\theoremstyle{plain}
\theoremstyle{plain}
\newtheorem{theorem}{Theorem}[section]
\newtheorem{proposition}[theorem]{Proposition}
\newtheorem{lemma}[theorem]{Lemma}
\newtheorem{corollary}[theorem]{Corollary}
\newtheorem{conjecture}[theorem]{Conjecture}
\newcommand{\res}{\text{Res}}
\theoremstyle{remark}
\newtheorem{remark}[equation]{Remark}
\theoremstyle{definition}
\newtheorem{definition}[theorem]{Definition}
\renewcommand{\Im}{\textup{Im}}
\renewcommand{\Re}{\textup{Re}}
\newcommand{\vol}{\textup{vol}}
\newcommand{\Tr}{\overline{\normalfont \textrm{Tr}}}
\newif\iffinalrun
  \newcommand{\need}[1]{}
  \newcommand{\mar}[1]{}
  \newcommand{\need}[1]{{\tiny *** #1}}
  \newcommand{\mar}[1]{\marginpar{\raggedright\tiny Fix Me:  #1 }}\fi
\pgfmathsetmacro{\myxlow}{-2}
\pgfmathsetmacro{\myxhigh}{2}
\pgfmathsetmacro{\myiterations}{6}
\title
{A quantitative converse of  Fekete's  theorem}
\author{Bryce Joseph  Orloski and Naser Talebizadeh Sardari}
\address{Penn State department of Mathematics, McAllister Building, Pollock Rd, State College, PA 16802 USA}
\email{nzt5208@psu.edu,bjo5149@psu.edu}
\thanks{We would like to thank Professors  Sarnak, Zargar, and Zikatanov for their comments on an earlier version of this work. }
\begin{document}
\maketitle
{\centering\footnotesize To Peter Sarnak on the occasion of his seventieth birthday.\par}

\begin{abstract}
Given a compact subset $\Sigma \subset \mathbb{R}$ (or $\mathbb{C}$) with logarithmic capacity greater than zero,
we construct an explicit family of probability measures supported on $\Sigma$  such that their closure is all the  possible weak  limit measures of complete sets of conjugate algebraic integers lying inside $\Sigma$. We give an asymptotic formula for the number of algebraic integers with given degree and prescribed distribution.
We exploit the algorithmic nature of our approach
to give a family of upper bounds that converges to the smallest limiting trace-to-degree ratio of totally positive algebraic integers and improve the best previously known upper bound on the Schur-Siegel-Smyth trace problem to  1.8216.

\end{abstract}

\section{Introduction}\label{intro}
\subsection{Fekete's theorem} 
A celebrated theorem of Fekete states that if a fixed compact set  $\Sigma\subset \mathbb{C}$ contains an infinite collection of complete sets of conjugate algebraic integers then the logarithmic capacity of $\Sigma$ is greater than or equal to zero.
Fekete and Szeg\"o~\cite{MR72941} proved a partial converse to this theorem.
Smyth showed that for any given probability distribution $\mu$ which is the weak limit of a sequence of distinct uniform probability measures on the complex roots of irreducible monic integral polynomials all of whose roots lie in $\Sigma$, then
\begin{equation}\label{smyth}
 \int \log|Q(x)|d\mu(x)\geq 0    
\end{equation}
for every non-zero $Q(x)\in \mathbb{Z}[x]$. Serre~\cite[Lemma 1.3.7]{MR4093205} showed more generally that
\begin{equation}\label{Serresmyth}
 \int \log|Q(x,y)|d\mu(x)d\nu(y)\geq 0    
\end{equation}
for every two variable integral polynomial $Q(x,y)\in \mathbb{Z}[x,y]$ and every pair of limiting measures $\mu,\nu$. Serre~\cite{MR4093205} also commented that the $n$-dimensional version of~\eqref{Serresmyth}  holds  and in~\cite[Corollary 1.3.10]{MR4093205} considered the special case when $Q(x,y)=x-y$ in \eqref{Serresmyth} which implies
\[
I(\mu,\nu):=\int \log|x-y|d\mu(x)d\nu(y)\geq 0. 
\]
When $\mu=\nu$, let $I(\mu):=\int \log|x-y|d\mu(x)d\mu(y)$ be the logarithmic energy of $\mu$.
\\

In a recent breakthrough, Smith~\cite{Smith} proved that the inequalities in \eqref{smyth} are sufficient for $\mu$ to be a limiting measure when  $\Sigma\subset\mathbb{R}$.
The authors~\cite{OT} recently established this result for any symmetric compact subset $\Sigma$ of the complex plane with a new proof of Smith's result. 
In doing so, we found new applications of the following inequality  in~\cite{OT}~and \cite{lowerbound}
\begin{equation}\label{multi}
\int \log(|Q(x_1,\dots,x_n)|)d\mu(x_1)\dots d\mu(x_n)\geq 0
\end{equation}
for  every limiting  measure $\mu$ and every non-zero $Q(x_1,\dots,x_n)\in \mathbb{Z}[x_1,\dots,x_n]$. 
\\

Suppose that $\Sigma \subset \mathbb{R}$ (or $\mathbb{C}$ with the assumption that it is invariant under complex conjugation) is a compact subset  with logarithmic capacity greater than zero and let $\mu$ be  a probability measure  supported on $\Sigma\subset \mathbb{C}$ that satisfies ~\eqref{multi}. In this paper,  we prove two basic results about the set of algebraic integers inside $\Sigma$ that are equidistributed according to $\mu$. 
\\

 First, we prove in Theorem~\ref{main2} that the number of algebraic integers of a given degree $n$ and approximate distribution $\mu$ is about 
\[
e^{\frac{n^2}{2}I(\mu)+o(n^{2-\delta})}.
\]
 Theorem~\ref{main2} gives a purely arithmetic definition of the logarithmic energy of $\mu$ in terms of the growth of the number of algebraic integers modelling $\mu$ as a function of their degree.  Our proof of Theorem~\ref{main2} is based on 
the geometry of numbers and a discrete version of John's theorem~\cite[Theorem 1.6]{Tao} for convex bodies inside Euclidean spaces. We also use~\eqref{multi} in order to bound Minkowski's  successive minima.  
\\

Second,  we construct an explicit family of probability measures supported on small neighborhoods of $\Sigma$ in Theorem~\ref{approx} that satisfy~\eqref{multi} and are dense with respect to the weak topology among  all possible probability measures that satisfy~\eqref{multi}. Our construction in Theorem~\ref{approx} uses the counting formula in Theorem~\ref{main2}. As an application, we give a family of upper bounds in Corollary~\ref{Acor} that converges to the smallest limiting trace-to-degree ratio of totally positive algebraic integers and improve the previously best-known upper bound on the Schur-Siegel-Smyth trace problem to 1.8216 in Corollary~\ref{our_bound}.
 Theorem~\ref{approx} addresses the following  question of  Sarnak:
\begin{center}    
``What are  the measures with compact support $\Sigma$ that satisfy the infinite number of inequalities in~\eqref{smyth} or equivalently in~\eqref{multi}?''
\end{center}

Before this work, the only known measures satisfying \eqref{multi} were the equilibrium measure of $\Sigma$ and an explicit  construction by Serre~\cite{MR2428512} when $\Sigma$ is a specific closed interval discussed in further detail in the next subsection.

\subsection{The Schur-Siegel-Smyth trace problem.} In this subsection, we discuss the application of our main theorems to the Schur-Siegel-Smyth trace problem. 
For an algebraic integer $\alpha$ with $\deg(\alpha)=n$ and Galois conjugates  $\alpha_1,\dots,\alpha_n$, define 
\[\text{Tr}(\alpha):=\sum_{i=1}^{n}\alpha_i.
\]
We call $\alpha$ totally positive if all of its conjugates $\alpha_1,\dots, \alpha_n$ are real and non-negative. Let $\mathcal{A}$ denote the set of totally positive algebraic integers. 
Note that for a given $m\in\mathbb N$, there are only finitely many $\alpha\in \mathcal{A}$ of a given degree such that $\text{Tr}(\alpha)=m$.
The Schur-Siegel-Smyth trace problem asks for the value of the lowest limiting trace-to-degree ratio. More specifically,
define $$\Tr(\alpha) = \frac{\text{Tr}(\alpha)}{\deg(\alpha)}.$$
The Schur-Siegel-Smyth trace problem seeks the value of
$$\lambda^{SSS} = \liminf_{\alpha\in\mathcal A}\Tr(\alpha).$$
Much work has been done computing lower bounds of $\lambda$ initiated by Schur~\cite{Schur},  Siegel~\cite{MR12092}, and Smyth~\cite{MR0762691} who gave a lower bound on the trace  problem by minimizing $\int x d\mu(x)$ subject to \eqref{smyth}. In~\cite{lowerbound}, the authors used~\eqref{multi} to improve the best known lower bound on the Schur-Siegel-Smyth trace problem to \(
1.79824\leq \lambda^{SSS}.
\) For providing upper bounds, one needs to construct probability measures with small trace that satisfy~\eqref{smyth} which is the main goal of this paper. 

\subsubsection{History of Upper Bounds }
The first known upper bound on $\lambda^{SSS}$ was given by Schur~\cite{Schur} who proved that $\lambda^{SSS}\le 2$ by giving an infinite family of totally positive algebraic integers of average conjugate value exactly 2.
Later, Siegel~\cite{MR12092} gave an example of an infinite family of totally positive algebraic integers $\{p_n\}$ with $p_n$ being degree $n$ and the trace of $p_n$ being $2n-1$. For a long time, 2 was the only known upper bound for $\lambda^{SSS}$. For this reason, the problem was usually phrased as finding the smallest $\epsilon > 0$ so that there are only finitely many totally positive algebraic integers $\alpha$ for which $\Tr(\alpha)<2-\epsilon$. Work by Serre~\cite{MR2428512} and Smith~\cite{Smith} showed that $\lambda^{SSS}<1.898304$. We discuss this upper bound in the following details.
\newline

Let 
$\mu$ be a measure and define
\begin{equation}\label{logpot}
U_{\mu}(x):=\int \log|x-y|d\mu(y)
\end{equation}
to be the (logarithmic) potential of $\mu$. Note that 
\[
I(\mu)= \int U_{\mu}(x)d\mu(x),
\]
and for two measures $\mu$ and $\nu$, we define
\[
I(\mu,\nu):=\int U_{\mu}(x)d\nu(x)=\int U_{\nu}(x)d\mu(x).
\]

In his letter to Smyth in February 1998, Serre showed that if $Q$ is a polynomial with real coefficients with leading and constant coefficient of modulus at least 1, and if $y,z>0$, $c\in \mathbb{R}$ are constants such that 
\begin{equation}\label{Serre}
   x-z\log(x)-y\log|Q(x)|\geq c 
\end{equation}
for all $x>0$, then $c\leq 1.898303$.   In his second letter to Smyth in March 1998, Serre constructed a  measure $\nu_S$ supported on an  interval $[a,b]\subset\mathbb{R}^+$ such that
\begin{equation}\label{Serredual}
   x\geq c+\lambda_x\log|x|+\lambda_{\nu_S}U_{\nu_S}(x) 
\end{equation}
\[
U_{\nu_S}(0)=\int \log(x)d\nu_S(x)\geq 0
\]
for all $x>0$  and some $c>1.898302$, $\lambda_x,\lambda_{\nu_S}\geq 0.$  By taking roots of $Q$ according to the distribution $\nu_S$, it follows that for every $c<1.898302$, there exists some $Q$ and   $y,z>0$ such that \eqref{Serre}
 holds. Let $c_{S}:=\sup c$,  where $c$ satisfies \eqref{Serredual} for some choice of $\nu_S$, $\lambda_x,\lambda_{\nu_S}\geq 0$. The above implies that $c_S\in (1.898302,1.898303)$. 
 \\

Using Serre's construction, Smith showed that there exists another measure $\mu_S$ supported on the same interval $[a,b]$ satisfying the conditions
\begin{equation}\label{serrecond}
  U_{\mu_S}(x)\geq \gamma \log|x|  
\end{equation}
and
\begin{equation}\label{serrecond2}
U_{\mu_S}(0)=\int \log(x)d\mu(x)\geq 0    
\end{equation}
with
\[
\int x d\mu_S(x)\leq 1.898304
\]
for a fixed $\gamma\geq 0$.  Note  that $\mu_S$ satisfies  Smyth's necessary conditions~\eqref{smyth} for $Q(x)=x$ by definition and for every $Q(x)$ coprime to $x$, we have 
\begin{equation}\label{SerreSmyth}
 \int \log|Q(x)|d\mu_S(x)=\sum_{Q(\alpha)=0}U_{\mu_S}(\alpha)\geq \gamma \sum_{Q(\alpha)=0}\log |\alpha|\geq 0.   
\end{equation}
By Smith's theorem, we have
\[
\lambda^{SSS}\leq 1.898304.
\]
This was the best-known bound on $\lambda_{SSS}$ prior to this work. Let $C_s=\inf \int x d\mu(x)$ where $\mu$ satisfies \eqref{serrecond} and \eqref{serrecond2} for some $\gamma>0$. Next, we show that  $c_c\leq C_s$ which is known as weak duality in the context of linear programming.
In fact, suppose that a given probability measure $\mu$ supported on $\mathbb{R}^+$  satisfies the inequalities~\eqref{serrecond} and \eqref{serrecond2}. We take the average of \eqref{Serredual} with respect to $\mu$ and obtain
\begin{equation}\label{oplower}
\int x d\mu(x) \geq c+\lambda_x\int\log|x|d\mu(x)+\lambda_{\nu}\int U_{\nu}(x)d\mu(x).    
\end{equation}
By inequality~\eqref{serrecond2} and $\lambda_x\geq 0$, we have
\[
\lambda_x\int\log|x|d\mu(x)\geq 0.
\]
Moreover, by \eqref{serrecond},
\[
\int U_{\nu}(x)d\mu(x)=\int U_{\mu}(x)d\nu(x) \geq \int \gamma \log(x)d\nu(x)\geq 0.
\]
By \eqref{oplower} and the above inequalities, it follows that
\[
C_S \geq c_S.
\]

\subsection{Main results}

Before this work, it was not known whether  $c_s=C_S$ which is known as strong duality. As a consequence of Theorem~\ref{dual}, we show that $c_s=C_S$ and we express this optimal value in the following corollary. This answers a question that was raised by Serre in his letter~\cite{MR2428512} about finding the optimal value for the optimization problem in~\eqref{Serre}.
\begin{corollary}
Let  $c_S $ and $C_S$ be as above and  $(\hat x, \hat y)$ be the unique solution to
\[\begin{cases}
y\log y = (y-x)\log(y-x)+x\\
\log(x)\log(y)=\log(xy)\log(y-x).
\end{cases}\]
Then \[
c_S =C_S= \hat y - \hat y \log\hat y + \hat x - \hat x\log\hat x.\] 
\end{corollary}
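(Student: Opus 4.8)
The plan is to take strong duality, $c_S=C_S$, as provided by Theorem~\ref{dual}, and then to determine the optimal measure explicitly and read off its value. By Theorem~\ref{dual} there exist an optimal probability measure $\mu$ for the problem defining $C_S$ and a dual optimizer $(\nu,\lambda_x,\lambda_\nu,c)$ for \eqref{Serredual}, with common value $c=c_S=C_S=\int x\,d\mu$, and the optimal measures are supported on intervals (consistent with Serre's and Smith's constructions). Complementary slackness then yields: the dual inequality $x-\lambda_x\log x-\lambda_\nu U_\nu(x)\ge c$ is an equality on $\operatorname{supp}\mu$; the primal inequality $U_\mu(x)\ge\gamma\log x$ is an equality on $\operatorname{supp}\nu$; and, since the multipliers are strictly positive, the constraint \eqref{serrecond2} and its dual analogue are tight, i.e.\ $\int\log x\,d\mu=U_\mu(0)=0$ and $\int\log x\,d\nu=U_\nu(0)=0$ (that these are equalities at the optimum is seen by averaging \eqref{Serredual} against $\mu$ and invoking \eqref{serrecond}--\eqref{serrecond2} exactly as in the weak-duality computation following \eqref{oplower}).

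The next step is to solve these optimality conditions explicitly. On each interval where a constraint is active, the identity becomes, after differentiating in $x$, an airfoil-type singular integral equation for the density, which can be inverted in closed form; the resulting measures are combinations of the balayage of the point mass $\delta_0$ onto the relevant interval and of weighted equilibrium measures with external fields built from $\log x$ and from $x$. Imposing that $\mu$ and $\nu$ be \emph{nonnegative} probability measures forces their densities to vanish at the appropriate endpoints — the soft-edge (free-boundary) conditions — and these, together with the normalizations $U_\mu(0)=U_\nu(0)=0$, the active-constraint matching, and the relation $c=\int x\,d\mu$, form a finite system of equations in the endpoints and the scalars $\gamma,\lambda_x,\lambda_\nu,c$. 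All quantities involved reduce to classical integrals over the interval, among them
\begin{gather*}
\int_a^b\frac{dt}{\sqrt{(t-a)(b-t)}}=\pi,\qquad
\int_a^b\frac{dt}{t\sqrt{(t-a)(b-t)}}=\frac{\pi}{\sqrt{ab}},\qquad
\int_a^b\frac{t\,dt}{\sqrt{(t-a)(b-t)}}=\frac{\pi(a+b)}{2},\\
\int_a^b\frac{\log t\,dt}{\sqrt{(t-a)(b-t)}}=\pi\log\frac{(\sqrt a+\sqrt b)^2}{4},\qquad
\frac{1}{\pi}\int_a^b\frac{\sqrt{(t-a)(b-t)}}{z-t}\,dt=z-\frac{a+b}{2}-\sqrt{(z-a)(z-b)}.
\end{gather*}
Eliminating $\gamma,\lambda_x,\lambda_\nu$ and the normalizing constants collapses the system to two equations in two endpoint parameters; calling these $(\hat x,\hat y)$ as in the statement, they become exactly the displayed pair — as a check, the first is equivalent to $\int_{\hat y-\hat x}^{\hat y}\log t\,dt=0$ and the second to $\frac{1}{\log(\hat y-\hat x)}=\frac{1}{\log\hat x}+\frac{1}{\log\hat y}$ — and the value $c=\int x\,d\mu$ simplifies to $\hat y-\hat y\log\hat y+\hat x-\hat x\log\hat x$.

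Finally I would settle uniqueness and feasibility. Writing $\phi(t)=t-t\log t=t(1-\log t)$, the first equation says $\phi(\hat y)=\phi(\hat y-\hat x)$; since $\phi$ is strictly increasing on $(0,1)$, strictly decreasing on $(1,\infty)$, with $\phi(0^+)=\phi(e)=0$ and $\phi(1)=1$, this forces $\hat y\in(1,e)$ and $\hat y-\hat x\in(0,1)$ and determines $\hat y-\hat x$ from $\hat y$ through a strictly monotone map. Substituting into the second equation leaves a single transcendental equation in one variable whose two sides are strictly monotone in opposite senses on the admissible range, so it has a unique solution. One then verifies that the $\mu$ built from this solution is genuinely feasible — its density is nonnegative and $U_\mu(x)\ge\gamma\log x$, $U_\mu(0)\ge0$ hold for all $x>0$, the inequalities reducing to a sign analysis of the explicit function $U_\mu(x)-\gamma\log x$ off its support via the Cauchy-transform formula above — which certifies that this is the optimizer and that $c_S=C_S=\hat y-\hat y\log\hat y+\hat x-\hat x\log\hat x$. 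The main obstacle is the elimination in the middle step: executing the singular integral inversions and handling the free-boundary and normalization conditions carefully enough that the output is recognizably the clean two-equation system; the monotonicity argument underpinning uniqueness is a secondary technicality.
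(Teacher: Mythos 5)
Your route is fundamentally the same as the paper's: invoke strong duality and the optimality/complementary-slackness conditions from Theorem~\ref{dual} (equality of the dual inequality on $\operatorname{supp}\mu$, equality $U_{\mu}=\gamma\log x$ on $\operatorname{supp}\nu$, and the tight constraints $\int\log x\,d\mu=\int\log x\,d\nu=0$, the latter justified by the strict positivity of the multipliers, which the paper gets from Proposition~\ref{smooth} and from $\Lambda_{\{x\}}<\Lambda_\emptyset=2$), then solve the inverse potential problem on a single interval $[a,b]$ explicitly and reduce to the two displayed equations. Where you differ is in execution and in the uniqueness step. The paper does not re-derive the airfoil inversions: it quotes the closed forms (32)--(33) from Appendix~B of \cite{lowerbound}, works in the variables $\hat x=\frac{a+b-2\sqrt{ab}}{4}$, $\hat y=\frac{a+b+2\sqrt{ab}}{4}$, and carries out the algebra showing $\int x\,d\mu_A=\hat y-\frac{\hat x\log\hat x}{\log\hat y}=\hat y-\hat y\log\hat y+\hat x-\hat x\log\hat x$; in your write-up this entire elimination, which is the computational heart of the statement, is asserted rather than performed (your two consistency rewritings of the equations are correct but do not substitute for it), so that step still has to be done. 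For uniqueness of $(\hat x,\hat y)$ the paper argues via the optimization itself: any solution of the system produces a primal value and a dual value that coincide, and a sign analysis ($\hat x\hat y>1$, $\hat y>1$) shows the associated measures are genuinely positive, so uniqueness of the optimum forces uniqueness of the solution; your direct monotonicity argument with $\phi(t)=t(1-\log t)$ is a legitimate alternative, but as stated it needs a preliminary reduction: $1/\log\hat x+1/\log\hat y$ is not monotone across $\hat x=1$, so you must first observe that the left side $1/\log(\hat y-\hat x)$ is negative on the admissible range, which forces $\hat x<1$, and only then do the two sides move monotonically in opposite directions. With that patch and the elimination actually carried out, your plan yields the corollary; each approach buys something — yours is self-contained and gives uniqueness by elementary calculus, while the paper's is shorter because it recycles \cite{lowerbound} and the uniqueness of the optimal measures from Theorems~\ref{main1} and~\ref{dual}.
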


In Theorem~\ref{approx}, we show that every limiting measure $\mu$ of uniform distributions on roots of monic integral polynomials is well-approximated by  probability measures $\eta$ for which there exists an integral polynomial $Q(x)\in \mathbb{Z}[x]$ such that
\[
U_{\eta}(x)\geq \gamma\log|Q(x)|
\]
for some constant $\gamma\geq 0$ and 
\[
\int \log|Q(x)|d\eta(x)\geq0.
\]
Therefore, Theorem~\ref{approx} reduces the upper bound problem to  measures $\eta$ satisfying the above conditions. 

\begin{definition}
    A Borel probability measure is {\it arithmetic} if
    $$\int\log|Q(x)|d\mu\ge 0$$
    for all $Q\in\mathbb Z[x]\setminus\{0\}$.
\end{definition}
\begin{definition}
    The {\it root distribution} of a polynomial $P$ is defined by $\mu_P:=\frac{1}{\deg(P)}\sum_{P(\alpha)=0} \delta_\alpha$ where $\delta_x$ is the Dirac-$\delta$ mass at $x$ and where the sum counts roots with multiplicity. For a monic polynomial $P$, we write 
    \[
    \Tr(P):=\int x d\mu_P(x)
    \]
\end{definition}
\begin{theorem}\label{approx}
    Suppose $\mu$ is a compactly supported arithmetic probability measure. For every $N\in\mathbb N$, there is some $n\ge N$ for which there exists a monic irreducible integral polynomial $p_n$ of degree $n$ and a compactly supported arithmetic probability measure $\mu_n$ which satisfy
    \[
    \int \log|p_n(x)|d\mu_n(x)\geq 0,
    \]
    \[
    U_{\mu_n}(x)\geq \gamma_n \frac{\log|p_n(x)|}{\deg(p_n)}
    \]
    for every $x\in \mathbb{C}$ and some constants $0<\gamma_n<1$ where $\lim_{n\to \infty}\gamma_n=1$, and 
    \[
    \lim_{n\to \infty} \mu_n=\mu
    \]
    in the weak-* topology.
    Furthermore, we can take $\mu_n$ so that $\forall n\in\mathbb N$, $\text{supp}(\mu_n)\supseteq\text{supp}(\mu_{n+1})$ and $\bigcap_{n\in\mathbb N}\text{supp}(\mu_n)=\Sigma$. If the support of $\mu$ lies inside $\mathbb R$, then $p_n$ and $\mu_n$ can be taken so that $p_n$ has all real roots and $\mu_n$ is also supported in $\mathbb R$.
\end{theorem}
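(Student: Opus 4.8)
The plan is to realise each $\mu_n$ as a convex combination
\[
\mu_n \;=\; (1-\epsilon_n)\,\widetilde{\mu}_{Q_n} \;+\; \epsilon_n\,\omega_{\Sigma_n},
\qquad \gamma_n := 1-\epsilon_n ,
\]
with $\epsilon_n\to 0$ decaying slowly compared with the approximation scales below. Fix a compact $\Sigma_0\supseteq\text{supp}(\mu)$ with $\mathrm{cap}(\Sigma_0)>1$ — obtained by thickening $\text{supp}(\mu)$ (inside $\mathbb R$, in the real case) until the capacity exceeds $1$ — and a nested family $\Sigma_n\downarrow\Sigma$ of nice neighbourhoods of $\Sigma_0$, each also containing a $1/n$–neighbourhood of $\text{supp}(\mu)$; let $\omega_{\Sigma_n}$ be the equilibrium measure of $\Sigma_n$, so that $U_{\omega_{\Sigma_n}}(x)\ge\log\mathrm{cap}(\Sigma_n)\ge\rho:=\log\mathrm{cap}(\Sigma_0)>0$ for every $x$, whence $\omega_{\Sigma_n}$ is arithmetic. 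Using Theorem~\ref{main2}, for infinitely many $n$ there are more than $e^{cn^2}$ monic irreducible integer polynomials of degree $n$ (totally real ones, when $\text{supp}(\mu)\subset\mathbb R$) with roots in a prescribed shrinking neighbourhood of $\text{supp}(\mu)$ and root distribution within $d_n$ of $\mu$; here $c>0$ provided $I(\mu)>0$ (see below). Among these pick one $p_n$ and $k_n\to\infty$ further, distinct ones $q_{1,n},\dots,q_{k_n,n}$, set $Q_n:=\prod_i q_{i,n}$ (so $\gcd(p_n,Q_n)=1$), and — by a counting argument against the $e^{cn^2}$ choices — arrange in addition that the roots of $p_n$ stay a distance $\ge\delta_n$ from the roots of $Q_n$ and that the union of the root sets of the $q_{i,n}$ contains no two points within $r_n$. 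Finally let $\widetilde{\mu}_{Q_n}:=\mu_{Q_n}*\sigma_{r_n}$, where $\sigma_{r_n}$ is the uniform probability measure on a disk (an interval, in the real case) of radius $r_n\to 0$ about $0$, placed so that $\text{supp}(\widetilde{\mu}_{Q_n})$ lies in the $1/n$–neighbourhood of $\text{supp}(\mu)$, hence in $\Sigma_n$.

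Granting suitable relative decay of $\epsilon_n,d_n,\delta_n,r_n$ and growth of $k_n$, the conclusions split off term by term. \emph{$\mu_n$ is arithmetic:} convex combinations of arithmetic measures are arithmetic, $\omega_{\Sigma_n}$ was handled above, and for primitive irreducible $R\in\mathbb Z[x]$ the subharmonicity of $\log|R|$ gives $\int\log|R|\,d\widetilde{\mu}_{Q_n}\ge\int\log|R|\,d\mu_{Q_n}=\tfrac1{\deg Q_n}\log|\res(Q_n,R)|\ge0$ when $R\nmid Q_n$, while when $R=q_{i,n}$ the only loss — the self‑interaction of the smeared mass near the roots of $q_{i,n}$, of size $O\!\big(\tfrac{n}{\deg Q_n}|\log r_n|\big)$ — is absorbed by $\epsilon_n\!\int\!\log|R|\,d\omega_{\Sigma_n}\ge\epsilon_n n\rho$. \emph{$\int\log|p_n|\,d\mu_n\ge0$:} since $\gcd(p_n,Q_n)=1$, $\int\log|p_n|\,d\widetilde{\mu}_{Q_n}\ge\tfrac1{\deg Q_n}\log|\res(Q_n,p_n)|\ge0$, and $\int\log|p_n|\,d\omega_{\Sigma_n}=\sum_{p_n(\beta)=0}U_{\omega_{\Sigma_n}}(\beta)\ge0$. \emph{$\mu_n\to\mu$ weak‑$*$:} $\mu_{Q_n}\to\mu$, smearing by $r_n\to 0$ is negligible, and $\epsilon_n\to 0$. \emph{The supports:} $\text{supp}(\mu_n)\subseteq\Sigma_n$, so the supports are nested and decrease to a compact $\Sigma\supseteq\text{supp}(\mu)$ (and in the real case, taking the capacity‑boosting part to be a union of intervals through $\text{supp}(\mu)$, one gets $\text{supp}(\mu_n)=\Sigma_n$).

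The main point is the pointwise bound $U_{\mu_n}(x)\ge\gamma_n U_{\mu_{p_n}}(x)$ for all $x\in\mathbb C$, where $U_{\mu_{p_n}}(x)=\tfrac1n\log|p_n(x)|$. Since $U_{\mu_n}=(1-\epsilon_n)U_{\widetilde{\mu}_{Q_n}}+\epsilon_n U_{\omega_{\Sigma_n}}$ and $U_{\omega_{\Sigma_n}}\ge\rho$, it suffices to show $U_{\widetilde{\mu}_{Q_n}}(x)-U_{\mu_{p_n}}(x)\ge-\epsilon_n\rho/(1-\epsilon_n)$ for every $x$. On one side, the elementary bound $U_{\sigma_{r_n}}(w)\ge\max(\log|w|,\ \log r_n-C)$ (with $C$ absolute) together with the no‑close‑pair property of the roots of $Q_n$ yields $U_{\widetilde{\mu}_{Q_n}}(x)\ge U_\mu(x)-E_n$ for all $x$, where $E_n\to 0$ collects the smearing loss $O(|\log r_n|/\deg Q_n)$ and a discrepancy loss controlled by $d_n$ (using that an arithmetic $\mu$ is non‑atomic, in fact has continuous potential). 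On the other side, the same discrepancy estimate gives $U_{\mu_{p_n}}(x)\le U_\mu(x)+E_n'$ away from the roots of $p_n$ with $E_n'\to0$; on the shell where $\delta_n\ge\mathrm{dist}(x,\{p_n=0\})$ dominates, the nearest‑root term contributes only $\le\tfrac1n\log\delta_n=o(1)$; and $U_{\mu_{p_n}}(x)\to-\infty$ as $x$ approaches a root, where the inequality is trivial because $U_{\mu_n}$ is bounded below on any fixed compact set containing all the roots. Choosing $\epsilon_n,d_n,\delta_n,r_n,k_n$ so that $E_n,E_n'=o(\epsilon_n)$ — which is possible: each error is of the form (power of $\log n$)$/n^{a}$ with $a>0$, against an $\epsilon_n$ one may take as slow as $1/\log n$ — completes the verification for all large $n$. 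I expect the genuine difficulty to be exactly this calibration: the smearing radius $r_n$ must be small enough to keep $\mu_n$ near $\mu$ and to permit a no‑close‑pair selection among the $q_{i,n}$, yet large enough that the smearing loss $|\log r_n|/\deg Q_n$ and the discrepancy loss stay $o(\epsilon_n)$, while $\epsilon_n\to0$ must still dominate those losses relative to the capacity buffer $\rho$.

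Two reductions remain. Theorem~\ref{main2} gives a count $e^{n^2 I(\mu)/2+o(n^{2-\delta})}$, which tends to $\infty$ when $I(\mu)>0$; the case $I(\mu)=0$ is reduced to this one by first replacing $\mu$ by $(1-t)\mu+t\,\omega_{\Sigma_0}$ — still arithmetic, of strictly positive energy, and tending weakly to $\mu$ as $t\to0$ — and diagonalising in $t$. The real‑supported case uses the construction of totally real algebraic integers approaching $\mu$ from the works of Smith and of the authors (in place of the complex count) and smears by intervals rather than disks; then $p_n$ has real roots and $\mu_n$ is supported in $\mathbb R$, as required.
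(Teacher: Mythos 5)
Your overall architecture is close to the paper's (smear a discrete approximation, mix in a small multiple of an equilibrium measure of a shrinking neighbourhood of $\mathrm{supp}(\mu)$ so that the Robin-constant buffer absorbs the errors, and get the polynomial from Theorem~\ref{main2}), but there is a genuine gap in the selection step on which your potential estimates rest. You ask to choose, among the $e^{cn^2}$ polynomials produced by Theorem~\ref{main2}, one $p_n$ and many $q_{1,n},\dots,q_{k_n,n}$ so that the roots of $p_n$ stay $\ge\delta_n$ from the roots of $Q_n=\prod_i q_{i,n}$ and so that no two roots of the $q_{i,n}$ lie within $r_n$ of each other, "by a counting argument". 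This cannot be arranged: every polynomial furnished by Theorem~\ref{main2} for a fixed generic sampling $\mathcal Z=\{\alpha_1,\dots,\alpha_n\}$ has its first $n$ roots within $d_{\Sigma}^{-m(1-o(1))}$ of the \emph{same} points $\alpha_i$, so the roots of any two choices are automatically within $2d_{\Sigma}^{-m(1-o(1))}$ of one another (and there is no lower bound on their mutual distance beyond resultant-type bounds of size $e^{-Cn^2}$, nor any control at all on the remaining $m$ roots across different choices). Hence any $\delta_n,r_n$ large enough to be usable in your smearing and shell estimates is violated by \emph{all} candidates, and pigeonholing among many candidates does not help. Since the no-close-pair property is what you use to prove $U_{\widetilde\mu_{Q_n}}(x)\ge U_\mu(x)-E_n$, and the $\delta_n$-separation is what you use on the shell around the roots of $p_n$, the key pointwise inequality $U_{\mu_n}\ge\gamma_n U_{\mu_{p_n}}$ is not established. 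The paper sidesteps exactly this issue by smearing around the \emph{sample points} (which are $\gg n^{-A}$ separated by condition (3) of Definition~\ref{genericsample}) rather than around roots, and by using a single polynomial $Q_{m,n}$ whose roots are exponentially close to those points, comparing both potentials to $U_{\tilde\mu_n}$ via Proposition~\ref{emperical}; your cluster of $k_n$ exponentially coincident roots could probably be handled directly (each cluster carries only a $1/(n+m)$ fraction of the mass), but that analysis is not in your write-up.

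Two smaller points. First, Theorem~\ref{main2} requires the sampled measure to be H\"older (and supported in a finite union of rectangles/intervals); a general arithmetic $\mu$ need not be, and mixing with an equilibrium measure does not make it so — the paper inserts a separate reduction to H\"older measures (via the approximation results of \cite{OT} and \cite{Smith}) followed by a diagonal argument, which your proposal omits. Second, in the totally real case your arithmeticity argument via "subharmonicity" fails for interval smearing (the one-dimensional mean-value inequality for $\log|x-\beta|$ goes the wrong way), so the loss for \emph{every} $R\in\mathbb Z[x]$, not just $R=q_{i,n}$, must be absorbed by the $\epsilon_n$-equilibrium buffer; relatedly, your fixed buffer $\rho=\log\mathrm{cap}(\Sigma_0)$ is inconsistent with $\Sigma_n\downarrow\mathrm{supp}(\mu)$, since $\log\mathrm{cap}(\Sigma_n)$ may tend to $0$ — the paper handles this by choosing $m_n$ after $\gamma_n$ and $\Sigma_n$ so the error is below $\tfrac12(1-\gamma_n)I_{\Sigma_n}$. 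These are repairable, but the root-separation step is the substantive obstruction.
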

In fact, we solve a more general optimization problem that was proposed  by Smith~\cite{Smith}.

\subsubsection{Primal problem}\label{primalintro}
Fix a finite subset of integer polynomials $A=\{Q_1,\dots,Q_k\}$. Let $\mathcal{M}^{+}$ be the space of all non-negative Borel measures with compact support and  finite mass  on the non-negative real numbers.
 Define 
\[
\mathcal{C}(A):=\{(\mu,b_Q) : \mu\in\mathcal{M}^+, b_Q\in\mathbb{R} \text{ for every } Q\in A\}.
\]
We call the following linear constraints for $(\mu,b_Q)\in \mathcal{C}(A)$ the \textit{primal constraints}
\begin{equation}\label{primal}
\begin{split}
      U_{\mu}(x)\geq \sum_{Q\in A}b_Q\log|Q(x)|,  
      \\
  \int \log|Q(x)|d\mu(x)\geq 0,
  \\
  \int d\mu(x)\geq 1,
  \\
  b_Q\geq 0 \text{ for every } Q\in A,
\end{split}
\end{equation}
where the first inequality holds for every $x\in \mathbb{C}$ and the second holds for every $Q\in A$. We say $(\mu,b_Q)\in \mathcal{C}(A)$ is a feasible point  if $(\mu,b_Q)$ satisfies all inequalities in~\eqref{primal}. We denote the subset of feasible points by $\mathcal{C}(A)^+$. Note that $(0,\dots,0)\in \mathcal{C}(A)$, but $(0,\dots,0)$ is not a feasible point. 
\\

The primal  problem is to minimize the trace $\int xd\mu(x)$ for $(\mu,b_Q)\in \mathcal{C}(A)^+$. Let
\begin{equation}\label{defA}
\Lambda_A:=\inf_{(\mu,b_Q)\in \mathcal{C}(A)^{+}}\int xd\mu(x).
\end{equation}
Suppose that $B\subset A$ and $(\mu,b_Q)\in \mathcal{C}(B)^+$. It is easy to check that $(\mu,b'_Q)\in \mathcal{C}(A)^+$, where $b'_Q:=b_Q$ if $Q\in B$ and $b'_Q:=0$ if $Q\in A\setminus B.$ This implies that
\[
\Lambda_A\leq \Lambda_B
\]
We say a set $A$ is a minimal primal set if 
\[
\Lambda_A<\Lambda_B
\]
for every  subset  $B\subset A$, where $B\neq A$. 
 We say there is a  solution $\mu$ to the primal problem associated to $A$ if there exists some coefficients $b_Q\geq 0$ such that $(\mu,b_Q)\in \mathcal{C}(A)^+$ and 
 \[
 \int xd\mu(x)=\Lambda_A.
 \]
Suppose that $(\mu,b_Q)$ is a solution to primal problem and that $b_Q=0$ for some $Q\in A$. It follows that $\Lambda_{A\setminus\{Q\}}=\Lambda_A$ which implies $ A$ is not a  minimal primal set.

\begin{theorem}\label{main1} 
    Fix a finite subset of integral polynomials $A\subset \mathbb{Z}[x]$ with all real roots. There is a  solution $\mu_A$ to the primal problem associated to $A$. Furthermore,  $\mu_A$ is supported on a finite union of intervals $\Sigma_A\subset [0,18]$ such that
    \[
    \begin{array}{cc}
          \int x d\mu_A(x)=\Lambda_{A},\\
          I(\mu_A,\nu_A)=0,\\
    U_{\mu}(x)\geq \sum_{Q\in A} b_Q \log|Q(x)|,
    \end{array}
    \]
where equality holds in the third line for every $x\in \Sigma$ with some scalars $b_Q\geq  0$,  and $\nu_A$ is any
 solution to the dual problem  in Theorem~\ref{dual}. Moreover, suppose that $\Sigma_A:=\bigcup_{i=0}^l[a_{2i},a_{2i+1}] $. For every gap interval $(a_{2i+1},a_{2i+2})$, there is some $Q\in A$ that vanishes inside $(a_{2i+1},a_{2i+2})$. The density function of $\mu_A$ is
$$\frac{|P(x)|}{\prod_{Q\in A}|Q(x)|\sqrt{|H(x)|}}$$
where $H(x)=\prod_{i=0}^{2l+1}(x-a_i)$ and  $P(x)$ is a polynomial of degree $\sum_{Q\in A}\deg(Q)+l$ if $\{x\}\subset A$. Lastly, if we assume that  for every $Q\in A$ all roots are non-negative and $\Tr(Q)< \Lambda_A$, then $I(\mu_A)=0$, and $\mu_A$ is the unique solution to the primal problem associated to $A$. 
\end{theorem}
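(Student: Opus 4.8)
\emph{Overview and existence.} The plan is to treat this as an infinite‑dimensional linear program over $\mathcal C(A)$: produce a primal optimizer $\mu_A$ by weak‑$*$ compactness, extract the three displayed equalities and the bound $\Sigma_A\subset[0,18]$ from strong duality (Theorem~\ref{dual}) and complementary slackness, convert the equality $U_{\mu_A}=\sum_Q b_Q\log|Q|$ on $\Sigma_A$ into a singular integral equation whose solution gives the band structure and the density formula, and finally upgrade to $I(\mu_A)=0$ and uniqueness under the extra hypotheses. For existence: since the equilibrium measure of $[0,4]$ with all $b_Q=0$ is feasible, $\Lambda_A\le 2$, so we may take a minimizing sequence $(\mu_n,b_{Q,n})\in\mathcal C(A)^+$ with $\int x\,d\mu_n\le 3$. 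One checks that the total masses are uniformly bounded (the constraints $\int\log|Q|\,d\mu_n\ge 0$, resp.\ $U_{\mu_n}(0)\ge\sum_Q b_{Q,n}\log|Q(0)|\ge 0$ when no $Q$ vanishes at $0$, bound the mass near $0$, while $\int x\,d\mu_n\le 3$ bounds it away from $0$), that the $b_{Q,n}$ are uniformly bounded (compare leading behaviour as $x\to+\infty$, where $U_{\mu_n}(x)\sim(\int d\mu_n)\log x$ and $\sum_Q b_{Q,n}\log|Q(x)|\sim(\sum_Q b_{Q,n}\deg Q)\log x$), and that the family is tight (a mass of size $m$ at scale $R$ contributes $\gtrsim mR$ to $\int x\,d\mu_n$, so none escapes to infinity). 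Passing to a weak‑$*$ limit $\mu_n\to\mu_A$, $b_{Q,n}\to b_{Q,A}$: lower semicontinuity of $\int x\,d\mu$ gives $\int x\,d\mu_A\le\Lambda_A$; upper semicontinuity of $\log|Q|$ preserves $\int\log|Q|\,d\mu_A\ge 0$; tightness preserves $\int d\mu_A\ge 1$; and $L^1_{\mathrm{loc}}$‑convergence of potentials gives $U_{\mu_A}(x)\ge\sum_Q b_{Q,A}\log|Q(x)|$ a.e., hence everywhere since $U_{\mu_A}-\sum_Q b_{Q,A}\log|Q|$ is superharmonic. (A priori $\mu_A$ need not have compact support, but it has finite mass and first moment, so all constraints make sense.)

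\emph{Duality, complementary slackness, and the bound $\Sigma_A\subset[0,18]$.} By Theorem~\ref{dual}, strong duality holds: $\Lambda_A$ equals the optimal dual value, attained by some $\nu_A$ with nonnegative multipliers, dual feasibility giving an inequality of the shape $x\ge\Lambda_A+\lambda_0\log|x|+\lambda_\nu U_{\nu_A}(x)+\sum_Q c_Q\log|Q(x)|$ for all $x>0$ together with $\int\log|Q|\,d\nu_A\ge 0$ and $U_{\nu_A}(0)\ge 0$. Integrating this inequality against $\mu_A$ and using the constraints satisfied by $(\mu_A,b_{Q,A})$ — in particular $\int U_{\nu_A}\,d\mu_A=\int U_{\mu_A}\,d\nu_A\ge\sum_Q b_{Q,A}\int\log|Q|\,d\nu_A\ge 0$ — gives $\int x\,d\mu_A\ge\Lambda_A\int d\mu_A\ge\Lambda_A$; combined with $\int x\,d\mu_A\le\Lambda_A$ and $\Lambda_A>0$ this forces $\int d\mu_A=1$, every added term to vanish, and the dual inequality to be an equality $\mu_A$‑a.e. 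Hence $\mu_A$ is a probability measure solving the primal with $I(\mu_A,\nu_A)=0$ (using $\lambda_\nu>0$ for a suitable dual optimizer, or the vanishing just obtained). On $\mathrm{supp}\,\mu_A$ the dual equality reads $x=\Lambda_A+\lambda_0\log|x|+\lambda_\nu U_{\nu_A}(x)+\sum_Q c_Q\log|Q(x)|$, whose right side grows only logarithmically in $x$, so $\mathrm{supp}\,\mu_A$ is bounded; a quantitative version using $\Lambda_A\le 2$, the mass bound and $|Q|\ge 1$ at extreme points places it inside $[0,18]$. Finally, complementary slackness for the pairing in the other direction makes $U_{\mu_A}\ge\sum_Q b_{Q,A}\log|Q|$ tight on $\mathrm{supp}\,\nu_A$, and a perturbation argument — wherever $g:=U_{\mu_A}-\sum_Q b_{Q,A}\log|Q|>0$ one may push a little mass of $\mu_A$ toward $0$, staying feasible while lowering the trace — shows $g\equiv 0$ on $\Sigma_A:=\mathrm{supp}\,\mu_A$, i.e.\ $U_{\mu_A}(x)=\sum_Q b_{Q,A}\log|Q(x)|$ there. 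This yields the first three displayed assertions.

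\emph{Band structure and density.} Differentiating $U_{\mu_A}=\sum_Q b_{Q,A}\log|Q|$ on the interior of $\Sigma_A=\bigsqcup_{i=0}^l[a_{2i},a_{2i+1}]$ shows that the density $\psi$ of $\mu_A$ solves the finite Hilbert (airfoil) equation $\mathrm{p.v.}\!\int_{\Sigma_A}\frac{\psi(y)\,dy}{x-y}=R(x)$, where $R(x)=\sum_Q b_{Q,A}\,Q'(x)/Q(x)$ is rational with simple poles at the (real) roots of the $Q$'s, all lying outside $\Sigma_A$. With $H(x)=\prod_{i=0}^{2l+1}(x-a_i)$, inverting the finite Hilbert transform gives $\psi(x)=\frac{1}{\pi^2\sqrt{|H(x)|}}\,\mathrm{p.v.}\!\int_{\Sigma_A}\frac{\sqrt{|H(t)|}\,R(t)}{t-x}\,dt+\frac{p(x)}{\pi\sqrt{|H(x)|}}$ with $\deg p\le l$, and evaluating the Cauchy integral by residues at the poles of $R$ collapses this to $\psi(x)=\frac{|P(x)|}{\prod_Q|Q(x)|\,\sqrt{|H(x)|}}$; tracking the growth at infinity and the pole orders at the roots of the $Q$'s yields $\deg P=\sum_Q\deg Q+l$ when $\{x\}\subset A$. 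The endpoints $a_i$, the number $l+1$ of bands, the coefficients $b_{Q,A}$ and the polynomial $P$ are then pinned down by: positivity of $\psi$ on $\Sigma_A$ with a soft (square‑root) edge at every $a_i$, the normalization $\int_{\Sigma_A}\psi=1$, the equalities $\int\log|Q|\,d\mu_A=0$ for $b_{Q,A}>0$, and the exterior inequality $g\ge 0$ on the gaps and on $\mathbb R\setminus[a_0,a_{2l+1}]$. For the gap statement: a bounded gap containing no root of any $Q$ would make $g$ harmonic across it, so the right‑hand side of the airfoil equation would be analytic there and the density formula would continue analytically across the gap with fixed sign (the factor $\prod_Q|Q|^{-1}$ changes sign‑structure only at roots); this means the two neighbouring bands merge into a single band carrying a nonnegative density, contradicting that $\Sigma_A$ is the support of the optimizer (via the uniqueness below, or by directly exhibiting a feasible competitor of no larger trace supported on the merged set). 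I expect this step — the band/endpoint analysis and the proof that every bounded gap contains a root of some $Q$ — to be the main obstacle.

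\emph{The case $I(\mu_A)=0$ and uniqueness.} Assume now that every $Q\in A$ has only nonnegative roots and $\Tr(Q)<\Lambda_A$. Integrating $U_{\mu_A}=\sum_Q b_{Q,A}\log|Q|$ against $\mu_A$ gives $I(\mu_A)=\sum_Q b_{Q,A}\int\log|Q|\,d\mu_A\ge 0$, and complementary slackness for the constraints $\int\log|Q|\,d\mu\ge 0$ gives $\int\log|Q|\,d\mu_A=0$ whenever the associated dual multiplier $c_Q$ is positive; the hypotheses force $c_Q>0$ whenever $b_{Q,A}>0$ — otherwise, using $\Tr(Q)<\Lambda_A$ and nonnegativity of the roots of $Q$ one builds a dual‑feasible perturbation of strictly larger value, equivalently under these hypotheses the primal and dual problems coincide and $\nu_A=\mu_A$ may be taken — so $I(\mu_A)=0$. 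For uniqueness, let $\mu_A,\mu_A'$ be two solutions, with coefficients $b_{Q,A},b'_{Q,A}$; then $\sigma:=\tfrac12(\mu_A+\mu_A')$ with coefficients $\tfrac12(b_{Q,A}+b'_{Q,A})$ is again feasible and attains $\Lambda_A$, hence is a solution, so $\int\log|Q|\,d\sigma=0$ whenever $\tfrac12(b_{Q,A}+b'_{Q,A})>0$; since the summands $\int\log|Q|\,d\mu_A,\ \int\log|Q|\,d\mu_A'$ are individually nonnegative, each vanishes, and likewise (by the slackness identity for $\sigma$) $U_{\mu_A}=\sum_Q b_{Q,A}\log|Q|$ on $\mathrm{supp}\,\mu_A'$; therefore $I(\mu_A,\mu_A')=\int U_{\mu_A}\,d\mu_A'=\sum_Q b_{Q,A}\int\log|Q|\,d\mu_A'=0$, whence $I(\mu_A-\mu_A')=I(\mu_A)-2I(\mu_A,\mu_A')+I(\mu_A')=0$. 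Since $\mu_A-\mu_A'$ is a compactly supported signed measure of total mass $0$, positive‑definiteness of the logarithmic kernel forces $\mu_A=\mu_A'$.
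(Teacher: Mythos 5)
Your overall LP strategy is in the spirit of the paper, but as written it has genuine gaps, the most serious being structural: you invoke Theorem~\ref{dual} (strong duality with an attained dual optimizer and multipliers) as a black box. In the paper, Theorem~\ref{main1} and Theorem~\ref{dual} are both deduced from Proposition~\ref{mainprop}, and that proposition is where essentially all the work lies: finite-dimensional separating-hyperplane approximations at a dense set of points $x_i$, a uniform mass bound and Prokhorov compactness for the discrete dual measures $\nu_{N}$, a smoothing $\nu*w_\varepsilon$ to pass the dual inequality and the values $\int\log|Q|\,d\nu$ to the limit, and a convexity argument identifying the supports. Since Theorem~\ref{dual} is itself stated in terms of the $\mu_A$ and $\Sigma_A$ you are trying to construct, citing it here is circular, or at best presupposes exactly the part that needs proof. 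Relatedly, the containment $\Sigma_A\subset[0,18]$ does not follow from a ``quantitative version'' of the logarithmic growth of the dual inequality; the constant $18$ comes from the explicit replacement argument of Proposition~\ref{compact} (Lemma 2.6 of \cite{lowerbound}), which shows any feasible measure with mass outside $[0,18]$ can be modified to a feasible one of strictly smaller trace.

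Several individual steps would also fail or are missing as stated. (a) Your perturbation ``push a little mass of $\mu_A$ toward $0$, staying feasible'' to force $U_{\mu_A}=\sum_Q b_Q\log|Q|$ on all of $\text{supp}\,\mu_A$ is not legitimate: the constraints $\int\log|Q|\,d\mu\ge 0$ are typically active at the optimum (e.g. $A=\{x\}$, where $\int\log x\,d\mu_A=0$), and moving mass toward $0$ strictly decreases them; the paper instead proves $\text{supp}\,\nu_A=\text{supp}\,\mu_A$ via strict convexity of $F(z)=z-U_{\nu_A}(z)-\sum_Q\lambda_Q\log|Q(z)|-\Lambda_A$ on root-free intervals and only then converts the slackness identity $\int\bigl|U_{\mu_A}-\sum_Q b_Q\log|Q|\bigr|\,d\nu_A=0$ into equality on $\Sigma_A$. (b) The gap statement (every bounded gap of $\Sigma_A$ contains a root of some $Q$) you acknowledge you do not prove; the analytic-continuation heuristic yields no contradiction by itself, whereas the paper's mean-value argument with $F''>0$ settles it directly. (c) Positivity of the dual multipliers when $\Tr(Q)<\Lambda_A$ is the content of Proposition~\ref{smooth}, proved by averaging the dual inequality against a regularized root distribution $w_\varepsilon*\mu_{Q}$; your one-line ``build a dual-feasible perturbation of strictly larger value'' is not carried out, and the aside that one may take $\nu_A=\mu_A$ is false, since Theorem~\ref{dual} gives $I(\nu_A)<0$ while $I(\mu_A)=0$. (d) In the uniqueness argument, the identity $I(\mu_A,\mu_A')=\sum_Q b_{Q}\int\log|Q|\,d\mu_A'$ needs $U_{\mu_A}=\sum_Q b_Q\log|Q|$ on $\text{supp}\,\mu_A'$, which you have not established; the paper's shorter route is that the average of two distinct solutions would have strictly positive logarithmic energy, contradicting the fact (already proved) that every solution has $I=0$.
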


\begin{corollary}\label{Acor} If $A\subset\mathbb Z[x]$ is finite, then $\lambda^{SSS}\le \Lambda_A$. Furthermore, for all $\epsilon>0$, there is a finite set $A$ of positive real rooted integer polynomials so that $\Lambda_A-\lambda^{SSS}<\epsilon$.
\end{corollary}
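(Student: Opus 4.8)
I would prove the two halves separately.

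For $\lambda^{SSS}\le\Lambda_A$ the plan is a ``weak duality'' argument: show every primal‑feasible probability measure is \emph{arithmetic}, then invoke the sufficiency direction (Smith's theorem) exactly as in the passage above that deduces $\lambda^{SSS}\le 1.898304$ from $\mu_S$. Since $\Lambda_{A'}\le\Lambda_A$ whenever $A'\supseteq A$, I may enlarge $A$ and assume it is closed under taking primitive irreducible factors (in $\mathbb Z[x]$) of its members. Let $(\mu,b_Q)\in\mathcal C(A)^+$; dividing $\mu$ and every $b_Q$ by the mass $\int d\mu\ge 1$ (which does not increase $\int x\,d\mu$, as $\operatorname{supp}(\mu)\subset\mathbb R^+$) I may take $\mu$ to be a probability measure. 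To see $\mu$ is arithmetic it suffices, by multiplicativity of $\log|\cdot|$ and because integer contents contribute a nonnegative amount, to check $\int\log|R|\,d\mu\ge 0$ for each primitive irreducible $R\in\mathbb Z[x]$. If $R\mid Q$ for some $Q\in A$ then $R=\pm R_i$ for a primitive irreducible factor $R_i$ of $Q$, so $R_i\in A$ by closure and $\int\log|R|\,d\mu=\int\log|R_i|\,d\mu\ge 0$ is one of the constraints in \eqref{primal}. Otherwise $R$ is coprime to every $Q\in A$, and writing $R(x)=a\prod_j(x-\rho_j)$ with $a=\operatorname{lc}(R)$ and applying the first inequality of \eqref{primal} at the $\rho_j$ gives the key estimate
\[
\int\log|R|\,d\mu=\log|a|+\sum_j U_\mu(\rho_j)\;\ge\;\log|a|\Bigl(1-\sum_{Q\in A}b_Q\deg Q\Bigr)+\sum_{Q\in A}b_Q\log|\res(R,Q)|\;\ge\;0,
\]
using $\prod_j Q(\rho_j)=\res(R,Q)/a^{\deg Q}$, that each $\res(R,Q)$ is a nonzero integer (as $R\nmid Q$), that $|a|\ge 1$, and that comparing growth at $\infty$ in $U_\mu(x)\ge\sum_Q b_Q\log|Q(x)|$ (with $\mu$ of mass $1$) forces $\sum_Q b_Q\deg Q\le 1$. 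Thus $\mu$ is an arithmetic probability measure with compact support in $\mathbb R^+$ and $\int\log|x|\,d\mu=U_\mu(0)\ge 0$, so by Smith's theorem it is a weak‑$*$ limit of root distributions of distinct monic irreducible integer polynomials which, by this last inequality, may be taken totally positive; their trace‑to‑degree ratios converge to $\int x\,d\mu$, whence $\lambda^{SSS}\le\int x\,d\mu$. Taking the infimum over $\mathcal C(A)^+$ yields $\lambda^{SSS}\le\Lambda_A$.

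For the approximation, fix $\epsilon>0$. By the definition of $\lambda^{SSS}$ there are infinitely many $\alpha\in\mathcal A$ with $\Tr(\alpha)<\lambda^{SSS}+\epsilon/3$; since for each degree bound only finitely many elements of $\mathcal A$ have bounded $\text{Tr}$, these $\alpha$ have unbounded degree, and their conjugates lie in a fixed compact $K\subset\mathbb R^+$ (a classical boundedness fact, consistent with the interval $[0,18]$ in Theorem~\ref{main1}). Choosing such $\alpha$ with pairwise distinct minimal polynomials and passing to a weak‑$*$ limit of their root distributions produces, by Smyth's theorem \eqref{smyth}, an arithmetic probability measure $\mu$ supported in $K$ with $\int x\,d\mu<\lambda^{SSS}+\epsilon/3$. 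Applying Theorem~\ref{approx} to $\mu$, for all large $n$ there are a monic irreducible integer polynomial $p_n$ with only real roots and an arithmetic probability measure $\mu_n$ with $\int\log|p_n|\,d\mu_n\ge 0$, $U_{\mu_n}(x)\ge(\gamma_n/n)\log|p_n(x)|$ for all $x$, $\mu_n\to\mu$ weak‑$*$, and $\operatorname{supp}(\mu_n)$ decreasing to $\operatorname{supp}(\mu)$. For $n$ large $\operatorname{supp}(\mu_n)\subset\mathbb R^+$, and since $\operatorname{supp}(\mu)\subset\mathbb R^+$ the construction in Theorem~\ref{approx} places the roots of $p_n$ in $\mathbb R^+$, so $p_n$ is positive real rooted. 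Then $(\mu_n,\gamma_n/n)$ satisfies all four conditions in \eqref{primal}, hence $(\mu_n,\gamma_n/n)\in\mathcal C(\{p_n\})^+$ and $\Lambda_{\{p_n\}}\le\int x\,d\mu_n$; and $\int x\,d\mu_n\to\int x\,d\mu$ because all $\mu_n$ are supported in the fixed compact $\operatorname{supp}(\mu_1)$. So for $n$ large, $A=\{p_n\}$ is a set of positive real rooted integer polynomials with $\Lambda_A<\lambda^{SSS}+\epsilon$; together with the first half this gives $0\le\Lambda_A-\lambda^{SSS}<\epsilon$.

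The step I expect to be most delicate is the positivity bookkeeping near the origin: making the equidistributing polynomials in the first half totally positive, and keeping $p_n$ and $\operatorname{supp}(\mu_n)$ inside $\mathbb R^+$ in the second half. Both rest on the arithmetic constraint attached to $Q(x)=x$ — which yields $U_\mu(0)\ge 0$ and pushes roots to the positive side — and on the fine control of supports and root locations built into the constructions behind Smith's theorem and Theorem~\ref{approx}; the borderline case $0\in\operatorname{supp}(\mu)$ requires additional care (a routine perturbation of $\mu$ changing $\int x\,d\mu$ by $o(\epsilon)$, or a direct check that the polynomials produced still have all roots $\ge 0$). A secondary input is the compactness fact used in the second half, that minimal‑trace totally positive algebraic integers have uniformly bounded conjugates, which I would cite rather than reprove.
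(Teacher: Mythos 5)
Your argument is correct and follows essentially the same route as the paper: the first half verifies that primal-feasible measures satisfy Smyth's conditions \eqref{smyth} (your resultant computation is exactly the check the paper handles via \eqref{SerreSmyth} and its citation of Smith's Proposition~5.9) and then applies Smith's converse theorem, while the second half extracts a weak-$*$ limit of root distributions of a trace-minimizing sequence of totally positive algebraic integers and feeds it into Theorem~\ref{approx}, just as the paper does. The only cosmetic differences are that you argue with arbitrary feasible points and take the infimum instead of invoking the attained optimum $\mu_A$ from Theorem~\ref{main1}, and that you explicitly flag the bookkeeping near $0$ (positivity of the roots of $p_n$ and of the approximating algebraic integers), a point the paper's proof passes over at the same level of detail.
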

\begin{proof}
By~Theorem \ref{main1}, there exists a primal measure $\mu_A$ with $\int x d\mu_A(x)=\Lambda_{A}$, where $\mu_A$ satisfies the conditions \eqref{primal}. It is easy to check that $\mu_A$  satisfies Smyth's necessary conditions~\eqref{smyth} for every non-zero $Q(x)\in \mathbb Z[x]$ as we checked it for $A=\{x\}$ in \eqref{SerreSmyth}; see also \cite[Proposition~5.9]{Smith} for general $A$. Therefore, $\mu_A$ is the limiting measure of some sequence of conjugate algebraic integers which implies $\lambda^{SSS}\le \Lambda_A$. 
\\

Next, suppose that $\alpha_n$ is a sequence of totally positive algebraic integers such that
    \[
    \lim_{n\to \infty}\Tr(\alpha_n)=\lambda^{SSS}.
    \]
    Let 
    \[
    \mu_{\alpha_n}:=\frac{1}{\deg(\alpha_n)}\sum_{\alpha_{n,j}\in [\alpha_n]}\delta_{\alpha_{n,j}}
    \]
    be the uniform discrete probability measure on the set of Galois conjugates  of $\alpha_n$. Since $\mu_{\alpha_n}$ is a tight family of probability measures, Prokhorov's theorem implies that there exists a convergent sub-sequence. Suppose that 
\(
\mu
\)
is a limiting measure of a sub-sequence. By Lemma 2.6 of \cite{lowerbound}, it follows that $\mu$ is supported inside the interval $[0,18]$. By Theorem \ref{approx}, it follows that for every $\varepsilon>0$, there exists an integral polynomial $p$ such that
$\Lambda_A-\lambda^{SSS}<\epsilon$, where $A=\{p\}$. This completes the proof of our Corollary.
\end{proof}
\begin{remark}
Let $A_0\subset A_1\subset\dots$ be an increasing chain of finite subsets of integer polynomials so that $\displaystyle\lim_{n\to\infty} \Lambda_{A_n}\to \lambda^{SSS}$. We conjecture that $\lim \mu_{A_n}=\mu$ exists and is the unique arithmetic measure with $\int xd\mu(x)=\lambda^{SSS}$ and $I(\mu)=0$; see Conjecture~\ref{unique_opt_dist} and Conjecture~\ref{opt_dist_zero_energy} for further detail. 
\end{remark}
\subsubsection{Dual problem}\label{dualintro} Fix $A$ and let $\mathcal{M}^+$ be as in the previous subsection. 
We  construct the analogue of $\nu_S$ for every finite subset $A$ which we denote by $\nu_A$. Our construction identifies $\nu_A$ as a solution to a dual optimization problem.  We say $\nu\in \mathcal{M}^+$ is a feasible dual measure associated to $A$ if it satisfies the following
\begin{equation}\label{dualconstraint}
    \begin{split}
        \int \log|Q(x)|d\nu(x)\geq 0,
        \\
      x-U_{\nu}(x)-\sum_{Q\in A}\lambda_{Q}(\nu)\log|Q(x)| \geq \lambda_{0}(\nu) \geq  0 
    \end{split}
\end{equation}
for every $Q\in A$ in the first inequality and every  $x\in\mathbb{R}^+$ in the second for some non-negative constants $\lambda_Q(\nu)\geq 0$. We denote the above by the \textit{dual constraints}. 
The dual problem is to maximize $\lambda_0(\nu)$ among feasible dual measures associated to $A$. Let
\begin{equation}
\lambda_A:=\sup_{\nu} \lambda_0(\nu),  
\end{equation}
where $\nu$ is varying among feasible dual measure associated to $A.$
Note that $\lambda_0(0)=0$, which implies $\lambda_A\geq0$. Moreover, if for some $Q\in A$ all roots are non-negative and $\Tr(Q)< \lambda_0(\nu)$, then $\lambda_Q(\nu)\neq 0$; see Proposition~\ref{smooth} for a quantitative version of this.

\begin{theorem}\label{dual}
Fix  a finite subset of integral polynomials $A\subset \mathbb{Z}[x]$ with all real roots. There is a feasible dual measure  $\nu_A$  with $\lambda_0(\nu_A)=\lambda_A$, and we have \[
\lambda_A=\Lambda_A.
\]

    Let $\mu_A$ be any solution in Theorem~\ref{main1} with support $\Sigma_A$ and expectation $\int x d\mu_A(x)=\Lambda_{A}$. Moreover, $\nu_A$ has the same support as $\mu_A$ with
       \[
    \begin{array}{cc}
                   x\geq \Lambda_A +\sum_{Q\in A}\lambda_{Q} \log|Q(x)|+ U_{\nu_A}(x)\text{ and}\\I(\mu_A,\nu_A)=0
    \end{array}
    \]
where in the first line the inequality holds for non-negative $x$ and equality holds for every $x\in \Sigma_A$ with some scalars $\lambda_Q\geq 0$ and $\lambda_0> 0$. Moreover if $\Sigma_A:=\bigcup_{i=0}^l[a_{2i},a_{2i+1}] $ and $\{x\}\subset A$, then $\nu_A$ has the following density function 
\[
\frac{|p(x)|\sqrt{|H(x)|}}{\pi\prod_{\lambda_Q\neq 0}|Q(x)|},
\]
where $H(x)=\prod_{i=0}^{2l+1}(x-a_i)$ and $p(x)$ is a monic polynomial with degree $\sum_{Q\in A} \deg(Q)-l-1$. Furthermore, $I(\nu_A)< 0$, and 
\[
\Lambda_A \geq \lambda^{SSS}\geq \Lambda_A+\frac{I(\nu_A)}{2\nu_A(\mathbb{R})}.
\]
\end{theorem}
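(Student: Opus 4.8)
The plan is to set up the primal and dual problems as a genuine infinite-dimensional linear programming pair over the cone of nonnegative compactly supported measures on $\mathbb R^+$ (paired with the finitely many scalars $b_Q$ resp.\ $\lambda_Q$), and then run the standard four-step argument: existence of a primal optimizer, existence of a dual optimizer, weak duality, and complementary slackness forcing strong duality $\lambda_A=\Lambda_A$. Existence on both sides is a compactness argument: by Lemma~2.6 of~\cite{lowerbound} all competing measures may be taken supported in $[0,18]$, and on that fixed compact set the constraint $U_\mu(x)\ge\sum_Q b_Q\log|Q(x)|$ together with $\int d\mu\ge 1$ bounds the total mass and the coefficients $b_Q$ (the potential of a unit-mass measure on $[0,18]$ is bounded above, while $\log|Q|$ has a fixed pole structure), so a minimizing/maximizing sequence has a weak-$*$ convergent subsequence; lower/upper semicontinuity of $\int x\,d\mu$ and of the potential functionals under weak-$*$ limits gives the optimizer. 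For weak duality I would integrate the pointwise dual inequality $x\ge\Lambda_A+\sum_Q\lambda_Q\log|Q(x)|+U_{\nu}(x)$ against a feasible primal $\mu$, exactly as in the computation already displayed in the introduction for $\nu_S$: the $\int\log|Q|\,d\mu$ terms are $\ge 0$ by the primal constraint and $\lambda_Q\ge0$, while $\int U_\nu\,d\mu=\int U_\mu\,d\nu\ge\sum_Q b_Q\int\log|Q|\,d\nu\ge 0$ using the primal potential inequality and the dual constraint $\int\log|Q|\,d\nu\ge0$; this yields $\int x\,d\mu\ge\lambda_0(\nu)$, hence $\Lambda_A\ge\lambda_A$.

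For the reverse inequality and the structural statements I would use Theorem~\ref{main1}: it already produces $\mu_A$ with $\int x\,d\mu_A=\Lambda_A$, with equality in $U_{\mu_A}(x)=\sum_Q b_Q\log|Q(x)|$ on $\Sigma_A$, and with $I(\mu_A,\nu_A)=0$ for $\nu_A$ the dual optimizer. Tracing the chain of inequalities in the weak-duality computation with $\mu=\mu_A$ and $\nu=\nu_A$, every inequality must be an equality: in particular $\lambda_Q\int\log|Q|\,d\mu_A=0$ for each $Q$, and $\int x\,d\mu_A=\lambda_0(\nu_A)=\lambda_A$, giving $\lambda_A=\Lambda_A$ and that $\nu_A$ is a dual optimizer with $\lambda_0(\nu_A)>0$ (positivity of $\lambda_0$ follows because $\Lambda_A>0$, e.g.\ comparing with the known lower bound $\lambda^{SSS}>1.79$). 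The equality of supports $\mathrm{supp}(\nu_A)=\Sigma_A$ and the density formula come from complementary slackness plus an explicit computation: on $\Sigma_A$ both $U_{\mu_A}'$ and $x-U_{\nu_A}'$ are forced to vanish (after subtracting the active $\log|Q|$ terms), so $\nu_A$ and $\mu_A$ solve a constrained equilibrium problem on the same union of intervals; the density of such an equilibrium measure is the classical $\tfrac1\pi$ times a rational function over $\sqrt{|H(x)|}$ with $H(x)=\prod(x-a_i)$, and matching the pole orders at the roots of the active $Q$'s (from the $\log|Q|$ terms) and the degree count at infinity pins down $p(x)$ as a monic polynomial of the stated degree $\sum_Q\deg(Q)-l-1$.

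Finally, the sandwich $\Lambda_A\ge\lambda^{SSS}\ge\Lambda_A+\frac{I(\nu_A)}{2\nu_A(\mathbb R)}$: the left inequality is Corollary~\ref{Acor}. For the right one, take any totally positive algebraic integer $\alpha$ with root distribution $\mu_\alpha$ and apply~\eqref{multi}; writing $\nu_A=\nu_A(\mathbb R)\cdot\overline\nu_A$ with $\overline\nu_A$ a probability measure, the dual inequality integrated against $\mu_\alpha$ gives $\Tr(\alpha)\ge\Lambda_A+\sum_Q\lambda_Q\int\log|Q|\,d\mu_\alpha+\int U_{\nu_A}\,d\mu_\alpha$; the $\log|Q|$ terms are $\ge0$ by~\eqref{multi} applied to each $Q\in\mathbb Z[x]$, and $\int U_{\nu_A}\,d\mu_\alpha=\int U_{\mu_\alpha}\,d\nu_A$, which we must bound below. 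Here $U_{\mu_\alpha}(x)=\frac1{\deg\alpha}\log|m_\alpha(x)|$ with $m_\alpha$ the minimal polynomial, and $\int\log|m_\alpha(x)|\,d\nu_A(x)\ge\tfrac12\cdot(\text{something})$ fails naively — the honest bound uses that $\nu_A$ is an \emph{arithmetic-like} object only after symmetrization, so instead one estimates $\int U_{\mu_\alpha}\,d\nu_A\ge\frac{I(\nu_A)}{2\nu_A(\mathbb R)}$ by the positivity of mutual energy of the difference measure: $0\le I(\mu_\alpha\nu_A(\mathbb R)-\nu_A)$ (valid since $\mu_\alpha\nu_A(\mathbb R)-\nu_A$ has total mass zero and compact support, and logarithmic energy is positive definite on zero-mass measures) expands to $\nu_A(\mathbb R)^2 I(\mu_\alpha)-2\nu_A(\mathbb R)I(\mu_\alpha,\nu_A)+I(\nu_A)\ge0$, and combined with $I(\mu_\alpha)\ge0$ (which is~\eqref{multi} for $Q=\prod(x_i-x_j)$, i.e.\ the discriminant) this gives $I(\mu_\alpha,\nu_A)\ge\frac{I(\nu_A)}{2\nu_A(\mathbb R)}$. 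Taking the liminf over $\alpha$ yields the claim. The main obstacle I anticipate is the density formula: verifying that complementary slackness really forces $\nu_A$ onto exactly $\Sigma_A$ (no extra mass in the gaps, no loss of an endpoint) and that the degree and pole bookkeeping closes requires carefully differentiating the equality $x=\Lambda_A+\sum_Q\lambda_Q\log|Q(x)|+U_{\nu_A}(x)$ across $\Sigma_A$ and controlling the behavior at the $a_i$ and at the zeros of the active $Q_j$, which is the delicate potential-theoretic heart of the argument.
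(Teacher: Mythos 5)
Your proposal has a genuine gap at its central point: it never actually proves strong duality or constructs the dual optimizer $\nu_A$. The ``standard four-step argument'' you describe is circular: complementary slackness is a \emph{consequence} of strong duality (once both optima are attained and equal), not a mechanism for proving it, and in this infinite-dimensional setting a duality gap is a priori possible. Your route to the reverse inequality invokes Theorem~\ref{main1}'s identity $I(\mu_A,\nu_A)=0$, but in the paper that identity is stated in terms of ``any solution to the dual problem in Theorem~\ref{dual}'' and is itself established only through Proposition~\ref{mainprop}, which is precisely the content you are trying to prove; tracing equalities through the weak-duality chain with $\mu=\mu_A$, $\nu=\nu_A$ only works if you already know $\int x\,d\mu_A=\lambda_0(\nu_A)$. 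The paper's actual work is a Hahn--Banach/separating-hyperplane argument applied to the finite-dimensional images $\psi_N(\mathcal{C}(A))$ (constraints evaluated at a dense set of rationals $x_0,\dots,x_N$), producing discrete approximate dual measures $\nu_N=\sum_j d_j'\delta_{x_j}$ with $g(\nu_N,\dots)\ge\Lambda_A-\varepsilon$; then a tightness argument (the primal slack $U_{\mu_A}-\sum_Q b_Q\log|Q|$ is strictly positive and harmonic off $\Sigma_A$, so the $\nu_{N_i}$ cannot leak mass), Prokhorov's theorem, and the smoothing $\nu_\varepsilon=\nu*w_\varepsilon+C_\varepsilon|\nu|\mu_{[0,18]}$ of Proposition~\ref{conv} to verify that dual feasibility and the value $\Lambda_A$ survive the limit. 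Likewise, the equality of supports and the interval/gap structure are obtained not from a generic ``constrained equilibrium'' remark but from the convexity argument $F''(z)=\int(z-x)^{-2}d\nu_A+\sum_Q\lambda_Q\sum_{Q(\alpha)=0}(z-\alpha)^{-2}>0$ in gaps combined with the mean value theorem. None of this machinery appears in your proposal, and your compactness claim for a dual maximizer also glosses over the fact that potentials behave only upper semicontinuously under weak-$*$ limits, which is exactly what Proposition~\ref{conv} is there to repair.

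Two secondary points. First, in the sandwich inequality your sign convention is off: with $I$ defined via $\log|x-y|$ (as in this paper), the logarithmic energy of a compactly supported signed measure of total mass zero is \emph{nonpositive}, so the correct input is $I\bigl(\nu_A(\mathbb{R})\mu-\nu_A\bigr)\le 0$ (the ``concavity'' inequality $2I(\mu,\bar\nu_A)\ge I(\mu)+I(\bar\nu_A)$ the paper cites); as you wrote it, with ``$\ge 0$'', the expansion gives an upper bound on $I(\mu_\alpha,\nu_A)$, not the needed lower bound. Second, you cannot apply this to an individual root measure $\mu_\alpha$, since $I(\mu_\alpha)=-\infty$ from the diagonal terms and \eqref{multi} is a statement about limiting measures; the paper instead integrates the dual inequality against the limiting arithmetic measure $\mu$ with $\int x\,d\mu=\lambda^{SSS}$ from Corollary~\ref{Acor}, for which $I(\mu)\ge 0$ does hold. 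These are fixable, but the missing strong-duality construction is the heart of the theorem and must be supplied.
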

\begin{remark}
    Note that Theorem~\ref{dual} gives both upper bound and lower bound on $\lambda^{SSS}$. Our lower bound associated to $A$ proved in \cite[Theorem 1.1]{lowerbound} is greater than $\Lambda_A+\frac{I(\nu_A)}{2\nu_A(\mathbb{R})}$ ; see Conjecture~\ref{supp_containment} and \cite{lowerbound} for further detail. We observed numerically that $\frac{I(\nu_A)}{\nu_A(\mathbb{R})}\geq \frac{I(\nu_B)}{\nu_B(\mathbb{R})}$ for $B\subset A$ and conjectured that $\limsup_{A}\frac{I(\nu_A)}{\nu_A(\mathbb{R})}=0$; see Conjecture~\ref{Inu=0}.
\end{remark}

In the trivial case where $A=\emptyset$, we have the following corollary of our main theorem. 
\begin{corollary}\label{schur_bound}
    If $A=\emptyset$, then $\Lambda_A=2$. Moreover, the optimal primal measure has density function
\[d\mu_A = \frac{dx}{\pi\sqrt{(4-x)x}}\]
and the optimal dual measure has density function 
\[
d\nu_A=\frac{\sqrt{4-x}}{\pi\sqrt{x}}dx
\]
with its potential function being $x-2$ on $\Sigma_A=[0,4]$.
\end{corollary}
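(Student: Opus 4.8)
The plan is to handle the case $A=\emptyset$ by exhibiting an explicit primal--dual pair of measures whose objective values coincide, and then to quote Theorems~\ref{main1} and~\ref{dual} for extremality and uniqueness. When $A=\emptyset$ every sum $\sum_{Q\in A}(\cdots)$ and every constraint indexed by $Q\in A$ is vacuous, so the primal problem is to minimize $\int x\,d\mu(x)$ over $\mu\in\mathcal{M}^+$ with $\mu(\mathbb{R})\ge 1$ and $U_{\mu}(x)\ge 0$ for all $x\in\mathbb{C}$, while the dual problem is to maximize $\lambda_0(\nu)$ over $\nu\in\mathcal{M}^+$ with $x-U_{\nu}(x)\ge\lambda_0(\nu)\ge 0$ for all $x\in\mathbb{R}^+$. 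I will produce a feasible $\mu_A$ with $\int x\,d\mu_A(x)=2$ (so $\Lambda_A\le 2$), a feasible $\nu_A$ with $\lambda_0(\nu_A)=2$ (so by weak duality $\Lambda_A\ge 2$), and conclude $\Lambda_A=2$ with both measures extremal.

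For the primal side I would take $\mu_A$ to be the equilibrium (arcsine) measure of $[0,4]$, with density $\dfrac{1}{\pi\sqrt{x(4-x)}}$ on $[0,4]$. It is a Borel probability measure supported in $\mathbb{R}^+$; since $[0,4]$ has logarithmic capacity $1$, its potential satisfies $U_{\mu_A}\equiv 0$ on $[0,4]$ and $U_{\mu_A}>0$ on $\mathbb{C}\setminus[0,4]$, hence $U_{\mu_A}\ge 0$ on $\mathbb{C}$ and $I(\mu_A)=0$; and its mean equals the midpoint, $\int_0^4 x\,d\mu_A(x)=2$. So $\mu_A$ is feasible and $\Lambda_A\le 2$.

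The substantive step is the dual side: take $\nu_A$ with density $\dfrac{\sqrt{4-x}}{\pi\sqrt{x}}$ on $[0,4]$ and show $U_{\nu_A}(x)=x-2$ for $x\in[0,4]$ and $x-U_{\nu_A}(x)>2$ for $x>4$. Under the affine change $x=2+2s$, $y=2+2t$ the measure $\nu_A$ pushes forward to $\dfrac{2}{\pi}\dfrac{1-t}{\sqrt{1-t^2}}\,dt$ on $[-1,1]$ (total mass $2$), which I split as twice the arcsine measure of $[-1,1]$ minus twice the odd measure $\dfrac{t}{\pi\sqrt{1-t^2}}\,dt$. For the arcsine part I use the classical equilibrium potential of $[-1,1]$ (equal to $-\log 2$ on $[-1,1]$ and $\log\frac{s+\sqrt{s^2-1}}{2}$ for $s>1$); for the odd part I differentiate the potential in $s$, use that the finite Hilbert transform of the arcsine weight vanishes on $(-1,1)$ and equals $(s^2-1)^{-1/2}$ for $s>1$, and integrate back (fixing the constant by oddness at $s=0$ and by continuity at $s=1$) to get potential $-s$ on $[-1,1]$ and $-s+\sqrt{s^2-1}$ for $s>1$. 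Assembling the two pieces and returning to $x$ gives $U_{\nu_A}(x)=x-2$ on $[0,4]$, so $x-U_{\nu_A}(x)=2=\lambda_0(\nu_A)$ there, while for $x>4$ one gets $x-U_{\nu_A}(x)=2+2\bigl(\sqrt{s^2-1}-\log(s+\sqrt{s^2-1})\bigr)>2$ because $\log(u+\sqrt{u^2+1})<u$ for $u>0$. Hence $\nu_A$ is dual-feasible with value $2$. Averaging the dual inequality against any primal-feasible $\mu$ and using the symmetry $\int U_{\nu_A}\,d\mu=\int U_{\mu}\,d\nu_A\ge 0$ (valid as $U_{\mu}\ge 0$ on $\text{supp}\,\nu_A$) yields $\int x\,d\mu(x)\ge 2\,\mu(\mathbb{R})\ge 2$, so $\Lambda_A\ge 2$. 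Therefore $\Lambda_A=2$, $\Sigma_A=[0,4]$, $\mu_A$ is optimal and unique by Theorem~\ref{main1} (whose hypothesis ``every $Q\in A$ has non-negative roots and $\Tr(Q)<\Lambda_A$'' holds vacuously), and $\nu_A$ is the dual optimum of Theorem~\ref{dual}; one also checks $\nu_A(\mathbb{R})=2$ and $I(\nu_A)=\int(x-2)\,d\nu_A(x)=-2<0$, consistent with that theorem.

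I expect the only non-routine point to be the potential computation for $\nu_A$: verifying the two finite-Hilbert-transform identities for the arcsine weight and then controlling the off-support behaviour via the elementary inequality $\log(u+\sqrt{u^2+1})<u$. Feasibility of $\mu_A$, the weak-duality averaging, and the appeals to Theorems~\ref{main1} and~\ref{dual} are routine.
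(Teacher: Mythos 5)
Your proposal is correct, but it takes a more self-contained route than the paper. The paper's proof is a two-line deduction: with $A=\emptyset$ the structural statement of Theorem~\ref{main1} forces $U_{\mu_A}$ to vanish on the support and be positive off it, so $\mu_A$ is the equilibrium measure of an interval of capacity $1$ (length $4$), and minimizing the trace pins down $[0,4]$; the dual density is then simply quoted from equation~(32) of \cite{lowerbound}, the explicit measure with linear potential on an interval. You instead produce an explicit primal--dual certificate: you verify feasibility of the arcsine measure directly, compute $U_{\nu_A}$ from scratch via the affine change of variables, the decomposition into the arcsine part and the odd part, and the two classical finite-Hilbert-transform identities (your computation checks out, including the off-support inequality $x-U_{\nu_A}(x)=2+2\bigl(\sqrt{s^2-1}-\log(s+\sqrt{s^2-1})\bigr)>2$ for $x>4$ and the values $\nu_A(\mathbb{R})=2$, $I(\nu_A)=-2$), and you recover $\Lambda_A=2$ by averaging the dual inequality against an arbitrary primal-feasible $\mu$, appealing to Theorem~\ref{main1} only for uniqueness. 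What your route buys is independence from the external citation and from the strong-duality machinery for the \emph{value} $\Lambda_A=2$; what the paper's route buys is brevity, since Theorems~\ref{main1} and~\ref{dual} already guarantee existence, support structure, and the equality $x=\Lambda_A+U_{\nu_A}(x)$ on $\Sigma_A$. Note also that your normalization (mass-$2$ dual measure with potential $x-2$) matches the corollary statement and the dual constraints of Theorem~\ref{dual}, whereas the paper's proof text works with the probability-normalized version $\tfrac{\sqrt{4-x}}{2\pi\sqrt{x}}\,dx$ with potential $\tfrac{x}{2}-1$; your version is the one consistent with the statement being proved.
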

Corollary~\ref{schur_bound} recovers Schur's and Siegel's upper bound on $\lambda^{SSS}$. 

\begin{corollary}\label{serre_bound}
If $A=\{x\}$, then $\Lambda_A=1.898302\dots$. The optimal measure $\mu_A$ is supported on
$\Sigma=[a,b]$ where
\begin{align*}
    a &= 0.0873528949\dots\text{ and }\\
    b &= 4.411076350\dots.
\end{align*}
This measure $\mu_A$ satisfies $U_{\mu_A}(x)= t \log(x)$ on $[a,b]$ where $t=0.215485...$ and
$U_{\mu_A}(0)=0$.
The dual measure is simply
$d\nu_A=\frac{\sqrt{(x-a)(b-x)}}{x\pi}dx$.    
\end{corollary}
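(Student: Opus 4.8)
The plan is to read the statement off from Theorem~\ref{main1} and Theorem~\ref{dual} specialized to $A=\{x\}$ (a single integer polynomial, with its unique root $0$ real), and then to determine the two endpoints of the support by solving a small explicit system.

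First I would fix the shape of the support. Writing $\Sigma_A=\bigcup_{i=0}^{l}[a_{2i},a_{2i+1}]$, Theorem~\ref{main1} says each gap $(a_{2i+1},a_{2i+2})$ contains a root of some member of $A$; the only root available is $0$, which cannot lie in a gap because $\Sigma_A\subseteq[0,\infty)$. Hence $l=0$ and $\Sigma_A=[a,b]$ is a single interval, with $a:=a_0\ge 0$, $b:=a_1$, and $H(x)=(x-a)(x-b)$. The degree counts in the two theorems now give: the dual monic polynomial $p$ has degree $\deg(x)-l-1=0$, so $p\equiv 1$ and $d\nu_A=\frac{\sqrt{(x-a)(b-x)}}{\pi x}\,dx$ on $[a,b]$, which is the asserted formula; and the primal numerator $P$ has degree $\deg(x)+l=1$, so $d\mu_A=\frac{c_0+c_1x}{\pi x\sqrt{(x-a)(b-x)}}\,dx$ for some constants $c_0,c_1$.

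Next I would compute the logarithmic potentials on $[a,b]$, using the standard finite Hilbert transforms $\int\frac{dy}{\pi\sqrt{(y-a)(b-y)}}=1$, $\int\frac{dy}{\pi y\sqrt{(y-a)(b-y)}}=\frac{1}{\sqrt{ab}}$, $\mathrm{p.v.}\!\int\frac{dy}{\pi(x-y)\sqrt{(y-a)(b-y)}}=0$, $\int\frac{\sqrt{(y-a)(b-y)}}{\pi y}\,dy=\frac{a+b}{2}-\sqrt{ab}$, $\mathrm{p.v.}\!\int\frac{\sqrt{(y-a)(b-y)}}{\pi(x-y)}\,dy=x-\frac{a+b}{2}$, together with $\frac{1}{y(x-y)}=\frac{1}{x}\bigl(\frac{1}{y}+\frac{1}{x-y}\bigr)$. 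This gives $U_{\nu_A}'(x)=1-\frac{\sqrt{ab}}{x}$ and $U_{\mu_A}'(x)=\frac{c_0}{x\sqrt{ab}}$ for $x\in(a,b)$, hence $U_{\nu_A}(x)=x-\sqrt{ab}\log x+C_\nu$ and $U_{\mu_A}(x)=\frac{c_0}{\sqrt{ab}}\log x+C_\mu$ on $[a,b]$, where $C_\mu,C_\nu$ are explicit functions of $a,b,c_0,c_1$ (evaluated, e.g., by integrating $U'$ from a point outside $[a,b]$ and matching the $(\text{mass})\log|x|$ growth at infinity). Comparing with the equalities forced on $\Sigma_A$ by Theorem~\ref{main1} ($U_{\mu_A}(x)=b_x\log|x|$) and Theorem~\ref{dual} ($x=\Lambda_A+\lambda_x\log|x|+U_{\nu_A}(x)$) pins down $b_x=\frac{c_0}{\sqrt{ab}}=:t$, $\lambda_x=\sqrt{ab}$, $\lambda_0=\Lambda_A=-C_\nu$, and forces the scalar equation $C_\mu=0$.

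Finally I would determine $a$ and $b$. Since $\Tr(x)=0<\Lambda_A$, Proposition~\ref{smooth} gives $t>0$ and $\lambda_x=\sqrt{ab}>0$, so the primal constraint $\int\log|x|\,d\mu_A\ge 0$ and the dual constraint $\int\log|x|\,d\nu_A\ge 0$ are tight; equivalently, using $I(\mu_A,\nu_A)=\int U_{\nu_A}\,d\mu_A=\int U_{\mu_A}\,d\nu_A=0$ from Theorem~\ref{dual} together with the equalities above, one gets $U_{\mu_A}(0)=\int\log x\,d\mu_A(x)=0$ and $U_{\nu_A}(0)=\int\log x\,d\nu_A(x)=0$. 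Adjoining $\int d\mu_A=1$ and $C_\mu=0$, this is a system of four equations in $a,b,c_0,c_1$; solving it numerically yields $a=0.0873528949\dots$, $b=4.411076350\dots$, whence $t=\frac{c_0}{\sqrt{ab}}=0.215485\dots$ and $\Lambda_A=\int x\,d\mu_A=c_0+c_1\frac{a+b}{2}=1.898302\dots$. Positivity of $d\mu_A$ on $[a,b]$ is clear once $c_0,c_1>0$, and the potential inequalities off $[a,b]$ are guaranteed by Theorems~\ref{main1} and~\ref{dual}. The step I expect to cost the most care is the parameter bookkeeping — confirming that these four equality conditions are exactly the right number, in particular that strong duality $\Lambda_A=-C_\nu$ and $I(\mu_A,\nu_A)=0$ emerge as consequences rather than extra independent constraints — together with the justification of the finite Hilbert transform evaluations; the concluding root-finding is routine numerics and can be cross-checked against Serre's original construction.
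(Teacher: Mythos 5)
Your overall strategy is sound and in fact lands on the same two transcendental conditions the paper uses ($U_{\mu_A}(0)=0$ and $U_{\nu_A}(0)=0$, plus normalizations), but there is one concrete gap: the tightness of the dual log-constraint. You justify "$t>0$ and $\lambda_x=\sqrt{ab}>0$" by Proposition~\ref{smooth}, but that proposition only controls the \emph{dual} multipliers $\lambda_Q$ (so it does give $\lambda_x>0$, hence $\int\log x\,d\mu_A=0$ and $a>0$); it says nothing about the \emph{primal} coefficient $b_x=t$. Complementary slackness in Proposition~\ref{mainprop}/Theorem~\ref{dual} gives $\int\log x\,d\nu_A=0$ only when $b_x\neq 0$, and this equation is one of your four; without it your system in $(a,b,c_0,c_1)$ is underdetermined and the quoted endpoints do not follow. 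The fix is short but must be supplied: if $b_x=0$ then $\mu_A$ is feasible for the $A=\emptyset$ problem, so $\int x\,d\mu_A\ge\Lambda_\emptyset=2$ by Corollary~\ref{schur_bound}, forcing $\Lambda_{\{x\}}=2$; this is excluded by exhibiting any $\{x\}$-feasible point of trace $<2$ (Serre's construction, or a posteriori your own candidate pair once its feasibility is checked via Proposition~\ref{analytic} and Theorem~\ref{dom}), whence $b_x>0$ and the fourth equation is legitimate.

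Beyond that, your route differs from the paper's in execution: you compute the potentials directly by finite Hilbert transforms and solve a $4\times 4$ system numerically, whereas the paper changes variables to $x=\frac{a+b-2\sqrt{ab}}{4}$, $y=\frac{a+b+2\sqrt{ab}}{4}$, invokes the closed-form potential formulas from \cite{lowerbound}, reduces to the $2\times 2$ system in the unlabelled Lemma of Section~\ref{serre_proof}, derives the closed form $\Lambda_{\{x\}}=\hat y-\hat y\log\hat y+\hat x-\hat x\log\hat x$, and then proves \emph{uniqueness} of the solution (via matching the primal and dual values and the positivity analysis $\hat x\hat y>1$, $\hat y>1$, which also yields $t>0$). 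That last step is what certifies that the numerically found root is the optimum and that the digits $a=0.0873528949\dots$, $b=4.411076350\dots$, $t=0.215485\dots$, $\Lambda_A=1.898302\dots$ are correct; your "routine numerics" should be accompanied by an analogous certificate, either this uniqueness or an explicit primal-feasible/dual-feasible sandwich in the spirit of Proposition~\ref{eq_sufficient}, which you gesture at but do not carry out.
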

As mentioned earlier, this measure $\mu_{\{x\}}$ in Corollary \ref{serre_bound} was approximated numerically by Serre~\cite{MR2428512} which Smith~\cite{Smith} used to prove  the best previous upper bound $\lambda^{SSS} \le 1.898304.$ 

\subsection{Gradient descent}
We wrote and implemented a gradient descent algorithm to approximate $\mu_A$ and $\nu_A$ for fixed $A$. 
Once given such approximate densities, we can estimate $\Lambda_A$ by forcing it into a small interval which provides an effective upper bound for $\Lambda_A\ge \lambda^{SSS}$.

\begin{corollary}\label{first_numer_approx}
Let $A=\{x,x-1\}$. Then the support of $\mu_A$ and $\nu_A$ is $[a_0,a_1]\cup [a_2,a_3]$ where
\[
a_0\approx 0.0706597128759717,a_1\approx 0.7191192204214787, a_2\approx 1.337668148298108, \text{ and }a_3\approx 4.687953934364709
\]
with
\[
|\Lambda_A- 1.84701204|\le 10^{-7}.
\]
\end{corollary}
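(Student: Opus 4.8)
The plan is to combine the structural description of the optimizers in Theorems~\ref{main1} and~\ref{dual} with an explicit numerical solution of the resulting finite system, certified a posteriori by weak duality. First I would pin down the qualitative shape of $\mu_A$ and $\nu_A$. Since $\{x\}\subset A$ and $\Lambda_A\ge\lambda^{SSS}\ge 1.798$ by~\cite{lowerbound}, we have $\Tr(x)=0<\Lambda_A$ and $\Tr(x-1)=1<\Lambda_A$, so Proposition~\ref{smooth} forces $b_x,b_{x-1}>0$ and $\lambda_x,\lambda_{x-1}>0$: both logarithmic terms are active, so $\mu_A$ is the unique primal solution. The only real roots available to fill a bounded gap are $0$ and $1$, and $0$ lies to the left of the whole support, so by Theorem~\ref{main1} there is a single gap $(a_1,a_2)\ni 1$, i.e. $\operatorname{supp}(\mu_A)=\Sigma_A=[a_0,a_1]\cup[a_2,a_3]$ with $l=1$; the primal density is then $|P(x)|/\bigl(|x|\,|x-1|\,\sqrt{|H(x)|}\bigr)$ with $H(x)=\prod_{i=0}^{3}(x-a_i)$ and $\deg P=3$, and by Theorem~\ref{dual} the dual measure $\nu_A$ has the same support with density $\sqrt{|H(x)|}/\bigl(\pi\,|x|\,|x-1|\bigr)$, the polynomial $p$ there being monic of degree $\sum_{Q}\deg Q-l-1=0$, hence $p\equiv 1$.

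Next I would convert the side conditions of Theorems~\ref{main1} and~\ref{dual} into a finite transcendental system in the unknowns $a_0<a_1<a_2<a_3$, the coefficients of $P$, and the constants $b_x,b_{x-1},\lambda_x,\lambda_{x-1},\Lambda_A$. The density of $\mu_A$ is the boundary jump of the resolvent $G(z)=\int(z-t)^{-1}\,d\mu_A(t)$; its expansions at $z=\infty$, $z=0$ and $z=1$ encode, respectively, the normalization $\mu_A(\mathbb R)=1$, the exponents $b_x,b_{x-1}$, and the identity $U_{\mu_A}(x)=b_x\log|x|+b_{x-1}\log|x-1|$ on $\Sigma_A$, and together they express $P$ and the $b$'s as algebraic functions of the $a_i$. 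The remaining scalar equations --- the complementary slackness identities $\int\log|x|\,d\mu_A=\int\log|x-1|\,d\mu_A=0$, the dual equality $x=\Lambda_A+\lambda_x\log|x|+\lambda_{x-1}\log|x-1|+U_{\nu_A}(x)$ on $\Sigma_A$, and $I(\mu_A,\nu_A)=0$ --- leave precisely four equations for the four endpoints. I would solve this system numerically by gradient descent, obtaining the four endpoints listed in the statement together with $\Lambda_A\approx 1.84701204$.

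To turn the numerics into the stated rigorous two-sided bound I would exhibit an \emph{explicit} feasible primal point $(\mu,b_Q)\in\mathcal C(A)^+$ with $\int x\,d\mu\le 1.84701204+10^{-7}$ and an \emph{explicit} feasible dual measure $\nu$ with $\lambda_0(\nu)\ge 1.84701204-10^{-7}$, each obtained by replacing the computed endpoints and coefficients by nearby closed-form (algebraic) values and restoring feasibility with a safety margin --- slightly inflating the mass of $\mu$ and shrinking the $b_Q$, and slightly shrinking $\lambda_0(\nu)$. By the strong duality $\lambda_A=\Lambda_A$ of Theorem~\ref{dual} this yields $1.84701204-10^{-7}\le\lambda_0(\nu)\le\Lambda_A\le\int x\,d\mu\le 1.84701204+10^{-7}$. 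The claimed endpoint values then follow from a Newton--Kantorovich estimate around the numerical root, which shows the transcendental system above has a genuine solution within the displayed precision of the computed one, combined with the uniqueness of $\mu_A$ from Theorem~\ref{main1}.

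I expect the main obstacle to be the rigorous verification of feasibility, since both $U_\mu(x)\ge b_x\log|x|+b_{x-1}\log|x-1|$ for all $x\in\mathbb C$ and $x-U_\nu(x)-\lambda_x\log|x|-\lambda_{x-1}\log|x-1|\ge\lambda_0$ for all $x\in\mathbb R^+$ are infinite families of inequalities. The plan is to dispatch them with the maximum principle: the relevant differences are harmonic off $\Sigma_A\cup\{0,1\}$ and vanish on $\Sigma_A$ by construction, so it suffices to control their sign on the single bounded gap $(a_1,a_2)$, in punctured neighbourhoods of $0$ and $1$, and as $|x|\to\infty$; on each of those pieces one is left with a concrete real-analytic function of one real variable whose sign is certifiable by interval arithmetic or by isolating its finitely many critical points. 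Everything else is bookkeeping.
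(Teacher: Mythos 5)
Your plan is essentially the paper's proof: the authors likewise use Theorems~\ref{main1} and~\ref{dual} to pin down the two-interval support with the single gap containing $1$, run gradient descent on the endpoints, and then certify $|\Lambda_A-1.84701204|\le 10^{-7}$ by weak duality, exhibiting an explicit slightly perturbed dual measure (mixed with a small multiple of $\mu_{eq}$, lower bound via Proposition~\ref{eq_sufficient}) and an explicit primal measure $\tilde\mu$ built from $\mu_{eq}$ and the $\mu_Q$'s (upper bound $\int x\,d\tilde\mu$), exactly as in Section~\ref{our_bound_proof}. The differences are only in verification details: the paper checks the dual inequality off $\Sigma_A$ through the convexity/derivative criterion of Proposition~\ref{eq_sufficient} rather than your maximum-principle plus interval-arithmetic route, its primal feasibility is automatic from the construction of $\mu_Q$ and Theorem~\ref{dom} rather than separately verified, and no Newton--Kantorovich certification of the endpoints is attempted (nor needed, since the corollary states them only approximately); also note Proposition~\ref{smooth} gives $\lambda_x,\lambda_{x-1}>0$ but not directly $b_x,b_{x-1}>0$, a harmless imprecision in your first step.
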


\begin{corollary}\label{approx_quad}
Let $A=\{x,1-x,x^2-3x+1\}$.
We have
\[
\Sigma_A\approx[0.0743803, 0.2966453 ]\cup [0.4803800, 0.708246] \cup [1.348128,2.333811] \cup [2.9387115,4.592219] 
\]
and 
\[
|\Lambda_A- 1.8224798|\le 10^{-7}.
\]
\end{corollary}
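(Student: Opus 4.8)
The plan is to proceed exactly as for Corollary~\ref{first_numer_approx}: combine the explicit structural description of the optimizers furnished by Theorem~\ref{main1} and Theorem~\ref{dual} with weak duality, which reduces the estimate $|\Lambda_A-1.8224798|\le 10^{-7}$ to the rigorous verification of finitely many inequalities between explicit algebraic quantities. Recall that weak duality gives, for every feasible primal point $(\mu,b_Q)\in\mathcal{C}(A)^+$ and every feasible dual measure $\nu$, the sandwich $\lambda_0(\nu)\le \Lambda_A\le \int x\,d\mu(x)$: the left inequality is obtained by integrating the dual pointwise inequality $x\ge \lambda_0(\nu)+\sum_Q\lambda_Q(\nu)\log|Q(x)|+U_\nu(x)$ against $\mu$ and using $\int d\mu\ge 1$, $\int\log|Q|\,d\mu\ge 0$, $b_Q,\lambda_Q(\nu)\ge 0$, $U_\mu\ge\sum_Q b_Q\log|Q|$ and $\int\log|Q|\,d\nu\ge 0$ together with $\int U_\nu\,d\mu=\int U_\mu\,d\nu$; the right inequality is the definition of $\Lambda_A$. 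Thus it suffices to exhibit one feasible primal point with trace at most $1.8224798+10^{-7}$ and one feasible dual measure with $\lambda_0\ge 1.8224798-10^{-7}$.

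First I would run the gradient descent algorithm of the previous subsection for $A=\{x,1-x,x^2-3x+1\}$ to obtain approximate endpoints $a_0<\dots<a_7$ of a four-interval support $\Sigma_A$. Four intervals is exactly what the structure theorem forces here: by Theorem~\ref{main1} every gap of $\Sigma_A$ contains a real root of some $Q\in A$, and the available roots are $1$ (of $1-x$) and $\tfrac{3\pm\sqrt5}{2}\approx 0.382,\,2.618$ (of $x^2-3x+1$), giving three gaps, hence $l=3$. By Theorem~\ref{main1} the primal density is $|P(x)|/\bigl(\prod_{Q\in A}|Q(x)|\sqrt{|H(x)|}\bigr)$ with $H(x)=\prod_{i=0}^{7}(x-a_i)$ and $\deg P=\sum_Q\deg Q+l=7$, and by Theorem~\ref{dual} the dual density is $\sqrt{|H(x)|}/\bigl(\pi\prod_{\lambda_Q\ne 0}|Q(x)|\bigr)$ since the monic numerator there has degree $\sum_Q\deg Q-l-1=0$. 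The unknowns — the $a_i$, the coefficients of $P$, the primal coefficients $b_x,b_{1-x},b_{x^2-3x+1}\ge 0$, the dual coefficients $\lambda_x,\lambda_{1-x},\lambda_{x^2-3x+1}\ge 0$ and $\lambda_0$ — are pinned down by the finite nonlinear system expressing: $\mu_A$ has mass $1$; $\int\log|Q|\,d\mu_A=0$ and $\int\log|Q|\,d\nu_A=0$ for each $Q$ with nonzero coefficient; $U_{\mu_A}=\sum_Q b_Q\log|Q|$ and $x-U_{\nu_A}-\sum_Q\lambda_Q\log|Q|=\lambda_0$ on $\Sigma_A$ (equivalently, the prescribed behaviour of the Cauchy transforms of $\mu_A,\nu_A$ at $\infty$ and at the roots of the $Q$'s); and $I(\mu_A,\nu_A)=0$. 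A count shows the number of equations matches the number of unknowns, so Newton iteration from the gradient-descent output converges quadratically and yields $a_0,\dots,a_7$, $P$ and all coefficients to arbitrary precision; the stated values of $a_0,\dots,a_3$ are read off, and $\int x\,d\mu_A(x)$ comes out $\approx 1.8224798$.

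To convert this into a rigorous two-sided bound I would certify feasibility of two slightly perturbed data sets. For the upper bound, take the primal density above with the computed $a_i,P,b_Q$, renormalised to have mass exactly $1$: because this density is algebraic with only square-root branch points at the $a_i$, the potential $U_\mu$ and its derivative are genuinely elementary on each component of $\mathbb{C}\setminus\Sigma_A$ (a rational function of $x$ and $\sqrt{H(x)}$, plus logarithms of the $Q$'s and of the endpoints), so $U_\mu'$ has finitely many zeros; hence "$U_\mu(x)\ge\sum_Q b_Q\log|Q(x)|$ on $\mathbb{C}$", "$\int\log|Q|\,d\mu\ge 0$", and "$\int x\,d\mu\le 1.8224798+10^{-7}$" each reduce, via monotonicity between the critical points, to finitely many sign tests on algebraic numbers, all certifiable by interval arithmetic. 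For the lower bound I do the same with the dual density: verify $\int\log|Q|\,d\nu\ge 0$ for each $Q$ and that the explicit elementary function $x-U_\nu(x)-\sum_Q\lambda_Q\log|Q(x)|$ on $\mathbb{R}^+\setminus\Sigma_A$ (equal to the constant $\lambda_0$ on $\Sigma_A$) stays $\ge\lambda_0\ge 1.8224798-10^{-7}$. Weak duality then gives $1.8224798-10^{-7}\le\lambda_0(\nu)\le\Lambda_A\le\int x\,d\mu(x)\le 1.8224798+10^{-7}$.

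The main obstacle is the rigorous pointwise control of the two potentials over the unbounded sets $\mathbb{C}$ and $\mathbb{R}^+$: one must write down honest closed forms for $U_\mu$ and $U_\nu$ on each gap and on the unbounded component, with the correct branch of $\sqrt{H}$ on each, bound the number and location of the critical points of the relevant differences, and propagate interval-arithmetic errors tightly enough to respect the $10^{-7}$ window — in particular ensuring the perturbed primal/dual data is exactly (not merely approximately) feasible, i.e.\ that the renormalised mass is $1$ and the $\log|Q|$-integrals are genuinely nonnegative. Everything else is bookkeeping on top of Theorems~\ref{main1} and~\ref{dual}.
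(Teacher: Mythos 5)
Your overall strategy -- run the gradient descent to locate the four-interval support, use the explicit density shapes from Theorem~\ref{main1} ($\deg P=7$) and Theorem~\ref{dual} (constant numerator, since there is exactly one root of $x^2-3x+1$ or $1-x$ in each of the three gaps), build a near-optimal primal and dual pair, and sandwich $\Lambda_A$ by weak duality -- is exactly the paper's strategy: the paper proves Corollary~\ref{approx_quad} by running the procedure of Section~\ref{our_bound_proof} with the stated support, constructing $\nu$ from the coefficients \eqref{residue} and $\mu$ from the linear system \eqref{[bQ]}, each perturbed by a tiny multiple of $\mu_{eq}$, and concluding $\lambda_0'-\delta\log(18)\le\Lambda_A\le\int x\,d\tilde\mu$. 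Where you genuinely diverge is in how feasibility is certified. For the primal inequality $U_\mu\ge\sum_Q b_Q\log|Q|$ on all of $\mathbb{C}$ you propose closed forms, critical-point location and interval arithmetic; the paper gets this for free, because $\mu$ is built as $c\,\mu_{eq}+\sum_Q b_Q\deg(Q)\mu_Q$ with equality of potentials on $\Sigma$, so the global inequality follows from the domination principle (Theorem~\ref{dom}) with no pointwise analysis at all. For the dual inequality off $\Sigma$ you again propose closed-form/critical-point verification, whereas the paper uses Proposition~\ref{eq_sufficient}: convexity of the relevant difference in each gap (checked via the explicit second-derivative formula), the vanishing derivative at the endpoints from \cite[Lemma 4.2]{lowerbound}, and derivative comparisons of density numerators at the $a_i$ -- a much lighter certification that needs no closed form for $U_\nu$ in the gaps.

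The one place where this difference matters substantively is the point you yourself flag as the ``main obstacle'': exact feasibility of numerically computed data within a $10^{-7}$ window. Your plan asserts a dual lower bound $\lambda_0(\nu)\ge 1.8224798-10^{-7}$ directly, but the numerically constructed $\nu$ will not satisfy the dual constraints exactly, and your proposal does not say how the resulting defect is absorbed. The paper's mechanism is concrete: it adds a small positive multiple of $\mu_{eq}$ to force positivity of the density and of the $\log|Q|$-integrals, and Proposition~\ref{eq_sufficient} is stated with the explicit slack $\lambda_0'-\delta\log(18)$, where $\delta$ bounds the boundary density ratios $d\nu/d\mu_{eq}$ (of order $10^{-8}$ in practice), precisely so that the certified lower bound survives the perturbation. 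Without that (or an equivalent device), the step from ``approximately feasible'' to a rigorous lower bound on $\Lambda_A$ is not done in your write-up; with it, your argument becomes the paper's. So: same duality-sandwich architecture, heavier and still-unexecuted verification machinery on your side, and the unresolved slack/exact-feasibility step is the gap you would need to close, most easily by invoking Theorem~\ref{dom} for the primal and Proposition~\ref{eq_sufficient} for the dual as the paper does.
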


\begin{corollary}\label{our_bound}
Let $A=\{x,1-x,x^2-3x+1,x^3-5x^2+6x-1\}$. We have
\[
\begin{split}
\Sigma_A\approx [0.061291, 0.175150]\cup [0.222340, 0.31342]\cup[0.457553, 0.746458]
\\
\cup[1.30038, 1.478553]
\cup[1.62963, 2.391040]\cup[ 2.87144, 3.108899]\cup[3.387062, 4.905820] 
\end{split}\]
and 
\[
|\Lambda_A-1.8215998| \le 10^{-7}.
\]
\end{corollary}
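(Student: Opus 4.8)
The plan is to combine the structural descriptions of $\mu_A$ and $\nu_A$ from Theorems~\ref{main1} and~\ref{dual} with a rigorous numerical certification coming from the gradient descent scheme. First I would invoke Theorem~\ref{main1}: since every $Q\in A$ has all real roots, there is a primal solution $\mu_A$ supported on a finite union of intervals $\Sigma_A=\bigcup_{i=0}^{l}[a_{2i},a_{2i+1}]\subset[0,18]$ with density $|P(x)|\big/\big(\prod_{Q\in A}|Q(x)|\sqrt{|H(x)|}\big)$, where $H(x)=\prod_{i=0}^{2l+1}(x-a_i)$ and $\deg P=\sum_{Q\in A}\deg(Q)+l$ (here $\{x\}\subset A$), and with $U_{\mu_A}(x)=\sum_{Q\in A}b_Q\log|Q(x)|$ on $\Sigma_A$. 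Each $Q\in A$ has only non-negative roots, and one checks directly that $\Tr(x)=0$, $\Tr(1-x)=1$, $\Tr(x^2-3x+1)=3/2$, $\Tr(x^3-5x^2+6x-1)=5/3$ are all below $1.82<\Lambda_A$, so Theorem~\ref{main1} also gives that $\mu_A$ is the \emph{unique} primal solution and $I(\mu_A)=0$. Dually, Theorem~\ref{dual} produces $\nu_A$ with the same support $\Sigma_A$, density $|p(x)|\sqrt{|H(x)|}\big/\big(\pi\prod_{\lambda_Q\neq 0}|Q(x)|\big)$ with $\deg p=\sum_{Q\in A}\deg(Q)-l-1$, potential $U_{\nu_A}(x)=x-\Lambda_A-\sum_{Q\in A}\lambda_Q\log|Q(x)|$ on $\Sigma_A$, and the strong duality identity $\lambda_A=\Lambda_A$.

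Next I would run the gradient descent algorithm on the finite-dimensional parameter space consisting of the endpoints $a_0<\dots<a_{2l+1}$, the nonnegative coefficients $b_Q$ (equivalently $\lambda_Q$), and the coefficients of $P$ and $p$, imposing the defining system: the equality constraints $U_{\mu_A}(x)=\sum_{Q\in A}b_Q\log|Q(x)|$ and $x=\Lambda_A+\sum_{Q\in A}\lambda_Q\log|Q(x)|+U_{\nu_A}(x)$ on each interval of $\Sigma_A$, the normalization $\int d\mu_A=1$, the orthogonality $I(\mu_A,\nu_A)=0$ and $\int\log|Q|\,d\mu_A=0$ when $b_Q\neq 0$, and the gap conditions from Theorem~\ref{main1} that every bounded complementary interval $(a_{2i+1},a_{2i+2})$ contains a root of some $Q\in A$. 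Solving this system numerically yields the seven intervals listed in the statement, the value $\Lambda_A\approx 1.8215998$, and explicit approximate measures $\tilde\mu,\tilde\nu$ obtained by rounding the computed parameters.

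To convert this into the asserted bound I would certify feasibility rigorously on both sides. For the upper bound, I perturb $\tilde\mu$ slightly (shrinking $\Sigma_A$ marginally and adjusting the $b_Q$) so that $(\tilde\mu,\tilde b_Q)\in\mathcal C(A)^+$, i.e.\ I verify $U_{\tilde\mu}(x)\ge\sum_{Q\in A}\tilde b_Q\log|Q(x)|$ for all $x\in\mathbb C$, together with $\int\log|Q|\,d\tilde\mu\ge0$ and $\int d\tilde\mu\ge1$; this gives $\Lambda_A\le\int x\,d\tilde\mu(x)$, and the last integral is evaluated rigorously by interval arithmetic. For the lower bound, I perturb $\tilde\nu$ into a feasible dual measure with $\lambda_0(\tilde\nu)$ within $10^{-7}$ of $1.8215998$; since $\lambda_A=\Lambda_A$ by Theorem~\ref{dual} this yields $\Lambda_A\ge\lambda_0(\tilde\nu)$. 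Taking all perturbations of size $<10^{-7}$ places $\Lambda_A$ in the stated interval, and then Corollary~\ref{Acor} gives $\lambda^{SSS}\le\Lambda_A<1.8216$.

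The main obstacle is the rigorous verification of the two potential inequalities, since $U_{\tilde\mu}(x)-\sum_{Q\in A}\tilde b_Q\log|Q(x)|$ and $x-\Lambda_A-\sum_{Q\in A}\lambda_Q\log|Q(x)|-U_{\tilde\nu}(x)$ are transcendental in $x$ (and, for the first, must be controlled off the real axis). I would dispatch this by differentiation: on each interval of $\Sigma_A$ both functions vanish identically by construction, while on the complement their derivatives are rational functions whose numerators are explicit polynomials with the rounded coefficients, so the sign on each complementary subinterval is decided by a Sturm-sequence or interval-arithmetic computation; off the real line one uses the harmonicity of the potential and the maximum principle to reduce again to the real axis. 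This turns the entire certification into a finite, machine-checkable list of polynomial sign conditions. A secondary point is confirming that $A$ is a minimal primal set, i.e.\ that no $b_Q$ (equivalently no $\lambda_Q$) vanishes; this follows from the same sign computations together with Proposition~\ref{smooth}.
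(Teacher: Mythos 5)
Your overall strategy (structure theorems plus gradient descent for $\Sigma_A$, then a feasible primal point for the upper bound and a feasible dual point for the lower bound, finishing with Corollary~\ref{Acor}) is the same as the paper's, but the certification step -- which is the entire content of this corollary -- has a genuine gap as you describe it. First, your reduction to ``a finite, machine-checkable list of polynomial sign conditions'' rests on the claim that off $\Sigma_A$ the derivatives of $U_{\tilde\mu}(x)-\sum_Q \tilde b_Q\log|Q(x)|$ and of $x-\lambda_0-\sum_Q\lambda_Q\log|Q(x)|-U_{\tilde\nu}(x)$ are rational. They are not: they are Cauchy transforms of densities of the form $({\rm polynomial})/\sqrt{|H|}$ or $({\rm polynomial})\sqrt{|H|}/\prod|Q|$, hence algebraic functions involving $\sqrt{H(x)}$, and the constraint functions themselves are transcendental. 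Even granting an algebraic closed form for the derivatives, knowing only the sign pattern of $F'$ on a gap does not settle $F\geq 0$ there: $F$ is convex on each side of the root in the gap, and a convex function that is $\approx 0$ at a gap endpoint can dip negative unless you also control $F'$ at that endpoint. This is precisely why the paper's Proposition~\ref{eq_sufficient} requires the dual density (minus a small multiple of the equilibrium density) to vanish at the $a_i$, so that $F'(a_i)=0$ and convexity alone forces $F\geq 0$ up to the root; your plan supplies no substitute for this, short of rigorously evaluating the transcendental potential at interior points.

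Second, your ``perturb $\tilde\mu$ and $\tilde\nu$ to exact feasibility'' is asserted rather than constructed, and it is exactly where the quantitative content lives. The paper avoids the issue entirely: the candidate measures are built as explicit combinations of $\mu_{eq}$, $\mu_{lin}$ and the $\mu_Q$ attached to the computed $\Sigma$, so the equality constraints on $\Sigma$ hold \emph{exactly} by construction, and primal feasibility off $\Sigma$ is automatic because $\mu=c\,\mu_{eq}+\sum_Q b_Q\deg(Q)\mu_Q$ with nonnegative coefficients has $U_\mu\geq\sum_Q b_Q\log|Q|$ everywhere (no global transcendental inequality is ever checked; Theorem~\ref{dom} handles $\mathbb{C}$). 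Numerical error is then absorbed by mixing in $O(10^{-8})\mu_{eq}$ to restore positivity and, on the dual side, by the explicit slack $\lambda_0'-\delta\log(18)$ in Proposition~\ref{eq_sufficient}, with $\delta\approx 3\cdot 10^{-8}$ read off the boundary density ratios; the only remaining checks are positivity of one explicit density and finitely many derivative comparisons at the $a_i$. To make your version rigorous you would need to either import this mechanism or replace it with certified interval quadrature of the potentials on the complementary intervals, neither of which appears in your proposal. (Minor point: the uniqueness of $\mu_A$, $I(\mu_A)=0$, and minimality of $A$ are not needed for the stated bounds, and the lower bound only uses weak duality, not $\lambda_A=\Lambda_A$.)
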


\begin{figure}\label{opt_measure_apx}
\centering
\begin{minipage}{.5\textwidth}
  \centering
  \includegraphics[width=1\linewidth]{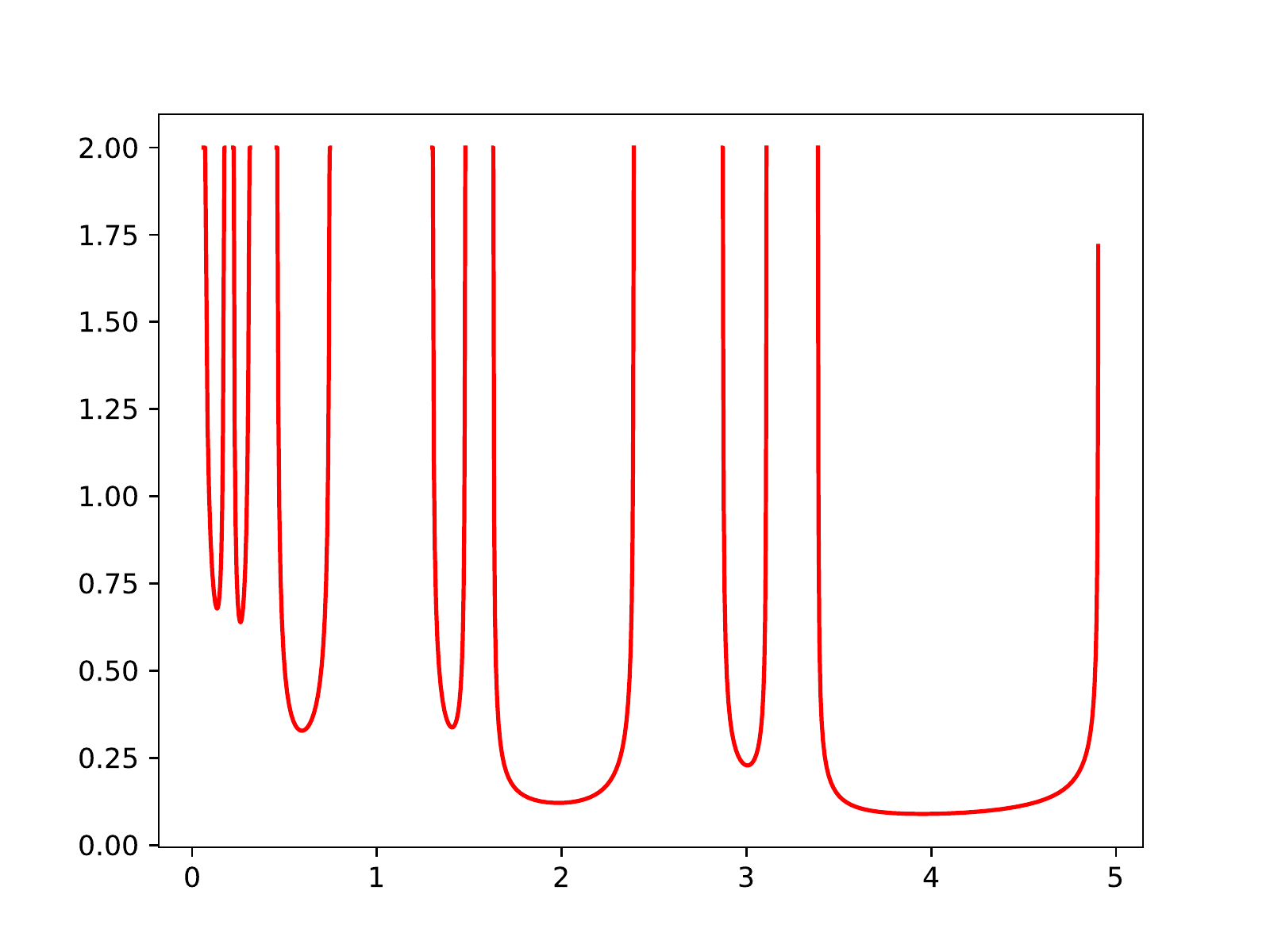}
  \captionof{figure}{Density function of $\mu_{\{x,x-1,x^2-3x+1,x^3-5x^2+6x-1\}}$ } 
        \label{primal_density_fig}
\end{minipage}%
\begin{minipage}{.5\textwidth}
  \centering
  \includegraphics[width=1\linewidth]{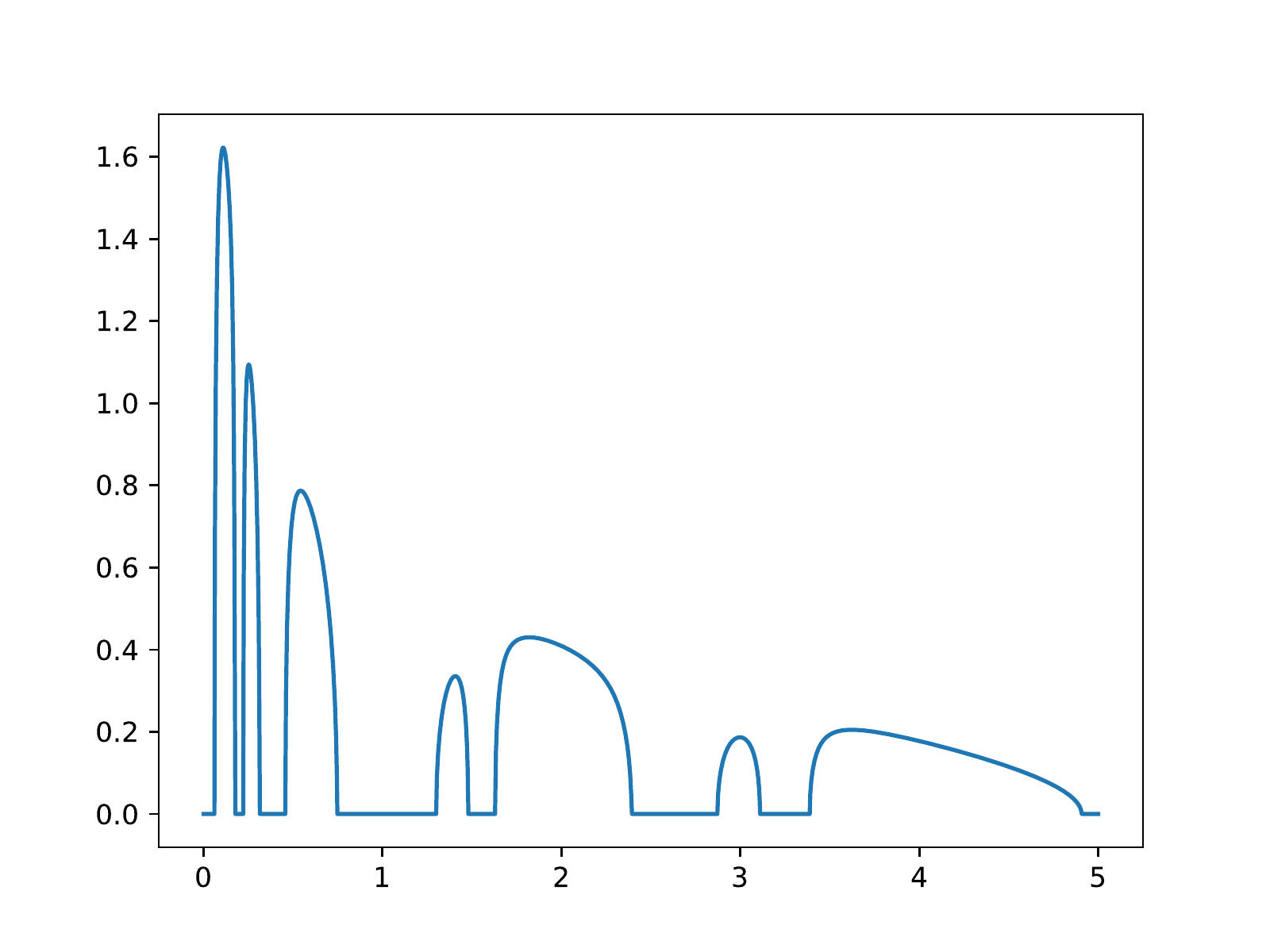}
  \captionof{figure}{Density function of $\nu_{\{x,x-1,x^2-3x+1,x^3-5x^2+6x-1\}}$ }
        \label{dual_density_fig}
\end{minipage}
\end{figure}

More details on the optimal primal and dual measures from these corollaries above are given in Figure~\eqref{primal_density_fig}, Figure~\eqref{dual_density_fig}, and Section \ref{our_bound_proof}.

\subsection{Counting algebraic integers of given degree and distribution}\label{result}
Suppose that $\Sigma\subset \mathbb{C}$ is compact and symmetric about the real axis with logarithmic capacity greater than or equal to zero. Let $\mathcal{P}_\Sigma$ be the space of probability measures supported on $\Sigma$ equipped with the weak-* topology. Let  
\[
\Sigma(\rho):=\{z\in \mathbb{C}: |z-\sigma|<\rho \text{ for some }\sigma\in\Sigma \}.
\]
If $\Sigma\subset \mathbb{R}$, let
\[
\Sigma_{\mathbb{R}}(\rho):=\{x\in \mathbb{R}: |x-\sigma|<\rho \text{ for some }\sigma\in\Sigma \}.
\]
Note that $\Sigma_{\mathbb{R}}(\rho)\subset \mathbb{R}$.
Next, we introduce some new notation to state our main theorem under some assumptions.  Let 
\[
[a,b]\times[c,d]:=\left\{z\in \mathbb{C}: \Re(z)\in [a,b] \text{ and } \Im(z)\in [c,d]\right\}\subset \mathbb{C}.
\]
We call $[a,b]\times[c,d]$ a rectangle, and we call it a real interval if $c=d=0$. We denote the boundary of a rectangle or an interval by $\partial R$ and $\partial I$.
A measure $\mu$ is a H\"older probability measure if there exists $\delta>0$ and $A>0$ such that
\[
\mu([a,b]\times[c,d]) \leq A\max(|b-a|, |d-c|)^{\delta}
\]
for every $a<b, c<d.$
Here $\delta$ is called a H\"older exponent of $\mu$.

\subsubsection{Generic symmetric sampling}
Suppose that $\Sigma \subset \mathbb{C}$ is a finite union of some rectangles and real intervals  and  $\Sigma$ is symmetric about the real axis.  We write
\[
\Sigma=\bigcup_{i=1}^{N_1}(R_i \cup \bar{R_i}) \bigcup_{j=1}^{N_2}I_j
\]
where $R_i\subset \mathbb{C}$ are rectangles, $\bar{R_i}$ is the reflection of $R_i$ by the $x$-axis and $I_j\subset \mathbb{R}$ are intervals. We define
\[
\Sigma_{\mathbb{R}}(\rho):= \bigcup_{i=1}^{N_1}(R_i(\rho) \cup \bar{R_i}(\rho)) \bigcup_{j=1}^{N_2}{I_j}_{\mathbb{R}}(\rho)
\]
for any $\rho>0$. Note that if $\bigcup_{j}{I_j}\neq \emptyset$, then $\Sigma_{\mathbb{R}}(\rho)\subsetneq \Sigma(\rho)$. We define the boundary of $\Sigma$ to be
\[
\partial \Sigma=\bigcup_{i=1}^{N_1}(\partial R_i \cup \partial\bar{R_i}) \bigcup_{j=1}^{N_2}\partial I_j
\]

 Suppose that $\mu$ is arithmetic and H\"older with support inside $\Sigma$. Given an integer $n$, we sample a collection of $n$ points according to the probability distribution  $\mu$ inside $\Sigma$ with some  properties that we introduce next.\\

Let $0<M\in \mathbb{Z}$ and $0<L \in \mathbb{Z}$ where 
\begin{equation}\label{samplcond}
n^{\varepsilon}<M \le n^{1/2-\varepsilon}, \text { and } L<n^{1-\varepsilon}M^{-2}  
\end{equation}
 for some fixed $\varepsilon>0.$
We partition each side of the rectangles and intervals inside $\Sigma^+$ into $M$ equal length sub-intervals. Namely, for a rectangle $R=[a,b]\times[c,d]$, we obtain a partition of $R$ by 
\[
R= \bigcup_{0\leq i,j< M}B_{ij},
\]
where $B_{ij}:=[a_i,a_{i+1}]\times[c_{j},c_{j+1}]$ and $a_i:=a+\frac{i(b-a)}{M}$ and $c_j:=c+\frac{j(d-c)}{M}.$
Similarly, for an interval $I=[a,b]\subset \mathbb{R}$, we have 
\[
I=\bigcup_{0\leq i< M}B_{i},
\]
where $B_{i}:=[a_i,a_{i+1}]$ and $a_i:=a+\frac{i(b-a)}{M}.$ 
\begin{definition}\label{genericsample} Fix $A>0$, and let $n,M,L$ be as above.  
   Let $\mathcal{Z}:=\{z_1,\dots,z_n \}\subset \Sigma$. We say that $\mathcal{Z}$ is a generic symmetric sampling according to $\mu$ with parameters $(n,M,L;A),$ if the following three conditions are satisfied.
   \begin{enumerate}
       \item $\mathcal{Z}$ is symmetric about the real axis.
       \item For every partition of rectangle $B_{ij}\subset R_l$, where $1\leq l\leq N_1$ and  $1\leq i,j \leq M$
   \[
   \left||\mathcal{Z} \cap B_{ij}| - \mu(B_{ij})n\right| \leq L,
   \]
   and for every partition of real interval $B_{i}\subset I_l$, where $1\leq l\leq N_2$ and $1\leq i \leq M$,
 \[
   \left||\mathcal{Z} \cap B_{i}| - \mu(B_{i})n\right| \leq L.
   \]
   
       \item $|z-w|\gg n^{-A}$ for every distinct pair $z,w\in \mathcal{Z}$, and any $z\in \mathcal{Z}$,  $w\in \bigcup_{0\leq i,j< M}\partial B_{ij} \bigcup_{0\leq i< M}\partial B_{i}.$
   \end{enumerate}
\end{definition}

\begin{theorem}\label{main2}
Suppose that $\Sigma \subset \mathbb{C}$ is compact and symmetric about the real axis and is a finite union of rectangles and real intervals. Furthermore, suppose that $\mu$ is arithmetic and H\"older with support inside $\Sigma$. Also let $\mathcal{Z}=\{\alpha_1,\dots,\alpha_n \}$ be sampled generically according to $\mu$ with parameters $(n,M,L;A).$ There  are asymptotically 
\[
e^{\frac{n^2}{2}I(\mu)+O(mn+m^2+n^{2-\delta})}
\]  
 irreducible polynomials $h$ of degree $n+m$ with complex roots $\{\beta_1(h),\dots, \beta_{n+m}(h) \}$ such that for every $1 \leq i\leq n$,
\[
|\beta_i(h)-\alpha_i|<d_{\Sigma}^{-m(1-O(n^{-\eta}))},
\]
where $m\geq n^{1-\delta}$ for some  $\delta>0$ and  $\eta>0$ only depend on $\mu$. Moreover, $\beta_i(h)\in \Sigma$ and
if $\alpha_i\in \mathbb{R}$ then $\beta_i(h)\in \mathbb{R}$
 for every $1 \leq i\leq n+m$.
\end{theorem}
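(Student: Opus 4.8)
The plan is to realize the admissible polynomials $h$ as the integer points of an explicit convex body $K$ in the space of polynomials of degree $<n+m$ whose volume we can compute; the factor $e^{\frac{n^2}{2}I(\mu)}$ will emerge from a Vandermonde (transfinite-diameter) estimate at the sampled points $\alpha_1,\dots,\alpha_n$. So \emph{first} I would convert the conditions on the roots of $h$ into analytic size conditions on $h$. A root-localization argument --- Jensen's formula together with the separation built into the generic sampling (Definition~\ref{genericsample}(3), which keeps the $\alpha_i$ apart and away from $\partial\Sigma$) --- shows that a monic integer polynomial $h$ of degree $n+m$ controlled by roughly $e^{(n+m)U_\mu(z)}$ on a sufficiently fine net of $\Sigma$ has exactly one root within $d_\Sigma^{-m(1-O(n^{-\eta}))}$ of each $\alpha_i$ precisely when $|h(\alpha_i)|$ lies below a threshold $W(\alpha_i)$ whose logarithm is $-m\log d_\Sigma+nU_\mu(\alpha_i)+O(m)$, and the net bound simultaneously forces the remaining $m$ roots into $\Sigma$. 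The root near a real $\alpha_i$ is itself real because $h$ has real coefficients and, as $m\ge n^{1-\delta}$, the target disc is far too small to contain a conjugate pair. Writing $h=x^{n+m}+P$ with $\deg P<n+m$, the admissible $h$ are therefore --- up to an $e^{O(mn+m^2+n^{2-\delta})}$ ambiguity in the choice of net and thresholds --- exactly the integer points of the real-symmetric subspace that lie in an explicit convex body $K$.

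\emph{Second}, I would evaluate $\log\vol(K)$ using as interpolation nodes the $n$ points $\alpha_1,\dots,\alpha_n$ together with $m$ auxiliary points $w_{n+1},\dots,w_{n+m}$ chosen $\mu$-distributed on $\Sigma$: evaluation at these $n+m$ nodes is a linear isomorphism whose Jacobian is the Vandermonde $\prod_{a<b}(w_a-w_b)$, so $\log\vol(K)=\sum_a\log W(w_a)-\sum_{a<b}\log|w_a-w_b|+O(n+m)$. Collecting terms, the interactions internal to the block $\alpha_1,\dots,\alpha_n$ contribute $\sum_{i\ne j}\log|\alpha_i-\alpha_j|$ (from the thresholds $W(\alpha_i)$) minus $\tfrac12\sum_{i\ne j}\log|\alpha_i-\alpha_j|$ (from the Vandermonde), i.e.\ $\tfrac12\sum_{i\ne j}\log|\alpha_i-\alpha_j|=\tfrac{n^2}{2}I(\mu)+O(n^{2-\delta})$, by the discrepancy estimate $\sum_{i\ne j}\log|\alpha_i-\alpha_j|=n^2I(\mu)+O(n^{2-\delta})$ --- which I would prove from the discrepancy bound of Definition~\ref{genericsample}(2), the H\"older regularity of $\mu$, and Definition~\ref{genericsample}(3) to control the logarithmic singularity. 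Every remaining interaction --- the mixed pairs $\alpha_i,w_j$, the pairs $w_i,w_j$, the net thresholds $(n+m)U_\mu(w_j)$, and the term $-nm\log d_\Sigma$ --- is manifestly $O(mn+m^2)$ (using $0\le\log d_\Sigma\le\log\operatorname{diam}(\Sigma)$, valid since an arithmetic $\mu$ exists only when $d_\Sigma\ge1$, and the boundedness of $I(\mu)$). Hence $\log\vol(K)=\tfrac{n^2}{2}I(\mu)+O(mn+m^2+n^{2-\delta})$.

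\emph{Third}, I would pass from $\vol(K)$ to the number of integer points in $K$ by the discrete John theorem~\cite[Theorem 1.6]{Tao} together with Minkowski's second theorem: once the successive minima $\lambda_1\le\lambda_2\le\cdots$ of the relevant lattice with respect to $K$ are under control, the integer-point count reproduces $\vol(K)$ up to a factor $e^{O(mn+m^2+n^{2-\delta})}$. The arithmetic hypothesis~\eqref{multi} enters precisely here, to bound $\lambda_1$ from below: an integer polynomial $Q$ in a small dilate of $K$ would be uniformly small on $\mathcal Z$ and on the net, forcing $\tfrac1{n+m}\sum_w\log|Q(w)|<0$, which contradicts $\int\log|Q|\,d\mu\ge0$ once the discrepancy error is absorbed. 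The discrete John theorem then controls the number and size of the $\lambda_i$ exceeding $1$, so the count telescopes to the volume within the allowed error; restricting at negligible cost to $h$ satisfying an auxiliary congruence at a small prime forces irreducibility. Carrying this out inside the real-symmetric subspace throughout yields the reality of the $\beta_i$ when $\alpha_i\in\mathbb R$ and completes the proof.

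\emph{Main obstacle.} The crux is the volume-to-count step: $K$ is extremely eccentric, being exponentially thin in the $n$ directions pinned by the $\alpha_i$, so it does not suffice to bound $\lambda_1$ --- one must quantify, through the discrete John theorem, how many of the $\lambda_i$ exceed $1$ and by how much, so that the integer-point count matches $\vol(K)$ within $e^{O(mn+m^2+n^{2-\delta})}$. The supporting points --- making the discrepancy estimate of the volume step uniform enough (via the H\"older exponent of $\mu$ and the parameters $(n,M,L;A)$) to keep its error at $O(n^{2-\delta})$, the root-localization of the first step, and arranging irreducibility compatibly with the thin body $K$ --- are less central but still require care.
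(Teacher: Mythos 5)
Your upper-bound half is essentially the paper's own argument: any admissible $h$ lies in a small dilate of the convex body $K_{n+m}$ from \eqref{defKn} (this is Lemma~\ref{hinK}), and its lattice points are counted with the discrete John theorem. But the obstacle you yourself single out --- quantifying how many successive minima exceed $1$ and by how much --- is not resolved by bounding $\lambda_1$ from below via the single inequality $\int\log|Q|\,d\mu\ge 0$. What the count actually requires is an upper bound on every partial product $\prod_{i\le d}\lambda_i^{-1}$ (Proposition~\ref{lowerd}), and in the source these come from the multivariate arithmetic inequality \eqref{multi} applied to resultant/discriminant-type expressions in several successive-minima polynomials at once; the one-variable constraint used as you propose does not yield them, so your volume-to-count step remains open exactly at its crux.

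The more serious gap is in the lower bound. The claimed equivalence ``$h$ admissible precisely when $|h(\alpha_i)|\le W(\alpha_i)$ and $h$ obeys the net bound'' fails in the direction you need: an integer point of your body $K$ can be small at $\alpha_i$ because several roots cluster at moderate distance, without any root in the exponentially small disc $|z-\alpha_i|<d_{\Sigma}^{-m(1-O(n^{-\eta}))}$, and an upper bound for $|h|$ on a net of $\Sigma$ does not force the remaining $m$ roots (or any roots) to lie in $\Sigma$; consequently ``exactly one root per disc,'' the reality of the root near a real $\alpha_i$, and the irreducibility count are not consequences of membership in $K$. The paper does not count all integer points of $K$: it builds an explicit anchor $p_{\mathcal Z}Q_m$ (Proposition~\ref{chpolreal} places the $m$ auxiliary roots inside $\Sigma$, keeps all roots $n^{-3}$ from the boundary, and makes the middle coefficients even), arranges Eisenstein at $2$, and perturbs only by lattice vectors $4n_ip_i$ with $|n_i|\lesssim N_i\approx d_{\Sigma_1}^{\,n^{1-\delta_2}}/\lambda_i$, so that the perturbation is dominated by $|p_{\mathcal Z}Q_m|$ on the boundary of each tiny disc or interval and Rouch\'e (resp.\ the intermediate value theorem) pins exactly one root there, automatically real in the real case; the size of this family is then bounded below by Minkowski's second theorem together with the volume bound of Proposition~\ref{vol_K_n}. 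Without such an anchor-plus-domination mechanism (or a quantitative equidistribution statement strong enough to localize each root in an $e^{-cm}$-disc, which net upper bounds cannot provide), your first step, and with it the lower bound, does not go through.
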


\subsection{Applications to abelian varieties over finite fields}
By Honda Tate theory~\cite[Theorem 1.2 page 23]{serre_curves}, the isogeny class of simple abelian varieties over $\mathbb{F}_q$ and isogenies over $\mathbb{F}_q$ corresponds  to algebraic integers such that all their conjugates have norm $\sqrt{q}$. We apply our main theorem and Honda Tate theory to prove the following.
\begin{corollary}
    Let  $\mathbb{F}_q$ be a finite field and $\mu$ be a H\"older arithmetic probability measure on $[-2\sqrt{q},2\sqrt{q}]$.
    Suppose that $\mathcal{Z}=\{\alpha_1,\dots,\alpha_n \}$ is sampled generically according to $\mu$ with parameters $(n,M,L;A).$ There are  asymptotically
\[
e^{\frac{n^2}{2}I(\mu)+O(mn+m^2+n^{2-\delta})}
\]  isogeny classes of ordinary abelian varieties of degree $n+m$ with Frobenious eigenvalues $\{\beta_1,\dots,\beta_{m+n} \}$ such that $m$ is any integer $m \geq n^{1-\delta}$ and for every $1 \leq i\leq n$,
\[
|\alpha_i-\beta_i|<q^{-\frac{m}{2}(1+O(n^{-\eta}))},
\]
where  $\delta>0$ and  $\eta>0$ only depend on $\mu$. 
\end{corollary}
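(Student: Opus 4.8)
The plan is to push Theorem~\ref{main2} through Honda--Tate theory, applied to the real interval $\Sigma=[-2\sqrt q,2\sqrt q]$, which has logarithmic capacity $d_\Sigma=\tfrac14(2\sqrt q-(-2\sqrt q))=\sqrt q\ge 1$. First I would invoke Theorem~\ref{main2} verbatim: since $\Sigma$ is a single real interval, $\mu$ is arithmetic and H\"older with support in $\Sigma$, and $\mathcal Z=\{\alpha_1,\dots,\alpha_n\}$ is a generic symmetric sample with parameters $(n,M,L;A)$, for every $m\ge n^{1-\delta}$ there are
\[
e^{\frac{n^2}{2}I(\mu)+O(mn+m^2+n^{2-\delta})}
\]
irreducible integral polynomials $h$ of degree $n+m$, automatically totally real since all their roots $\beta_1(h),\dots,\beta_{n+m}(h)$ lie in $\Sigma\subset\mathbb R$, and satisfying $|\beta_i(h)-\alpha_i|<d_\Sigma^{-m(1-O(n^{-\eta}))}=q^{-\frac m2(1+O(n^{-\eta}))}$ for $1\le i\le n$. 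It then remains to attach to each such $h$ an ordinary abelian variety over $\mathbb F_q$, injectively on isogeny classes, whose real Weil numbers (the sums $\pi+\bar\pi$ of conjugate pairs of Frobenius eigenvalues, which is what $\beta_i$ denotes in the statement) are the $\beta_i(h)$.

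For a fixed such $h(t)=\prod_{i=1}^{n+m}(t-\beta_i(h))=\sum_jc_jt^j$ I would form
\[
P_h(x):=\prod_{i=1}^{n+m}\bigl(x^2-\beta_i(h)\,x+q\bigr)=x^{n+m}\,h\!\left(x+\tfrac qx\right)=\sum_{j}c_j\,x^{\,n+m-j}(x^2+q)^j.
\]
The last form shows $P_h\in\mathbb Z[x]$ is monic of degree $2(n+m)$ with constant term $q^{n+m}$; its roots $\pi_i,\bar\pi_i$ satisfy $\pi_i\bar\pi_i=q$ and $\pi_i+\bar\pi_i=\beta_i(h)\in[-2\sqrt q,2\sqrt q]$, whence $|\pi_i|=\sqrt q$, so $P_h$ is a $q$-Weil polynomial. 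Because $\deg h=n+m>2$ no $\beta_i(h)$ can equal the degree-$\le 2$ number $\pm 2\sqrt q$, so each factor $x^2-\beta_i(h)x+q$ is irreducible over the totally real field $\mathbb Q(\beta_i(h))$, making $\mathbb Q(\pi_1)$ a CM field of degree $2(n+m)$; since the Galois group of the splitting field acts transitively on the conjugates $\beta_i(h)$ and, by that irreducibility, also swaps $\pi_i\leftrightarrow\bar\pi_i$ within each pair, $P_h$ is irreducible. By Honda--Tate theory~\cite{serre_curves}, $P_h$ is the characteristic polynomial of Frobenius of a simple abelian variety $A_h/\mathbb F_q$. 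Distinct $h$ give distinct $P_h$ and hence distinct isogeny classes, and conversely every isogeny class whose real Weil numbers $\beta_i$ meet the above proximity bound arises from such an $h$; so the count of these isogeny classes is squeezed between the count of all such $h$ and the count of those $h$ that are, moreover, ordinary.

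It remains to restrict to ordinary $A_h$. Writing $q=p^a$ and reducing $P_h=\sum_jc_jx^{n+m-j}(x^2+q)^j$ modulo $p$ gives $P_h(x)\equiv x^{n+m}h(x)\pmod p$, so the middle coefficient of $P_h$ is congruent to $h(0)$ mod $p$; thus $A_h$ is ordinary exactly when $p\nmid h(0)$, and in that case $\mathrm{End}^0(A_h)=\mathbb Q(\pi_1)$ forces $\dim A_h=n+m$ and $P_h$ to be the minimal polynomial of $\pi_1$. \emph{The one delicate point} is that a positive proportion of the $h$ produced by Theorem~\ref{main2} satisfy the single congruence $p\nmid h(0)$; I would extract this from the internals of that proof, where the $h$ are counted as lattice points of a large convex body via the geometry of numbers and a discrete John's theorem, so that imposing one congruence modulo the fixed prime $p$ --- or, crudely, fixing $h\bmod p$ to a residue with nonzero constant term, costing a factor $p^{O(n+m)}=e^{O(mn)}$ --- changes the count only within the error term $O(mn+m^2+n^{2-\delta})$. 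Granting this, the number of ordinary isogeny classes with the required proximity is again $e^{\frac{n^2}{2}I(\mu)+O(mn+m^2+n^{2-\delta})}$, each of dimension $n+m$, with $|\alpha_i-\beta_i|<q^{-\frac m2(1+O(n^{-\eta}))}$ inherited directly from Theorem~\ref{main2}. The only genuine work is this last step --- checking that ordinarity is ``generic'' within the lattice family of Theorem~\ref{main2} --- the rest being Honda--Tate together with the elementary identity $P_h(x)=x^{n+m}h(x+q/x)$.
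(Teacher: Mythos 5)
Your route is the intended one: apply Theorem~\ref{main2} to $\Sigma=[-2\sqrt q,2\sqrt q]$ (so $d_\Sigma=\sqrt q$), pass from a totally real irreducible $h$ to the $q$-Weil polynomial $P_h(x)=\prod_i(x^2-\beta_i(h)x+q)=x^{n+m}h(x+q/x)$, check $P_h$ is irreducible because $[\mathbb Q(\beta_i):\mathbb Q]=n+m>2$ forces $\beta_i\neq\pm2\sqrt q$, and invoke Honda--Tate to get a bijection onto simple isogeny classes; the reduction $P_h\equiv x^{n+m}h(x)\pmod p$ and the middle-coefficient criterion correctly identify ordinarity with $p\nmid h(0)$. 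All of that matches the paper's (unwritten) argument and is fine.

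The genuine gap is exactly the point you flag and then defer: that ordinarity is ``generic'' among the polynomials counted in Theorem~\ref{main2}. This does not follow from the theorem as a black box, and it is not a routine congruence count inside its proof. The lower-bound family constructed there is
\[
h_n+\sum_{i=0}^{n-1}4n_ip_i,\qquad h_n(x)=x^{m+n}+\sum_{i=n}^{m+n-1}a_ix^i+4w(x)-2,
\]
whose irreducibility comes from Eisenstein at the prime $2$; consequently every member has constant term $\equiv 2\pmod 4$, hence $2\mid h(0)$. So when $q$ is a power of $2$ the entire family produces \emph{non-ordinary} Weil polynomials, and your proposed fix (restricting to a residue class of $h\bmod p$ with $p\nmid h(0)$) is incompatible with the Eisenstein-at-$2$ device: one must replace it (e.g.\ Eisenstein at an auxiliary prime $\ell\neq p$, with the factor-of-$2$ normalizations in $r$, $w$, $\mathbf H_n(\mathbf N)$ redone at $\ell$) before any congruence argument can start. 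Even for odd $p$, the claim that the congruence $h(0)\not\equiv 0\pmod p$ costs only $e^{O(mn)}$ needs an argument: $h(0)=4w(0)-2+4\sum_i n_ip_i(0)$, and nothing a priori prevents all the successive-minima polynomials from satisfying $p\mid p_i(0)$ (they need not generate $\Gamma_{n-1}$), in which case $h(0)\bmod p$ is constant on the whole family and could vanish; one has to either rule this out or build the congruence into the construction of $h_n$ (e.g.\ adjust $w$ or the choice of $Q_m$ so that $h_n(0)\not\equiv 0\pmod p$). Until this step is carried out, you have the upper bound (Proposition~\ref{Tao} needs no irreducibility or ordinarity) but not the matching lower bound for ordinary classes, which is the substance of the corollary.
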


\subsection{Organization of the paper}
The asymptotic formula in Theorem~\ref{main2} is proved by proving an optimal upper bound in section~\ref{Upper bound} and an optimal lower bound
with the same exponent up to a power saving error term in section~\ref{lower bound}.  Our upper bound is proved by using a discrete version of John's theorem due to Tao and Vu~\cite[Theorem 1.6]{Tao} and our  bounds on the product of subsets of Minkowski's successive minima stated in~Proposition~\ref{lowerd}.
Our lower bound is proved following the method of proof of \cite[Theorem 1.5]{OT}. A highlight of our lower bound proof in this paper is that we do not use any results from potential theory.
This gives a new proof of our main theorems in \cite{OT} and Smith's main theorem in~\cite{Smith} without relying on the results of~\cite{logpotentials}.
Section~\ref{density} presents a proof of Theorem \ref{approx}.
Section~\ref{lin_prog} formulates a family of linear programming problems and proves that strong duality holds. Using this, Section~\ref{main_thms} proceeds to prove our main theorems on upper bounds to the Schur-Siegel-Smyth trace problem with an explanation on how this recovers Serre and Smith's bound in Section~\ref{serre_proof}.
Section~\ref{validation} validates the bounds given through numerical approximation, and finally, Section~\ref{conjectures} gives a list of open questions.

\section{Results from~\cite{OT}}\label{prior}
\subsection{Empirical logarithmic potential}
Suppose that $\Sigma \subset \mathbb{C}$ is a finite union of some rectangles and real intervals  and  $\Sigma$ is symmetric about the real axis.  Suppose that $\mu$ is arithmetic and H\"older with support inside $\Sigma$. Suppose that  $\mathcal{Z}\subset \Sigma$  is a generic symmetric sampling according to $\mu$ with parameters $(n,M,L;A),$ that is defined in~\eqref{genericsample}. Recall the logarithmic potential of $\mu$ from~\eqref{logpot} is  \[
  U_{\mu}(z):= \int \log|z-w| d\mu(w).
\]
We define the empirical logarithmic potential function $U_{\mathcal{Z}}$ as
\[
U_{\mathcal{Z}}(z):=\frac{1}{n}\sum_{\alpha\in\mathcal{Z}} \log|z-\alpha|.
\]

For each $z\in\mathbb C$, let $\hat z\in\mathcal Z$ be so that $|z-\hat z|=\min_{\alpha\in \mathcal{Z}}\log|z-\alpha|$. We define the  polynomial $p_{\mathcal{Z}}$ as
\begin{equation}\label{p_z}
p_{\mathcal{Z}}(x):=\prod_{\alpha\in\mathcal{Z}}(z-\alpha).
\end{equation}
Note that $U_{\mathcal{Z}}(z)=\frac{\log|p_{\mathcal Z}(z)|}{|\mathcal{Z}|}.$
The following proposition shows that $U_{\mathcal{Z}}(z)$ approximates  $U_{\mu}(z)$ with a power saving error in $n$. 

\begin{proposition}\label{emperical} Suppose that  $\mu$ is arithmetic with support inside $\Sigma$. Furthermore, suppose $\mu$ is H\"older with exponent $\delta>0$, and $\mathcal{Z}\subset \Sigma$  is a generic symmetric sampling according to $\mu$ with parameters $(n,M,L;A)$. 
We have 
\[
U_{\mathcal{Z}}(z)=U_{\mu}(z)+\min\left(\frac{\log|z-\hat{z}|}{n},0\right)+C\left(\frac{M^2L\log(n)}{n}+ M^{-\delta/2}\log(n)^{1/2}
\right)
\]
for some constant $C$ that only depends on $A$ and $\mu.$ Moreover, for $\Sigma \subset \mathbb R$, we have the stronger result
\[
U_{\mathcal{Z}}(z)=U_{\mu}(z)+\min\left(\frac{\log|z-\hat{z}|}{n},0\right)+C\left(\frac{ML\log(n)}{n}+ M^{-\delta/2}
\right).
\]
\end{proposition}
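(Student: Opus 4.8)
The plan is to split the difference $U_{\mathcal Z}(z)-U_\mu(z)$ into a "bulk" contribution, controlled by the discrepancy between the sampled empirical measure $\mu_{\mathcal Z}:=\frac1n\sum_{\alpha\in\mathcal Z}\delta_\alpha$ and $\mu$, and a "diagonal" contribution coming from the one sample point $\hat z$ closest to $z$, where the integrand $\log|z-w|$ has its singularity. Concretely, for a fixed $z$ let $B$ be the partition cell (rectangle $B_{ij}$ or interval $B_i$) containing $\hat z$, and write
\[
U_{\mathcal Z}(z)-U_\mu(z)=\frac1n\log|z-\hat z|-\int_{B}\log|z-w|\,d\mu(w)+\Big(\tfrac1n\!\!\sum_{\alpha\in\mathcal Z\setminus\{\hat z\}}\!\!\log|z-\alpha|-\int_{\Sigma\setminus B}\log|z-w|\,d\mu(w)\Big).
\]
The term $\int_B\log|z-w|\,d\mu(w)$ is $O(M^{-\delta}\log n)$ by the H\"older bound $\mu(B)\ll M^{-\delta}$ together with the standard estimate that $\int\log|w|$ over a small disc of $\mu$-mass $M^{-\delta}$ is $\ll M^{-\delta}(1+|\log M^{-\delta}|)\ll M^{-\delta}\log n$ (using $M\le n^{1/2}$); this is absorbed into the error term. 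The isolated term $\frac1n\log|z-\hat z|$ is exactly the $\min\big(\frac{\log|z-\hat z|}{n},0\big)$ appearing in the statement once we notice that when $|z-\hat z|\ge 1$ it is positive and of size $O(\log n /n)$ (since $\Sigma$ is bounded), hence also absorbable; so the content is the case $|z-\hat z|<1$.

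The main work is bounding the bulk term, i.e. estimating $\int f\,d(\mu_{\mathcal Z}-\mu)$ for $f(w)=\log|z-w|$ restricted away from the cell $B$, where $f$ is now a bounded function with bounded variation away from a small neighbourhood of $z$. I would do this cell by cell: on each partition cell not equal to $B$, approximate $f$ by a constant (its value at a corner), incurring an error $\le \mathrm{osc}_{B'}(f)\cdot(\mu_{\mathcal Z}(B')+\mu(B'))$; the oscillation of $\log|z-\cdot|$ on a cell of side $\ell/M$ at distance $d$ from $z$ is $\ll \frac{\ell/M}{d}$, and summing $\sum_{d}\frac{1}{Md}$ over the $\asymp M$ (resp. $M^2$) cells at the relevant distances yields $\ll \frac{\log n}{M}$ in the real case and $\ll \log n$ (times the per-cell discrepancy) in the complex case — this is precisely the source of the extra factor $M$ (real: $M$ cells per axis; complex: $M^2$ cells) in the two displayed bounds. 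For the constant part, $\big|\sum_{B'}c_{B'}\big(\mu_{\mathcal Z}(B')-\mu(B')\big)\big|\le \max|c_{B'}|\cdot\#\{\text{cells}\}\cdot \frac{L}{n}$ by telescoping/Abel summation using condition (2) of Definition~\ref{genericsample} ($|\,|\mathcal Z\cap B'|-\mu(B')n\,|\le L$), and $\#\{\text{cells}\}$ is $O(M)$ for intervals and $O(M^2)$ for rectangles while $\max|c_{B'}|=\max|\log|z-w||\ll \log n$ on $\Sigma$ (or, near $z$, controlled by the separation condition (3), $|z-w|\gg n^{-A}$, which also gives $\ll \log n$). Combining: the bulk error is $O\!\big(\frac{M^2 L\log n}{n}+M^{-\delta/2}\log n\big)$ in the complex case and $O\!\big(\frac{ML\log n}{n}+M^{-\delta/2}\big)$ in the real case, where the $M^{-\delta/2}$ (rather than $M^{-\delta}$) comes from optimizing the cutoff radius around $z$ between the oscillation bound and the mass bound — one balances a $\ll M^{-\delta}\log n$ mass term against a $\ll 1/(M\cdot r)$-type oscillation term at scale $r$, choosing $r\asymp M^{-\delta/2}$.

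The step I expect to be the genuine obstacle is the uniformity in $z$, especially when $z$ is extremely close to the boundary of a partition cell or to another sample point: there the naive oscillation bound $\frac{\ell/M}{d}$ blows up. This is exactly what condition (3) of Definition~\ref{genericsample} is for — it guarantees $|z-\hat z|$ and the distance from $z$ to any cell boundary are $\gg n^{-A}$, so the worst single cell contributes $\ll \frac{\log n}{n}$ after dividing by $n$, and $z$ lies in a well-defined cell $B$ for the splitting above. I would handle this by treating the (at most $O(1)$) cells adjacent to the one containing $z$ separately with the crude bound $\mu(B')\ll M^{-\delta}$ and $|\mathcal Z\cap B'|/n\ll M^{-\delta}$ plus the $n^{-A}$ separation, and only apply the oscillation-by-telescoping argument to cells at distance $\gg 1/M$ from $z$. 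Assembling these pieces and taking $C$ to depend only on $A$, the H\"older data $(\delta,A_\mu)$ of $\mu$, and the geometry of $\Sigma$ gives the claimed formula; the real-line case is strictly easier because there are $M$ rather than $M^2$ cells and the potential-theoretic singularity is milder on $\mathbb R$, which is why the error improves from $\frac{M^2L\log n}{n}+M^{-\delta/2}\log(n)^{1/2}$ to $\frac{ML\log n}{n}+M^{-\delta/2}$.
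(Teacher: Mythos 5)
The paper itself gives no internal proof of Proposition~\ref{emperical}: its ``proof'' is a citation to equations (33)--(34) in the proof of Proposition 2.8 of \cite{OT}, so your argument cannot be compared line by line with the text. Your architecture --- isolate the nearest sample point to produce the $\min(\log|z-\hat z|/n,0)$ term, control the cell containing $\hat z$ by the H\"older mass bound, and compare $\frac1n\sum_{\alpha\in\mathcal Z}\delta_\alpha$ with $\mu$ cell by cell through a constant-plus-oscillation decomposition using conditions (2) and (3) of Definition~\ref{genericsample} --- is the natural route and, properly executed, does yield the stated bounds (the regime of very large $|z|$ being trivial, and the second-nearest points being handled by $|z-\alpha|\ge|\alpha-\hat z|-|z-\hat z|\ge \tfrac12|\alpha-\hat z|\gg n^{-A}$).

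However, the central bulk estimate as you wrote it does not deliver what you claim. The oscillation error must be paired with the cell masses $\mu(B')+\mu_{\mathcal Z}(B')$, not with the per-cell discrepancy $L/n$; and if you use the per-cell H\"older bound $\mu(B')\ll M^{-\delta}$ and sum over the $\asymp j$ complex cells at distance $\asymp j/M$ from $z$, you get $\sum_j j\cdot j^{-1}\cdot M^{-\delta}\asymp M^{1-\delta}$, which diverges for $\delta<1$; likewise your stated evaluations ($\ll \log n/M$ in the real case, ``$\ll\log n$ times the per-cell discrepancy'' in the complex case) are not what the displayed sums produce. The repair is to apply the H\"older condition at the scale of the distance rather than of the cell: the dyadic annulus $\{2^k/M\le|w-z|\le 2^{k+1}/M\}$ is covered by $O(1)$ squares of side $\asymp 2^k/M$, hence has $\mu$-mass $\ll (2^k/M)^{\delta}$, so the oscillation-times-mass sum is $\ll\sum_k 2^{k(\delta-1)}M^{-\delta}\ll M^{-\min(\delta,1)}\log M$, while the discrepancy contribution to the oscillation part is $\ll ML/n$ (complex) and $\ll L\log M/n$ (real). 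Combined with the constant part ($\ll M^2L\log n/n$, resp.\ $ML\log n/n$, from condition (2)) and the near-diagonal cells ($\ll (M^{-\delta}+L/n)\log n$ via the $n^{-A}$ separation), every term is $\ll M^{-\delta/2}$ plus the discrepancy terms, because $M\ge n^{\varepsilon}$ by \eqref{samplcond} absorbs the logarithms (reduce WLOG to $\delta\le1$). In particular, your explanation that the exponent $\delta/2$ arises from balancing a cutoff at $r\asymp M^{-\delta/2}$ is not correct --- no such balancing occurs in this argument; the natural cutoff is $r\asymp 1/M$ and $M^{-\delta/2}$ (and the $\log(n)^{1/2}$ in the complex case) is simply slack in the stated error term. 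These are fixable slips rather than a wrong approach, but since they sit exactly in what you identify as the main work, the proof is not complete as written.
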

\begin{proof}
    It follows from  \cite[equation (33),(34)]{OT} in the proof of \cite[Proposition 2.8]{OT}.
\end{proof}

\subsection{Geometry of numbers}
Let $\Gamma_n$ be the lattice of integral polynomials of degree less than or equal to $n$. Suppose that $\mu$ is arithmetic and H\"older with support inside $\Sigma$,
and let
\begin{equation}\label{defKn}
K_n:=\left\{ p(x)\in \mathbb{R}[x]: \deg(p)\leq n, \frac{\log |p(z)|}{n} \leq U_{\mu}(z) \text{ for every } z\in \mathbb{C}\right\}.
\end{equation}
$K_n$ is a symmetric, convex region inside the vector space of polynomials with real coefficients and degree less than or equal to $n$. The successive minima of $K_n$ on $\Gamma_n$ are defined by setting the $k$-th successive minimum $\lambda_k$ to be the infimum of the numbers $\lambda$ such that $\lambda K_n$ contains $k+1$ linearly-independent vectors of $\Gamma_n$.
We have $0 < \lambda_0\leq \lambda_1 \leq \dots \leq \lambda_n <\infty$. Minkowski's second theorem states that 
\begin{equation}\label{minksec}
  \frac{2^{n+1}\vol(K_n)^{-1}}{(n+1)!} \leq  \lambda_0\lambda_1\dots \lambda_n\leq 2^{n+1}\vol(K_n)^{-1}.
\end{equation}
We denote a fixed associated successive minima basis of polynomials by
\begin{equation}\label{minimabasis}
  B_n:=\{p_0,\dots,p_n\}  
\end{equation}

We cite some propositions from~\cite{OT}.
\begin{proposition}[Proposition 2.10~\cite{OT}]\label{vol_K_n}
We have
\[
\prod_{i=0}^{n}\lambda_i^{-1} \gg 2^{-n}\vol(K_n) \gg n^{-cn^{2-\frac{\delta}{6}}}e^{\frac{n^2}{2}I(\mu)}
\]
for some constant $C,\delta>0$.
\end{proposition}

\begin{proposition}[Proposition 2.13~\cite{OT}]\label{lowerd} For every $d\leq n$, we
 have 
\[
\prod_{i=0}^{d} \lambda_i^{-1} \leq (d+1)! e^{\frac{(2n-d)(d+1)}{2}I(\mu)}.
\]
\end{proposition}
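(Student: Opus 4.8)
The plan is to bound the partial product $\prod_{i=0}^{d}\lambda_i$ from \emph{below} by feeding a ``generalized Wronskian'' of the first $d+1$ successive minima vectors into the arithmetic inequality~\eqref{multi}. Write $p_0,\dots,p_d$ for the first $d+1$ polynomials of the fixed successive minima basis $B_n$ from~\eqref{minimabasis}; they are $\mathbb Z$-linearly independent integer polynomials of degree at most $n$, and since $p_i\in\lambda_iK_n$ the definition~\eqref{defKn} of $K_n$ gives $|p_i(z)|\le\lambda_i e^{nU_\mu(z)}$ for all $z\in\mathbb C$ and $0\le i\le d$. First I would form
\[
R(x_0,\dots,x_d):=\det\big[p_i(x_j)\big]_{0\le i,j\le d}\in\mathbb Z[x_0,\dots,x_d].
\]
Being alternating, $R$ is divisible by the Vandermonde polynomial $V(x_0,\dots,x_d)=\prod_{0\le i<j\le d}(x_j-x_i)$, and dividing out the monic factors $x_j-x_i$ one at a time shows the quotient $S:=R/V$ again has integer coefficients.

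Next I would check that $S\not\equiv 0$. This is the one genuinely structural point, and it is where linear independence of the $p_i$ enters: since no nontrivial linear combination of $p_0,\dots,p_d$ vanishes identically, the image of $x\mapsto(p_0(x),\dots,p_d(x))$ spans $\mathbb C^{d+1}$, hence contains a basis; choosing $x_0,\dots,x_d$ realizing such a basis gives $R(x_0,\dots,x_d)\ne 0$, so $R\not\equiv 0$ and $S\not\equiv 0$. Since $\mu$ is arithmetic it satisfies~\eqref{multi} (applied with $d+1$ variables), so $\int\log|S|\,d\mu^{\otimes(d+1)}\ge 0$. As $\mu$ is H\"older it has finite logarithmic energy, so the integrals of $\log|R|$ and $\log|V|$ against $\mu^{\otimes(d+1)}$ are finite and
\[
0\le\int\log|S|\,d\mu^{\otimes(d+1)}=\int\log|R|\,d\mu^{\otimes(d+1)}-\int\log|V|\,d\mu^{\otimes(d+1)}.
\]

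What remains is bookkeeping. Fubini and the definition of $I(\mu)$ give
\[
\int\log|V|\,d\mu^{\otimes(d+1)}=\binom{d+1}{2}I(\mu)=\frac{d(d+1)}{2}I(\mu),
\]
while expanding the determinant and using $|p_i(z)|\le\lambda_i e^{nU_\mu(z)}$ gives
\[
|R(x_0,\dots,x_d)|\le(d+1)!\,\Big(\prod_{i=0}^{d}\lambda_i\Big)e^{\,n\sum_{j=0}^{d}U_\mu(x_j)},
\]
so that, using $\int U_\mu\,d\mu=I(\mu)$ in each coordinate,
\[
\int\log|R|\,d\mu^{\otimes(d+1)}\le\log\big((d+1)!\big)+\sum_{i=0}^{d}\log\lambda_i+n(d+1)I(\mu).
\]
Substituting into the displayed inequality yields $\sum_{i=0}^{d}\log\lambda_i\ge-\log((d+1)!)-\big(n(d+1)-\tfrac{d(d+1)}{2}\big)I(\mu)$, i.e.
\[
\prod_{i=0}^{d}\lambda_i^{-1}\le(d+1)!\,e^{\frac{(2n-d)(d+1)}{2}I(\mu)},
\]
which is the claim. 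The cases $d=0$ (just $0\le\int\log|p_0|\,d\mu\le\log\lambda_0+nI(\mu)$) and $d=1$ (using $(p_0(x)p_1(y)-p_0(y)p_1(x))/(x-y)$) are the model instances; for general $d$ I expect the only real obstacle to be the non-vanishing and integrality of $S$, together with making sure the multivariable inequality~\eqref{multi} is legitimately available for $\mu$ — both of which hold in the present setting.
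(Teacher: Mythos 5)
Your proposal is correct and is essentially the argument the paper relies on: as the introduction notes, \eqref{multi} is used precisely to bound Minkowski's successive minima, and the cited proof runs through the same nonzero integer polynomial $\det[p_i(x_j)]/\prod_{i<j}(x_j-x_i)$ built from the minima basis \eqref{minimabasis}, the bound $|p_i(z)|\le \lambda_i e^{nU_\mu(z)}$ coming from $p_i\in\lambda_i K_n$, and the identity $\int\log|V|\,d\mu^{\otimes(d+1)}=\tfrac{d(d+1)}{2}I(\mu)$. Your bookkeeping reproduces exactly the factor $(d+1)!$ from the permutation expansion and the exponent $n(d+1)-\tfrac{d(d+1)}{2}=\tfrac{(2n-d)(d+1)}{2}$, so nothing further is needed.
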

\begin{proposition}[Proposition 2.14~\cite{OT}]\label{up}
    Suppose that $\mu$ is arithmetic and H\"older with support inside $\Sigma$. We have
\[
\lambda_n \leq   n^{cn^{1-\delta}}. 
\]
for some positive constants $\delta,C>0$.
\end{proposition}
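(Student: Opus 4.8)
The plan is to bound $\lambda_n$ from above by exhibiting $n+1$ linearly independent elements of $\Gamma_n$ inside the dilate $n^{cn^{1-\delta}}K_n$; since $K_n$ is convex and symmetric this gives $\lambda_n\le n^{cn^{1-\delta}}$ at once. Call a monic $h\in\mathbb Z[x]$ of degree $n$ \emph{$\mu$-modelling} if $\tfrac1n\log|h(z)|\le U_\mu(z)+\varepsilon_n$ for all $z\in\mathbb C$, with a power-saving error $\varepsilon_n\ll n^{-\eta}\log n$ (some $\eta>0$ depending on the H\"older exponent of $\mu$). For such an $h$, writing $\gamma(h):=\inf\{t>0:h\in tK_n\}$, one has
\[
\gamma(h)=\sup_{z\in\mathbb C}|h(z)|\,e^{-nU_\mu(z)}=\sup_{z\in\mathbb C}\exp\!\big(n(\tfrac1n\log|h(z)|-U_\mu(z))\big)\le e^{n\varepsilon_n}\le n^{\,cn^{1-\eta}},
\]
the supremum being finite because $\deg h\le n$ and $U_\mu(z)-\log|z|\to0$ as $z\to\infty$. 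Taking $\delta=\eta$, every $\mu$-modelling $h$ lies in $n^{cn^{1-\delta}}K_n$, so it suffices to produce $n+1$ linearly independent $\mu$-modelling monic polynomials of degree $n$.

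For these I would run the construction underlying Proposition~\ref{emperical}: fix a generic symmetric sampling $\mathcal Z$ of size $n$ according to $\mu$, so that $U_{\mathcal Z}$ agrees with $U_\mu$ up to a power-saving error (Proposition~\ref{emperical}; the term $\min(\tfrac1n\log|z-\hat z|,0)$ is $\le0$ and so only helps the one-sided bound we need), and round $p_{\mathcal Z}$ to monic integer polynomials $h$ all of whose roots are exponentially close to $\mathcal Z$. By the resultant/transfer estimates at the heart of the method (these are the same inputs that give Theorem~\ref{main2}), there are at least $e^{o(n^2)}$ such $h$, and since the root perturbation is exponentially small in $n$ while $\mathcal Z$ is $n^{-A}$-separated, each of them satisfies $\tfrac1n\log|h(z)|\le U_\mathcal Z(z)+o(1/n)\le U_\mu(z)+2\varepsilon_n$, i.e.\ each is $\mu$-modelling. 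In particular their number is $\gg n+1$.

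It remains to select $n+1$ linearly independent ones. As the $h$'s are all monic of degree $n$ they lie in the affine hyperplane $\{[x^n]p=1\}$, and independence of some $n+1$ of them is equivalent to their pairwise differences spanning the $n$-dimensional space $\{[x^n]p=0\}$. If this failed, all of the $h$'s would lie in a single proper rational affine subspace, hence satisfy $L(h)=c$ for one fixed nonzero integer linear functional $L$ and one integer $c$; but this is impossible because the family is far too spread out — the coefficients of $h$, equivalently the power sums of its roots, vary over intervals of length $\gg1$ as $\mathcal Z$ and the rounding vary (already $[x^{n-1}]h=-\sum_i\beta_i(h)$ does), so no nonzero integer functional can be constant on it. Choosing $n+1$ independent $h$'s among them completes the proof.

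The main obstacle is the second step: producing the large family of $\mu$-modelling integer polynomials with a genuine power-saving control on $\tfrac1n\log|h|-U_\mu$, which is precisely the technical core of the whole approach (and, in \cite{OT}, is exactly the construction bootstrapped together with Proposition~\ref{emperical}). Note that one cannot shortcut this by using the monomial staircase $\{x^a Q^b\}$ adapted to a single lower-degree approximant $Q$: for a general arithmetic $\mu$ — for instance Serre's measure in Corollary~\ref{serre_bound}, where $U_\mu$ is negative on part of $\mathrm{supp}(\mu)$ — the gauge $\gamma(x^d)=\sup_z|z|^d e^{-nU_\mu(z)}$ is exponentially large for small $d$, so one genuinely needs $\mu$-adapted polynomials of every relevant degree, which is why the argument proceeds through the abundance of full-degree $\mu$-modelling polynomials rather than through an explicit triangular basis.
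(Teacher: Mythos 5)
Your opening computation is fine: the gauge of $K_n$ from \eqref{defKn} is $\sup_{z}|h(z)|e^{-nU_\mu(z)}$, so any monic degree-$n$ integer polynomial $h$ with $\tfrac1n\log|h(z)|\le U_\mu(z)+\varepsilon_n$ for all $z$ lies in $e^{n\varepsilon_n}K_n$, and producing $n+1$ linearly independent such polynomials would indeed give $\lambda_n\le e^{n\varepsilon_n}$. But note this is not a reduction, it is a restatement of the proposition, and the step where you produce the polynomials is exactly where the argument fails. You cannot ``round $p_{\mathcal Z}$ to monic integer polynomials all of whose roots are exponentially close to $\mathcal Z$'' as a black box: coefficientwise rounding perturbs $p_{\mathcal Z}$ by a polynomial whose size near the support is of order $\max(1,|z|)^{n}$, which is enormous compared with $e^{nU_\mu(z)}$ wherever $U_\mu\le 0$ (which happens on part of the support in the cases of interest, as you yourself observe). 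Making that correction small pointwise is precisely what the proof of the lower bound of Theorem~\ref{main2} does, by expanding the correction in the successive-minima basis $B_{n-1}$ of \eqref{minimabasis} and invoking the bound $\lambda_i\le n^{cn^{1-\delta}}$ --- that is, Proposition~\ref{up} itself. The same objection applies to your appeal to ``the resultant/transfer estimates \dots the same inputs that give Theorem~\ref{main2}'': in this paper, and likewise in \cite{OT} where the analogous equidistribution/counting theorem uses Proposition 2.14, the abundance of integer polynomials modelling $\mu$ is downstream of this successive-minima bound, so invoking that machinery here is circular. The paper itself offers no proof (it cites \cite[Proposition 2.14]{OT}), but whatever that proof is, it must be independent of the counting theorem.

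Concretely there are two gaps. First, the existence of even one monic degree-$n$ integer polynomial with $\tfrac1n\log|h(z)|\le U_\mu(z)+O(n^{-\eta}\log n)$ uniformly in $z$ is never established, and this is the entire content of the statement; you flag it as ``the main obstacle,'' but flagging it does not discharge it, and no route around the circularity is proposed. Second, the linear-independence step is only heuristic: a rational affine hyperplane $\{L(h)=c\}$ contains infinitely many lattice points, so ``the coefficients vary over intervals of length $\gg1$'' does not preclude the whole family from lying in one; ruling this out requires a quantitative spanning argument (for instance exhibiting $n+1$ specific members whose differences visibly span degree $\le n-1$), which you do not give. On the positive side, your gauge identity, the finiteness of the supremum, and the remark that a monomial or degree-staircase family cannot work because $U_\mu$ may be negative on the support (so $\gamma(x^d)$ is exponentially large for small $d$) are all correct and relevant --- but they only explain why the easy route fails; they do not supply the construction the proposition asserts.
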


\begin{proposition}[Proposition 2.16~\cite{OT}]\label{chpolreal}
 Given a monic polynomial with real coefficients $p(x)$ of degree $n$, $\rho>0$, and an even integer $n^{\varepsilon}\leq m\leq n,$ there exists a monic polynomial with real coefficients  $Q_m(x)$ such that $\deg(Q_m)=m$ and each of the following hold.
    \begin{enumerate}
    \item The degree $i$  coefficient of the product $pQ_m$  is an even integer for  $n\leq i\leq n+m-1$.
    \item \label{2deform} Take $X$ to be the set of roots of $Q_m$. Then $X$ is a subset of $\Sigma_{\mathbb{R}}(\rho)$ and symmetric about the real axis. 
    \item \label{4deform}We have 
    \begin{equation*}
\frac{\log |Q_{m}(z)|}{m} =\frac{\min_{\alpha\in X}\log|z-\alpha|}{m}+  U_{\mu_{eq}}(z)+O(m^{-\delta})
\end{equation*}
for  all $z\in \mathbb{C}$ and some $\delta>0,$ where $\mu_{eq}$ is the equilibrium measure for $\Sigma_{\mathbb{R}}(\rho/10).$

    \item~\label{3deform} Given any root $\alpha$ of $Q_m$ and any $\alpha'$ that is either a root of $p$  or a boundary point of $\Sigma_{\mathbb{R}}(\rho)$, we have
\(
n^{-3} \leq |\alpha-\alpha'|.
\)
\end{enumerate}
\end{proposition}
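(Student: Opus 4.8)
The plan is to build an \emph{ideal} polynomial $Q_m^{(0)}$ satisfying (2), (3), (4) with room to spare, and then to make a tiny Diophantine perturbation of it producing $Q_m$ that in addition satisfies the congruence (1), the perturbation being so small that (2)--(4) survive. For the ideal polynomial: since arithmetic measures supported on $\Sigma$ exist we have $\mathrm{cap}(\Sigma)\ge 1$, so the thickening $K:=\Sigma_{\mathbb R}(\rho/10)$ has $\mathrm{cap}(K)>1$. Its equilibrium measure $\mu_{eq}$ is H\"older (its density blows up at worst like an inverse square root near the finitely many endpoints and corners of $\partial K$) and is arithmetic, because $U_{\mu_{eq}}(z)=g_K(z,\infty)+\log\mathrm{cap}(K)\ge 0$ for every $z\in\mathbb C$. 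Fix sampling parameters $M',L'$ for degree $m$ as in~\eqref{samplcond} so that the error term of Proposition~\ref{emperical} is $O(m^{-\delta})$ for some $\delta>0$, and let $X^{(0)}$ be a generic symmetric sampling of $\mu_{eq}$ inside $K$ with parameters $(m,M',L';A')$, chosen so that in addition $|\alpha-\alpha'|\gg n^{-3}$ for every $\alpha\in X^{(0)}$ and every root $\alpha'$ of $p$; this last requirement is admissible because the $\mu_{eq}$-measure of the $O(n^{-3})$-neighbourhood of the at most $n$ roots of $p$ is $O(n\cdot n^{-3/2})=o(1)$ by the H\"older bound. Put $Q_m^{(0)}:=\prod_{\alpha\in X^{(0)}}(x-\alpha)$: it is monic of degree $m$ with real coefficients and symmetric root set, its roots lie in $\Sigma_{\mathbb R}(\rho/10)\subset\Sigma_{\mathbb R}(\rho)$ (so (2) holds), at distance $\ge 9\rho/10\gg n^{-3}$ from $\partial\Sigma_{\mathbb R}(\rho)$ and $\gg n^{-3}$ from the roots of $p$ (so (3) holds), and Proposition~\ref{emperical} applied with $\mu=\mu_{eq}$ gives (4) for $Q_m^{(0)}$ verbatim.

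Next, the congruence (1) is a lattice condition. Writing $Q=x^m+\sum_{j=0}^{m-1}q_jx^j$ and using that $p$ is monic of degree $n$, the degree-$(n+k)$ coefficient of $pQ$ equals $q_k+\sum_{j>k}p_{\,n+k-j}q_j$ for $0\le k\le m-1$, so the linear map $(q_0,\dots,q_{m-1})\mapsto\big([x^{n+k}](pQ)\big)_{k=0}^{m-1}$ is upper unitriangular and has determinant $1$. Hence the set of monic real $Q$ of degree $m$ satisfying (1) is a coset $v+\Lambda$ of a lattice $\Lambda\subset\mathbb R[x]_{\le m-1}\cong\mathbb R^m$ of covolume $2^m$, and we must find a point of $v+\Lambda$ in the region where (2), (3), (4) still hold.

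For the perturbation it is convenient to work with root positions. Freezing the symmetry type of $X^{(0)}$ and letting each root move by at most $\tau:=n^{-4}$ — so the perturbed configuration stays a generic symmetric sampling of $\mu_{eq}$ with the same parameters and still obeys (3) — gives an $m$-parameter family $X(\vec t)$, $\|\vec t\|\le\tau$, with $Q_{X(\vec t)}$ monic of degree $m$ satisfying (2), (3), (4), and a smooth map $\Phi(\vec t):=\big([x^{n+k}](pQ_{X(\vec t)})\big)_k\bmod(2\mathbb Z)^m$ into $\mathbb T^m:=(\mathbb R/2\mathbb Z)^m$. By the calculation of the previous paragraph and the chain rule, the Jacobian of $\Phi$ is the Vandermonde matrix of the roots times the unitriangular factor of determinant $1$ (and, in the complex case, a bounded factor from the real parametrisation of the conjugate pairs), so
\[
\log|\det D\Phi|=\sum_{i<j}\log|\alpha_i-\alpha_j|=\tfrac{m^2}{2}I(\mu_{eq})+O(m^{2-\delta})\ge c\,m^2
\]
for a constant $c>0$, the point being that $I(\mu_{eq})=\log\mathrm{cap}(K)>0$. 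Therefore $\int_{\|\vec t\|\le\tau}|\det D\Phi|\,d\vec t\gg\tau^m e^{cm^2}=e^{cm^2-O(m\log n)}$, which exceeds $\mathrm{vol}(\mathbb T^m)=2^m$ by a super-exponential factor as soon as $m\ge n^{\varepsilon}$. Since $\Phi$ is a submersion at every point of this (connected) domain, a covering argument then yields $\vec t$ with $\|\vec t\|\le n^{-4}$ and $[x^{n+k}](pQ_{X(\vec t)})\in 2\mathbb Z$ for all $k$. Set $Q_m:=Q_{X(\vec t)}$: (1) holds by construction; (2) and (3) hold because the roots were moved by $o(n^{-3})$ from $X^{(0)}$ and still form a symmetric set inside $\Sigma_{\mathbb R}(\rho/10)$; and (4) holds because $X(\vec t)$ is still a generic symmetric sampling of $\mu_{eq}$, to which Proposition~\ref{emperical} applies. (The hypothesis that $m$ is even is what lets us choose a conjugation-invariant configuration of $m$ roots with the count conditions of Definition~\ref{genericsample} in symmetric cells.) Equivalently, Step 3 can be phrased as geometry of numbers: the convex body of $\mu_{eq}$-subordinate monic polynomials of degree $m$ has volume $e^{\frac{m^2}{2}I(\mu_{eq})+O(m^{2-\delta})}$ by the argument behind Proposition~\ref{vol_K_n}, which dwarfs $\mathrm{covol}(\Lambda)=2^m$, and Minkowski's theorem supplies the lattice point.

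The hard part is exactly this last step: the correction must land \emph{inside} the admissible region, which is thin in many directions (the monic--Chebyshev direction is essentially pinned), while $\Lambda$ is skewed by the — a priori large — coefficients of $p$. One must check that the super-exponential surplus $e^{\Omega(m^2)}$, available only because $\mathrm{cap}(\Sigma_{\mathbb R}(\rho/10))>1$ (equivalently $I(\mu_{eq})>0$), genuinely overrides this eccentricity and forces the coset $v+\Lambda$ to meet the region — i.e. that the image of $\Phi$ covers all of $\mathbb T^m$ rather than a positive-measure sliver — and still leaves room for the non-convex conditions (confinement of the roots to $\Sigma_{\mathbb R}(\rho)$ and $n^{-3}$-separation from the roots of $p$). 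It is precisely the bound $\log|\det D\Phi|=\tfrac{m^2}{2}I(\mu_{eq})+o(m^2)$ together with $m\ge n^{\varepsilon}$ that makes this go through, and carrying it out carefully is the bulk of the proof.
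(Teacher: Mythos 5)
Your set-up (sampling roots of $Q_m$ generically from the equilibrium measure of $\Sigma_{\mathbb R}(\rho/10)$ and invoking Proposition~\ref{emperical} for item (3), plus the observation that the map from the coefficients $q_0,\dots,q_{m-1}$ of a monic $Q$ to the degree-$n,\dots,n+m-1$ coefficients of $pQ$ is unitriangular) is sound, but the step that produces the parity condition (1) has a genuine gap, and as parameterized it cannot work. First, the quantitative problem: you only allow each root to move by $\tau=n^{-4}$. The coefficient of $x^{n+m-1}$ in $pQ$ equals $p_{n-1}-\sum_{\alpha\in X}\alpha$, and under your perturbation it varies by at most $2m\tau\le 2n^{-3}<2$. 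So unless the initial sample happens to place $p_{n-1}-\sum\alpha$ within $O(n^{-3})$ of an even integer, \emph{no} admissible $\vec t$ makes that coefficient even: the image of your map $\Phi$ is trapped in a slab of width $o(1)$ in that coordinate and certainly does not cover $\mathbb T^m$. The enormous Jacobian determinant $e^{\frac{m^2}{2}I(\mu_{eq})+o(m^2)}$ comes entirely from violent stretching in the directions of the low-degree coefficients; the singular value in the $e_1$-direction stays of size $O(m)$ per unit root motion, and volume says nothing about coverage direction by direction.

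Second, even ignoring the scale of $\tau$, the logical step ``$\int_{\|\vec t\|\le\tau}|\det D\Phi|\,d\vec t>2^m$, $\Phi$ a submersion, hence the coset is hit'' is not valid: by the area formula that integral only bounds the \emph{average} number of preimages, so the image has positive measure but may be a small region covered many times; a submersion of an $m$-ball into $\mathbb T^m$ is a local diffeomorphism with open, generally proper, image. Likewise Minkowski's (or Blichfeldt's) theorem gives a nonzero lattice point in a symmetric convex body, or two points of a large set differing by a lattice vector — it does not give a point of a prescribed coset $v+\Lambda$ inside a prescribed region, which is what (1) demands. You flag this covering statement yourself as ``the hard part,'' but it is exactly the content of the proposition and it is false at your chosen scale. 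The cited proof in \cite{OT} avoids a global volume argument altogether: it exploits the unitriangular structure to fix the parities of the top coefficients of $pQ_m$ one (or one conjugate pair) at a time, choosing/adjusting roots by an intermediate-value argument over macroscopic ranges, with the condition $d_{\Sigma_{\mathbb R}(\rho/10)}>1$ entering to keep the later adjustments exponentially small so that the equidistribution estimate (3) and the separation (4) survive. To salvage your route you would have to allow aggregate root displacements of order $1$ in the $e_1$-direction (and then re-prove (3) and (4) for the moved configuration), and replace the volume/covering heuristic by an actual surjectivity argument — at which point you have essentially been forced back to the sequential construction.
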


\section{Counting Integral Polynomials}\label{counting}
\subsection{Upper Bound}\label{Upper bound}

In this section, we give an  upper bound on the number of monic, integral, irreducible polynomials $h$ satisfying the conditions of
Theorem~\ref{main2}. We recall those conditions.  Suppose that $\mathcal{Z}=\{\alpha_1,\dots,\alpha_n \}$ is sampled generically according to $\mu$ with parameters $(n,M,L;A).$ Suppose that $h$ is an
irreducible polynomial  of degree $n+m$ with complex roots $\{\beta_1(h),\dots, \beta_{n+m}(h) \}$ such that  for every $1 \leq i\leq n$,
\[
|\beta_i(h)-\alpha_i|<d_{\Sigma}^{-m(1+O(n^{-\eta}))},
\]
if $\alpha_i\in \mathbb{R}$ then $\beta_i(h)\in \mathbb{R}$, 
and $\beta_i(h)\in \Sigma$ for every $1 \leq i\leq n+m$ where $m\ge n^{1-\delta}$ for some $\delta>0$.
\begin{proposition}\label{upp}
The number of integral polynomials $h$ that satisfies the conditions of Theorem~\ref{main2} listed above is less than
\[
e^{\frac{n^2}{2}I(\mu)+O(mn+m^2+n^{2-\delta})}
\]  
for some positive constant $\delta>0$.
\end{proposition}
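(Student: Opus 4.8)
The plan is to count the polynomials $h$ by viewing them as lattice points in $\Gamma_{n+m}$ that are constrained to lie in a thin convex region, and then to apply the discrete John's theorem of Tao–Vu~\cite[Theorem 1.6]{Tao} together with the bounds on successive minima from Propositions~\ref{vol_K_n}, \ref{lowerd}, and \ref{up}. First I would observe that any admissible $h$ factors (as a real polynomial, approximately) as a product of a degree-$n$ part whose roots are the perturbations $\beta_i(h)$ of the sample points $\alpha_i$ (for $1\le i\le n$) and a degree-$m$ ``tail'' whose roots $\beta_{n+1}(h),\dots,\beta_{n+m}(h)$ lie in $\Sigma$. Since $|\beta_i(h)-\alpha_i| < d_\Sigma^{-m(1-O(n^{-\eta}))}$ is extremely small, the first factor is essentially $p_{\mathcal Z}$; the empirical potential estimate in Proposition~\ref{emperical} then gives $\frac{\log|h(z)|}{n+m} \le U_\mu(z) + O(m^{-\delta}) + (\text{small})$ for all $z$, i.e. $h$ lies (after a harmless rescaling absorbing the error) in a bounded dilate $CK_{n+m}$ of the convex body $K_{n+m}$ defined in~\eqref{defKn}. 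So every admissible $h$ is an integral point of $\Gamma_{n+m}$ inside $O(1)\cdot K_{n+m}$.

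Next I would bound the number of such lattice points. A crude bound is $\vol(K_{n+m})$ times a factor accounting for the dimension, but that is far too weak; the point is that the $\beta_i(h)$ for $i\le n$ are pinned down to within $d_\Sigma^{-m(1+o(1))}$, which forces $h$ to lie not just in $K_{n+m}$ but in a much thinner slab in the directions ``transverse'' to the fixed part $p_{\mathcal Z}$. Concretely, I would write $h = p_{\mathcal Z}\cdot g + (\text{error})$ and note that the constraint on the roots $\beta_i(h)$, $i \le n$, translates into the statement that $h$ lies within distance $\ll d_\Sigma^{-m(1-o(1))}$ of the $m$-dimensional affine subspace $p_{\mathcal Z}\cdot \mathbb{R}[x]_{\le m}$. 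Intersecting $O(1)K_{n+m}$ with this slab and applying the discrete John's theorem: the number of lattice points is $\ll \prod_{i=0}^{n+m}\max(1,\lambda_i^{-1})$ up to a $c^{n+m}$ factor, but because $h$ is confined to an $m$-dimensional tube we only pick up the contribution of the $m+1$ smallest successive minima of the relevant sublattice, giving a bound of the shape $(m+1)! \cdot e^{\frac{(2(n+m)-m)(m+1)}{2} I(\mu)}\cdot e^{O(\dots)}$ via Proposition~\ref{lowerd} applied with $d = m$; together with the volume lower bound of Proposition~\ref{vol_K_n} this should assemble into $e^{\frac{n^2}{2}I(\mu)+O(mn+m^2+n^{2-\delta})}$ after collecting the error terms (the $n^{2-\delta}$ from the $n^{-cn^{2-\delta/6}}$ in Proposition~\ref{vol_K_n} and from the empirical potential error $M^2 L \log n / n + M^{-\delta/2}\log n$, the $mn+m^2$ from the explicit exponent in Proposition~\ref{lowerd} and from $\log\lambda_{n+m} \ll (n+m)\log(n+m)$ via Proposition~\ref{up}).

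The main obstacle, I expect, is making the reduction to the thin slab precise and honest: one must show that the tiny root-localization radius $d_\Sigma^{-m(1-o(1))}$ really does pin $h$ to within the stated distance of the subspace $p_{\mathcal Z}\cdot\mathbb{R}[x]_{\le m}$, controlling how a perturbation of the roots propagates to the coefficients (which requires a lower bound on $|p_{\mathcal Z}'(\alpha_i)|$ — available from the minimal spacing $|z-w|\gg n^{-A}$ in the generic-sampling hypothesis, Definition~\ref{genericsample}) and then arguing that two admissible $h$ differing on those $n$ coordinates by less than a lattice-dependent threshold must in fact be equal. Equivalently, one needs the transverse cross-section of $K_{n+m}$ to be not too small so that only $O(c^n)$-many lattice points fit transversally. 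The bookkeeping of error terms — ensuring that the accumulated $O(\cdot)$'s from the potential approximation, from Minkowski's second theorem, and from John's theorem all fit inside $O(mn+m^2+n^{2-\delta})$ — is the other place where care is needed, but it is routine once the geometric picture is set up.
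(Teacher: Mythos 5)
Your first paragraph is essentially the paper's Lemma~\ref{hinK}: using Proposition~\ref{emperical} for the degree-$n$ part and the trivial bound $\frac{\log|Q_m(z)|}{m}\le U_\mu(z)+O(1)$ for the degree-$m$ tail (whose roots lie in $\Sigma$), one gets $h\in e^{C(m+n^{1-\delta'})}K_{n+m}$, and the dilation factor only costs $e^{O((n+m)(m+n^{1-\delta'}))}$, which is admissible. The problem is your second step. The ``crude'' bound you dismiss as far too weak is in fact exactly what the paper uses and is sufficient: applying the Tao--Vu discrete John theorem to $\Gamma_{n+m}\cap e^{C(m+n^{1-\delta'})}K_{n+m}$ and bounding $\prod_{i\le d}\lambda_i^{-1}$ by Proposition~\ref{lowerd} (with $d$ up to $n+m$, the worst case) gives at most $(c(n+m))^{9(n+m)/2}e^{\frac{(n+m)^2}{2}I(\mu)}$ times the dilation factor, and since $\frac{(n+m)^2}{2}I(\mu)=\frac{n^2}{2}I(\mu)+O(mn+m^2)$, this is already $e^{\frac{n^2}{2}I(\mu)+O(mn+m^2+n^{2-\delta})}$. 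The allowed error $O(mn+m^2)$ is precisely what absorbs the loss from counting in dimension $n+m$ without using the root localization at all; no volume lower bound (Proposition~\ref{vol_K_n}) enters the upper bound.

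Your thin-slab reduction is not just unnecessary, it is wrong. The localization $|\beta_i(h)-\alpha_i|<d_\Sigma^{-m(1-o(1))}$ does \emph{not} pin $h$ to within distance $d_\Sigma^{-m(1-o(1))}$ of the subspace $p_{\mathcal Z}\cdot\mathbb{R}[x]_{\le m}$ in coefficient space: near a sample point $\alpha_i$ one has $|p_{\mathcal Z}'(\alpha_i)|$ of size roughly $e^{nU_\mu(\alpha_i)}$, which is exponentially large in $n$, so writing $h=p_{\mathcal Z}Q_m+r$ the constraint on the roots only forces $|r(z)|\lesssim |z-\hat z|\,e^{nU_\mu(z)}d_{\Sigma}^{O(m)}$ near the roots — i.e.\ $r$ ranges over an exponentially large region comparable to (a mild shrinking of) $e^{nU_\mu}$ times the unit body, whose lattice-point count is itself $e^{\frac{n^2}{2}I(\mu)+o(n^2)}$. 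This is exactly how the matching lower bound in Section~\ref{lower bound} is built (perturbations $h_n+\sum_i 4n_ip_i$ with $|n_i|$ up to roughly $d_{\Sigma_1}^{n^{1-\delta_2}}/\lambda_i$ still keep every root inside its disk, by Rouch\'e). Consequently your intermediate bound of shape $(m+1)!\,e^{\frac{(2(n+m)-m)(m+1)}{2}I(\mu)}=e^{O(mn+m^2)}$, coming from Proposition~\ref{lowerd} with $d=m$, would contradict the theorem's lower bound $e^{\frac{n^2}{2}I(\mu)+o(n^2)}$ whenever $I(\mu)>0$; the count is genuinely governed by all $n+m$ successive minima, not by $m+1$ of them. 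Replace the slab argument by the direct count described above and the proof closes.
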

\begin{proof}
    By Lemma~\ref{hinK}, we know that $h\in e^{C(m+n^{1-\delta'})}K_{n+m} \cap \Gamma_{m+n}$. By Proposition~\ref{Tao}, we have
    \[
    \begin{split}
    |e^{C(m+n^{1-\delta'})}K_{n+m} \cap \Gamma_{m+n}| &\leq e^{C(n+m)(m+n^{1-\delta'})}(c(n+m))^{9(n+m)/2} e^{\frac{(m+n)^2}{2}I(\mu)}
    \\
    &\leq 
    e^{\frac{n^2}{2}I(\mu)+O(mn+m^2+n^{2-\delta})}
     \end{split}
    \]
    
for some positive constant $\delta>0$.
\end{proof}
 
\begin{lemma}\label{hinK}
Let $K_{n+m}$ and $h$ be as above. Then\[
h\in e^{C(m+n^{1-\delta'})}K_{n+m}
\]
for some constants  $C,\delta'>0$.
\end{lemma}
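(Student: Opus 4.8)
The plan is to establish the uniform pointwise bound
\[
\log|h(z)|\le (n+m)\,U_\mu(z)+C\bigl(m+n^{1-\delta'}\bigr)\qquad\text{for all }z\in\mathbb{C},
\]
which, because $h$ has real coefficients and $\deg(h)=n+m$, is by~\eqref{defKn} exactly the assertion $e^{-C(m+n^{1-\delta'})}h\in K_{n+m}$. Assuming $h$ is monic, I would factor $h=h_1h_2$ with $h_1(z):=\prod_{i=1}^{n}(z-\beta_i(h))$ collecting the $n$ roots that shadow the sample points and $h_2(z):=\prod_{i=n+1}^{n+m}(z-\beta_i(h))$ the remaining $m$ roots, and bound $\log|h_1|$ and $\log|h_2|$ separately, using that all $n+m$ roots lie in $\Sigma$.

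For $h_2$ a crude uniform estimate suffices. Since $\mu$ is H\"older, a layer-cake computation gives $\int_{|z-w|<1}\log|z-w|\,d\mu(w)\ge -\int_0^1 t^{-1}\mu(\{w:|z-w|<t\})\,dt\ge -C_0$ with $C_0$ depending only on the H\"older data, so $U_\mu$ is bounded below on $\mathbb{C}$; combining this with $\log|z-\beta|=\log|z|+O(1/|z|)=U_\mu(z)+O(1/|z|)$ as $|z|\to\infty$ (uniformly in $\beta\in\Sigma$) and the trivial bound of $\log|z-\beta|$ on bounded sets, one obtains a constant $C_1=C_1(\Sigma,\mu)$ with $\log|z-\beta|\le U_\mu(z)+C_1$ for every $z\in\mathbb{C}$ and $\beta\in\Sigma$. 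Summing over the $m$ roots of $h_2$ gives $\log|h_2(z)|\le m\,U_\mu(z)+mC_1$.

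For $h_1$ the idea is to compare it with $p_{\mathcal{Z}}(z)=\prod_{i=1}^{n}(z-\alpha_i)$, recalling $\log|p_{\mathcal{Z}}(z)|=n\,U_{\mathcal{Z}}(z)$, and then invoke Proposition~\ref{emperical}. Put $\tau:=\max_{1\le i\le n}|\beta_i(h)-\alpha_i|$; since $\tau< d_\Sigma^{-m(1+O(n^{-\eta}))}$ and $m\ge n^{1-\delta}$, the quantity $\tau$ is smaller than any fixed negative power of $n$ for $n$ large, in particular $\tau\ll n^{-A}$ and $\tau<\tfrac12$. By Definition~\ref{genericsample}(3) the points of $\mathcal{Z}$ are $\gg n^{-A}$-separated, so for each fixed $z$ there is at most one index $i_0$ with $|z-\alpha_{i_0}|<\tau$. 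For every other $i$ one has $|z-\alpha_i|\ge\tau$ and $|z-\alpha_i|\gg n^{-A}$, hence $\log|z-\beta_i(h)|\le\log(|z-\alpha_i|+\tau)\le\log|z-\alpha_i|+\tau/|z-\alpha_i|$, and the error terms sum to $O(\tau n^{A+1})=O(1)$; for $i_0$, if present, $\log|z-\beta_{i_0}(h)|\le\log(2\tau)\le 0$. Therefore
\[
\log|h_1(z)|\le\sum_{i\ne i_0}\log|z-\alpha_i|+O(1)=n\,U_{\mathcal{Z}}(z)-\log|z-\alpha_{i_0}|+O(1),
\]
the last term omitted if no $i_0$ exists. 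When $i_0$ exists it is the nearest point of $\mathcal{Z}$ to $z$, i.e.\ $\hat z=\alpha_{i_0}$, and $|z-\alpha_{i_0}|<\tau<1$, so the correction in Proposition~\ref{emperical} satisfies $\min(\tfrac{\log|z-\hat z|}{n},0)=\tfrac{\log|z-\alpha_{i_0}|}{n}$, whence $n\,U_{\mathcal{Z}}(z)=n\,U_\mu(z)+\log|z-\alpha_{i_0}|+O(n^{1-\delta'})$ for some $\delta'>0$ coming from the ranges $n^{\varepsilon}<M\le n^{1/2-\varepsilon}$, $L<n^{1-\varepsilon}M^{-2}$ in~\eqref{samplcond} together with the H\"older exponent; the $\log|z-\alpha_{i_0}|$ terms cancel, leaving $\log|h_1(z)|\le n\,U_\mu(z)+O(n^{1-\delta'})$. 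When no $i_0$ exists the same bound follows at once, using only $\min(\,\cdot\,,0)\le 0$.

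Adding the two estimates yields $\log|h(z)|\le(n+m)U_\mu(z)+mC_1+O(n^{1-\delta'})=(n+m)U_\mu(z)+O(m+n^{1-\delta'})$ uniformly in $z$, which is the claim. The delicate point — and the only real obstacle — is the comparison of $h_1$ with $p_{\mathcal{Z}}$ on a $\tau$-neighbourhood of a sample point: there both $\log|h_1(z)|$ and $n\,U_{\mathcal{Z}}(z)$ carry a logarithmic singularity of equal strength, and reconciling them is precisely the role of the correction term $\min(\tfrac{\log|z-\hat z|}{n},0)$ in Proposition~\ref{emperical}, used in tandem with the $\gg n^{-A}$-separation of $\mathcal{Z}$, which guarantees there is a single offending index $i_0$.
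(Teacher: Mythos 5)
Your proof is correct and follows essentially the same route as the paper: factor $h$ into the $n$ roots shadowing $\mathcal{Z}$ and the $m$ remaining roots, bound each of the latter by $U_\mu(z)+O(1)$ using compactness of $\Sigma$ and a lower bound on $U_\mu$, and control the former via Proposition~\ref{emperical}, with the error $O(m+n^{1-\delta'})$ coming from \eqref{samplcond}. The only difference is cosmetic: the paper observes that the perturbed set $\mathcal{X}=\{\beta_1(h),\dots,\beta_n(h)\}$ is itself a generic sampling with the same parameters and applies Proposition~\ref{emperical} to it directly, whereas you apply the proposition to $\mathcal{Z}$ and absorb the perturbation by an explicit comparison of $\log|h_1|$ with $\log|p_{\mathcal{Z}}|$ near the exceptional index $i_0$ --- same substance, same bound.
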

\begin{proof}
    Let $\mathcal{X}:=\{\beta_1(h),\dots, \beta_{n}(h)\}$. Since $|\beta_i(h)-\alpha_i|<d_{\Sigma}^{-m}$ and $d_{\Sigma}^{-m} \ll n^{-A}$, it is easy to check that $\mathcal{X}$ is also a generic sampling of $\mu $ with the same parameters $(n,M,L;A)$.
    By Proposition~\ref{emperical}, 
    \[
    \frac{\log|p_{\mathcal{X}}(z)|}{n}=U_{\mu}(z)+\min(\log|z-\hat{z}|,0)+C\left(\frac{M^2L\log(n)}{n}+ M^{-\delta/2}\log(n)^{1/2}
\right).
    \]
   Let $Q_m(z):=\prod_{i=1}^m(z-\beta_{n+i}(h))$. Note that $|z-\beta_{n+i}|\leq |z|+C$ for all $i=1,\dots m$ for some fixed constant $C$. Hence,
   \[
   \frac{\log|Q_m(z)|}{m} \leq \log|z|+C
   \]
   By \cite[Lemma 2.1 and Lemma 2.2]{OT},
   \[
   U_{\mu}(z)\geq \max(\log|z|,0)+C_1
   \]
for some constant $C_1$ that depends on $\mu$. Hence,
\[
\frac{\log|Q_m(z)|}{m} \leq U_{\mu}(z)+C_2
\]
for some constant $C_2$. 
Finally, we have
\[
\log|h(z)|= \log|p_{\mathcal{X}}(z)|+\log| Q_m(z)|\leq (m+n)U_{\mu}(z)+C'\left(m+M^2L\log(n)+ nM^{-\delta/2}\log(n)^{1/2}
\right)
\]
for some constant $C'$. The lemma follows from~\eqref{samplcond}, which implies 
\[
M^2L\log(n)+ nM^{-\delta/2}\log(n)^{1/2}=O(n^{1-\delta'})
\] 
for some $\delta'>0$.

\end{proof}

We apply a discrete version of John's theorem theorem due to Tao and Vu~\cite[Theorem 1.6]{Tao} to give an upper bound on $|\Gamma_n\cap K_n|$ in terms of the volume of $K_n$. For a linearly independent tuple of integral polynomials ${\bf{P}}:=(p_0,\dots,p_d)\in \Gamma_n^d$ and  ${\bf{N}}:=(N_0,\dots,N_d)\in \mathbb{Z}^d$, define
\[
{\bf{P}}({\bf{N}}):=\left\{\sum_{i=0}^d n_ip_i: -N_i\leq n_i\leq N_i \right\}\subset \Gamma_n
\]

\begin{proposition}\label{Tao}
    We have
   \[
 |\Gamma \cap K_n|< (cn)^{9n/2} e^{\frac{n^2}{2}I(\mu)}.
\]
    for some constant $c,\delta>0$.
\end{proposition}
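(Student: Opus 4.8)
The plan is to bound $|\Gamma_n\cap K_n|$ by combining the discrete John's theorem of Tao--Vu with the volume estimate for $K_n$ from Proposition~\ref{vol_K_n} and the bounds on partial products of successive minima from Proposition~\ref{lowerd}. First I would invoke \cite[Theorem 1.6]{Tao}: there is a generalized arithmetic progression ${\bf P}({\bf N})$ of some rank $d+1\le n+1$, generated by linearly independent integral polynomials $p_0,\dots,p_d\in\Gamma_n$ and with dimensions $N_0,\dots,N_d$, such that ${\bf P}({\bf N})\subseteq C_1 K_n\cap\Gamma_n$ (for an absolute constant $C_1$, after the standard rescaling) and
\[
|\Gamma_n\cap K_n|\le (C_2 n)^{C_3 n}\,|{\bf P}({\bf N})| = (C_2 n)^{C_3 n}\prod_{i=0}^d(2N_i+1),
\]
with the constants $C_2,C_3$ absolute (the exponent $9n/2$ in the statement will come out of tracking these constants; I expect $C_3=3/2$ from John plus a contribution from the rank, but the precise value is immaterial as long as it is $O(n)$).

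Next I would control $\prod_{i=0}^d(2N_i+1)$. Without loss of generality order the generators so that the minimal vectors are used; since the $p_i$ realize a progression inside $O(1)K_n$, each $N_i$ is comparable (up to a factor polynomial in $n$) to $1/\lambda_i'$ where $\lambda_i'$ is the successive minimum in the direction of $p_i$. More robustly, I would argue directly: the vectors $\{N_i^{-1}p_i\}$ or rather $\{p_0,\dots,p_d\}$ together with the containment ${\bf P}({\bf N})\subseteq O(1)K_n$ force $\prod_{i=0}^d N_i \le (Cn)^{O(n)}\prod_{i=0}^d \lambda_i^{-1}$, because the $p_i$ are linearly independent lattice vectors lying in $O(N_i)^{-1}$-scaled copies of $O(1)K_n$ in their respective directions, so by the definition of successive minima $\lambda_i\ll n^{O(1)} N_i^{-1}$ after reindexing. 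Then Proposition~\ref{lowerd} with $d=n$ gives
\[
\prod_{i=0}^{n}\lambda_i^{-1}\le (n+1)!\,e^{\frac{n(n+1)}{2}I(\mu)}\le (Cn)^{n}\,e^{\frac{n^2}{2}I(\mu)},
\]
using $I(\mu)\le 0$ to absorb the difference between $\frac{n(n+1)}{2}$ and $\frac{n^2}{2}$ into the polynomial factor (or, if $I(\mu)$ could be close to $0$, noting $\frac{n}{2}|I(\mu)|$ is itself $O(n\log n)$ since $\mu$ has bounded support, hence absorbable). Multiplying the Tao--Vu factor $(C_2n)^{C_3 n}$ by $(Cn)^{n}$ and tuning the constant yields $(cn)^{9n/2}e^{\frac{n^2}{2}I(\mu)}$.

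The main obstacle is the middle step: extracting $\prod N_i \le (Cn)^{O(n)}\prod\lambda_i^{-1}$ cleanly from the John-type progression. The subtlety is that Tao--Vu's progression need not be "box-like" aligned with a successive-minima basis, so one cannot simply read off $N_i\asymp\lambda_i^{-1}$ coordinatewise; instead I would use that the $2^{d+1}$ "corner" sums $\sum\pm N_ip_i$ (or a suitable sublattice argument) all lie in $O(1)K_n$, giving a sublattice of $\Gamma_n$ of rank $d+1$ whose fundamental volume is $\gg\prod N_i$ and which is contained in $O(1)K_n$, whence $\prod N_i \ll n^{O(n)}\prod_{i=0}^{d}\lambda_i^{-1}$ by Minkowski's second theorem applied to $K_n$ restricted to the span (or, more simply, by comparing with the full-lattice successive minima via the standard estimate $\lambda_i(\text{sublattice})\ge\lambda_i(K_n,\Gamma_n)$). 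Alternatively, one avoids sublattices entirely by noting that the $p_i$ themselves, being $d+1$ independent vectors with $N_i^{-1}p_i \in O(1)K_n$, satisfy $\lambda_i \ll n^{O(1)} N_i^{-1}$ after sorting, directly from the definition of $\lambda_i$. Either route is routine once set up; everything else is bookkeeping of absolute constants, which I would not grind through here.
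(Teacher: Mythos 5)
Your proposal follows essentially the same route as the paper's proof: invoke Tao--Vu's discrete John theorem, sort the $N_i$ in decreasing order, and deduce $N_i\le\lambda_i^{-1}$ directly from the definition of the successive minima (your ``alternative route avoiding sublattices'' is exactly the paper's argument, and since the paper's invocation of Tao--Vu places $\mathbf{P}(\mathbf{N})$ inside $K_n$ itself, no dilation constants even arise), then apply Proposition~\ref{lowerd} and absorb the factorial and the discrepancy between $\frac{(2n-d)(d+1)}{2}$ and $\frac{n^2}{2}$ into $(cn)^{9n/2}$. The only slip is the sign of $I(\mu)$ --- for an arithmetic measure $I(\mu)\ge 0$, not $\le 0$ --- but your parenthetical fallback (compact support makes $\frac{n}{2}I(\mu)=O(n)$, hence absorbable into $(cn)^{O(n)}$) is precisely what is needed, so this is not a gap.
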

\begin{proof}
By~\cite[Theorem 1.6]{Tao}, there exists a linearly independent tuple of integral polynomials ${\bf{P}}:=(p_0,\dots,p_d)\in \Gamma_n^d$ and  ${\bf{N}}:=(N_0,\dots,N_d)\in \mathbb{Z}^d$,  such that
\[
{\bf{P}}({\bf{N}}) \subset \Gamma \cap K_n
\]
and 
\[
|\Gamma \cap K_n|\leq (cn)^{7n/2} |{\bf{P}}({\bf{N}})|
\]
for some $c>0$. Note that
\[
|{\bf{P}}({\bf{N}})|=\prod_{i=0}^d(2N_i+1)
\]
Suppose that $N_0\geq N_1\geq \dots\geq N_d$ is a decreasing sequence. By definition of Minkowski's minima, since
\(
N_ip_i\in K_n,
\) we have
\(
\lambda_i^{-1}\geq N_i
\)
for every $0\leq i\leq d$. This implies that
\[
|{\bf{P}}({\bf{N}})|\leq \prod_{i=0}^d 3\lambda_i^{-1} \leq 3^{d+1}(d+1)! e^{\frac{(2n-d)(d+1)}{2}I(\mu)},
\]
where we used Proposition~\ref{lowerd} for the last inequality. Therefore,
\[
|\Gamma \cap K_n|<(cn)^{7n/2} 3^{d+1}(d+1)! e^{\frac{(2n-d)(d+1)}{2}I(\mu)}.
\]
 We note $d\leq n$ and the above inequality is maximized when $d=n$, which implies
 \[
 |\Gamma \cap K_n|< (cn)^{9n/2} e^{\frac{n^2}{2}I(\mu)}
 \]
 for some (potentially larger) $c$ dependent only on $\mu$.
\end{proof}

\subsection{Lower Bound}\label{lower bound}
In this section, we give a lower bound on the number of integral polynomials $h$ satisfying the conditions of Theorem~\ref{main2}. Our proof is similar to that of \cite[Theorem 1.5]{OT}.

\begin{proof}[Proof of Theorem~\ref{main2} (lower bound)]\label{rouche}
Suppose that $ \Sigma=\bigcup_{i}(R_i \cup \bar{R_i}) \bigcup_{j}I_j$ with $d_{\Sigma}>1$. By continuity of the transfinite diameter~\cite[Theorem J]{MR72941}, it is possible to find a subset $\Sigma_1\subset \Sigma$ which is a finite union of rectangles and intervals such that ${\Sigma_1}_{\mathbb{R}}(\rho)\subset \Sigma$ and $d_{\Sigma_1}> 1$ for some $\rho>0$.
\\

Let $\mathcal{Z}=\{\alpha_1,\dots,\alpha_n \}$ be as in the statement of Theorem~\ref{main2}. Let $p_{\mathcal{Z}}(x)$ be as in~\eqref{p_z}. By Proposition~\ref{emperical}, we have
\[
|p_{\mathcal{Z}}(z)| \geq |z-\hat{z}| e^{nU_\mu(z)}e^{-Cn^{1-\delta_0}}
\]
for some $\delta_0>0$ and $C>0$.
We apply Proposition~\ref{chpolreal} to $\Sigma_1$, $p_{\mathcal{Z}}(x)$ and $m:= n^{1-\delta}$ for some $\delta>0$ that we specify later to obtain $Q_m(x)$ such that 
      $p_{\mathcal Z}(x)Q_m(x)$ has $n+m$ distinct roots inside $\Sigma$ which are $n^{-3}$ apart from the boundary of $\Sigma$. Moreover, we have
      \begin{equation}\label{robin}
\left| p_{\mathcal Z}(z)Q_m(z) \right|\geq |z-\hat{z}| e^{nU_\mu(z)} d_{\Sigma_1}^m n^{-Cn^{1-\delta_0}},
\end{equation}
where we used the third and fourth properties of Proposition~\ref{chpolreal} and the fact that $U_{eq}(z)>d_{\Sigma_1}$.
Here $\hat{z}$ is the closest root of $p_\mathcal{Z}(z)Q_m(z)$ to $z$.
Moreover, 
\[
p_\mathcal Z(x)Q_m(x)=x^{m+n}+\sum_{i=0}^{m+n-1} a_ix^i
\]
where $a_i$ are even for $n\leq i\leq m+n-1.$ Let 
\[
r(x):=\frac{1}{2}+\sum_{i=0}^{n-1} \frac{a_i}{4} x^i.
\] We write $r(x)$ in terms of  the basis $B_{n-1}:=\{p_0,\dots,p_{n-1}\}$ obtained from Minkowski's successive minima for $K_{n-1}$ defined in~\eqref{minimabasis}, and obtain
\[
r(x)=\sum_{i=0}^{n-1} b_i p_i(x).
\] 
Let
\[
w(x):=\sum_{i=0}^{n-1} \lfloor b_i\rfloor p_i(x)
\]
and
\[
h_n(x):= x^{m+n}+\sum_{i=n}^{m+n-1} a_ix^i +4w(x)-2.
\]
By the Eisenstein criteria at the prime 2, $h_n(x)$ is irreducible. Moreover,
\[
h_n(x)=p_{\mathcal Z}Q_m(x)+\sum_{i=0}^{n-1-m} \beta_ip_i(x),
\]
where $|\beta_i|=4|\alpha_i-\lfloor\alpha_i\rfloor|<4$ for each $i$. By the definition of Minkowski's minima, we have that
\(
|p_i(z)|\leq \lambda_i e^{(n-1)U_\mu(z)}
\)
for every $z\in \mathbb{C}$. By Proposition~\ref{up},
\(\lambda_i\leq n^{cn^{1-\delta_1}}\)
for some $\delta_1>0$.
Take $m=n^{1-\delta}$ for some $\delta>0$ such that $\min(\delta_1,\delta_0)>\delta>0$. Let ${\bf{P}}:=(p_0,\dots,p_{n-1})$ and  ${\bf{N}}:=(N_0,\dots,N_{n-1})\in \mathbb{Z}^d$, where $N_i=\left\lfloor \frac{(d_{\Sigma_1})^{n^{1-\delta_2}}}{\lambda_i}\right\rfloor$ for some $\delta_1>\delta_2>\delta$. We
define
\[
\mathbf{H}_n(\mathbf{N}):=\left\{h_n+\sum_{i=0}^{n-1} 4n_ip_i: -\left\lfloor\frac{N_i}{4}\right\rfloor\leq n_i\leq \left\lfloor\frac{N_i}{4}\right\rfloor \right\}\subset \Gamma_n.
\]
We show that every polynomial inside $\mathbf{H}_n(\mathbf{N})$ satisfies the conditions of Theorem~\ref{main2}.
First, we give a lower bound on the number its elements.
By Proposition~\ref{vol_K_n}, we have
\[
|\mathbf{H}_n(\mathbf{N})| = \prod_{i} \left(2\left\lfloor\frac{N_i}{4}\right\rfloor+1\right) > \prod_i\left(\frac{N_i}{4}\right)\ge 4^{-n}d_{\Sigma_1}^{n^{2-\delta_2}}\prod \lambda_i^{-1}\geq e^{\frac{n^2}{2}I(\mu)+O(mn+m^2+n^{2-\delta})}
\]
for some $\delta>0$.  Next, we verify every element of $\mathbf{H}_n(\mathbf{N})$ satisfies the conditions of Theorem~\ref{main2}. 
Suppose that $h_n+\sum_{i=0}^{n-1} 4n_ip_i=p_{\mathcal Z}Q_m(z)+\sum_{i=0}^d (4n_i+\beta_i)p_i\in \mathbf{H}_n(\mathbf{N})$. It follows from~\eqref{robin} and our choices for $m$ and $N_i$ that
\begin{equation}\label{maininq}
\left| p_{\mathcal Z}Q_m(z) \right|\geq  |z-\hat{z}| e^{nU_\mu(z)} d_{\Sigma_1}^m n^{-Cn^{1-\delta_0}}>\left|\sum_{i=0}^{n-m-1} (\beta_i+4n_i)p_i(z)\right|
\end{equation}
for every $z$ such that $$|z-\hat{z}|>d_{\Sigma_1}^{-m(1-Cn^{-\delta'}\log(n))},$$
where $\delta'=\min(\delta_2,\delta_0)-\delta>0$ and $C>0$ is a positive constant. Let $\mathcal{X}$ be the root set of $Q_m$.
Suppose that $\alpha\in (\mathcal{Z}\cup \mathcal{X}) \cap \bigcup_{i}(R_i \cup \bar{R_i})$,
consider the disk 
$$D_{\alpha}:=\{z:
|z-\alpha|<d_{\Sigma}^{-m(1-O(n^{-\eta}))}\}$$
for some $\eta<\delta'$. Note that by our construction of $Q_m$ and the definition  of generic sampling~\ref{genericsample}, $p_{\mathcal{Z}}Q_m(z)$ has exactly the single root $\alpha$ inside $D_{\alpha}$. 
By~\eqref{maininq} and Rouch\'e's theorem, $h_n+\sum_{i=0}^{n-1} 4n_ip_i$ and $p_{\mathcal{Z}}Q_m(z)$ have exactly one root inside $D_{\alpha_i}$. 
\\

Similarly, suppose that $\gamma\in (\mathcal{Z}\cup \mathcal{X}) \cap (\bigcup_{j}I_j)$,
consider the real interval 
$$I_{\gamma}:=\{x\in \mathbb{R}:
|x-\gamma|<d_{\Sigma}^{-m(1-O(n^{-\eta}))}\}$$
for some $\eta<\delta'$. By our construction of $Q_m$ and the definition  of generic sampling~\ref{genericsample}, $p_{\mathcal{Z}}Q_m(z)$ has exactly the single real root $\gamma$ inside $I_{\gamma}$. 
By~\eqref{maininq} and intermediate value theorem, $h_n+\sum_{i=0}^{n-1} 4n_ip_i$ and $p_{\mathcal{Z}}Q_m(z)$ have exactly one root inside $I_{\gamma}$. Lastly, since $h_n$ is Eisenstein at the prime 2, so is every polynomial in $\mathbf{H}_n(\mathbf{N})$. This completes the proof of our Theorem.
\end{proof}

\section{Proof of Theorem~\ref{approx}}\label{density}
Our proof is based on the approximation results the authors proved in~\cite[Proposition 3.1]{OT}.
Our Theorem \ref{main2} counts algebraic integers whose roots distribute approximately to a given sampling.
We use this along with an approximation of potentials to prove the theorem.
In \cite{OT}, the authors make extensive use of the root distribution of a polynomial.

To find the polynomials desired in Theorem \ref{approx}, we use Theorem \ref{main2} to obtain polynomials with roots close to a given sampling of the desired measure. 
\begin{proof}[Proof of Theorem~\ref{approx}]
Define $\Sigma:=\text{supp}(\mu)$ and $\Sigma_n:=\overline{\Sigma_{\mathbb R}(n^{-1})}$ in the case that $\Sigma\subset\mathbb R$ and $\Sigma_n:=\overline{\Sigma(n^{-1})}$ otherwise.
We start by assuming without loss of generality that $\mu$ is H\"older. If it is not H\"older, then by Proposition 2.12 of \cite{OT} and Proposition 2.4 of \cite{Smith}, there is a sequence of arithmetic H\"older measures converging to $\mu$ on which we can use a diagonal argument.
Define $\tilde\mu_n:=(1-n^{-1})\mu+n^{-1}\mu_{eq,\Sigma_n}$ so that $I_{\tilde\mu_n}>0$, $\tilde\mu_n$ is H\"older, and $\tilde\mu_n$ converges weakly to $\mu$.
Furthermore, Disc$(\mu,\tilde\mu_n)\ll n^{-1}$.
Let $\min(\frac{1}{2},\delta)\le\delta_n\le 1$ be a H\"older exponent of $\tilde\mu_n$ where $\delta$ is a fixed H\"older exponent of $\mu$.
Let $\mathcal{Z}_{m,n}=\{z_{1,n},\dots z_{m,n}\}$ be a generic sampling of $\tilde \mu_n$ with parameters $(m,\lfloor m^{1/3}\rfloor,\lfloor m^{\frac{1}{3}-\frac{\delta_n}{6}}\rfloor; \frac{-4}{3})$ for $\Sigma\subset \mathbb R$ and $(m,\lfloor m^{1/3}\rfloor,\lfloor m^{\frac{1}{3}-\frac{\delta_n}{6}}\rfloor; \frac{-5}{6})$ otherwise as defined in definition \ref{genericsample}.
\newline

First, assume $\Sigma\subset\mathbb R$. By Theorem \ref{main2}, for sufficiently large $m$, we can find integral $Q_{m,n}$ with real roots $r_1,\dots, r_{\deg(Q_{m,n})}$ such that $\deg(Q_{m,n})=m+\Theta(m^{1-\epsilon})$ and $m$ of the roots (without loss of generality $r_1,\dots, r_m$) satisfy $|z_{i,n}
-r_i|\le d_{\Sigma_n}^{-\tilde C_n\sqrt{m}}$ for some constant $\tilde C_n>0$ and $\epsilon>0$.
We now estimate the potential of the root distribution of $Q_{m,n}$ using Proposition \ref{emperical}.
This states that
\begin{equation}\label{Zpot}
    U_{\mathcal Z_{m,n}}(x):=\frac{1}{m}\sum_{i=1}^m\log|x-z_{i,n}| = U_{\tilde\mu_n}(x)+\min\left(\frac{\log|x-\hat{x}|}{m},0\right)+O\left(m^{-\delta_n/6}\right)
\end{equation}
where $\hat{x}\in\mathcal{Z}_{m,n}$ minimizes $|x-z|$ for $z\in \mathcal{Z}_{m,n}$.
The proximity of the roots of $Q_{m,n}$ to $\mathcal{Z}_{m,n}$ gives
\begin{equation}\label{compare}
    U_{\mathcal{Z}_{m,n}}(x)-U_{\mu_{Q_{m,n}}}(x)= \frac{1}{\deg(Q_{m,n})}\left(\sum_{1\le i\le m}\log\left|\frac{x-z_{i,n}}{x-r_i}\right|-\sum_{i>m}\log|x-r_i|\right)+\frac{\deg(Q_{m,n})-m}{m\deg(Q_{m,n})}U_{\mathcal Z_{m,n}}(x).
\end{equation}
Using equations \eqref{Zpot} and \eqref{compare}, we get
\begin{equation}\label{poly_pot}
U_{\mu_{Q_{m,n}}}(x) = U_{\tilde\mu_n}(x) + \min\left(\frac{1}{\deg(Q_{m,n})}
\left(\log|x-\tilde{x}|+\sum_{i>m}\log|x-r_i|\right),0\right) + O\left(m^{-\delta_n/6}\right)
\end{equation}
where $\tilde{x}$ is the root of $Q_{m,n}$ closest to $x$.
\newline

We now compute $\mu_n$ so that for each large enough $n$, there is a sufficiently large $m_n$ for which $U_{\mu_n}(x)\ge \gamma_n\frac{\log|Q_{m_n,n}(x)|}{\deg(Q_{m_n,n})}$. For this, we give a proof similar to that of Proposition 3.1 of \cite{OT} as mentioned before. Define
$\mu_{m,n}$ to be the measure supported on intervals of length $m^{-2}$ around the elements of $\mathcal Z_{m,n}$, more precisely $\{x\in \mathbb R:\exists z\in \mathcal Z_{m,n},|x-z|<\frac{1}{2m^2}\}$. On each interval, the measure is given by $m$ times the Lebesgue measure so that $\mu_{m,n}$ is a probability measure.
\newline

Note that $U_{\mu_{m,n}}(x)=m\sum_{z\in\mathcal{Z}_{m,n}}\int_\frac{-1}{2m^2}^\frac{1}{2m^2}\log|x-z-y|dy.$ For fixed $x$, pick $z\in\mathcal{Z}_{m,n}\setminus\{\hat x\}$.
Then since $|x-z|\gg m^{-4/3}$, we have that
$$\left|m\int_\frac{-1}{2m^2}^\frac{1}{2m^2}\log|x-z-y|dy-\frac{1}{m}\log|x-z|\right|=m\left|\int_\frac{-1}{2m^2}^\frac{1}{2m^2}\log\left|1-\frac{y}{x-z}\right|dy\right|\ll m^{-5/3}.$$
Therefore,
$$U_{\mu_{m,n}}(x)=m\int_\frac{-1}{2m^2}^\frac{1}{2m^2}\log|x-\hat x-y|dy +\frac{1}{m}\sum_{z\in\mathcal{Z}_{m,n}\setminus\{\tilde x\}}\log|x-z|+O(m^{-2/3}).$$
Thus if $|x|\ge 2\sup_{y\in\text{supp}(\mu)}|y|$, then $U_{\mu_{m,n}}(x)=U_{\tilde\mu_n}(x)+O\left(m^{-\delta_n/6}\right)$. Otherwise,
since we have that $m\int_\frac{-1}{2m^2}^\frac{1}{2m^2}\log|x-\tilde x-y|dy\ll m^{-1}\log(m)$, then
$$U_{\mu_{m,n}}(x)=U_{\mathcal{Z}_{m,n}}(x)-\frac{1}{m}\log|x-\hat{x}|+O(m^{-2/3}).$$
Thus we have 
$U_{\mu_{m,n}}(x)=U_{\tilde\mu_n}(x)+O\left(m^{-\delta_n/6}\right)$
for all $x\in\mathbb R$ and hence by Theorem \ref{dom} for all $x\in \mathbb C$. A similar computation is done in the proof of Theorem 3.1 of \cite{OT} for the case $\Sigma\not\subset\mathbb R$ (in which $\mu_{m,n}$ is the union of Lebesgue measures on intervals of length $m^{-1}$ around points of $\mathcal Z_{m,n}$). Here it is computed that $U_{\mu_{m,n}}(x) = U_{\tilde\mu_n}(x) + O(m^{-1/6}+m^{-\delta_n/6}\log(m)^{1/2})$. In either case,
\begin{equation}\label{approx_pot}
    U_{\mu_{m,n}}(x) = U_{\tilde\mu_n}(x)+O\left
    (m^{-\delta_n/7}\right)
\end{equation}
\newline

Now fix $0<\gamma_n<1$ so that $\lim_{n\to\infty}\gamma_n=1$. Let $\nu_n$ be the equilibrium measure on $\Sigma_n$
and set $\mu_n:=\gamma_n\mu_{m_n,n}+(1-\gamma_n)\nu_n$ where $m_n$ is chosen later.
We see that
\begin{align*}
    U_{\mu_n}(x) &= \gamma_n U_{\mu_{m_n,n}}(x)+(1-\gamma_n)U_{\nu_n}(x)\\
    &= \gamma_n U_{\tilde\mu_n}(x)+(1-\gamma_n)U_{\nu_n}(x) + O\left(m_n^{-\delta_n/7}\right)&\text{by }\eqref{approx_pot}\\
    &=\gamma_n U_{\mu_{Q_{m_n,n}}}(x) - \min\left(\frac{\gamma_n}{\deg(Q_{m_n,n})}
\left(\log|x-\tilde{x}|+\sum_{i>m_n}\log|x-r_i|\right),0\right)\\ &\qquad + (1-\gamma_n)U_{\nu_n}(x) + O\left(m_n^{-\delta_n/7}\right) &\text{by }\eqref{poly_pot}.
\end{align*}
This works regardless of $\Sigma$.
Choosing $m_n$ large enough so that the function represented by $O\left(m_n^{-\delta_n/7}\right)$ has sup-norm at most $\frac{1}{2}(1-\gamma_n)I_{\Sigma_n}$.
We see that since $U_{\nu_n}(x)
\ge I_{\Sigma_n}>0$,
$$U_{\mu_n}(x) \ge \gamma_n U_{\mu_{Q_{m_n,n}}}(x) + \frac{1}{2}(1-\gamma_n)I_{\Sigma_n}>\gamma_n U_{\mu_{Q_{m_n,n}}}(x)$$
as desired.
\newline

Now we show that $\int\log|Q_{m_n,n}(x)|d\mu_n\ge 0$. By \eqref{approx_pot}, $\int\log|Q_{m_n,n}(x)|d\mu_{m_n,n}$ is
$$\int\log|Q_{m_n,n}(x)|d\tilde\mu_n+O(m_n^{1-\delta_n/7})$$
which by \eqref{poly_pot} is
$$\deg(Q_{m_n,n})I_{\tilde\mu_n}+\int \min\left(\log|x-\tilde x|+\sum_{i>m_n}\log|x-r_i|,0\right)d\tilde\mu_n(x) + O\left(m_n^{1-\delta_n/7}\right).$$
As long as we show the integral is $o(m_n)$, then (potentially taking $m_n$ to be even larger) this value is positive and $\int\log|Q_{m_n,n}(x)|d\mu_n>0$ as well since $\nu_n$ is arithmetic.
\newline

Fix $i>m_n$. We show $\int \log|x-r_i|d\tilde\mu_n\ll_n 1$ so that $\int\sum_{i>m_n}\log|x-r_i|d\tilde\mu_n\ll_nm_n^{1-\epsilon}$ which is $o(m_n)$ as needed.
Indeed, $\left|\int \log|x-r_i|d\tilde\mu_n\right|$ is at most
$$\sum_{i=0}^\infty \left(\tilde\mu_n([r_i-c2^{1-i},r_i-c2^{-i}]) + \tilde\mu_n([r_i+c2^{-i},r_i+c2^{1-i}])\right)\left|\log|c2^{-i}|\right|\ll_n \sum_{i=0}^\infty 2^{-i\delta_n}\left|\log(c2^{-i})\right|\ll_n1$$
where $c=\max\Sigma_n$.
In the complex case, a similar computation holds by summing over annuli of radii exponentially approaching 0.
Now we look at $\int\log|x-\tilde x|d\tilde\mu_n$. We see that
$$\left|\int\log|x-\tilde x|d\tilde\mu_n \right| \le \int\big|\log|x-\overline x|\big|d\tilde\mu_n + \sum_{i>m_n}\int\big|\log|x-r_i|\big|d\tilde\mu_n\le \int\big|\log|x-\overline x|\big|d\tilde\mu_n + O_n\left(m_n^{1-\epsilon}\right)$$
where $\overline x$ is the closest root of $Q_{m_n,n}$ to $x$ excluding $r_i$ for $r_i>m_n$. This ensures that the remaining roots are separated by $\Omega\left(m_n^{-4/3}\right)$ in the real case and $\Omega\left(m_n^{-5/6}\right)$ in the complex case. Looking at the real case,
by integrating over neighborhoods of $r_i$ of order $m_n^{-4/3}$, we get
$$\int\log|x-\overline{x}|d\tilde\mu_n \le 
\sum_{i=0}^{m_n}\left|\int_{r_i-m_n^{-4/3}}^{r_i+m_n^{-4/3}}\log|x-r_i|d\tilde\mu_n\right|+O_n(\log m_n).$$
This works because any overlap in the intervals would only increase the summation, and the $O_n(\log(m_n))$ error term is achieved by integrating over any space not covered by these intervals.
Now for fixed $1\le i\le m_n$, $\displaystyle\left|\int_{r_i-m_n^{-4/3}}^{r_i+m_n^{-4/3}}\log|x-r_i|d\tilde\mu_n\right|$ is at most
$$\sum_{k=1}^\infty\left(\tilde\mu_n\left(r_i-m_n^\frac{-4k}{3},r_i-m_n^\frac{-4(k+1)}{3}\right) + \tilde\mu_n\left(r_i + m_n^\frac{-4(k+1)}{3}, r_i + m_n^\frac{-4k}{3}\right)\right) \left|\log(m_n^{-4(k+1)/3})\right|.$$
This is at most $2\sum_{k=1}^\infty m_n^\frac{-4k\delta_n}{3}\left|\log(m_n^{-4(i+1)/3})\right|\ll_nm_n^{-4\delta_n/3}\log(m_n)$. Therefore, we have that $\int\log|x-\overline{x}|d\tilde\mu_n(x)=o(m_n)$ as desired. Again a similar computation gives the same result for the complex case using summing over the integrals of annuli of decreasing radius around each $r_i$.
This completes the proof that $\int\log|Q_{m_n,n}(x)|d\mu_n\ge 0$.
\newline

It remains to study the convergence.
Note that so long as the limits exist, $\lim_{n\to\infty} \mu_n = \lim_{n\to\infty} \mu_{m_n,n}$. We also know that $\lim_{n\to\infty }\tilde\mu_n=\mu.$ So it suffices to show that for any continuous function $f$, $\lim_{n\to\infty}\int fd\mu_{m_n,n}-\int fd\tilde\mu_n=0$. We provide the argument for the complex case. The real case is almost identical except we sum over $B_i$ rather than $B_{ij}$.
\newline

Fix a continuous function $f$. Since $\bigcup_{n\in\mathbb N}(\text{supp}(\tilde \mu_n)\cup\text{supp}(\mu_{m_n,n}))$ is bounded, we have that $f$ is uniformly continuous on this set. Fix $\epsilon>0$. Suppose that for any $x$ and $y$ in the support of any of these measures then $|x-y|<\epsilon'$ implies $|f(x)-f(y)|<\frac{\epsilon}{2}$. Also let the supremum of $|f|$ on these supports be denoted by $M$. Fix $N$ such that for all $n\ge N$, we have that $\text{diam}(B_{ij})<\epsilon'$ for all $i$ and $j$ and also $m_n^{-\min(\frac{1}{2},\delta)} < \frac{\epsilon}{2M+\epsilon}$. Then
\begin{align*}
    \left|\int fd\tilde\mu_n -\int fd\mu_{m_n,n}\right| &\le \sum_{ij}\left|\int_{B_{ij}} f(x)d\tilde\mu_n(x) -\sum_{z\in B_{ij}\cap \mathcal Z_{m_n,n}}m_n\int_{\frac{-1}{2m_n^2}}^\frac{1}{2m_n^2} f(z+x)dx\right|\\
    &\le\sum_{ij}\left(\tilde\mu_n(B_{ij})\frac{\epsilon}{2} + m_n^{-\frac{2}{3}-\delta_n}\left(M+\frac{\epsilon}{2}\right)\right)\\
    &\le \frac{\epsilon}{2} + m_n^{-\min(\frac{1}{2},\delta)}\left(M+\frac{\epsilon}{2}\right)\\
    &< \epsilon
\end{align*}
This completes the proof of the theorem.
\end{proof}

\section{Formulating a linear programming problem and its dual}\label{lin_prog}
Recall the definition of the primal measure and the notations from subsection~\ref{primalintro}. We begin by showing the support of $\mu_A$ is inside a fixed compact set independent of $A$. Our proof is based on the principle of domination for potentials that we cite below from~\cite[Theorem 3.2]{logpotentials}. 
\newline

\begin{theorem}~\cite[Theorem 3.2]{logpotentials}\label{dom}
    Let $\mu$ and $\nu$ be two positive finite Borel measures with compact support on $\mathbb{C}$ and suppose that the total mass of $\nu$ does not exceed that of $\mu$. Assume further that $\mu$ has finite logarithmic energy. If the inequality
    \[
    U_{\mu}(z)\geq U_{\nu}(z)+c 
    \]
 holds $\mu$-almost everywhere for some constant $c$, then it holds for all $z\in \mathbb{C}$.
\end{theorem}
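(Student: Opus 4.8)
This is the classical \emph{principle of domination} (Cartan's domination theorem), so I would prove it along standard potential-theoretic lines. Write $v:=U_\nu+c-U_\mu$; the hypothesis is that $v\le 0$ holds $\mu$-almost everywhere, and the goal is $v\le 0$ on all of $\mathbb C$. The plan is to reduce everything to the maximum principle on $G:=\mathbb C\setminus\operatorname{supp}(\mu)$. On $G$ the measure $\mu$ puts no mass, so there the distributional Laplacian of $v$ is $2\pi\nu\ge 0$ and $v$ is subharmonic on $G$. For the behaviour at infinity, $U_\mu(z)=\operatorname{mass}(\mu)\log|z|+O(1/|z|)$ and the same for $\nu$, so $v(z)\to-\infty$ when $\operatorname{mass}(\nu)<\operatorname{mass}(\mu)$ and $v(z)\to c$ when the masses coincide; in the equal-mass case one gets $c\le 0$ by integrating $v\le 0$ against $\mu$, or, more robustly, by first replacing $\nu$ with $\nu+\varepsilon\,\omega_R$ where $\omega_R$ is normalized arclength on a large circle --- which only lowers $v$ near $\operatorname{supp}(\mu)$ up to a constant absorbed into $c$ --- and letting $\varepsilon\to 0$ at the end. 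In all cases $\limsup_{z\to\infty}v(z)\le 0$.

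Granting this, once we know that $v\le 0$ quasi-everywhere on $\operatorname{supp}(\mu)$, the sharp form of the maximum principle for subharmonic functions (which tolerates a polar exceptional set on the boundary, and which applies since $\operatorname{supp}(\mu)$ is non-polar, being the Euclidean support of a nonzero finite-energy measure) gives $v\le 0$ on $G$; combined with the bound on $\operatorname{supp}(\mu)$ this finishes the proof. So the whole content of the theorem is the upgrade of the hypothesis from ``$v\le 0$ $\mu$-a.e.'' to ``$v\le 0$ quasi-everywhere on $\operatorname{supp}(\mu)$'', and this is exactly where finiteness of the logarithmic energy of $\mu$ is indispensable: it forces $\mu$ to be atomless, to charge no polar set, and to make $U_\mu$ $\mu$-integrable, so that the $\mu$-a.e.\ hypothesis is effectively a quasi-everywhere hypothesis on the fine support of $\mu$.

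To carry out that upgrade, following Cartan: since $U_\mu$ and $U_\nu$ are subharmonic, $-U_\mu$ and $-U_\nu$ are superharmonic and hence finely continuous, so $v$ is finely continuous on $\mathbb C$ and $E:=\{v>0\}$ is finely open. The hypothesis says $\mu(E)=0$; since every nonempty finely-open set meeting the fine support $\Sigma_f$ of $\mu$ has positive $\mu$-measure, $E$ is disjoint from $\Sigma_f$, i.e.\ $v\le 0$ on $\Sigma_f$. Finally, $\operatorname{supp}(\mu)\setminus\Sigma_f$ is contained in the set of points at which $\operatorname{supp}(\mu)$ is thin, which is polar (a Kellogg-type theorem), so $v\le 0$ quasi-everywhere on $\operatorname{supp}(\mu)$. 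The delicate part --- and what I expect to be the main obstacle --- is precisely this fine-topology bookkeeping: establishing fine continuity of the potentials, identifying $\Sigma_f$, and controlling the polar exceptional sets both here and in the boundary hypothesis of the maximum principle. For these details I would defer to the treatment of the domination principle in Saff--Totik (Theorem~II.3.2) or Landkof rather than reproduce it here.
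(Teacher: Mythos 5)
The paper gives no proof of this statement at all: it is quoted verbatim, with the citation built into the theorem environment, from the reference \cite[Theorem 3.2]{logpotentials} (Saff--Totik's principle of domination, their Theorem II.3.2). So your ultimate move --- deferring to Saff--Totik/Landkof --- is literally what the paper does, and your sketched route (subharmonicity of $v:=U_\nu+c-U_\mu$ off $\operatorname{supp}(\mu)$, control at infinity, upgrading the $\mu$-a.e.\ hypothesis to a quasi-everywhere statement using finiteness of $I(\mu)$, then a maximum principle tolerating a polar exceptional set) is indeed the standard proof found there.

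Two steps of your sketch, however, would fail as written. First, in the equal-mass case the claim that $c\le 0$ follows ``by integrating $v\le 0$ against $\mu$'' is not valid: integration gives $c\,\mu(\mathbb{C})\le \int U_\mu\,d\mu-\int U_\nu\,d\mu$, and the right-hand side need not be $\le 0$ (take $\mu$ = normalized Lebesgue measure on $[0,1]$ and $\nu=\delta_{1/2}$: then $\int U_\mu\,d\mu=-3/2$ while $\int U_\nu\,d\mu=-1-\log 2$, so the difference is positive). In fact $c\le 0$ is essentially part of the conclusion, not an easy a priori reduction. Second, your ``more robust'' regularization goes in the wrong direction: replacing $\nu$ by $\nu+\varepsilon\omega_R$ pushes the mass of $\nu$ above that of $\mu$, so $v(z)\sim\varepsilon\log|z|\to+\infty$ at infinity and the maximum-principle argument collapses. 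The standard fixes are either to attach the extra mass $\varepsilon\omega_R$ to $\mu$ (the hypothesis survives on $|z|=R$ because $\nu$ has no larger mass, finite energy is preserved, and one lets $\varepsilon\to 0$ after the strict-inequality case), or to invoke the planar extended maximum principle in which the point at infinity is a negligible boundary point, so mere boundedness of $v$ near infinity suffices and no sign of $c$ is needed in advance. A further subtlety you gloss over: the q.e.-boundary maximum principle also requires $v$ bounded above on the complement of $\operatorname{supp}(\mu)$, which is not automatic since $U_\mu$ can equal $-\infty$ on a nonempty polar subset of $\operatorname{supp}(\mu)$ even when $I(\mu)$ is finite; this is one of the places where the argument in the cited source does genuine work, so as it stands your text is a correct pointer to the literature rather than a self-contained proof.
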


\begin{proposition}\label{compact} Fix $\Omega$ any  subset of $\mathbb{C}$. 
Let $A$ and  $\mathcal{C}(A)$ be as above and suppose that some $(\mu,b_Q)\in \mathcal{C}(A)$ satisfies 
\[
\int \log|Q(x)|d\mu(x)\geq 0
\]
for every $Q\in A$ and
\[
U_{\mu}(x)\geq \sum_{Q\in A} b_Q \log |Q(x)|
\]
for every $x\in \Omega$. 
Then there exists  $(\mu',b_Q')\in \mathcal{C}(A)$ such that $\mu'$ is supported inside the interval $[0,18]$,  
\[
\int \log|Q(x)|d\mu'(x)\geq 0
\]
for every $Q\in A$,  
\[
U_{\mu'}(x)\geq \sum_{Q\in A} b_Q' |\log Q(x)|
\]
for every $x\in \Omega$, and
\[
\int xd\mu(x)\geq \int xd\mu'(x).
\]
The above is a strict inequality if $\mu$ is not supported inside $[0,18]$. Moreover, if $\Omega$ is dense in $[0,18]$ then $(\mu',b_Q')\in \mathcal{C}(A)^+$ is a feasible point. 
\end{proposition}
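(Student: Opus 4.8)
The plan is to keep the restriction $\mu|_{[0,18]}$, discard the part of $\mu$ living on $(18,\infty)$, and add back a carefully chosen multiple of the equilibrium measure $\mu_{eq,[0,18]}$ of $[0,18]$ (whose logarithmic capacity is $9/2$, so $U_{\mu_{eq,[0,18]}}\ge\log(9/2)$ everywhere, with equality on $[0,18]$). Concretely, set $\sigma_-:=\mu|_{(18,\infty)}$,
\[
m_0:=\frac{1}{\log(9/2)}\int_{(18,\infty)}\log t\,d\mu(t)\ \ge\ 0,
\]
$\sigma_+:=m_0\,\mu_{eq,[0,18]}$, $\mu':=\mu-\sigma_-+\sigma_+=\mu|_{[0,18]}+\sigma_+$, and $b_Q':=b_Q$. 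Then $\mu'\in\mathcal{M}^{+}$ is supported in $[0,18]$; since $\log t\ge\log 18>\log(9/2)$ on $(18,\infty)$ we get $m_0\ge\|\sigma_-\|$, hence $\|\mu'\|\ge\|\mu\|$, and $b_Q'\ge0$ whenever $b_Q\ge0$. When $\mu((18,\infty))=0$ we have $m_0=0$, $\mu'=\mu$, and everything below degenerates to trivialities, so the interesting case is $m_0>0$.

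The heart of the argument is that $U_{\mu'}\ge U_\mu$ on all of $\mathbb{C}$, equivalently $U_{\sigma_+}\ge U_{\sigma_-}$. On $\operatorname{supp}(\sigma_+)=[0,18]$ the left side is the constant $m_0\log(9/2)$, while for $x\in[0,18]$
\[
U_{\sigma_-}(x)=\int_{(18,\infty)}\log(t-x)\,d\mu(t)\le\int_{(18,\infty)}\log t\,d\mu(t)=m_0\log(9/2),
\]
so $U_{\sigma_+}\ge U_{\sigma_-}$ holds on $\operatorname{supp}(\sigma_+)$, in particular $\sigma_+$-a.e. As $\|\sigma_-\|\le\|\sigma_+\|$ and $\sigma_+$ has finite logarithmic energy, Theorem~\ref{dom} applied to the pair $(\sigma_+,\sigma_-)$ promotes the inequality to all of $\mathbb{C}$. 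Consequently $U_{\mu'}(x)\ge U_\mu(x)\ge\sum_{Q\in A}b_Q'\log|Q(x)|$ for every $x\in\Omega$, and for every $Q\in A$
\[
\int\log|Q|\,d\mu'-\int\log|Q|\,d\mu=(\|\mu'\|-\|\mu\|)\log|\operatorname{lc}(Q)|+\sum_{Q(\alpha)=0}\bigl(U_{\mu'}(\alpha)-U_\mu(\alpha)\bigr)\ge0,
\]
so $\int\log|Q|\,d\mu'\ge\int\log|Q|\,d\mu\ge0$. Finally, using $\int x\,d\mu_{eq,[0,18]}(x)=9$ (the mean of the arcsine law on $[0,18]$),
\[
\int x\,d\mu'-\int x\,d\mu=9m_0-\int_{(18,\infty)}t\,d\mu(t)=\int_{(18,\infty)}\Bigl(\tfrac{9\log t}{\log(9/2)}-t\Bigr)d\mu(t)\le 0,
\]
with strict inequality exactly when $\mu((18,\infty))>0$: the integrand is $<0$ for all $t\ge18$ because $t\mapsto t-\tfrac{9\log t}{\log(9/2)}$ is increasing on $[18,\infty)$ and positive at $t=18$, which amounts to $(9/2)^2>18$. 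This establishes $(\mu',b_Q')\in\mathcal{C}(A)$ with all asserted properties.

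Now assume $\Omega$ is dense in $[0,18]$; we check $(\mu',b_Q')\in\mathcal{C}(A)^{+}$. Since $U_{\mu'}$ is upper semicontinuous and $\sum_{Q}b_Q'\log|Q|$ is continuous off the finitely many roots of the $Q\in A$, the inequality $U_{\mu'}\ge\sum_{Q}b_Q'\log|Q|$, known on the dense set $\Omega$, propagates to all of $[0,18]$ (at a root where some $b_Q'>0$ the right side is $-\infty$; elsewhere one uses continuity), hence holds $\mu'$-a.e.\ and $\mu_{eq,[0,18]}$-a.e. Integrating it against $\mu'$ gives $I(\mu')\ge\sum_Q b_Q'\int\log|Q|\,d\mu'\ge0$, so $\mu'$ has finite logarithmic energy. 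Integrating it against $\mu_{eq,[0,18]}$ and using the identity $\int U_{\mu'}\,d\mu_{eq,[0,18]}=\int U_{\mu_{eq,[0,18]}}\,d\mu'=\|\mu'\|\log(9/2)$ together with $\int\log|Q|\,d\mu_{eq,[0,18]}\ge\deg(Q)\log(9/2)$ (because $U_{\mu_{eq,[0,18]}}\ge\log(9/2)$ and $|\operatorname{lc}(Q)|\ge1$) yields $\|\mu'\|\ge\sum_Q b_Q'\deg(Q)$. Writing $\sum_{Q\in A}b_Q'\log|Q(x)|=c_0+U_\sigma(x)$ with $\sigma:=\sum_{Q\in A}b_Q'\sum_{Q(\alpha)=0}\delta_\alpha\ge0$ and $c_0:=\sum_Q b_Q'\log|\operatorname{lc}(Q)|\ge0$, we have $\|\sigma\|=\sum_Q b_Q'\deg(Q)\le\|\mu'\|$, and $U_{\mu'}\ge c_0+U_\sigma$ holds $\mu'$-a.e., so a second application of Theorem~\ref{dom}, now to the pair $(\mu',\sigma)$, gives $U_{\mu'}\ge\sum_Q b_Q'\log|Q|$ on all of $\mathbb{C}$. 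Together with $b_Q'\ge0$, $\|\mu'\|\ge\|\mu\|\ge1$, and $\int\log|Q|\,d\mu'\ge0$ proved above, this shows $(\mu',b_Q')$ is a feasible point.

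The main obstacle is keeping the potential inequalities alive off $[0,18]$: discarding far-away mass lowers the potential there, and the role of the added equilibrium mass is precisely to compensate while keeping the potential explicitly computable on $[0,18]$, so that the principle of domination can carry the inequalities to the whole plane. The accompanying bookkeeping — the mass comparisons $\|\sigma_-\|\le\|\sigma_+\|$ and $\|\sigma\|\le\|\mu'\|$, finite energy of the dominating measures, and the elementary inequality $(9/2)^2>18$ (equivalently $18>16$) forcing the strict decrease of the trace — is what must be verified with care.
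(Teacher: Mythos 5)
Your proof is correct, but it follows a genuinely different route from the paper's. The paper first disposes of the case $\int x\,d\mu\ge 2$ by swapping in the equilibrium measure of $[0,4]$ with zero coefficients, normalizes $\mu$ to a probability measure, and then invokes the argument of Lemma 2.6 of \cite{lowerbound}: it conditions $\mu$ on $[0,M]$, mixes with the equilibrium measure of $[0,N]$ with $N=4e$, and crucially rescales the coefficients to $b_Q'=\delta b_Q$ with $\delta=\alpha/\mu([0,M])\le 1$, so the potential comparison proved there is $U_{\mu'}\ge \delta U_\mu$, with a single use of Theorem~\ref{dom} to pass from a dense set to all of $\mathbb{C}$; the constant $18$ arises from $z\ge 2e\log z+2$ for $z\ge 18$. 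You instead keep $b_Q'=b_Q$, replace only the tail $\mu|_{(18,\infty)}$ by $m_0\,\mu_{eq,[0,18]}$ with $m_0=\frac{1}{\log(9/2)}\int_{(18,\infty)}\log t\,d\mu(t)$, and obtain the stronger pointwise statement $U_{\mu'}\ge U_\mu$ on all of $\mathbb{C}$ via a first application of Theorem~\ref{dom} to the pair $(\sigma_+,\sigma_-)$; the constant $18$ enters through $(9/2)^2>18$. This buys you monotonicity of every constraint functional at once ($\int\log|Q|\,d\mu'\ge\int\log|Q|\,d\mu$, mass non-decrease, unchanged coefficients) and a clean strict decrease of the trace exactly when mass lies beyond $18$, and it is self-contained rather than leaning on \cite{lowerbound}; your second application of Theorem~\ref{dom} (to $(\mu',\sigma)$, after checking $\|\sigma\|\le\|\mu'\|$ by integrating against $\mu_{eq,[0,18]}$ and finiteness of $I(\mu')$) plays the same role as the paper's single domination step in the dense-$\Omega$ case, and your semicontinuity argument for propagating the inequality from $\Omega$ to $[0,18]$ matches the paper's. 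One caveat, which you share with the paper rather than introduce: the final feasibility claim needs $b_Q\ge 0$ and $\int d\mu\ge 1$, neither of which is literally among the stated hypotheses (you write $\|\mu'\|\ge\|\mu\|\ge 1$ without justification); the paper's own proof makes the same implicit assumptions (it normalizes when $\mu(\mathbb{C})>1$ and never treats mass below $1$), and in all applications these constraints are already imposed, so this is a defect of the statement, not of your argument.
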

\begin{proof}
If $\int x d\mu(x)\geq 2$, let $\mu'=\mu_{[0,4]}$ be the equilibrium measure of $[0,4]$ and $(b_Q')=(0,\dots,0)$.
This is sufficient to prove the lemma in this case since the potential of $\mu'$ is non-negative on $\mathbb C$.
Furthermore, if $\mu(\mathbb C)>1$, then $(\mu/\mu(\mathbb C), b_Q/\mu(\mathbb C))\in C(A)$ satisfies the requirements of the lemma and with smaller expectation. So without loss of generality, $\mu$ is a probability measure.
The rest of the proof is almost identical to that of Lemma 2.6 of \cite{lowerbound} while applying Theorem \ref{dom} to the inequality (12) in that proof.
\end{proof}

\begin{lemma}\label{lemexis}
   Recall the definition of $\Lambda_A$ from \eqref{defA}. There exists a feasible point $(\mu_A,b_Q)\in \mathcal{C}(A)^+$, where $\mu_A$ is supported in $[0,18]$  and $ \int xd\mu_A(x) = \Lambda_A.$
\end{lemma}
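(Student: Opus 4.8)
The plan is to establish existence of an optimizer for the primal problem by a standard direct method in the calculus of variations, combined with the compactness reduction already provided by Proposition~\ref{compact}. First I would note that $\Lambda_A < \infty$: the equilibrium measure $\mu_{[0,4]}$ of $[0,4]$ together with $b_Q = 0$ for all $Q \in A$ is a feasible point (its potential is non-negative on $\mathbb{C}$, dominates $0 = \sum_Q 0 \cdot \log|Q|$, each $\int \log|Q| d\mu_{[0,4]} \ge 0$ since $\mu_{[0,4]}$ is arithmetic, and its mass is $1$), so $\Lambda_A \le 2$. Also $\Lambda_A \ge 0$ trivially since $\mu$ is supported on $\mathbb{R}^+$. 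Then take a minimizing sequence $(\mu_k, b_{Q,k}) \in \mathcal{C}(A)^+$ with $\int x\, d\mu_k(x) \to \Lambda_A$. By Proposition~\ref{compact} applied with $\Omega = \mathbb{C}$ (which is dense in $[0,18]$), I may replace each $(\mu_k, b_{Q,k})$ by a feasible point $(\mu_k', b_{Q,k}')$ with $\mu_k'$ supported in $[0,18]$ and $\int x\, d\mu_k'(x) \le \int x\, d\mu_k(x)$; the new sequence is still minimizing and now all measures live on the fixed compact set $[0,18]$.

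Next I would extract limits. Since each $\mu_k'$ is supported in $[0,18]$ with mass in $[1, \infty)$ — and in fact the mass is bounded above (if $\mu_k'(\mathbb{C})$ were large, then $\int x\, d\mu_k'(x)$ would be forced large by the constraint $\int \log|Q|\,d\mu_k' \ge 0$ pinning the measure away from being concentrated near $0$, or more simply we may rescale to mass exactly $1$ as in the proof of Proposition~\ref{compact} without increasing the trace, so assume $\mu_k'(\mathbb{C}) = 1$) — the family $\{\mu_k'\}$ is tight, and by Prokhorov's theorem (equivalently Banach--Alaoglu on $C([0,18])^*$) a subsequence converges weak-$*$ to a probability measure $\mu_A$ supported in $[0,18]$. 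For the coefficients: the constraint $U_{\mu_k'}(x) \ge \sum_Q b_{Q,k}' \log|Q(x)|$ evaluated at a large real $x_0$ outside all roots of the $Q$'s (where $\log|Q(x_0)| > 0$ for each $Q$, after possibly taking a common point, or by evaluating near distinct roots) together with the uniform upper bound $U_{\mu_k'}(x_0) \le \log(x_0 + 18)$ forces the $b_{Q,k}'$ to be bounded; so a further subsequence gives $b_{Q,k}' \to b_Q \ge 0$.

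Finally I would verify $(\mu_A, b_Q)$ is feasible and optimal. Optimality: $x \mapsto x$ is continuous on $[0,18]$, so $\int x\, d\mu_A(x) = \lim \int x\, d\mu_k'(x) = \Lambda_A$. The mass constraint $\int d\mu_A = 1 \ge 1$ is immediate. For $\int \log|Q(x)|\,d\mu_A(x) \ge 0$: $\log|Q(\cdot)|$ is upper semicontinuous and bounded above on $[0,18]$, so by the portmanteau theorem for weak-$*$ convergence $\limsup_k \int \log|Q|\,d\mu_k' \le \int \log|Q|\,d\mu_A$ fails in the wrong direction — instead I would use that $\log|Q(\cdot)|$ is lower semicontinuous after subtracting its (integrable) singular part, or more cleanly invoke the standard fact that weak-$*$ limits of arithmetic measures are arithmetic (this is essentially \cite[Lemma 1.3.7]{MR4093205} / the principle underlying Smyth's inequalities), giving $\int \log|Q|\,d\mu_A \ge 0$ directly. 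For the potential inequality $U_{\mu_A}(x) \ge \sum_Q b_Q \log|Q(x)|$: the key point is the upper semicontinuity of $x \mapsto U_{\mu_A}(x)$ and the lower semicontinuity of $\mu \mapsto U_\mu(x)$ under weak-$*$ convergence (Fatou / the principle of descent in potential theory), which gives $U_{\mu_A}(x) \ge \limsup_k U_{\mu_k'}(x) \ge \limsup_k \sum_Q b_{Q,k}' \log|Q(x)| = \sum_Q b_Q \log|Q(x)|$ for every $x$ where the $\log|Q(x)|$ are finite, and then Theorem~\ref{dom} (the domination principle) extends it to all of $\mathbb{C}$ since $\mu_A$ has finite logarithmic energy (being supported on $[0,18]$ with bounded density-free mass — actually finite energy needs a short argument, e.g. $\mu_A$ is a weak-$*$ limit so $I(\mu_A) \le \liminf I(\mu_k')$ by lower semicontinuity of energy, and the $\mu_k'$ have uniformly bounded energy). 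The main obstacle I anticipate is precisely this last semicontinuity bookkeeping: ensuring the potential inequality passes to the limit pointwise (principle of descent) and then lifting it to all of $\mathbb{C}$ via the domination principle, together with confirming $\mu_A$ has finite energy so Theorem~\ref{dom} applies; everything else is routine compactness.
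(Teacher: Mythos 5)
Your overall strategy is the same as the paper's: reduce to measures supported in $[0,18]$ via Proposition~\ref{compact}, normalize to probability measures, extract weak-$*$ limits of the measures and of the coefficients $b_{Q,k}'$, and pass the constraints to the limit by semicontinuity of $\log|Q|$ and of the logarithmic kernel. Two local points in your write-up are off, though neither is fatal. First, the portmanteau inequality you dismiss is in fact in the \emph{right} direction: for $f=\log|Q|$, which is upper semicontinuous and bounded above on $[0,18]$, weak-$*$ convergence gives $\int f\,d\mu_A\ \ge\ \limsup_k\int f\,d\mu_k'\ \ge 0$, which is exactly what is needed and is precisely the paper's argument (phrased there via monotone approximation of the u.s.c.\ function by continuous ones). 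Your fallback appeal to ``weak-$*$ limits of arithmetic measures are arithmetic'' does not literally apply, since the $\mu_k'$ are only known to satisfy $\int\log|Q|\,d\mu_k'\ge 0$ for the finitely many $Q\in A$, not for all integer polynomials; what saves you is the same per-polynomial semicontinuity you had already written down. Second, the detour through Theorem~\ref{dom} is unnecessary: for each fixed $x\in\mathbb{C}$ the kernel $y\mapsto\log|x-y|$ is u.s.c.\ and bounded above on $[0,18]$, so $U_{\mu_A}(x)\ge\limsup_k U_{\mu_k'}(x)\ge\sum_Q b_Q\log|Q(x)|$ holds at \emph{every} $x$ (at a root of some $Q$ with $b_Q>0$ the right-hand side is $-\infty$), so no domination step or finite-energy discussion is needed. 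Moreover, if you did want to invoke Theorem~\ref{dom}, you would need the mass bound $\sum_Q b_Q\deg(Q)\le 1$ for the comparison measure, which your single-point evaluation at a large $x_0$ does not give; the paper obtains it from the asymptotics $U_{\mu_k'}(x)=\log|x|+O(1/x)$ as $x\to\infty$. With these two corrections your argument coincides with the paper's proof.
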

\begin{proof}
    By definition of $\Lambda_A$ and Proposition~\ref{compact}, there exists a sequence of probability measures $\{(\mu_n,b_{Q,n})\}$ that are supported on the interval $[0,18]$ satisfying 
    the primal constraints~\eqref{primal}, and
    \[
\lim_{n\to \infty} \int xd\mu_n(x)=\Lambda_A.
\]
By the asymptotic of the potential function $U_{\mu_n}(x)=\log|x|+O(1/x)$ as $x\to \infty$,  it follows that 
    \[
    \sum_{Q\in A} b_{Q,n}\deg(Q)\leq 1.
    \]
By compactness of $[0,1]^{|A|}$,  we may assume without loss of generality that  \[
\lim_{n\to \infty} b_{Q,n}=b_Q
\]
for some $b_Q\geq 0$ for every $Q\in A$.
Moreover, by compactness of the space of probability measures supported on $[0,18]$, there exists a sub-sequence  $\mu_{a_n}$ that converges weakly to some probability distribution $\mu$.  Note that $\log |Q(x)|$ is an upper semi-continuous function. Hence, by monotone convergence theorem, we have 
\[
\begin{split}
      U_{\mu}(x)\geq \liminf_{n\to \infty} U_{\mu_{a_n}}(x) \geq  \sum_{Q\in A}b_Q\frac{\log|Q(x)|}{\deg Q}, 
      \\
  \int \log|Q(x)|d\mu(x)\geq  \liminf_{n\to \infty}\int\log|Q(x)|d\mu_{a_n}(x) \geq 0,
\end{split}
\]
Note that $(\mu,b_Q)$ is a feasible point and $\int x d\mu=\Lambda_A.$ 
This completes the proof of our lemma. 
\end{proof}
In the next proposition, we show the existence of an explicit optimal probability measure $\mu_{A,\Sigma_A}$ to the primal problem under some assumption on the support of $\mu_A$ and its logarithmic potential in Lemma~\ref{lemexis}.
\begin{proposition}\label{analytic}
    Suppose that $A\neq \emptyset$ and $(\mu_A,b_Q)\in \mathcal{C}(A)^+$ with $\int xd\mu_A=\Lambda_A$ from Lemma~\ref{lemexis}. Furthermore, suppose that $\mu_A$  is supported on $\Sigma_A=\bigcup_{j=1}^l I_j $, a finite union of intervals away from roots of $Q\in A$. Suppose that there exists a dense set of points $K\subset \Sigma_A$ such that for any $\varepsilon>0$ and any open intervals $O\subset \Sigma_A$ then there exists $\xi\in K \cap O$ such that
    \[
    |U_{\mu_A}(\xi)-\sum_{Q}b_Q\log|Q(\xi)||\leq \varepsilon.
    \]
    There exists a  probability measure $\mu_{A,\Sigma_A}$  supported on $\Sigma_A$ such that
    \[
    U_{\mu_{A,\Sigma_A}}(x)=\sum b_Q\log|Q|(x)
    \]
    for every $x\in \Sigma_A$, and $(\mu_{A,\Sigma_A},b_Q)\in \mathcal{C}(A)^+$ with $\int x d\mu_{A,\Sigma_A}(x)=\Lambda_A.$
\end{proposition}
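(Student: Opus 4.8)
We first reduce the problem to an integral equation. Writing $\mathrm{lc}(Q)$ for the leading coefficient of $Q$, set
\[
f(x):=\sum_{Q\in A}b_Q\log|Q(x)|=c_0+U_\sigma(x),\qquad \sigma:=\sum_{Q\in A}b_Q\!\!\!\sum_{Q(\rho)=0}\!\!\!\delta_\rho,\qquad c_0:=\sum_{Q\in A}b_Q\log|\mathrm{lc}(Q)|.
\]
Here $\sigma\ge 0$ is carried by the roots of the $Q\in A$, hence by hypothesis by a compact set disjoint from $\Sigma_A$, and $c_0\ge 0$ since a non-zero integer polynomial has leading coefficient of modulus at least $1$. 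Thus $f$ is harmonic on a neighbourhood $N$ of $\Sigma_A$ and continuous, so bounded, on $\Sigma_A$. We must produce a probability measure $\mu_{A,\Sigma_A}$ on $\Sigma_A$ with $U_{\mu_{A,\Sigma_A}}=f$ on $\Sigma_A$, $\int x\,d\mu_{A,\Sigma_A}=\Lambda_A$, and $(\mu_{A,\Sigma_A},b_Q)$ feasible.

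The plan is to build the (essentially unique) measure on $\Sigma_A$ solving $U=f$ there, and then to identify it with $\mu_A$. Let $\Omega:=\mathbb C\setminus\Sigma_A$, which is connected since $\Sigma_A$ is a finite union of real intervals, let $g_\Omega(\cdot,\infty)$ be its Green's function with pole at infinity, and let $\tau:=\mathrm{Bal}(\sigma,\Sigma_A)$ be the balayage of $\sigma$ onto $\Sigma_A$: then $\tau\ge 0$, $\tau(\mathbb C)=\sigma(\mathbb C)$, and $U_\tau=U_\sigma-c_1$ quasi-everywhere on $\Sigma_A$, where $c_1:=\int g_\Omega(\rho,\infty)\,d\sigma(\rho)\ge 0$. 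With $\omega_{\Sigma_A}$ the equilibrium measure of $\Sigma_A$ and $V:=U_{\omega_{\Sigma_A}}|_{\Sigma_A}=\log\mathrm{cap}(\Sigma_A)$, put
\[
\eta:=\tau+\frac{c_0+c_1}{V}\,\omega_{\Sigma_A},\qquad\text{so that}\qquad U_\eta=U_\sigma+c_0=f\quad\text{quasi-everywhere on }\Sigma_A.
\]
(The degenerate case $\mathrm{cap}(\Sigma_A)=1$ does not arise here, since we will see a probability measure on $\Sigma_A$ with potential $f$ exists.) Because $\Sigma_A$ is a finite union of intervals, the densities of $\tau$ and $\omega_{\Sigma_A}$ are bounded away from the endpoints and blow up there only like $|H(x)|^{-1/2}$ with $H(x)=\prod_i(x-a_i)$, as in the density formula of Theorem~\ref{main1}; hence their potentials, and so $U_\eta$, are continuous on $\mathbb C$. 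Since $f$ is continuous on $N$ and the exceptional set in $U_\eta=f$ is polar, we get $U_\eta(x)=f(x)$ for every $x\in\Sigma_A$. Moreover $\eta$ is the only finite-energy signed measure on $\Sigma_A$ with potential $f$ on $\Sigma_A$: the difference of two such is a finite-energy signed measure on $\Sigma_A$ with potential $0$ quasi-everywhere on its support, hence of zero energy, hence zero.

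It remains to check $\eta\ge 0$, $\eta(\mathbb C)=1$, feasibility of $(\eta,b_Q)$, and $\int x\,d\eta=\Lambda_A$; for this I would show $\eta=\mu_A$, where $(\mu_A,b_Q)$ is the feasible point of Lemma~\ref{lemexis}. By feasibility, $h:=U_{\mu_A}-f\ge 0$ on $\mathbb C$; since $U_{\mu_A}\ge f\ge -C$ on $\Sigma_A$, the measure $\mu_A$ has finite logarithmic energy, so charges no polar set. The hypothesis gives points of $K$, dense in every subinterval of $\Sigma_A$, at which $h\le\varepsilon$. Using that $h$ is subharmonic on $N$ with Riesz mass $\mu_A$ and harmonic on $N\setminus\Sigma_A$, together with the domination principle (Theorem~\ref{dom}) applied to the comparison of $\mu_A$ with $\eta$, this should force $U_{\mu_A}=f=U_\eta$ quasi-everywhere on $\Sigma_A$. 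Then $\mu_A$ and $\eta$ are finite-energy measures on $\Sigma_A$ whose potentials agree quasi-everywhere on their supports, so the uniqueness just proved gives $\mu_A=\eta$. Taking $\mu_{A,\Sigma_A}:=\eta=\mu_A$, all required properties follow: $\mu_{A,\Sigma_A}\ge 0$, $\mu_{A,\Sigma_A}(\mathbb C)=1$, $(\mu_{A,\Sigma_A},b_Q)\in\mathcal C(A)^+$, $\int x\,d\mu_{A,\Sigma_A}=\Lambda_A$, and $U_{\mu_{A,\Sigma_A}}=f$ on $\Sigma_A$.

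The main obstacle is the implication ``$h\ge 0$ on $\mathbb C$ and $h\le\varepsilon$ on a set dense in each subinterval of $\Sigma_A$'' $\Longrightarrow$ ``$U_{\mu_A}=f$ quasi-everywhere on $\Sigma_A$''. The merely dense, approximate vanishing is not sufficient on its own, because $h=U_{\mu_A}-f$ is a priori only upper semicontinuous, and a nonnegative subharmonic function can have a non-polar positivity set whose complement is dense. One must use the subharmonicity of $h$ near $\Sigma_A$, its harmonicity off $\Sigma_A$, the finiteness of $I(\mu_A)$, the domination principle, and — as the decisive complementary-slackness input — the optimality $\int x\,d\mu_A=\Lambda_A$: a measure with $U_{\mu_A}>f$ on a non-polar subset $E\subset\Sigma_A$ leaves room to partially sweep mass onto $E$ (or to replace $\mu_A$ by $\eta$), producing a feasible point of strictly smaller trace, a contradiction. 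Making this variational argument precise, bounding $\eta(\mathbb C)$ enough to invoke Theorem~\ref{dom}, and dispatching the $\mathrm{cap}(\Sigma_A)=1$ degeneracy, are the remaining technical points.
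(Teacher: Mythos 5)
Your construction of the candidate measure is essentially the paper's: the balayage of $\sigma=\sum_{Q\in A}b_Q\sum_{Q(\rho)=0}\delta_\rho$ onto $\Sigma_A$ plus a suitable multiple of the equilibrium measure is exactly the measure $\sum_{Q\in A}b_Q\deg(Q)\mu_{Q,\Sigma_A}+b_{eq}\mu_{eq,\Sigma_A}$ that the paper assembles from \cite[Lemma 3.2]{lowerbound}, and your positivity and continuity remarks about it are fine. The genuine gap is in how you verify that this measure has mass $1$, trace $\Lambda_A$, and $\int\log|Q|\,d\eta\ge 0$: you propose to prove $\eta=\mu_A$, i.e.\ that $U_{\mu_A}=\sum_Q b_Q\log|Q|$ quasi-everywhere on $\Sigma_A$, and you yourself flag that the implication from the hypothesis (approximate equality on a dense set, for the merely upper semicontinuous difference $h=U_{\mu_A}-f\ge 0$) is not established. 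The proposed repair --- ``sweep mass onto a non-polar set where $h>0$ to strictly decrease the trace'' --- is only a sketch: it is not shown that such a modification preserves feasibility (the constraints $\int\log|Q|\,d\mu\ge 0$ and $U_\mu\ge\sum_Q b_Q\log|Q|$ on all of $\mathbb C$ must survive) nor that it strictly lowers $\int x\,d\mu$. So the key conclusions of the proposition are not actually proved in your write-up; this is a missing idea, not a routine technicality.

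Moreover, the identification $\eta=\mu_A$ is stronger than what the proposition asserts or needs, and the paper never proves it. Instead it transfers the three quantities from $\mu_A$ to the constructed measure at the level of integrals: for any $f(x)=\int_{\Sigma_A}\log|x-y|\,g(y)\,dy$, Fubini gives $\int f\,d\mu_{A,\Sigma_A}=\int_{\Sigma_A}\bigl(\sum_Q b_Q\log|Q(y)|\bigr)g(y)\,dy$, and this integral is then computed by Riemann sums evaluated at the dense points $\xi_{i,N}\in K$ where $\lvert U_{\mu_A}(\xi_{i,N})-\sum_Q b_Q\log\lvert Q(\xi_{i,N})\rvert\rvert\le 1/N$, yielding $\int U_{\mu_A}\,g\,dy=\int f\,d\mu_A$; the choices $f=1$, $f=\log|Q|/\deg Q$, and $f=x$ (the latter via \cite[Lemma 3.3]{lowerbound}) then give the mass, the sign of the $\log|Q|$ integrals, and the trace, after which feasibility follows from Theorem~\ref{dom} together with $\sum_Q b_Q\deg(Q)\le 1$ from Lemma~\ref{lemexis}, much as you do for $\eta$. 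To complete your route you would either have to establish the pointwise (q.e.) identity for $U_{\mu_A}$ --- which is not available from the stated hypotheses without substantial extra argument --- or switch to this integral-level transfer, which is precisely what the dense-set hypothesis was designed to enable.
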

\begin{proof}
We note that $\mu_A$ is an arithmetic measure supported on $\Sigma_A$ with non-constant potential.
This implies that
\[
I(\mu_A)=\int \log|x-y|d\mu_A(x)d\mu_A(y)\geq 0
\]
and 
 $C_{\Sigma_A}>0$, where $C_{\Sigma_A}$ is the logarithmic capacity of $\Sigma_A$. 
First, we construct the  measure $\mu_{A,\Sigma_A}$. By ~\cite[Lemma 3.2]{lowerbound}, there exists a probability measure $\mu_{Q,\Sigma_A}$ with potential
 \[
 U_{\mu_{Q,\Sigma_A}}(x)=\frac{\log|Q|(x)}{\deg(Q)}-C_{Q,\Sigma_A}
 \]
where $C_{Q,\Sigma_A}>0.$ Let $\mu_{eq,\Sigma_A}$ be the equilibrium probability measure on $\Sigma_A.$ Let 
\[
\mu_{A,\Sigma_A}:=\sum_{Q\in A}b_Q\deg(Q)\mu_{Q,\Sigma_A}+b_{eq}\mu_{eq,\Sigma_A}
\]
where
\[
b_{eq}:=\sum_{Q\in A}b_Q\deg(Q)\frac{C_{Q,\Sigma_A}}{C_{\Sigma_A}}>0.
\]
Since $b_Q\geq 0$ and $b_{eq}>0$, $\mu_{A,\Sigma_A}$ is a positive measure. By the above choice $b_{eq}$, we have
 \[
    U_{\mu_{A,\Sigma_A}}(x)=\sum b_Q\log|Q|(x)
    \]
    for every $x\in \Sigma_A$. Suppose that $f(x)$ is a continuous function on $\Sigma_A$ such that
    \[
    f(x)=\int_{\Sigma_A} \log|x-y|g(y)dy
    \]
     for some function $g(y)$ on $\Sigma_A$, where  $ \int \log|x-y||g(y)|dy< \infty$ for every $x\in\Sigma_A$.
Next, we show 
\[
\int_{x\in \Sigma_A} f(x)d \mu_{A,\Sigma_A}(x)=\int_{x\in \Sigma_A} f(x)d \mu_{A}(x). 
\]
We rewrite the above as 
\[
\int f(x)d \mu_{A,\Sigma_A}(x)= \int_{x\in \Sigma_A} \int_{y\in \Sigma_A} \log |x-y|  g(y)dy d\mu_{A,\Sigma_A}(x)
\]
Since
\(
\int |\log |x-y|g(y)| dy d\mu_{A,\Sigma_A}(x) < \infty
\), by Fubini's theorem,
\[
\int_{x\in \Sigma_A}f(x) d \mu_{A,\Sigma_A}(x) = \int_{y\in \Sigma_A} \sum_{Q\in A} b_Q\log|Q|(y)g(y)dy. 
\]
Given $N\geq 0$, we partition $\Sigma_A\subset \bigcup_{i=1}^M O_i$ into open intervals of length less than $1/N$. By our assumption, for every open interval $O_i$, there exists $\xi_{i,N}\in O_i \cap K $ such that
\[
    |U_{\mu_A}(\xi_{i,N})-\sum_{Q\in A}b_Q\log|Q(\xi_{i,N})||\leq 1/N.
\]
By  approximating the integral  by Riemann sum over $\xi_{i,N} \in O_i$, we have
\[
\begin{split}
    \int_{x\in \Sigma_A}f(x) d \mu_{A,\Sigma_A}(x)&=\int_{y\in \Sigma_A} \sum_{Q\in A} b_Q\log|Q|(y)g(y)dy
    \\
    &= \lim_{N\to \infty} \sum_{i}\sum_{Q}b_Q\log|Q(\xi_{i,N})| g(\xi_{i,N}) \text{len}(O_i)
    \\
    &=
    \lim_{N\to \infty} \sum_{i}U_{\mu_A}(\xi_{i,N})g(\xi_{i,N}) \text{len}(O_i)
    \\
    &= \int_{x\in \Sigma_A} \int_{y\in \Sigma_A} \log |x-y|  g(y)dy d\mu_{A}(x)
    \\
 &=\int f(x)  d\mu_A(x).
    \end{split}
\]
By taking $f(x)=1$ and $g(y)dy=\frac{1}{C_{\Sigma_A}}d\mu_{eq,\Sigma_A}(y)$, we have
\[
\int d \mu_{A,\Sigma_A}(x)=\int   d\mu_A(x)=1.
\]
Similarly, by taking $f(x)=\frac{\log|Q(x)|}{\deg(Q)}$ and $g(y)dy=d\mu_{Q,\Sigma_A}+\frac{C_{Q,\Sigma_A}}{C_{\Sigma_A}}$, we have
\[
\int \log|Q(x)| d \mu_{A,\Sigma_A}(x)=\int \log |Q(x)|  d\mu_A(x)\geq 0.
\]
By~\cite[Lemma 3.3]{lowerbound}, there exists $g(y)dy $ such that 
\[
x=\int_{\Sigma_A}\log|x-y|g(y)dy.
\]
This implies that
\[
\int xd \mu_{A,\Sigma_A}(x)=\int   xd\mu_A(x)=\Lambda_A.
\]
Finally, let $\eta:=\sum_{Q} b_Q\deg(Q)\mu_Q$. Note that $|\eta|=\sum b_Q\deg(Q) \leq 1$. 
By Theorem~\ref{dom}, since  $\mu_{A,\Sigma_A}$ is a probability measure and $U_{\mu_{A,\Sigma_A}}(x)= \sum b_Q\log|Q|(x)=U_{\eta}(x)$ for every $x\in \Sigma_A$, 
we have
\[
    U_{\mu_{A,\Sigma_A}}(x)\geq \sum b_Q\log|Q|(x)=U_{\eta}(x)
\]
for every $x\in\mathbb{C}$.
This implies that $(\mu_{A,\Sigma_A},b_Q)\in \mathcal{C}(A)^+$ and concludes our proposition. 
\end{proof}

\subsection{Dual problem}
In this subsection, we define the associated dual optimization problem. 
Let 
\[
\mathcal{D}(A):=\{(\nu,\lambda_Q,\beta_Q,\lambda_0): \nu\in\mathcal{M}^+, \beta_Q,\lambda_Q\geq 0 \text{ for every } Q\in A,\text{ and }\lambda_0\geq 0\}.
\]
For $(\mu,b_Q)\in \mathcal{C}(A)$ and $(\nu,\lambda_Q,\beta_Q,\lambda_0)\in \mathcal{D}(A)$, we define the  Lagrangian function as
\begin{multline}\label{Lag}
L\left((\mu,b_Q),(\nu,\lambda_Q,\beta_Q,\lambda_0)\right):=\int xd\mu (x)-\int\left( U_{\mu}(x)-\sum_{Q\in A} b_Q\log|Q(x)|\right)d\nu(x)
\\
-\sum_{Q\in A}\left(\lambda_Q\int \log|Q(x)|d\mu(x)+b_Q\beta_Q\right)-\lambda_0\left(\int d\mu -1\right).
\end{multline}
We note that if $(\mu,b_Q)\in \mathcal{C}(A)^+$ is a feasible point then 
\begin{equation}\label{Lagin}
  L\left((\mu,b_Q),(\nu,\lambda_Q,\beta_Q,\lambda_0)\right) \leq \int xd\mu (x)   
\end{equation}

for every $(\nu,\lambda_Q,\beta_Q,\lambda_0)\in \mathcal{D}(A).$ We may rewrite the  Lagrangian function as
\begin{multline}\label{Lag2}
L\left((\mu,b_Q),(\nu,\lambda_{Q},\beta_{Q},\lambda_{0})\right)=\lambda_{0}+\int \left(x-U_{\nu}(x)-\sum_{Q\in A}\lambda_{Q}\log|Q(x)|-\lambda_{0} \right)  d\mu (x)
\\
+\sum_{Q\in A} \left(\int \log|Q(x)|d\nu(x)-\beta_{Q}\right) b_{Q}.
\end{multline}
We define
\[
g((\nu,\lambda_Q,\beta_Q,\lambda_0)):= \inf_{(\mu,b_Q)\in \mathcal{C}(A)} L\left((\mu,b_Q),(\nu,\lambda_Q,\beta_Q,\lambda_0)\right).
\]
\begin{lemma}\label{duallem}
    Suppose that $g((\nu,\lambda_Q,\beta_Q,\lambda_0))>-\infty$. Then \begin{equation}\label{dualconst}
    \begin{split}
        \int \log|Q(x)|d\nu(x)=\beta_{Q}\geq 0,
        \\
      x-U_{\nu}(x)-\sum_{Q\in A}\lambda_{Q}\log|Q(x)| \geq \lambda_{0} \geq  0 
    \end{split}
\end{equation}
for every $x\in\mathbb{R}^+$. We denote the above by the \textit{dual constraints}.  Moreover, we have
\[
g((\nu,\lambda_Q,\beta_Q,\lambda_0))=\lambda_0.
\]
\end{lemma}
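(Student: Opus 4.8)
The plan is to read the three conclusions directly off the rewritten Lagrangian~\eqref{Lag2}, which, for a fixed $(\nu,\lambda_Q,\beta_Q,\lambda_0)\in\mathcal D(A)$, has the shape
\[
L\bigl((\mu,b_Q),(\nu,\lambda_Q,\beta_Q,\lambda_0)\bigr)=\lambda_0+\int\phi(x)\,d\mu(x)+\sum_{Q\in A}\left(\int\log|Q(x)|\,d\nu(x)-\beta_Q\right)b_Q,
\]
where $\phi(x):=x-U_\nu(x)-\sum_{Q\in A}\lambda_Q\log|Q(x)|-\lambda_0$. The two structural facts to exploit are that in $\mathcal C(A)$ the scalars $b_Q$ are allowed to range over all of $\mathbb R$ (their non-negativity is a primal constraint, not part of the definition of $\mathcal C(A)$), and that $\mathcal M^+$ contains every nonnegative multiple of a Dirac mass $\delta_{x_0}$ with $x_0\in\mathbb R^+$. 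Since $g((\nu,\lambda_Q,\beta_Q,\lambda_0))>-\infty$ means exactly that $L$ is bounded below over $(\mu,b_Q)\in\mathcal C(A)$, each conclusion is obtained by restricting $L$ to a one-parameter subfamily of $\mathcal C(A)$ along which a violation of the claimed inequality would force $L\to-\infty$.

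First I would take $\mu=0$ and, for a single $Q_0\in A$, let $b_{Q_0}$ range over $\mathbb R$ with the remaining $b_Q$ set to $0$: then $L=\lambda_0+\bigl(\int\log|Q_0(x)|\,d\nu(x)-\beta_{Q_0}\bigr)b_{Q_0}$ is affine in $b_{Q_0}$, so boundedness below forces $\int\log|Q_0(x)|\,d\nu(x)=\beta_{Q_0}$; and $\beta_{Q_0}\ge 0$ by definition of $\mathcal D(A)$. Repeating over all $Q\in A$ gives the first dual constraint. Next I would fix all $b_Q=0$ and, for a point $x_0\in\mathbb R^+$, test the measures $\mu=t\,\delta_{x_0}$ as $t\to+\infty$: since $\delta_{x_0}\in\mathcal M^+$ and $\int\phi\,d(t\delta_{x_0})=t\,\phi(x_0)$, we get $L=\lambda_0+t\,\phi(x_0)$, so boundedness below forces $\phi(x_0)\ge 0$, that is $x_0-U_\nu(x_0)-\sum_{Q\in A}\lambda_Q\log|Q(x_0)|\ge\lambda_0$; and $\lambda_0\ge 0$ by definition of $\mathcal D(A)$. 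One should note in passing that $\phi(x_0)$ always lies in $(-\infty,+\infty]$ --- $U_\nu$ and each $\log|Q|$ take values in $[-\infty,+\infty)$, and the $\lambda_Q=0$ summands are read as $0$ --- so the hypothesis ``$\phi(x_0)<0$'' genuinely produces a finite negative value and hence $L\to-\infty$.

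With both dual constraints established, the $b_Q$-sum in~\eqref{Lag2} vanishes identically, so $L=\lambda_0+\int\phi\,d\mu$ for every $(\mu,b_Q)\in\mathcal C(A)$; since $\phi\ge 0$ on $\mathbb R^+\supseteq\operatorname{supp}\mu$, we get $\int\phi\,d\mu\ge 0$, hence $L\ge\lambda_0$ throughout $\mathcal C(A)$, with equality attained at $(\mu,b_Q)=(0,\dots,0)\in\mathcal C(A)$. Therefore $g((\nu,\lambda_Q,\beta_Q,\lambda_0))=\lambda_0$. The one point needing care --- and the closest thing to an obstacle --- is the measure-theoretic bookkeeping: that $\int\phi\,d\mu$ is well defined, which holds because $\phi$ is lower semicontinuous, bounded below on every bounded subset of $\mathbb R^+$, and satisfies $\phi(x)=x-\bigl(\nu(\mathbb R)+\sum_{Q\in A}\lambda_Q\deg Q\bigr)\log x+O(1)\to+\infty$ as $x\to+\infty$, so it is bounded below on all of $\mathbb R^+$ while every $\mu\in\mathcal M^+$ has bounded support; and that $\delta_{x_0}\in\mathcal M^+$ for $x_0\in\mathbb R^+$ and $(0,\dots,0)\in\mathcal C(A)$, both of which are immediate from the definitions.
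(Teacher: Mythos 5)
Your proposal is correct and follows essentially the same route as the paper: read the constraints off the rewritten Lagrangian \eqref{Lag2}, using that $b_Q$ ranges over all of $\mathbb{R}$ in $\mathcal{C}(A)$ to force $\int\log|Q|\,d\nu=\beta_Q$, testing with large multiples of Dirac masses $\delta_{x_0}$ to force the pointwise inequality, and then concluding $g=\lambda_0$ from $L\geq\lambda_0$ with equality at $(0,\dots,0)$. Your added remarks on the values of $\phi$ in $(-\infty,+\infty]$ and the well-definedness of $\int\phi\,d\mu$ are harmless extra care beyond what the paper records.
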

\begin{proof}
 Since the minimum of a non-zero linear function is $-\infty$, by \eqref{Lag2} $g((\nu,\lambda_Q,\beta_Q,\lambda_0))=-\infty$ unless  $\int \log|Q(x)|d\nu(x)=\beta_{Q}$. Suppose to the contrary that for some $x_0>0$,
 $$
x_0-U_{\nu}(x_0)-\sum_{Q\in A}\lambda_{Q}\log|Q(x_0)| <0.
 $$
  By taking $(\mu,b_Q)=(a\delta_{x_0},0,\dots,0)$ for arbitrary large $a>0$, it follows that 
 \[
 g((\nu,\lambda_Q,\beta_Q,\lambda_0))=-\infty.
 \]
 Finally, by the above inequalities and~\eqref{Lag2}, we have
 \[
 L\left((\mu,b_Q),(\nu,\lambda_{Q},\beta_{Q},\lambda_{0})\right)\geq \lambda_0
 \]
 for every $(\mu,b_Q)\in \mathcal{C}(A)$ with equality for $(\mu,b_Q)=(0,\dots,0)$.
\end{proof}

We say $(\nu,\lambda_Q,\beta_Q,\lambda_0)$ is a feasible dual point if $(\nu,\lambda_Q,\beta_Q,\lambda_0)$ satisfies all the dual constraints. We denote the set of dual feasible points by $\mathcal{D}(A)^+.$ 
\begin{proposition}\label{smooth}
    Suppose that $(\nu,\lambda_Q,\beta_Q,\lambda_0)\in \mathcal{D}(A)^+$ is a feasible point. We have
    \[
2e\geq  \lambda_0+\sum_{Q}\deg(Q)\lambda_Q+\nu(\mathbb{R}).
\]
Moreover, suppose that 
    \[
    d_{Q_0}:=\lambda_0- \Tr(Q_0)>0
    \]
     for some $Q_0\in A$ with all roots being  positive real. Then 
    \[
    \lambda_{Q_0}\gg d_{Q_0}/\max(\log(1/d_{Q_0}),1),
    \]
    where the implicit constant is universal and independent of all variables.
\end{proposition}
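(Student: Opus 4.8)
The plan is to extract both conclusions from the two dual constraints in~\eqref{dualconst} by testing them against carefully chosen probability measures. For the first bound, integrate the pointwise inequality $x - U_\nu(x) - \sum_Q \lambda_Q \log|Q(x)| \geq \lambda_0$ against a suitable probability measure $\sigma$ on $\mathbb{R}^+$. Choosing $\sigma$ to be, say, the equilibrium measure of a fixed interval like $[0,4]$ (or more simply a point mass, though a spread-out measure is needed to control $U_\nu$), the left side becomes $\int x\,d\sigma - I(\nu,\sigma) - \sum_Q \lambda_Q \int \log|Q|\,d\sigma$. The term $I(\nu,\sigma) = \int U_\sigma\,d\nu$ is bounded below by $(\inf U_\sigma)\,\nu(\mathbb{R})$, and if $\sigma$ is the equilibrium measure of $[0,4]$ then $U_\sigma \geq 0$ everywhere so this is harmless; but to get $\nu(\mathbb{R})$ to appear with a \emph{positive} coefficient on the correct side we instead want to use the growth $x - U_\nu(x) \sim (1-\nu(\mathbb{R}))\log x + o(1)$ as $x\to\infty$. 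Comparing the constraint at large $x$ forces $\nu(\mathbb{R}) \leq 1$ unless the inequality fails; more carefully, evaluating the constraint asymptotically as $x\to\infty$ and using $\log|Q(x)| = \deg(Q)\log x + O(1)$, the left side behaves like $x - (\nu(\mathbb{R}) + \sum_Q \lambda_Q \deg Q)\log x$, which must stay $\geq \lambda_0$; testing near the smallest point where $x = 2e\log x$-type behavior is tight (as in the proof of Proposition~\ref{compact}, where the bound $z \geq 2e\log z + 2$ on $[18,\infty)$ appeared) yields $\lambda_0 + \sum_Q \deg(Q)\lambda_Q + \nu(\mathbb{R}) \leq 2e$. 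I would mimic the computation in the commented-out proof of Proposition~\ref{compact} almost verbatim, taking the minimum of $x/\log x$ on $\mathbb{R}^+$, which is $e$ at $x = e$, and bookkeeping the constant additive terms.

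For the second, quantitative conclusion, suppose $Q_0 \in A$ has all roots positive real and $d_{Q_0} = \lambda_0 - \Tr(Q_0) > 0$. The idea is to integrate the dual constraint against the root distribution $\mu_{Q_0}$ of $Q_0$: since each root $\alpha$ of $Q_0$ is positive, we may evaluate
\[
\int\Big(x - U_\nu(x) - \sum_{Q\in A}\lambda_Q \log|Q(x)|\Big)\,d\mu_{Q_0}(x) \geq \lambda_0.
\]
The left side equals $\Tr(Q_0) - I(\nu,\mu_{Q_0}) - \sum_Q \lambda_Q \int \log|Q|\,d\mu_{Q_0}$. The term $\int \log|Q_0|\,d\mu_{Q_0}$ is $-\infty$ (it is $\frac{1}{\deg Q_0}\log|\text{disc}|$-type but with a root hitting zero of $\log$)—rather, $\int \log|Q_0(x)|\,d\mu_{Q_0}(x) = -\infty$ since the integrand has logarithmic singularities at each root and the measure has atoms exactly there. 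This is the crux: the atom of $\mu_{Q_0}$ at a root $\alpha$ of $Q_0$ makes $\int \log|Q_0|\,d\mu_{Q_0} = -\infty$, so literally integrating is illegal. The fix is to integrate against a \emph{smoothed} version: replace $\mu_{Q_0}$ by $\mu_{Q_0,\epsilon}$, the uniform measure on $\epsilon$-neighborhoods of the roots (as in the construction of $\mu_{m,n}$ in the proof of Theorem~\ref{approx}), for which $\int \log|Q_0|\,d\mu_{Q_0,\epsilon} = O(\epsilon\log(1/\epsilon))$ — more precisely of size $\asymp \log \epsilon$ since near each simple root $Q_0(x)\approx c(x-\alpha)$ and $\int_{-\epsilon}^{\epsilon}\log|y|\,\frac{dy}{2\epsilon} = \log\epsilon - 1$. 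Then the inequality reads
\[
\Tr(Q_0) + O(\epsilon) - I(\nu,\mu_{Q_0,\epsilon}) - \lambda_{Q_0}\big(\log\epsilon + O(1)\big) - \sum_{Q\neq Q_0}\lambda_Q \int\log|Q|\,d\mu_{Q_0,\epsilon} \geq \lambda_0.
\]
Since $I(\nu,\mu_{Q_0,\epsilon}) = \int U_\nu\,d\mu_{Q_0,\epsilon}$ is bounded (by $\nu(\mathbb R)$ times the sup of $U_\nu$ on a fixed compact set, which is $O(1)$ by the first part), and the $\sum_{Q\neq Q_0}$ terms are $O(1)$ using $\lambda_Q \leq 2e$ and $\int \log|Q|\,d\mu_{Q_0,\epsilon} = O(1)$ for $Q$ coprime to $Q_0$ (roots of $Q$ are a bounded distance from roots of $Q_0$, since $A$ is a fixed finite set), we obtain $-\lambda_{Q_0}\log\epsilon \gg \lambda_0 - \Tr(Q_0) - C = d_{Q_0} - C$ for an absolute constant $C$ — which is not yet what we want because of that stray constant. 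To get the clean statement, I would instead \emph{subtract} the dual constraint evaluated at a nearby non-root point: use the measure $\sigma_\epsilon := \mu_{Q_0,\epsilon} - \mu_{Q_0,1}$ (difference of smoothings at scales $\epsilon$ and $1$), a signed measure of total mass zero, so that all the $O(1)$ and $\Tr$ and $U_\nu$ contributions that are Lipschitz-stable cancel to leading order, isolating $\lambda_{Q_0}\log(1/\epsilon) \gg d_{Q_0} - o(d_{Q_0})$; optimizing over $\epsilon$ (taking $\log(1/\epsilon) \asymp \max(\log(1/d_{Q_0}),1)$) gives $\lambda_{Q_0} \gg d_{Q_0}/\max(\log(1/d_{Q_0}),1)$.

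The main obstacle is making the cancellation in the second part fully rigorous with explicit universal constants: one must show that $\int U_\nu\,d(\mu_{Q_0,\epsilon} - \mu_{Q_0,1})$, and $\int \log|Q|\,d(\mu_{Q_0,\epsilon}-\mu_{Q_0,1})$ for $Q \neq Q_0$, are all $O(\epsilon)$ or at worst $o(1)$ as $\epsilon\to 0$ with constants depending only on absolute quantities — this uses that $U_\nu$ has a modulus of continuity controlled by $\nu(\mathbb{R}) \leq 2e$ (from part one) together with the fact that $\nu$, $\mu_{Q_0}$ are supported in a fixed compact set $[0,18]$ by Proposition~\ref{compact}, and that the roots of the fixed polynomials in $A$ are separated by a fixed amount. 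I would present the first part in full and the second part with the smoothing-and-differencing argument spelled out, deferring the (routine but tedious) continuity estimates to a short lemma or to the analogous estimates already carried out in the proof of Theorem~\ref{approx}.
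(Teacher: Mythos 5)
Your high-level strategy (integrate the dual constraint against a well-chosen positive measure, once against an equilibrium-type measure and once against a smoothed root distribution of $Q_0$) is the paper's strategy, but both halves of your execution have genuine gaps. For the first inequality, testing against the equilibrium measure of $[0,4]$ only yields $2\geq\lambda_0$, since that measure has capacity $1$ and its potential is merely $\geq 0$; your fallback, the large-$x$ asymptotics $U_\nu(x)\sim\nu(\mathbb{R})\log x$ and $\log|Q(x)|\sim\deg(Q)\log x$, is only valid for $x$ beyond a threshold depending on the (a priori uncontrolled) support of $\nu$ and on the polynomials in $A$, so minimizing $x-c\log x$ over all $x>0$ is not justified and the resulting constant is not universal. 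The paper's fix is simply to integrate against the equilibrium measure of $[0,4e]$: its capacity is $e$, so its potential is $\geq 1$ on all of $\mathbb{C}$, whence $\int\log|Q|\,d\mu_{[0,4e]}\geq\deg(Q)$ and $\int U_\nu\,d\mu_{[0,4e]}\geq\nu(\mathbb{R})$, while $\int x\,d\mu_{[0,4e]}=2e$; this gives $2e\geq\lambda_0+\sum_Q\deg(Q)\lambda_Q+\nu(\mathbb{R})$ in one line.

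For the quantitative lower bound on $\lambda_{Q_0}$, you correctly diagnose the stray-constant problem, but your proposed repair does not work: $\int F\,d\mu_{Q_0,\epsilon}\geq 0$ and $\int F\,d\mu_{Q_0,1}\geq 0$ are inequalities in the same direction, so integrating against the signed measure $\mu_{Q_0,\epsilon}-\mu_{Q_0,1}$ gives no valid inequality, and the claimed $o(1)$ stability of $\int U_\nu\,d(\mu_{Q_0,\epsilon}-\mu_{Q_0,1})$ fails because $U_\nu$ can be very negative near $\mathrm{supp}(\nu)$, which may contain the roots of $Q_0$. Moreover your $O(1)$ bounds on $\int U_\nu\,d\mu_{Q_0,\epsilon}$ and on $\int\log|Q|\,d\mu_{Q_0,\epsilon}$ invoke root separation ``since $A$ is a fixed finite set,'' which is incompatible with the universal implicit constant in the statement. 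The paper removes the stray constant differently: it uses a mollifier $w_\varepsilon$ supported on $[\varepsilon^2,\varepsilon]$ whose potential equals $\log|x|-C_\varepsilon$ on its support with $C_\varepsilon\ll\varepsilon^{1/2}$, so that $U_{w_\varepsilon*\mu_{Q_0}}\geq\max\bigl(\tfrac{\log|Q_0|}{\deg Q_0}-C_\varepsilon,\,2\log\varepsilon-C_\varepsilon\bigr)$; then the term $\int U_\nu\,d(w_\varepsilon*\mu_{Q_0})=\int U_{w_\varepsilon*\mu_{Q_0}}\,d\nu$ is bounded below by $-O(C_\varepsilon)$ using the dual constraint $\int\log|Q_0|\,d\nu\geq 0$ (an ingredient you never use), and the $Q\neq Q_0$ terms are bounded below by $-\deg(Q)C_\varepsilon$ via integrality of resultants. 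Every error is thus $O(\varepsilon+C_\varepsilon)$ rather than $O(1)$, and choosing $\varepsilon\asymp d_{Q_0}^2$ yields $\lambda_{Q_0}\gg d_{Q_0}/\log(1/d_{Q_0})$ with universal constants. With your uniform-interval smoothing the potential loss is a fixed constant independent of $\epsilon$, so no choice of $\epsilon$ can rescue the argument when $d_{Q_0}$ is small.
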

\begin{proof}
Note that 
\begin{equation}\label{ddd}
 x\geq \lambda_0+ \sum_{Q}\lambda_Q \log|Q(x)|+U_{\nu}(x)    
\end{equation}
for every $x\in \mathbb{R}^+$. We integrate the above inequality with respect to $\mu_{[0,4e]}$, the equilibrium measure of the interval $[0,4e]$, and obtain
\[
2e\geq  \lambda_0+\sum_{Q}\deg(Q)\lambda_Q+\nu(\mathbb{R}).
\]
    Let $w_{\varepsilon}$ be the compactly supported probability measure supported on $[\varepsilon^2,\varepsilon]$ with logarithmic potential
    
    \begin{equation}\label{weight}
    U_{w_{\varepsilon}}(x)=\log|x|-C_{\varepsilon}
    \end{equation}

    for every $x\in [\varepsilon^2,\varepsilon]$
    where 
 \begin{equation}\label{eps}
     \varepsilon^{1/2}\gg C_{\varepsilon}=\log\left(\frac{\varepsilon^2+2\varepsilon^{3/2}+\varepsilon}{\varepsilon-\varepsilon^2}\right)>0.
 \end{equation}
    Let $\mu_{Q_0}=\frac{1}{\deg(Q_0)}\sum_{Q_0(\alpha)=0}\delta_{\alpha}$ be the root distribution of $Q_0$. We note that 
    \[
    \lim_{\varepsilon\to 0} w_{\varepsilon}*\mu_{Q_0}=\mu_{Q_0}
    \]
    where $dw_{\varepsilon}*\mu_{Q_0}(x)=\frac{1}{\deg(Q_0)}\sum_{Q_0(\alpha)=0} dw_{\varepsilon}(x-\alpha).$ Moreover, for every $x\in \mathbb{C}$, we have
    \[
    U_{w_{\varepsilon}*\mu_{Q_0}}(x) \geq \max\left(\frac{\log |Q_0(x)|}{\deg(Q_0)}-C_{\varepsilon}, 2\log(\varepsilon)-C_{\varepsilon}\right).
    \]

    This implies that 
    \[
    \int U_{\nu}(x) dw_{\varepsilon}*\mu_{Q_0}(x)=\int U_{w_{\varepsilon}*\mu_{Q_0}}(x) d\nu(x)\geq \int \frac{\log |Q_0(x)|}{\deg(Q_0)}-C_{\varepsilon} d\nu(x) \geq -C_{\varepsilon}.
    \]
    Moreover, 
    \[
    \int \log|Q(x)|d\mu_{Q
    _0}(x) \geq 0
    \]
    for every non-zero $Q\neq Q_0$, and hence
    \[
    \int \log|Q(x)|dw_{\varepsilon}*\mu_{Q_0}(x) \geq -C_{\varepsilon}
    \]
    for every $Q\in A\setminus\{Q_0\}$ and 
    \[
    \int xdw_{\varepsilon}*\mu_{Q_0}(x) \leq \varepsilon+\Tr(Q).
    \]
    On the other hand, average the  inequality~\eqref{ddd} with respect to $dw_{\varepsilon}*\mu_{Q_0}(x)$, and obtain
    \[
    \varepsilon+\Tr(Q) \geq  \lambda_0- C_{\varepsilon}\sum_{Q\neq Q_0} \lambda_Q-\lambda_{Q_0}(2\log(1/\varepsilon)-C_{\varepsilon})
    \]
    By taking $\varepsilon_0\gg (\lambda_0-\Tr(Q))^2 $ small enough, where $C_{\varepsilon_0}<\frac{\lambda_0-\Tr(Q)}{2(1+\sum_{Q\neq Q_0} \lambda_Q)}$, the above inequality implies that
    \[
    \lambda_{Q_0}\geq \frac{\lambda_0-\Tr(Q_0)}{4\log(1/\varepsilon_0)}\gg d_{Q_0}/\log(1/d_{Q_0}).
    \]
    This completes the proof of our proposition.
\end{proof}
\begin{proposition}\label{conv}
      Suppose that $(\nu,\lambda_Q,\beta_Q,\lambda_0)\in \mathcal{D}(A)^+$ is a feasible point, and $x\in A$. Let $w_{\varepsilon}$ be the smooth probability measure supported on $[\varepsilon^2,\varepsilon]$ defined in \eqref{weight}, where $U_{w_{\varepsilon}}(x)=\log|x|-C_{\varepsilon}$ for every $x\in [\varepsilon^2,\varepsilon]$. 
    Define
      $\nu_{\varepsilon}:=\nu*w_{\varepsilon}+C_{\varepsilon}|\nu|\mu_{[0,18]}$, where $|\nu|:=\int d\nu$ and  $\mu_{[0,18]}$ is the equilibrium measure of the interval $[0,18]$. We have
      \[
\lim_{\varepsilon\to 0}U_{\nu_{\varepsilon}}(x)=U_{\nu}(x),
      \]
      for every $x\in \mathbb{C},$
      \[
(\nu_{\varepsilon},\lambda_Q,\beta_{Q,\varepsilon},\lambda_{0,\varepsilon})\in \mathcal{D}(A)^+,
      \]
      where 
      \(
        \beta_{Q,\varepsilon}:=\int \log|Q(x)|d\nu_{\varepsilon}(x)
      \)
      and 
      \[
      \lambda_{0,\varepsilon}\geq \lambda_0-C \varepsilon^{1/2}.
      \]
      for some absolute constant $C$.
\end{proposition}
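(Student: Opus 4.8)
The plan is to study $\nu_\varepsilon$ through the decomposition $U_{\nu_\varepsilon}=U_{\nu*w_\varepsilon}+C_\varepsilon|\nu|\,U_{\mu_{[0,18]}}$ and to reduce every assertion to one estimate on $w_\varepsilon$: since $w_\varepsilon$ is a probability measure of finite logarithmic energy with $U_{w_\varepsilon}(s)=\log|s|-C_\varepsilon$ on $\text{supp}(w_\varepsilon)=[\varepsilon^2,\varepsilon]$, the domination principle (Theorem~\ref{dom}) applied to the pair $(w_\varepsilon,\delta_0)$ yields the global bound $U_{w_\varepsilon}(s)\ge\log|s|-C_\varepsilon$ for all $s\in\mathbb{C}$. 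Together with $C_\varepsilon\ll\varepsilon^{1/2}$ from \eqref{eps} and the a priori bounds $|\nu|\le 2e$, $\sum_Q\deg(Q)\lambda_Q\le 2e$ from Proposition~\ref{smooth}, this is what drives all three assertions.

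For the convergence of potentials, I would write $U_{\nu*w_\varepsilon}(x)=\int U_{w_\varepsilon}(x-y)\,d\nu(y)$. As $\varepsilon\to0$ the integrand converges pointwise to $\log|x-y|$ (using $w_\varepsilon\to\delta_0$ weakly away from $x$, and $U_{w_\varepsilon}(0)\in[2\log\varepsilon,\log\varepsilon]$ at $y=x$); it is bounded above uniformly on bounded sets and below by $\log|x-y|-C_\varepsilon$, which is $\nu$-integrable whenever $x$ is not an atom of $\nu$. Dominated convergence then gives $U_{\nu*w_\varepsilon}(x)\to U_\nu(x)$, while $C_\varepsilon|\nu|\,U_{\mu_{[0,18]}}(x)\to0$; at an atom of $\nu$ both sides tend to $-\infty$. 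Hence $\lim_{\varepsilon\to0}U_{\nu_\varepsilon}(x)=U_\nu(x)$ for every $x\in\mathbb{C}$.

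For the dual constraints, consider first the requirement $\beta_{Q,\varepsilon}=\int\log|Q|\,d\nu_\varepsilon\ge0$: factoring $Q(z)=a_Q\prod_j(z-\rho_j)$ with $|a_Q|\ge1$ and using $\int\log|z-\rho_j|\,d(\nu*w_\varepsilon)(z)=\int U_{w_\varepsilon}(\rho_j-y)\,d\nu(y)\ge U_\nu(\rho_j)-C_\varepsilon|\nu|$ yields $\int\log|Q|\,d(\nu*w_\varepsilon)\ge\beta_Q-C_\varepsilon|\nu|\deg(Q)\ge-C_\varepsilon|\nu|\deg(Q)$; since the equilibrium potential of $[0,18]$ is everywhere at least $\log\frac{18}{4}$ one has $\int\log|Q|\,d\mu_{[0,18]}\ge\deg(Q)\log\frac{18}{4}$, and because $\frac{18}{4}>e$ the term $C_\varepsilon|\nu|\mu_{[0,18]}$ overcompensates: $\beta_{Q,\varepsilon}\ge C_\varepsilon|\nu|\deg(Q)\big(\log\frac{18}{4}-1\big)\ge0$. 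For the inequality $x-U_{\nu_\varepsilon}(x)-\sum_Q\lambda_Q\log|Q(x)|\ge\lambda_{0,\varepsilon}$, I would take $x\ge\varepsilon$ so that $x-t>0$ for $t\in[\varepsilon^2,\varepsilon]$, apply the dual constraint for $\nu$ at $x-t$, integrate against $w_\varepsilon$, and use $\int\log|Q(x-t)|\,dw_\varepsilon(t)\ge\log|Q(x)|-C_\varepsilon\deg(Q)$ to obtain $x-U_{\nu_\varepsilon}(x)-\sum_Q\lambda_Q\log|Q(x)|\ge\lambda_0-C_\varepsilon\big(\sum_Q\lambda_Q\deg(Q)+|\nu|\,U_{\mu_{[0,18]}}(x)\big)\ge\lambda_0-C'\varepsilon^{1/2}$, the last step using Proposition~\ref{smooth} and $U_{\mu_{[0,18]}}(x)=\log\frac{18}{4}$ on $[0,18]$ (the range $x>18$ being handled directly, the linear term eventually dominating $O(\log x)$). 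On $0<x<\varepsilon$ the potential $U_{\nu_\varepsilon}(x)$ and the terms $\lambda_Q\log|Q(x)|$ for $Q\ne x$ remain bounded while $-\lambda_x\log x\ge\lambda_x\log(1/\varepsilon)\to\infty$, and $\lambda_x>0$ by Proposition~\ref{smooth} (as $x\in A$ and $\lambda_0>\Tr(x)=0$), so the expression exceeds $\lambda_0$ once $\varepsilon$ is small. Setting $\lambda_{0,\varepsilon}:=\inf_{x>0}\big(x-U_{\nu_\varepsilon}(x)-\sum_Q\lambda_Q\log|Q(x)|\big)$, we get $\lambda_{0,\varepsilon}\ge\lambda_0-C\varepsilon^{1/2}\ge0$ for $\varepsilon$ small, hence $(\nu_\varepsilon,\lambda_Q,\beta_{Q,\varepsilon},\lambda_{0,\varepsilon})\in\mathcal D(A)^+$.

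The main obstacle is keeping the loss incurred by convolving with $w_\varepsilon$ uniformly of size $O(C_\varepsilon)=O(\varepsilon^{1/2})$ in $x$; this is exactly where the global bound $U_{w_\varepsilon}\ge\log|\cdot|-C_\varepsilon$ (obtained from Theorem~\ref{dom}) and the mass bounds of Proposition~\ref{smooth} are indispensable. The remaining delicate point is the region $x<\varepsilon$, near the root $0$ of the polynomial $x\in A$, where the convolution genuinely changes the potential and one must fall back on the $-\lambda_x\log x$ term to rescue the inequality.
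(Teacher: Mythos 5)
Your proof is correct and follows essentially the same route as the paper's: the global bound $U_{w_\varepsilon}\ge\log|\cdot|-C_\varepsilon$, the capacity-of-$[0,18]$ compensation to get $\beta_{Q,\varepsilon}\ge 0$, averaging the dual constraint against $w_\varepsilon$ to control $\lambda_{0,\varepsilon}$, and Proposition~\ref{smooth} giving $\lambda_x>0$ so that the $-\lambda_x\log x$ term rescues the region near $0$. The differences are only refinements within that strategy: you justify the global potential bound explicitly via Theorem~\ref{dom} and use it to treat points near roots of $Q\in A$ uniformly where the paper argues by cases, and you use dominated convergence for $U_{\nu_\varepsilon}\to U_\nu$ where the paper combines the weak-convergence upper bound with the lower bound $U_{\nu_\varepsilon}\ge U_\nu-O(\varepsilon^{1/2})$.
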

\begin{proof}Note that $\lim_{\varepsilon\to 0}\nu_{\varepsilon}=\nu$. This implies that for every $x\in \mathbb{C}$, we have
\[
\limsup_{\varepsilon\to 0} U_{\nu_{\varepsilon}}(x)\leq U_{\nu}(x).
\]
On the other hand, we have
\[
U_{\nu_{\varepsilon}}(x)=\int U_{w_{\varepsilon}}(x-y)d\nu(y)+C_{\varepsilon}|\nu|U_{[0,18]}(x).
\]
By~\eqref{weight} and \eqref{eps}, we have
\[
U_{\nu_{\varepsilon}}(x)\geq U_{\nu}(x)-O(\varepsilon^{1/2}). 
\]
This implies that
\[
\liminf _{\varepsilon\to 0}U_{\nu_{\varepsilon}}(x)\geq U_{\nu}(x).
\]
Hence,
      \[
\lim_{\varepsilon\to 0}U_{\nu_{\varepsilon}}(x)=U_{\nu}(x).
      \]
Note that $\nu*w_{\varepsilon}$ is supported on $\mathbb{R}^+$ and 
\[
U_{\nu*w_{\varepsilon}}(x)= \int U_{w_\varepsilon}(x-y_1)d\nu(y_1)\geq \int \log|x-y_1|-C_{\varepsilon}d\nu(y_1)
\geq U_{\nu}(x)-C_{\varepsilon}|\nu|
\]

We show that $\beta_{Q,\varepsilon}\geq 0$. We have
\[
\begin{split}
      \beta_{Q,\varepsilon}=\int \log|Q(x)|d\nu_{\varepsilon}(x)&=\frac{1}{\deg(Q)}\sum_{\alpha,Q(\alpha)=0}U_{\nu_{\varepsilon}}(\alpha)
      \\
      &\geq \frac{1}{\deg(Q)}\sum_{\alpha,Q(\alpha)=0}U_{\nu}(\alpha)-C_{\varepsilon}|\nu|+C_{\varepsilon}|\nu| \log(9/2) \geq 0
      \end{split}
\]
where we used $\int \log|Q(x)|d \nu(x) \geq 0$. By Lemma~\ref{duallem}, it is enough to show that
    \[
    x-\sum \lambda_Q \log|Q(x)|-\lambda_0- U_{\nu_\varepsilon(x)}\geq -C\varepsilon^{1/2}.
    \]
    for some constant $C>0$ and every $x\in \mathbb{R}^+.$
    By Proposition~\ref{smooth}, we have $\lambda_{x}\neq 0$ and the above inequality holds for $0<x\ll 1$. Similarly, the above inequality holds when $x$ is near a root of $Q$. So, without loss of generality we   suppose that $x\gg 1$ and $|x-\alpha|\gg 1$ for any $\alpha$ where $Q(\alpha)=0$ for some $Q\in A.$
    By our assumption, we have 
    \[
    (x-y)-\sum \lambda_Q \log|Q(x-y)|-\lambda_0- U_{\nu}(x-y)\geq 0
    \]
    for every $y<x$.  We integrate the above with respect to $dw_\varepsilon(y)$ and obtain
    \[
     x-\sum \lambda_Q \log|Q(x)|-\lambda_0- U_{\nu*w_\varepsilon(x)}   \gg -\varepsilon
    \]
    The above implies that
    \[
    x-\sum \lambda_Q \log|Q(x)|-\lambda_0- U_{\nu_\varepsilon(x)}   \gg -\varepsilon^{1/2}.
    \]
    This completes the proof of our proposition.
\end{proof}

The dual problem is to maximize $g((\nu,\lambda_Q,\beta_Q,\lambda_0))$ for $(\nu,\lambda_Q,\beta_Q,\lambda_0)\in \mathcal{D}(A)^+$. Let
\begin{equation}\label{dualdef}
\lambda_A:=\sup_{(\nu,\lambda_Q,\beta_Q,\lambda_0)\in \mathcal{D}(A)} g((\nu,\lambda_Q,\beta_Q,\lambda_0)).  
\end{equation}
Note that $g(0,\dots,0)=0$, which implies $\lambda_A\geq0$. In Proposition~\ref{mainprop}, we show that there exists a   $(\nu_A,\lambda_{A,Q},\beta_{A,Q},\lambda_{A})\in  \mathcal{D}(A)^+$ with 
\[
g(\nu_A,\lambda_{A,Q},\beta_{A,Q},\lambda_{A})=\lambda_{A}.
\]

\subsection{Strong duality}
By \eqref{Lagin}, for every $(\mu,b_Q)\in \mathcal{C}(A)^{+}$ and every $(\nu,\lambda_Q,\beta_Q,\lambda_0)\in \mathcal{D}(A)$, we have
\[
g((\nu,\lambda_Q,\beta_Q,\lambda_0)) \leq \int xd\mu (x).
\] 
This implies
\(
\lambda_A  \leq \Lambda_A,
\)
 which is known as the weak duality inequality. 
In Proposition~\ref{mainprop}, we show that strong duality holds which means $\lambda_A = \Lambda_A.$
\begin{proposition}\label{mainprop}
    Fix a finite subset of integral polynomials $A$ with all real roots and let $\Lambda_A$ and $\lambda_A$ be defined as in~\eqref{defA} and~\eqref{dualdef}. Then $\Lambda_A=\lambda_A$. Moreover, there is a  solution $(\mu_A,b_Q)\in \mathcal{C}(A)^+$ to the primal problem associated to $A$ with $\int x d\mu_A(x)=\Lambda_{A}$. Furthermore, $\mu_A$ is supported on a finite union of intervals $\Sigma\subset [0,18]$   and every gap between the intervals contains a root of some $Q\in A$, and
    \[
    \begin{array}{cc}
          \int x d\mu_A(x)=\Lambda_{A},\\
         I(\mu_A,\nu_A)=0,          \\
    U_{\mu_A}(x)\geq \sum_{Q\in A} b_Q \log|Q(x)|,\\
          \int \log|Q(x)| d \mu_A=0 \text{ if $\lambda_Q\neq 0$,}

    \end{array}
    \]
where equality holds in the third line for every $x\in \Sigma_A$ with some scalars $b_Q\geq 0$. Furthermore, there exists   $(\nu_A,\lambda_{A,Q},\beta_{A,Q},\Lambda_{A})\in  \mathcal{D}(A)^+$  such that $\nu_A$ is supported on $\Sigma_A$,
       \[
    \begin{array}{cc}
                   x\geq \Lambda_A +\sum_{Q\in A}\lambda_{A,Q} \log|Q(x)|+ U_{\nu_A}(x),\text{ and}\\
          \beta_{A,Q}=\int \log|Q(x)| d \nu_A=0 \text{ if $b_Q\neq 0$,}

    \end{array}
    \]
where in the first line the inequality holds for non-negative $x$ and equality holds for every $x\in \Sigma_A$ with some scalars $\lambda_{A,Q}\geq 0$.  
\end{proposition}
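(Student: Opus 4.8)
The plan is to pair the primal optimum supplied by Lemma~\ref{lemexis} with an explicitly constructed dual feasible point of the \emph{same} value, so that the weak-duality inequality $\lambda_A\le\Lambda_A$ recorded above becomes an equality and the two points form a saddle pair of the Lagrangian \eqref{Lag}; all the remaining assertions (the interval structure of the support, the two coincidence identities, the complementary slackness $\int\log|Q|\,d\mu_A=0$ when $\lambda_{A,Q}\neq 0$ and $\beta_{A,Q}=0$ when $b_Q\neq 0$, and $I(\mu_A,\nu_A)=0$) are then read off from that saddle relation. Throughout we may, by the normalization step in the proof of Proposition~\ref{compact}, assume $\mu_A$ is a probability measure supported in $[0,18]$; the case $A=\emptyset$ is the explicit computation recorded in Corollary~\ref{schur_bound}, so assume $A\neq\emptyset$.

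\emph{Step 1 (abstract strong duality).} First I would upgrade $\lambda_A\le\Lambda_A$ to an equality together with existence of a dual optimizer. By Proposition~\ref{compact} the primal infimum is unchanged if $\mu$ ranges over probability measures supported on $[0,18]$ (a weak-$*$ compact convex set) and $b_Q$ over $[0,1]^{|A|}$, while by Proposition~\ref{smooth} every feasible dual point of interest satisfies $\nu(\mathbb R)+\sum_Q\deg(Q)\lambda_Q+\lambda_0\le 2e$ and by Proposition~\ref{conv} may be replaced, up to an arbitrarily small loss in $\lambda_0$, by one supported in $[0,18]$. The Lagrangian $L$ in \eqref{Lag}--\eqref{Lag2} is affine in the primal variables and affine in the dual variables and both constraint sets are convex, so Sion's minimax theorem on these restricted domains gives $\inf\sup L=\sup\inf L$; by the discussion around \eqref{Lagin} the left-hand side equals $\Lambda_A$ and by Lemma~\ref{duallem} the right-hand side equals $\lambda_A$. (Equivalently, one separates by Hahn--Banach the cone of feasible perturbations of $\mu_A$ from the decrease direction of the trace to produce the multipliers directly.) This yields a tuple $(\nu_A,\lambda_{A,Q},\beta_{A,Q},\Lambda_A)\in\mathcal D(A)^+$ with $g(\nu_A,\lambda_{A,Q},\beta_{A,Q},\Lambda_A)=\Lambda_A$, which by Lemma~\ref{duallem} and Proposition~\ref{conv} we may take with $\nu_A$ supported in $[0,18]$.

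\emph{Step 2 (structure of the support).} Put $\phi(x):=x-\Lambda_A-\sum_{Q\in A}\lambda_{A,Q}\log|Q(x)|-U_{\nu_A}(x)\ge 0$ on $\mathbb R^+$. Equating the two expressions \eqref{Lag} and \eqref{Lag2} for $L\big((\mu_A,b_Q),(\nu_A,\lambda_{A,Q},\beta_{A,Q},\Lambda_A)\big)$ and using $\int x\,d\mu_A=\Lambda_A=g$ forces $\int\phi\,d\mu_A=0$, hence $\operatorname{supp}\mu_A\subseteq\Sigma_A:=\{x\ge 0:\phi(x)=0\}$; likewise $\int\big(U_{\mu_A}-\sum_Q b_Q\log|Q|\big)d\nu_A=0$ forces $U_{\mu_A}=\sum_Q b_Q\log|Q|$ on $\operatorname{supp}\nu_A$, and the two bilinear terms give $\sum_Q b_Q\beta_{A,Q}=0=\sum_Q\lambda_{A,Q}\int\log|Q|\,d\mu_A$. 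The key observation is that on any real interval avoiding the (finitely many) roots of the $Q\in A$ and avoiding $\operatorname{supp}\nu_A$ each of $-\log|Q(x)|$ and $-U_{\nu_A}(x)$ has positive second derivative, so $\phi$ is convex there; a nonnegative convex function has an interval for its zero set, so $\Sigma_A$ is a finite union of closed intervals, contained in $[0,18]$ (by the a priori bounds behind Propositions~\ref{compact} and \ref{smooth}), avoiding the roots, and with every complementary gap containing a root of some $Q\in A$. The same convexity forces $\operatorname{supp}\nu_A\subseteq\Sigma_A$; combining with the slackness identities, $\operatorname{supp}\mu_A,\operatorname{supp}\nu_A\subseteq\Sigma_A$ and $U_{\mu_A}=\sum_Q b_Q\log|Q|$, $\phi=0$ hold on (a dense subset of) $\Sigma_A$.

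\emph{Step 3 (explicit measures and the final identities).} With $\Sigma_A$ now a finite union of intervals away from the roots and $U_{\mu_A}=\sum_Q b_Q\log|Q|$ on a dense subset, Proposition~\ref{analytic} produces the probability measure $\mu_{A,\Sigma_A}$, which we relabel $\mu_A$; it satisfies $U_{\mu_A}=\sum_Q b_Q\log|Q|$ on all of $\Sigma_A$ and the first two primal conditions. For the dual measure, \cite[Lemma~3.2]{lowerbound} supplies the potentials of the $\log|Q|$ and \cite[Lemma~3.3]{lowerbound} a signed density $g$ with $\int_{\Sigma_A}\log|x-y|g(y)\,dy=x$ on $\Sigma_A$; assembling these with the multipliers $\lambda_{A,Q}$ gives $\nu_A$ supported on $\Sigma_A$ with $U_{\nu_A}(x)=x-\Lambda_A-\sum_Q\lambda_{A,Q}\log|Q(x)|$ on $\Sigma_A$. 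One then checks $\nu_A\in\mathcal D(A)^+$: $\phi\ge 0$ holds on $\Sigma_A$ with equality and extends to all of $\mathbb R^+$ by the convexity of $\phi$ off the roots together with $\phi\ge 0$ at the interval endpoints and the behaviour near $0$ and near each root furnished by Propositions~\ref{smooth} and \ref{conv}, and $\beta_{A,Q}=\int\log|Q|\,d\nu_A\ge 0$ since $\int U_{\nu_A}\,d\mu_Q\ge 0$ as in the proof of Proposition~\ref{conv}. Finally, integrating $U_{\mu_A}=\sum_Q b_Q\log|Q|$ against $\nu_A$ and $x=\Lambda_A+\sum_Q\lambda_{A,Q}\log|Q(x)|+U_{\nu_A}(x)$ against $\mu_A$ gives $I(\mu_A,\nu_A)=\sum_Q b_Q\beta_{A,Q}=0$, $\int\log|Q|\,d\mu_A=0$ whenever $\lambda_{A,Q}\neq 0$, and $\beta_{A,Q}=0$ whenever $b_Q\neq 0$; and $g(\nu_A,\lambda_{A,Q},\beta_{A,Q},\Lambda_A)=\Lambda_A$ re-confirms $\lambda_A=\Lambda_A$.

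\emph{Main obstacle.} The crux is Step~2, extracting the interval structure of the support, and within it the two delicate points: (a) obtaining honest complementary slackness, which presupposes the abstract strong duality/saddle point of Step~1 — not entirely routine in this infinite-dimensional setting with the unbounded dual variable $\nu$, and leaning on the a priori bound of Proposition~\ref{smooth} and the regularization of Proposition~\ref{conv}; and (b) promoting ``$\phi=0$ on $\Sigma_A$'' to ``$\phi\ge 0$ on all of $\mathbb R^+$'' for the reconstructed $\nu_A$, where the convexity-off-roots argument must be patched carefully with the endpoint, near-$0$, and near-root behaviour.
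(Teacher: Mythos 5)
Your overall skeleton (a saddle point of the Lagrangian \eqref{Lag}, complementary slackness, convexity of the slack function $\phi$ off the roots to get the interval structure, then Proposition~\ref{analytic} for the explicit primal measure) matches the paper's, but the load-bearing step — producing a dual point $(\nu_A,\lambda_{A,Q},\beta_{A,Q},\Lambda_A)\in\mathcal D(A)^+$ with $g=\Lambda_A$ — is not actually established in your Step~1, and everything in Steps~2--3 hangs on it. Sion's minimax theorem (compactness on the primal side only) gives at best equality of values, never attainment of the supremum on the noncompact dual side, yet you read off a dual optimizer from it. Worse, after restricting the primal to probability measures on $[0,18]$ with $b_Q\in[0,1]$, the inner infimum $\inf_{p\,\mathrm{restricted}}L(p,d)$ is no longer the function $g(d)$ of Lemma~\ref{duallem}: with $b_Q$ bounded by $1$ the infimum no longer forces $\int\log|Q|\,d\nu=\beta_Q$, and since only test measures $\delta_{x_0}$ with $x_0\in[0,18]$ are available you only extract the pointwise dual constraint on $[0,18]$, not on all of $\mathbb R^+$. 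Hence a maximizer of the restricted inner infimum need not be dual feasible, and even the value identity $\lambda_A=\Lambda_A$ does not follow. (Also, Proposition~\ref{conv} smooths a given dual measure; it does not let you assume a priori that near-optimal dual measures are supported in $[0,18]$ — in the paper this is a conclusion, obtained from $\operatorname{supp}\nu_A=\operatorname{supp}\mu_A$.) The paper avoids exactly these traps by a different device: it projects the \emph{full} cone $\mathcal C(A)$ by finitely many linear functionals (the constraints evaluated at an enumeration of rational points), separates two disjoint finite-dimensional convex sets — where no Slater/interior-point hypothesis is needed — so the resulting inequality \eqref{lagrang} holds on all of $\mathcal C(A)$, whence Lemma~\ref{duallem} gives genuine dual feasibility of the discrete measures $\nu_N$ with $g\ge\Lambda_A-\varepsilon$; it then passes to the limit, with tightness coming from strict positivity of $U_{\mu_A}-\sum_Qb_Q\log|Q|$ off $\operatorname{supp}\mu_A$ and with the value preserved via the regularization of Proposition~\ref{conv}. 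These are precisely the points you flag as ``not entirely routine'' and then defer; they are the proof.

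A secondary gap is in your Step~3: having reconstructed $\nu_A$ from its potential on $\Sigma_A$, you must re-verify $x-\Lambda_A-\sum_Q\lambda_{A,Q}\log|Q(x)|-U_{\nu_A}(x)\ge 0$ on all of $\mathbb R^+$, and ``convexity off the roots plus $\phi=0$ at the interval endpoints'' does not do it: a convex function vanishing at an endpoint of a gap can dip negative before blowing up at the root unless one also controls $\phi'$ at that endpoint (this is why the numerical verification in Proposition~\ref{eq_sufficient} invokes the vanishing of the dual density at the $a_i$'s and \cite[Lemma 4.2]{lowerbound}). The paper's proof of Proposition~\ref{mainprop} never needs this re-verification, because global dual feasibility is inherited from the limiting construction and the explicit density is only identified afterwards, in the proof of Theorem~\ref{dual}. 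As written, then, your argument has a genuine gap at the strong-duality/attainment step and an incomplete feasibility check for the reconstructed dual measure.
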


\begin{proof}[Proof of Proposition~\ref{mainprop}]  Let $A=\{Q_1,\dots,Q_n \}$. 
We note that $\Lambda_A\leq 2$, since $(\mu_{[0,4]},0,\dots,0)$, where $\mu_{[0,4]}$ is the equilibrium measure of the interval $[0,4]$ is a feasible point and has expected value $2$. 
We fix an enumeration of positive rational numbers which are not integers and write 
\[
\mathbb{Q}^+\setminus \mathbb Z:=\{x_i: i\geq 0\}.
\]
For $N\geq 1$, we define $\psi_N:\mathcal{C}(A)\to \mathbb{R}^{2+2n+N}$ as follows
\begin{align*}
\psi_N((\mu,b_Q)):=(E_{\mu}(x),E_{\mu}(1),E_{\mu}(\log|Q_1|),\dots,E_{\mu}(\log|Q_n|),b_{Q_1},\dots,b_{Q_n},
\\
U_{\mu}(x_0)-\sum_j b_{Q_j}\log |Q_j(x_0)|,\dots, U_{\mu}(x_N)-\sum_j b_{Q_j}\log |Q_j(x_N)|)    
\end{align*}
where $E_\mu(f):=\int f(x)d\mu(x)$ is the expected value of  $f$ with respect to  $\mu$. Let 
\[\mathcal{A}_N:=\psi_N(\mathcal{C}(A)) \subset  \mathbb{R}^{2+2n+N}\]
be the image of $\mathcal{C}(A)$ by $\psi_N$. For $\varepsilon>0$, let 
\[
\mathcal{B}_{\varepsilon,N}:=\{(y_0,y_1,\dots,y_{2n+N+1}): y_0\leq \Lambda_A-\varepsilon, y_1\geq 1, \text{ and }  y_i\geq 0 \text{ for }2\leq  i\leq 2n+N+1 \}.
\]
Since $\mathbb{Q}$ is dense inside $\mathbb{R}$, it follows from the definition of $\Lambda_A$ that for every $\varepsilon>0$, there exists a large enough $M$ such that 
\[
\mathcal{A}_N \cap \mathcal{B}_{\varepsilon,N} =\emptyset
\]
for every $N\geq M.$
Otherwise, there exists a sequence $(\mu_N,b_{N,Q})\in \mathcal{C}(A)$ for $N\geq 0$ such that
\[
\begin{split}
    \int x d\mu_N \leq \Lambda_A-\varepsilon
    \\
        \int  d\mu_N \geq 1
        \\
        \int \log|Q(x)|d\mu(x)\geq 0,
    \text { and }
    b_{N,Q} \geq 0 \text{ for every } Q\in A
    \\
    U_{\mu_N}(x_i)\geq \sum_j b_{N,Q_j}\log |Q_j(x_i)| \text{ for every } 0\leq i\leq N.
\end{split}
\]
By normalizing $\mu_N/\int d\mu_N$, we may assume that $\mu_N$ is a probability measure for every $N\geq 0$. By Proposition~\ref{compact}, we may assume that $\mu_N$ is supported on $[0,18]$ satisfying all the above inequalities. Note that as $x_i\to \infty$
\[
\log(x_i)+o(1)\geq  U_{\mu_N}(x_i)\geq \sum_Q b_{N,Q}\log |Q_j(x_0)|\geq \log|x_i|\sum_j b_{N,Q}\deg(Q)+o(1)
\]
This implies that
\[
0\leq \sum_Q b_{N,Q}\deg(Q) \leq 1.
\]
Moreover, since $E_{\mu_N}(x) <2$,  $\{\mu_N\}$ is a tight family of  probability measures on $\mathbb{R}^+$.  Prokhorov's theorem implies $(\mu_N,b_{N,Q})\in \mathcal{C}(A)$ has a convergent sub-sequence $(\mu_{a_n},b_{a_n,Q})$ such that
\[
\lim_{n\to \infty} \mu_{a_n} =\mu
\]
and
\[
\lim_{n\to \infty}b_{a_n,Q_j}=b_{Q_j}
\]
for some probability measure $\mu$ supported inside $[0,18]$ and $b_Q\geq 0$. Moreover, since $\log |Q(x)|$ is an upper semi-continuous function and $\mathbb{Q}\cap [0,18]$ is dense in $[0,18]$, the monotone convergence theorem implies that
\[
U_{\mu}(x)\geq \sum_j b_{Q_j}\log |Q_j(x)|
\]
for every $x$ inside the support of $\mu$. By Theorem~\ref{dom}, the above holds for every $x\in \mathbb{C}$. Note that  $(\mu,b_Q)$  satisfies all the primal problem constraints  with expected value less than $\Lambda_A-\varepsilon$ which is a contradiction by the definition of $\Lambda_A$.  
\\

Suppose that $N\geq M$. Note that $\mathcal{A}_N, \mathcal{B}_{\varepsilon,N}\subset \mathbb{R}^{2+2n+N} $ are both convex regions with \[
\mathcal{A}_N \cap \mathcal{B}_{\varepsilon,N} =\emptyset.
\]
By the separating hyperplane theorem, it follows that there exists some coefficients 
\[
(c_0,c_1,c_{Q_1}\dots,c_{Q_n},l_{Q_1},\dots,l_{Q_n},d_1,\dots, d_N)\]
depending on $N$ and $\varepsilon$ such that for every $(y_0,y_1,\dots,y_{1+2n+N})\in \mathcal{B}_{\varepsilon,N}$ and $(\mu,b_Q)\in \mathcal{C}(A)$,
\begin{multline}
\label{hyp}
  c_0y_0+c_1y_1+\sum_{i=1}^n c_{Q_i} y_{i+1} +\sum_{i=1}^n l_i y_{n+i+1}+\sum_{j=1}^N d_j y_{j+2n+1} \geq c_0E_{\mu}(x)+c_1E_{\mu}(1)+\sum_{i=1}^n c_{Q_i}E_{\mu}(\log|Q_i|)
  \\
 + \sum_{i=1}^{n}l_{Q_i}b_{Q_i} +\sum_{i=1}^N d_i\left(U_{\mu}(x_i)-\sum_j b_{Q_j}\log |Q_j(x_i)|\right).     
\end{multline}
 Since $y_i\geq 0$ could be arbitrary large for $i\geq 1$, so we must have $c_1,c_{Q_i},l_i,d_j\geq 0$ for every $i,j\geq 1$.
 Similarly, $y_0\leq\Lambda_A -\varepsilon $ could get close to $-\infty$, we must have $c_0\leq 0$. Next, we show that $c_0\neq 0.$ Let $\mu_{[0,18]}$ be the
the equilibrium measure of $[0,18]$. We note that $(\mu_{[0,18]},0,\dots,0)\in \mathcal{C}(A) $, and 
\[
E_{\mu_{[0,18]}}(x)=9,
\]
\[
E_{\mu_{[0,18]}}(\log|Q_i|)\geq \deg(Q_i)\log(9/2),
\]
\[
U_{\mu}(x_i)\geq \log(9/2).
\]
By substituting $(y_0,y_1,\dots,y_{2n+N})=(\Lambda_A-\varepsilon,1,0,\dots,0)$  and $(\mu_{[0,18]},0,\dots,0)$ in~\eqref{hyp} and the above identities, we have
\begin{equation}\label{ub}
    c_0(\Lambda_A-\varepsilon-9) \geq \sum_{i=2}^{n+1}c_{Q_i}\deg(Q_i)\log(9/2)+\sum_{j=1}^N d_j\log(9/2)/2.
\end{equation}
Since $\Lambda_A\leq 2$, the above inequality implies that $c_0\neq 0$ and $c_0<0$. By substituting $(y_0,y_1,\dots,y_{2n+N})=(\Lambda_A-\varepsilon,1,0,\dots,0)$ and dividing both sides of \eqref{hyp} by $-c_0$, we have
\begin{equation}\label{lagrang}
E_{\mu}(x)  \geq \Lambda_A-\varepsilon+c_1'( E_{\mu}(1)-1)+\sum_{i=1}^{n} c_{Q_i}'E_{\mu}(\log|Q_i|)+ \sum_{i=1}^{n}l'_{Q_i}b_{Q_i}+\sum_{i=1}^N d_i'\left(U_{\mu}(x_i)-\sum_j b_{Q_j}\log |Q_j(x_i)|\right) 
\end{equation}
for every $(\mu,b_Q)\in \mathcal{C}(A)$, where $c_{Q_i}':=-\frac{c_{Q_i}}{c_0}\geq 0$, $l_{Q_i}':=-\frac{l_{Q_i}}{c_0}\geq 0$ and $d_j':=-\frac{d_j}{c_0}\geq 0$. Let 
\[
\nu_N:=\sum_{j}d_j'\delta_{x_i}.
\]
By rewriting~\eqref{lagrang}, in terms of the Lagrangian~\eqref{Lag}, we obtain
\begin{equation}\label{mainineq}
g(\nu_N,c_{Q_i}',l_{Q_i}',c_1') \geq \Lambda_A-\varepsilon.
\end{equation}

By Lemma~\ref{duallem}, we have
\[
\int \log|Q_i(x)|d\nu_N(x)=l_{Q_i}'.
\]
\[
\Lambda_A \geq c_1'=g(\nu_N,c_{Q_i}',l_{Q_i}',c_1')\geq \Lambda_A-\varepsilon.
\]
\[
x-U_{\nu_N}(x)-\sum_{p\in A}c'_{Q_i}\log|Q_i(x)|\geq c_1'
\]
for every $x\in \mathbb{R}^+$. Moreover, by \eqref{ub}, the total mass of $\nu_N$ and sum of $c_Q$ is bounded explicitly by
\begin{equation}\label{xub}
\frac{9}{\log(9/2)/2} \geq \sum_{i=1}^{n}c'_{Q_i}\deg(Q_i)+\nu_{N}(\mathbb{R}^+).    
\end{equation}
 Next, we take a sequence of $\varepsilon_i \to 0$, $N_i\to \infty$, and construct $\nu_{N_i}$ as above such that 
\[
g(\nu_{N_i},c_{Q,i}',l_{Q,i}',c_{1,i}')=c_{1,i}'\geq \Lambda_A-\varepsilon_i.
\]

Let $\mu_A$ be the probability measure from Lemma~\ref{lemexis} with support $\Sigma_A\subset[0,18]$, it follows from~\eqref{Lagin} and~\eqref{mainineq} that 
\begin{equation}\label{sand}
    \Lambda_A \geq L\left((\mu_A,b_{Q,A}),(\nu_{N_i},c_{Q,i}',l_{Q,i}',c_{1,i}')\right)\geq \Lambda_A -\varepsilon_i
\end{equation}
This implies that
\begin{equation}\label{abinq}
    \varepsilon_i\geq -\int\left( U_{\mu_A}(x)-\sum_{Q\in A} b_Q\log|Q(x)|\right)d\nu_{N_i}(x)
\\
-\sum_{Q\in A}\left(c_{Q,i}'\int \log|Q_i(x)|d\mu_A(x)+b_Ql_{Q,i}'\right)+\varepsilon_i\geq 0.
\end{equation}
We note that $U_{\mu_A}(x)-\sum_{Q\in A} b_Q\log|Q(x)|$ is a strictly positive harmonic function outside the support of $\mu_A$. As $\varepsilon_i\to 0$, the  inequality \eqref{abinq} implies that 
\begin{equation}\label{tight}
\lim_{N_i\to \infty}\nu_{N_i}(\mathbb{R}^+\setminus \Sigma_A)=0.  \end{equation}

Otherwise, there exists an open set $U$ such that $\limsup_{N\to \infty}\nu_{N_i}(U)\gg 1$ and
\[
U_{\mu_A}(x)-\sum_{Q\in A} b_Q\log|Q(x)\gg 1
\]
for every $x\in U$. This is a contradiction with \eqref{abinq}. The inequality~\eqref{tight} implies that $\nu_{N_i}$ is a tight family of probability measures. By Prokhorov's theorem, there exists a convergent sub-sequence such that 
\[
\lim_{i\to \infty} (\nu_{N_i},c_{Q,i}',l_{Q,i}',c_{1,i}')= (\nu_A,\lambda_Q,\beta_Q,\Lambda_A),
\]
where $\nu_A$ is supported inside $\Sigma_A'\subset\Sigma_A$. Next, we show that $$g(\nu_A,\lambda_Q,\beta_Q,\Lambda_A)=\Lambda_A.$$ Fix $\varepsilon>0$, and  let $w_{\varepsilon}$ be a smooth weight function that we introduce in Proposition~\ref{conv}, and  recall the following definition from Proposition~\ref{conv} for a measure $\nu$
\[
\nu_{\varepsilon}:=\nu*w_{\varepsilon}+C_{\varepsilon}|\nu|\mu_{[0,18]}.\]
By the weak convergence of $\{\nu_{N_i}\}_{i\in\mathbb N}$ to $\nu_{A}$, we have
\[
U_{\nu_{A,\varepsilon}}(x)=\lim_{N_i\to \infty}U_{\nu_{N_i,\varepsilon}}(x)
\]
for every $x\in \mathbb{C}$ and fixed $\varepsilon>0$. By Proposition~\ref{conv}, we have 
\[
g(\nu_{N_i,\varepsilon},c_{Q,i}',l_{Q,i,\varepsilon}',c_{1,i,\varepsilon}')\geq \Lambda_A-\varepsilon_i-C\varepsilon^{1/2}
\]
for some constant $C$. This implies that 
\[
g(\nu_{A,\varepsilon},\lambda_Q,\beta_{Q,\varepsilon},\Lambda_{A,\varepsilon})\geq \Lambda_A-C\varepsilon^{1/2}.
\]

   By  Proposition~\ref{conv}, $\lim_{\varepsilon\to 0}U_{\nu_{A,\varepsilon}}(x)=U_{\nu_{A}}(x)$. Hence, the above implies that

   \begin{equation} \label{StrD}g(\nu_A,\lambda_Q,\beta_Q,\Lambda_A)=\Lambda_A.  
   \end{equation}

Next, we show that the support of $\nu_A$ is equal to the support of $\mu_A$. It is enough to show that $\Sigma_A\subset\Sigma_A'$.   By the above, \eqref{Lag2} and \eqref{sand}, we have
\begin{equation}\label{zeroid}
 \int \left|x-U_{\nu_{A}}(x)-\sum_{Q\in A}\lambda_Q\log|Q(x)|-\Lambda_A\right|  d\mu_A (x)=0.    
\end{equation}
Let
\begin{equation}\label{F}
F(z):=z-U_{\nu_A}(z)-\sum_{Q\in A}\lambda_Q\log|Q(z)|-\Lambda_A.
\end{equation}
Note that $F(z)=0$ for every $x\in \Sigma_A$ and $F(z)\geq 0$ for every $z\in \mathbb{R}^+$. 
Suppose to the contrary that $\Sigma_A'\neq \Sigma_A$. Then there exists  an open interval $(a,b)\subset[0,18]$ such that $(a,b)$ does not contain any roots of $Q\in A,$ $(a,b)\cap \Sigma_A' =\emptyset$,   and $\mu_A((a,b))\gg 1$. 
Note that $I(\mu_A)\geq 0$, so $\mu_A$ does not have any atom measure in its support. This and \eqref{zeroid}, implies that there exists some distinct $z_1,z_2,z_3 \in (a,b)\cap \Sigma_A $ such that 
\[F(z_i)=0.\]
By mean value theorem for the second derivative,  there exists $z_4\in (a,b)$ such that
\[
F''(z_4)=0.
\]
On the other hand, we have
\begin{equation}\label{double}
    F''(z)=\int \frac{1}{(z-x)^2}d\nu_A(x)+\sum_{Q\in A}\lambda_Q\sum_{\alpha,Q(\alpha)=0}\frac{1}{(z-\alpha)^2}\gg 1 .
\end{equation}
for every $z\in (a,b).$
This is a contradiction. Therefore, $\nu_A$ has the same support as $\mu_A$. 
\newline

By~\eqref{StrD} and~\eqref{Lag}, we have 
\begin{equation}
 \int\left| U_{\mu_A}(x)-\sum_{Q\in A} b_{Q}\log|Q(x)|\right|d\nu_A(x)+\sum_Q \lambda_Q\int \log|Q|d\mu_A(x)+b_Q\int\log|Q|d\nu_A(x)=0
\end{equation}
which implies that
\begin{equation}
    U_{\mu_A}(x)=\sum_{Q\in A} b_{Q}\log|Q(x)|
\end{equation} for every $x\in \Sigma_A$, 
\begin{equation}\label{mulogQ}
    \int \log|Q(x)|d\mu_A(x)=0
\end{equation}
for every $Q\in A$ where $\lambda_Q\neq 0$, and 
\begin{equation}
    \int\log|Q|d\nu_A(x)=0
\end{equation}
for every $Q\in A$ where $b_{Q}\neq 0$.
\newline

Next, we show that $\Sigma_A'=\Sigma_A$ is a finite union of intervals and every gap between the intervals contains a root of some $Q\in A$. Suppose to the contrary that there exists an open interval $(a,b)$ inside $[0,18]$ which does not contain any root of $Q$ for any $Q\in A$ such that $a,b\in \Sigma_A$ and $(a,b)\cap \Sigma_A=\emptyset.$ Let $F$ be the function that is defined in \eqref{F}. By the mean value theorem for second derivatives, we have
\[
\frac{F(b)+F(a)-2F(\frac{a+b}{2})}{(\frac{b-a}{2})^2}=\frac{-2F(\frac{a+b}{2})}{(\frac{b-a}{2})^2}=F''(z_0)
\]
for some $z_0\in (a,b)$. By~\eqref{double}, we have 
\[
\frac{-2F(\frac{a+b}{2})}{(\frac{b-a}{2})^2}=F''(z_0)>0
\]
This implies that $F(\frac{a+b}{2})<0$ which is a contradiction. By Proposition~\ref{analytic}, it follows that $\mu_{A,\Sigma_A}$ constructed in Proposition~\ref{analytic} is a solution to the primal problem. Similarly,  $\nu_A$ is the  measure with potential function constructed in \cite[section 3]{lowerbound}
\[
U_{\nu_{A}}(x)=x-\sum_{Q\in A}\lambda_Q\log|Q(x)|-\Lambda_A
\]
for $x\in \Sigma_A$.
This completes the proof of our proposition. 
\end{proof}

Finally, we give a proof of Theorem~\ref{main1}.

\section{Proofs of Theorem~\ref{main1} and Theorem~\ref{dual}}\label{main_thms}
In this section, we give an explicit formula for the density function of the optimal measures for the primal and the dual linear programming problems, and we complete the proofs of Theorem~\ref{main1} and Theorem~\ref{dual}. Recall our notation from the introduction. First, we give a proof of Theorem~\ref{main1}. 

\begin{proof}[Proof of Theorem~\ref{main1}]
    By Proposition~\ref{mainprop},  there is a  solution $(\mu_A,b_Q)\in \mathcal{C}(A)^+$ to the primal problem associated to $A$, where $\mu_A$ is supported on  a finite union of intervals $\Sigma_A=\bigcup_{i=0}^l[a_{2i},a_{2i+1}] $. Moreover, $(\mu_A,b_Q)$  satisfies all the  properties stated in Theorem~\ref{main1}  except proving the explicit formula for the density function of 
$\mu_A$. 
By Proposition~\ref{mainprop},  
   \begin{equation}
           U_{\mu_A}(x)=\sum_{Q\in A} b_Q \log|Q(x)|
   \end{equation}
    for every $x\in \Sigma_A$. Using the above, one can recover $\mu_A$ by solving the inverse log potential problem as in Proposition~\ref{analytic} and conclude that 
    $$d\mu_A(x)=\frac{|P(x)|}{\prod_{Q\in A}|Q(x)|\sqrt{|H(x)|}}dx$$
where $H(x)=\prod_{i=0}^{2l+1}(x-a_i)$ and  $P(x)$ is a polynomial of degree $\sum_{Q\in A}\deg(Q)+l$ if $\{x\}\subset A$.  

Finally, suppose  that  for every $Q\in A$ all roots are non-negative and $\Tr(Q)< \Lambda_A$. By Proposition~\ref{smooth}, $\lambda_Q>0$. By Proposition~\ref{mainprop}, we have
\[
\int \log|Q(x)|d\mu_A(x)=0
\]
for every $Q\in A$ and any solution $\mu$ of the primal problem.
By Proposition~\ref{mainprop},
\[
U_{\mu_A}(x)=\sum_{Q\in A} b_Q\log|Q(x)|
\]
which implies 
\[
I(\mu_A)=\int U_{\mu_A}(x) d\mu(x)=0
\]
The uniqueness follows from the concavity of the logarithmic energy. In fact, if we have two distinct primal solutions $\mu_{A,1}$ and $\mu_{A,2}$, then $\frac{\mu_{A,1}+\mu_{A,2}}{2}$ is also a solution to the primal problem with strictly positive logarithmic energy which contradicts with $I(\frac{\mu_{A,1}+\mu_{A,2}}{2})=0$. This completes the proof of our propostion.  
\end{proof}

Finally, we give a proof of Theorem~\ref{dual}. 

\begin{proof}[Proof of Theorem~\ref{dual}] By Proposition~\ref{mainprop}, 
    there exists  $(\nu_A,\lambda_{A,Q},\beta_{A,Q},\Lambda_{A})\in  \mathcal{D}(A)^+$  such that $\nu_A$ is supported on $\Sigma_A$ and satisfies all the properties of Theorem~\ref{dual} except proving the explicit formula for the density function of $\nu_A$. By Proposition~\ref{mainprop}  \[
    \begin{array}{cc}
                   x\geq \Lambda_A +\sum_{Q\in A}\lambda_{A,Q} \log|Q(x)|+ U_{\nu_A}(x),\text{ and}\\
          \beta_{A,Q}=\int \log|Q(x)| d \nu_A=0 \text{ if $b_Q\neq 0$,}

    \end{array}
    \]
where in the first line the inequality holds for non-negative $x$ and equality holds for every $x\in \Sigma_A$ with some scalars $\lambda_{A,Q}\geq 0$. By \cite[Proposition 4.1]{lowerbound}, the density function of $\nu_A$ vanishes on the positive boundary points of $\Sigma_A$. By a similar argument as in Proposition~\ref{analytic} and computing the top coefficient of the polynomial in the numerator of $\nu_A$, it follows that $\nu_A$ has the following density function 
\[
\frac{|p(x)|\sqrt{|H(x)|}}{\pi \prod_{\lambda_Q\neq 0}|Q(x)|},
\]
where $H(x)=\prod_{i=0}^{2l+1}(x-a_i)$ and $p(x)$ is a monic polynomial with degree $\sum_{Q\in A} \deg(Q)-l-1$.

Finally, we show that  $I(\nu_A)< 0$. By concavity of the logarithmic energy~\cite[(6)]{lowerbound} and since $\nu_A\neq \mu_A$, we have
\[
0=2I(\mu_A,\frac{\nu_A}{\nu_A(\mathbb{R})})> I(\mu_A)+\frac{I(\nu_A)}{\nu_A(\mathbb{R})^2}.
\]
Since 
\[
I(\mu_A)=\sum_{Q} b_Q \int \log|Q(x)|d\mu_A(x)\geq 0. 
\]
This implies that 
\[
I(\nu_A)<0.
\]
Let $\mu$ be the arithmetic measure supported on non-negative real numbers with \( \int x d\mu(x)=\lambda^{SSS}\) that we constructed in Corollary~\ref{Acor}. 
We take the average of \[
x\geq \Lambda_A +\sum_{Q\in A}\lambda_{A,Q} \log|Q(x)|+ U_{\nu_A}(x)
\] with respect to $\mu$ and obtain  
\[
\lambda^{SSS}\geq \Lambda_A+ \nu_A(\mathbb{R})I(\frac{\nu_A}{\nu_A(\mathbb{R})},\mu)\geq \Lambda_A+\frac{I(\nu_A)}{2\nu_A(\mathbb{R})}.
\]
where we used the fact that 
$\int \log|Q| d\mu(x)\geq 0$ and the 
 concavity of the logarithmic energy. This completes the proof of Theorem~\ref{dual}.
\end{proof}

\section{Explicit Values for Serre's contruction}\label{serre_proof}
In this section, we find expressions for $\Lambda_\emptyset$ and $\Lambda_{\{x\}}$.
This heavily uses the computations done in Appendix B of \cite{lowerbound} which shows how to compute density functions of measures with potentials of the form $ax+b\log(x)+c$.
\begin{proof}[Proof of Corollary \ref{schur_bound}]
By Theorem \ref{main1}, $U_{\mu_A}(x)$ vanishes on the support $[a,b]$ of $\mu_A$ and is positive on the complement. Therefore $b-a=4$ and $\mu_A$ is the equilibrium measure on $[a,b]$. Minimizing the trace, we get $[a,b]=[0,4]$, $d\mu_A=\frac{dx}{\pi\sqrt{x(4-x)}}$, and $\int xd\mu_A=2$.
From equation (32) of \cite{lowerbound}, $d\nu_A = \frac{dx}{\pi\sqrt{x(4-x)}} + \frac{(2-x)dx}{2\pi\sqrt{x(4-x)}}=\frac{(4-x)dx}{2\pi\sqrt{x(4-x)}}$ as desired with potential function $\frac{1}{2}x-1$ on $[0,4]$.
\end{proof}

\begin{lemma}
Let $(\hat x, \hat y)$ be the unique solution to
\[\begin{cases}
y\log y = (y-x)\log(y-x)+x\\
\log(x)\log(y)=\log(xy)\log(y-x).
\end{cases}\]
Then $\Lambda_A = \hat y - \hat y \log\hat y + \hat x - \hat x\log\hat x$.
\end{lemma}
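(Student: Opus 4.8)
The plan is to use Theorems~\ref{main1} and~\ref{dual} for $A=\{x\}$ to reduce $\Lambda_{\{x\}}$ to two equations for the endpoints $a<b$ of the common support interval of the optimal measures, and then to recognize those equations, after a change of variables, as the system in the statement and read off the value from the dual potential.

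First I would record the structural facts. By Theorem~\ref{main1} the primal measure $\mu_A$ is supported on a single interval $[a,b]$: there are no gaps since the only root of $Q=x$ is $0\notin(0,18]$, and $a>0$ because the density $\frac{|P(x)|}{|x|\sqrt{|H(x)|}}$ has non-integrable mass near an endpoint equal to $0$. Since $\Tr(x)=0<\Lambda_A$, Proposition~\ref{smooth} gives $b_x>0$ and $\lambda_x>0$, so by Proposition~\ref{mainprop} we have $U_{\mu_A}(x)=b_x\log x$ on $[a,b]$ with $\int\log x\,d\mu_A=U_{\mu_A}(0)=0$, dually $U_{\nu_A}(x)=x-\lambda_x\log x-\Lambda_A$ on $[a,b]$ with $\int\log x\,d\nu_A=U_{\nu_A}(0)=0$, and $\Lambda_A=\int x\,d\mu_A$. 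Using the inverse-logarithmic-potential computations of Appendix~B of \cite{lowerbound} (equivalently, the Stieltjes transform $G_{\mu_{eq,[a,b]}}(z)=(z-a)^{-1/2}(z-b)^{-1/2}$ and the inversion $z\mapsto ab/z$), one gets $\mu_A=(1-b_x)\mu_{eq,[a,b]}+b_x\nu_x$ with $d\nu_x=\frac{\sqrt{ab}}{\pi x\sqrt{(x-a)(b-x)}}dx$, and $d\nu_A=\frac{\sqrt{(x-a)(b-x)}}{\pi x}dx$ (the latter also follows from the density formula in Theorem~\ref{dual} with $l=0$), together with $\lambda_x=\sqrt{ab}$ (from $U_{\nu_A}'(x)=1-\sqrt{ab}/x$ on $(a,b)$, a principal-value integral) and the potential identities on $[a,b]$
\[
U_{\mu_{eq,[a,b]}}=\log\tfrac{b-a}{4},\qquad U_{\nu_x}=\log x+\log\tfrac{\sqrt b-\sqrt a}{\sqrt b+\sqrt a},\qquad U_{x\,d\mu_{eq,[a,b]}}(x)=-x+\tfrac{a+b}{2}\bigl(\log\tfrac{b-a}{4}+1\bigr),
\]
as well as their values at $0$: $U_{\mu_{eq}}(0)=\log\tfrac{(\sqrt a+\sqrt b)^2}{4}$, $U_{\nu_x}(0)=\log(ab)-\log\tfrac{(\sqrt a+\sqrt b)^2}{4}$, $U_{x\,d\mu_{eq}}(0)=\tfrac{a+b}{2}\bigl(\log\tfrac{(\sqrt a+\sqrt b)^2}{4}+1\bigr)-\sqrt{ab}$.

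The key move is the substitution $\hat x:=\tfrac{(\sqrt b-\sqrt a)^2}{4}$, $\hat y:=\tfrac{(\sqrt a+\sqrt b)^2}{4}$, so that $\sqrt a=\sqrt{\hat y}-\sqrt{\hat x}$, $\sqrt b=\sqrt{\hat y}+\sqrt{\hat x}$, $\hat y-\hat x=\sqrt{ab}$, $\hat x+\hat y=\tfrac{a+b}{2}$, $\hat x\hat y=\tfrac{(b-a)^2}{16}$; under it $\log\tfrac{b-a}{4}=\tfrac12\log(\hat x\hat y)$, $\log\tfrac{(\sqrt a+\sqrt b)^2}{4}=\log\hat y$, $\log\tfrac{\sqrt b-\sqrt a}{\sqrt b+\sqrt a}=\tfrac12\log\tfrac{\hat x}{\hat y}$, $\log\sqrt{ab}=\log(\hat y-\hat x)$. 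Requiring that the potential of $\mu_A=(1-b_x)\mu_{eq}+b_x\nu_x$ have no constant term on $[a,b]$ gives $b_x=\tfrac{\log(\hat x\hat y)}{2\log\hat y}$; imposing in addition $U_{\mu_A}(0)=0$ and eliminating $b_x$ simplifies to $\log\hat x\,\log\hat y=\log(\hat x\hat y)\log(\hat y-\hat x)$, the second equation of the system. Next, using the density identity $\nu_A=(a+b)\mu_{eq,[a,b]}-x\,d\mu_{eq,[a,b]}-\sqrt{ab}\,\nu_x$ and imposing $U_{\nu_A}(0)=\int\log x\,d\nu_A=0$, the value-at-$0$ identities above collapse to $\hat y\log\hat y=(\hat y-\hat x)\log(\hat y-\hat x)+\hat x$, the first equation. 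This $(\hat x,\hat y)$ is the unique solution: $(a,b)\mapsto(\hat x,\hat y)$ is a bijection onto $\{0<\hat x<\hat y\}$ and $\mu_A$ is unique by Theorem~\ref{main1}, while conversely any admissible solution of the two equations reconstructs — via the displayed formulas, the domination principle (Theorem~\ref{dom}) for primal feasibility, and convexity of $x-U_\nu(x)-\lambda_x\log x-\lambda_0$ off $[a,b]$ for dual feasibility — a primal–dual pair of equal objective value, hence the optimal one.

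For the value, Theorem~\ref{dual} says the constant term of $U_{\nu_A}$ on $[a,b]$ is $-\Lambda_A$; combining $\nu_A=(a+b)\mu_{eq}-x\,d\mu_{eq}-\sqrt{ab}\,\nu_x$ with the three potential identities on $[a,b]$ yields $U_{\nu_A}(x)=x-\sqrt{ab}\log x+\bigl[\tfrac{a+b}{2}\log\tfrac{b-a}{4}-\tfrac{a+b}{2}-\sqrt{ab}\log\tfrac{\sqrt b-\sqrt a}{\sqrt b+\sqrt a}\bigr]$ there, so $\Lambda_A=\tfrac{a+b}{2}-\tfrac{a+b}{2}\log\tfrac{b-a}{4}+\sqrt{ab}\log\tfrac{\sqrt b-\sqrt a}{\sqrt b+\sqrt a}$, which under the substitution becomes exactly $\hat y-\hat y\log\hat y+\hat x-\hat x\log\hat x$ (and, as a consistency check, equals $\int x\,d\mu_A=(\hat x+\hat y)-2b_x\hat x$ precisely when the first equation of the system holds). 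The hard part here is not conceptual but the explicit evaluation of these logarithmic potentials and their constants on and off $[a,b]$ — the Appendix~B computations, where sign and branch choices must be tracked carefully; once those are pinned down, the identification of the system and the closed form for $\Lambda_A$ is bookkeeping in the $(\hat x,\hat y)$ variables.
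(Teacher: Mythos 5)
Your proposal follows essentially the same route as the paper: you invoke the structure from Theorems~\ref{main1} and~\ref{dual} for $A=\{x\}$ (single-interval support, $U_{\mu_A}=b_x\log x$ on $[a,b]$, $U_{\nu_A}=x-\lambda_x\log x-\Lambda_A$ on $[a,b]$, vanishing of $\int\log x\,d\mu_A$ and $\int\log x\,d\nu_A$), use the explicit potential formulas from Appendix~B of \cite{lowerbound}, make the identical substitution $\hat x=\tfrac{(\sqrt b-\sqrt a)^2}{4}$, $\hat y=\tfrac{(\sqrt a+\sqrt b)^2}{4}$, obtain $b_x=\tfrac{\log(\hat x\hat y)}{2\log\hat y}$ and the same two equations, and extract the value (you read it off the dual constant, the paper computes $\int x\,d\mu_A=\hat y-\tfrac{\hat x\log\hat x}{\log\hat y}$ and checks the dual value matches; these are the same computations in a slightly different order). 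Your explicit identities ($\lambda_x=\sqrt{ab}$, the balayage densities, the constant terms) agree with the paper's.

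There is, however, a gap in the uniqueness step. The lemma asserts that the system has a \emph{unique} solution, and your converse direction only treats ``admissible'' solutions of the two equations, i.e.\ those for which the reconstructed pair is genuinely feasible. What is missing is precisely the verification that \emph{every} solution of the system is admissible: the reconstruction $\mu=(1-b_x)\mu_{eq}+b_x\nu_x$ with $b_x=\tfrac{\log(\hat x\hat y)}{2\log\hat y}$ is a positive measure only if $b_x\ge 0$, and a priori a solution of $\log(x)\log(y)=\log(xy)\log(y-x)$ could have $\log(\hat x\hat y)$ and $\log\hat y$ of opposite signs, producing a spurious root of the transcendental system that your argument does not exclude and that would defeat uniqueness. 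The paper closes exactly this hole in the last part of its proof: it shows that any point on $\log(x)\log(y)=\log(xy)\log(y-x)$ with $0<x<y$ satisfies $y>1$ and $xy>1$ (a short sign analysis comparing $L(x,y)=\log x\log y$ with $R(x,y)=\log(xy)\log(y-x)$), so $b_x>0$ and the primal measure is positive, while the dual density $\hat c\,\sqrt{(x-a)(b-x)}/x$ is positive automatically. Adding this positivity argument (or an equivalent one) would complete your proof; without it the claim ``hence the optimal one'' applies only to solutions you have already assumed to be well behaved.
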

Here if the support is $\Sigma_A=[a,b]$, $\hat x$ represents $\frac{a+b-2\sqrt{ab}}{4}$ and $\hat y$ represents $\frac{a+b+2\sqrt{ab}}{4}$. Note that the approximating support given by Serre as given in Corollary \ref{serre_bound} satisfies the system of equations with error at most $10^{-10}$ and giving $\Lambda_{\{x\}}\in(1.8983020088,1.8983020091)$.
\begin{proof}
    As seen by equation (33) in \cite{lowerbound}, the fact that $\int \log|x|d\nu_A=0 $ is equivalent to
    \begin{equation}\label{log_zero_serre} y\log y - y = (y-x)\log(y-x)-(y-x)\end{equation}
    where $y=\int zd\nu_A(z)=\frac{a+b+2\sqrt{ab}}{4}$ and $x=\frac{a+b-2\sqrt{ab}}{4}$ for $0<a<b$. For the other equation, we look at the primal problem.
    \newline

    By Theorem \ref{main1}, we have that $U_{\mu_A}(x)=c\log(x)$ on $[a,b]$ for some positive constant $c$. The following is equivalent to the constant term of $U_{\mu_A}(x)$ being zero:
    \begin{align*}
    0&=(1-c)\log\left(\frac{b-a}{4}\right) - c \log\frac{a+b+2\sqrt{ab}}{b-a}\\
    &= \log\left(\frac{b-a}{4}\right)-c\log\frac{a+b+2\sqrt{ab}}{4}\\
    &= \frac{1}{2}\log(xy)-c\log(y).
    \end{align*}
    So that $c=\frac{\log(xy)}{2\log(y)}$.
    We are asked to minimize expectation of $\mu$ which is
    \begin{equation}\label{expectation_serre}(y-x)+2(1-c)x=y+x-\frac{x\log(xy)}{\log(y)}=y-\frac{x\log(x)}{\log(y)}.\end{equation}
    The following is equivalent to $\int \log|x|d\mu_A(x)=0$:
    \[
        0=\frac{1}{2}\log\left(xy\right)-2c^2\log\left(\frac{y}{y-x}\right).\]
    Since $c=\frac{\log(xy)}{2\log(y)}$, we have
    $$0=\frac{1}{2}\log(xy)-\frac{\log(xy)^2}{2\log(y)^2}\left(\log(y)-\log(y-x)\right).$$
    Dividing both sides by $\frac{\log(xy)}{2\log(y)}$, we can rearrange to get that
    \begin{equation}\label{energy_zero_serre}
        \log(x)\log(y)=\log(xy)\log(y-x).
    \end{equation}
    Thus for some solution $(\hat x, \hat y)$ to the simultaneous system \eqref{log_zero_serre} and \eqref{energy_zero_serre}, $\Lambda_A$ is given by $\hat y - \frac{\hat x\log\hat x}{\log \hat y}$ as stated in \eqref{expectation_serre}.
    From \eqref{log_zero_serre}, we can multiply by $\log(x)$ to get
    $$x\log x=y\log(y)\log(x) -(y-x)\log(y-x)\log(x).$$
    Using \eqref{energy_zero_serre} and simplifying, we get
    $$x\log x = (y\log(y)+x\log(x))\log(y-x).$$
    Thus
    $$\Lambda_A=\hat y - \frac{\hat x\log\hat x}{\log\hat y}=\hat y - \frac{(\hat y\log\hat y+\hat x\log\hat x)\log(\hat y - \hat x)}{\log\hat y}= \hat y - (\hat y - \hat x)\log(\hat y - \hat x)-\frac{\hat x\log(\hat x\hat y)\log(\hat y - \hat x)}{\log\hat y}.$$
    Applying \eqref{log_zero_serre} and \eqref{energy_zero_serre} one last time, we have
    $\Lambda_A = \hat y - \hat y\log\hat y + \hat x - \hat x \log\hat x$.
    It remains to show that the system of equations given by \eqref{log_zero_serre} and \eqref{energy_zero_serre} has a unique solution.
    \newline

    The fact that the system has a solution is a consequence of Theorem \ref{main1}. For uniqueness, it suffices to show by uniqueness of the optimal dual measure per Theorem \ref{dual} that $\lambda_A$ given by the dual measure is also $\hat y - \hat y\log\hat y + \hat x - \hat x \log\hat x$ and that any solutions to the system of equations correspond to positive measures.
    The value of $\lambda_A$ is $\int zd\nu_A(z)-2 xI_\nu$ assuming the density function of $\nu_A$ vanishes at $a$ and $b$ per Theorem \ref{dual}. This is
    $$ y- 2x\left(\frac{1}{2}\log(xy)-\frac{2g^2}{(m-g)^2}\log\frac{m+g}{2g}+\frac{3g-m}{2(m-g)}\right)$$
    where $m=x+y$ and $g=y-x$. This simplifies to
    $$2x- x\log(xy)+\frac{(y-x)^2}{x}\log\frac{y}{y-x}.$$
    Since $(y-x)\log(y-x)=y\log y -x$, after simplifying we get
    $$ x-x\log(x)+y-y\log(y).$$
    Thus the lower bound on $\Lambda_A$ given by the dual measure matches the upper bound on $\Lambda_A$ given by the primal solution.
    To complete the proof of uniqueness, we need to show that solutions to
    \[\begin{cases}
y\log y = (y-x)\log(y-x)+x\\
\log(x)\log(y)=\log(xy)\log(y-x)
\end{cases}\]
    only correspond to {\it positive} measures in the primal and dual problems. For the dual problem, the density functions are always of the form $\hat c \sqrt{(b-x)(x-a)}$ for some $\hat c>0$ and $0<a<b$. Hence these measures are positive. For the primal problem, it suffices to show $c=\frac{\log(\hat x\hat y)}{2\log(\hat y)}>0$. We show the more general statement that $xy>1$ and $y>1$ for any point $(x,y)$ on $\log(x)\log(y)=\log(xy)\log(y-x)$.
    \newline

    Let $L(x,y)=\log(x)\log(y)$ and $R(x,y) = \log(xy)\log(y-x)$. Suppose $L(x,y)=R(x,y)$. Firstly, $y>x>0$ in order for $L$ and $R$ to be defined.
    Furthermore, for the sake of contradiction assume $y\le 1$. Then we have $\log(xy)\le \log(x)< 0$ and $\log(y-x)<\log(y)\le0$ which is a contradiction because then $R(x,y)>L(x,y)$. So $y>1$.
    Now assume $xy\le 1$ so that $0<x<1<y$ and $xy\le 1$. Firstly, $xy\neq 1$ because then $R(x,y)=0$ and $L(x,y)<0$. If $xy<1$, then $y-x>1$ because otherwise $L(x,y)<0$ and $R(x,y)\ge0$. Therefore,
    $$0 < -\log(xy)<-\log(x)$$
    and
    $$0 < \log(y-x) < \log(y).$$
    Multiplying, we have that $R(x,y)>L(x,y)$ which is again a contradiction. Therefore, $xy>1$ and $y>1$ for any solution to the system of equations. This completes the proof.
\end{proof}

\section{Numerical approximations of $\mu_A$ and $\nu_A$}\label{validation}
Fix a finite subset of integral polynomials $A\subset \mathbb{Z}[x]$ with all real roots. Recall our notations from Theorem~\ref{main1} and Theorem~\ref{dual}. Let $\Sigma_A:=\bigcup_{i=0}^l[a_{2i},a_{2i+1}]$ be the support of the optimal measures $\mu_A$ and $\nu_A$. 
In this section, we  explain how to approximate to $\nu_A$ and $\mu_A$ numerically. First, we express $\mu_A$ and $\nu_A$ explicitly in terms of their support $\Sigma_A$. Then we apply our gradient descent algorithm to approximate $\Sigma_A$ numerically.
\subsection{Computing $\nu_A$}\label{Xdef}
We proved in Theorem~\ref{dual} that for every $x\in \Sigma_A$
\[
x= \Lambda_A +\sum_{Q\in A}\lambda_{Q} \log|Q(x)|+ U_{\nu_A}(x)
\]
and that 
\[
d\nu_A(x)=c\frac{|p(x)|\sqrt{|H(x)|}}{\prod_{\lambda_Q\neq 0}|Q(x)|}.
\]
Let $\mu_{eq}$, $\mu_{lin}$, $\mu_Q$ be  the equilibrium measure, the measure with linear potential $U_{\mu_{lin}}(t)=t$, and the probability measure with potential $U_{\mu_{Q}}(t)=\frac{\log(|Q(t)|)}{\deg(Q)}+C_Q$ for some constant $C_Q$ supported on $\Sigma_A$ respectively. By \cite[Lemma~3.2]{lowerbound}, 
the density function of  $\mu_Q$ is given by the following explicit expression:
\[
d\mu_Q(x)=\frac{1}{\deg(Q)}\sum_{Q(\alpha)=0}\frac{|P_{\alpha}(x-\alpha)|\sqrt{H(\alpha)}}{|x-\alpha|\pi\sqrt{H(x)}},
\]
where $P(x):=x^{\deg(P_{\alpha})}p_\alpha(1/x)$ is the polynomial that appears in the numerator of the equilibrium measure of the  image of $\Sigma_A$ under the map $z\to \frac{1}{z-\alpha}$. By Theorem~\ref{dual} and the above
\[
\begin{split}
X_{eq}\frac{|p_{eq}(x)|}{\pi\sqrt{H(x)}}+\frac{-x|p_{eq}(x)|+|P_{gap}(x)|}{\pi\sqrt{H(x)}}+\sum_{Q}\frac{X_Q}{\deg(Q)}\sum_{Q(\alpha)=0}\frac{|P_{\alpha}(x-\alpha)|\sqrt{H(\alpha)}}{|x-\alpha|\pi\sqrt{H(x)}}    \\=c\frac{|p(x)|\sqrt{|H(x)|}}{\prod_{\lambda_Q\neq 0}|Q(x)|},
\end{split}
\]
where $X_{eq}>0,$ and $X_Q<0$, $c>0$ is a normalizing constant, and $\tilde{A}=\{Q\in A: X_{Q}\neq 0\}$. 
From this, it is then derived that
\begin{equation}\label{residue}
X_Q=-\frac{\deg(Q)c\pi |\text{Res}(p,Q)|^{1/\deg(Q)}\sqrt{|\text{Res}(H,Q)|^{1/\deg(Q)}}}{|\text{Disc}(Q)|^{1/\deg(Q)}|\prod_{Q_i\neq Q}|\text{Res}(Q,Q_i)|^{1/\deg(Q)}|}.
\end{equation}
Suppose that we have exactly one root of $Q\in A$ inside every gap, then $\deg(p)=0$ and up to a scalar $\nu_A$ has the following density function
\[
\frac{\sqrt{|H(x)|}}{\prod_{\lambda_Q\neq 0}|Q(x)|}.
\]
Therefore, assuming we have exactly one root of $Q\in A$ inside every gap, we obtain \[X_Q=-\frac{c\pi\deg(Q)|\text{Res}(H,Q)|^\frac{1}{2\deg(Q)}}{|\text{Disc}(Q)|^\frac{1}{\deg(Q)}}\] for $Q\in A$ since $|\text{Res}(P,Q)|=1$ for any $P\neq Q$ in 
$A=\{x, x-1, x^2-3x+1, x^3-5x^2+6x-1 
\}$. 
 \subsection{Computing $\mu_A$}
 Recall the definition of $\mu_{eq}$ and $\mu_Q$ from the previous subsection.
We proved in Theorem~\ref{main1} that 
\[
U_{\mu_A}(x)=\sum_{Q\in A}b_Q\log|Q(x)|
\]
for every $x\in \Sigma_A$. It follows that
\[
\mu_A=\left(1-\sum_{Q\in A} b_Q\right)\mu_{eq}+\sum_{Q\in A}b_Q\mu_{Q}.
\]
Moreover, by \eqref{mulogQ}, if $\lambda_Q\neq 0$ then
\[
\int \log|Q(x)|d\mu_A(x)=0.
\]
By assuming $\lambda_Q\neq 0$ for every $Q\in A$, it follows that $b_Q$ satisfies the following system of linear equations:
\[
\sum_{Q} b_QW(Q',Q)=W(Q')
\]
where 
\[
W(Q',Q):=\int \log|Q'(x)|d\mu_{Q}(x)-\int \log|Q'(x)|d\mu_{eq}(x)
\]
and 
\[
W(Q')=\int \log|Q'(x)|d\mu_{eq}(x).
\]
Assuming $\det(W(Q,Q'))\neq 0$, one can write
\begin{equation}\label{[bQ]}
  [b_Q]_{Q\in A}= [W(Q,Q')]_{Q,Q'\in A}^{-1}\times[W(Q')]_{Q'\in A}.  
\end{equation}
Note $\Tr(Q)<\lambda_{SSS}$ for every $Q\in \{x, x-1, x^2-3x+1, x^3-5x^2+6x-1\}$. By Proposition~\ref{smooth}, $\lambda_Q\neq 0$ for every $Q\in \{x, x-1, x^2-3x+1, x^3-5x^2+6x-1\}$.  Our numerical calculation also shows that $\det(W(Q,Q'))\neq 0$ for 
$Q,Q'\in\{x, x-1, x^2-3x+1, x^3-5x^2+6x-1\}.
$ 
\subsection{Gradient descent algorithm}\label{grad_desc}
We estimate $\Lambda_A$ by adapting the gradient descent algorithm developed in \cite{lowerbound}. Given $\Sigma=\bigcup_i[a_{2i}, a_{2i+1}]$, we are able to compute $X_{lin}$, $X_{eq}$ and $X_Q$ for each $Q\in A$ as in section \ref{Xdef}.
We then compute $\mu_{eq, \Sigma}$, $\mu_{lin, \Sigma}$, and $\mu_{Q,\Sigma}$ for each $Q\in A$ as shown in section \ref{Xdef}. Define
$$\nu_\Sigma := X_{eq}\mu_{eq,\Sigma}+X_{lin,\Sigma}\mu_{lin,\Sigma}+\sum_{Q\in A}X_Q\mu_{Q,\Sigma}.$$
We then compute the matrix $[W(Q',Q)]$ and compute $b_Q$ coefficients as in \eqref{[bQ]}, and define
\[
\mu_{\Sigma}:=\left(1-\sum_{Q\in A} b_Q\right)\mu_{eq}+\sum_{Q\in A}b_Q\mu_{Q}.
\]

With these density function, we can compute all of the required quantities of a particular measure. So the high-level algorithm is to perform gradient descent on $\Sigma$ numerically (where $a_i$'s are the parameters) minimizing the sum of squares of  $I(\nu_\Sigma,\mu_{\Sigma})$, the value of the density function of $\nu_{\Sigma}$ at the $a_i$'s, the values of $\int\log|Q(x)|d\mu_\Sigma$ and $\int\log|Q(x)|d\nu_\Sigma$ for $Q\in A$, and finally $\int xd\mu_{\Sigma}-\Lambda_{\Sigma}$ because these values are 0 from Theorem~\ref{main1} and Theorem~\ref{dual}.
\newline

We use the same numerical methods that we introduce in~\cite{lowerbound}. We refer the reader to \cite[Section 6.2]{lowerbound} for further details of the numerical computations of singular integrals and the optimization methods that we introduce to increase the speed of the computations. 

\subsection{Verification of Results}    
In this section, we verify that given numerical approximations to $\nu_A$ and $\mu_A$, we get valid lower and upper bounds on $\Lambda_A$, respectively.
Fix a finite subset $A\subset\mathbb{Z}[x]$ and $\Sigma=\bigcup_{i=0}^l[a_{2i},a_{2i+1}]$. Suppose $\mu$ and $\nu$ are Borel probability measures with support $\Sigma$.
Let $b_Q'\ge 0$ for each $Q\in A$ so that $U_\mu(x)=\sum_{Q\in A}b_Q'\log|Q(x)|$ for each $x\in \Sigma$ and let  $\lambda_0'>0$ and $\lambda_Q'\ge 0$ for all $Q\in A$ such that $x = \lambda_0' + \sum_{Q\in A} \lambda_Q' \log|Q(x)| + U_\nu(x)$ for each $x\in \Sigma$.
Further assume that $\int \log|Q(x)|d\mu\ge 0$ and $\int \log|Q(x)|d\nu\ge 0$ for $Q\in A$.
In the rest of this section, we prove that $\lambda_0' -\delta\log(18) \le \Lambda_A\le \int xd\mu$.
In particular, we must also show that
\begin{equation}\label{mu_ineq}    
U_\mu(x)\ge \sum_{Q\in A}b_Q'\log|Q(x)|\text{ for all }x\in \mathbb{R}^+
\end{equation}
and
\begin{equation}\label{nu_ineq}
x \ge \lambda_0' - \delta\log(18) + \sum_{Q\in A} \lambda_Q' \log|Q(x)| +U_\nu(x)\text{ for all }x\in \mathbb{R}^+
\end{equation}
given equality on $\Sigma$. We do this for special classes of $A$ and $\mu$ which are of interest to us and which are satisfied by Corollaries \ref{first_numer_approx} through \ref{our_bound}. The following proposition gives a lower bound estimate on $\Lambda_A$. The proof is based off of the proof of Proposition 6.1 in \cite{lowerbound}.
\begin{proposition}\label{eq_sufficient}
    Let $A, \Sigma, \lambda_0',$ and $\lambda_Q'$ for all $Q\in A$ be defined as above. Suppose $\Sigma\subset [0,18]$ and $\mu_{eq}$ is the equilibrium measure of $\Sigma$. Define $\mathcal R$ to be $\{\alpha\in\mathbb R: \exists Q\in A,Q(\alpha)=0\}$.
    Suppose that $\mathcal R\cap (a_{2i+1},a_{2(i+1)})=\{r_i\}$ for each $i=0,\dots, l-1$ and $\mathcal R\cap \big([0,a_0)\cup (a_{2l+1},\infty)\big)=\{0\}$. Fix $\delta>0$ so that for each $a_i$,
    \[
    \delta_{i}:=\lim_{\stackrel{x\to a_i}{x\in \Sigma}} \frac{d\nu(x)}{d\mu_{eq}(x)}\leq \delta, 
    \]
     for any $x\in (a_{2i+1},r_i)$,
    \[
    \sum_{Q\in A}\sum_{Q(\alpha)=0}\frac{\lambda_Q'}{(x-\alpha)^2} + \int \frac{f_{2i+1}(y)}{(x-y)^2}dy>0
    \]
     where $f_{i}(y)$ is the density function of $\nu-\delta_{i}\mu_{eq}$, and for any $x\in (r_i,a_{2(i+1)})$,
    \[
    \sum_{Q\in A}\sum_{Q(\alpha)=0}\frac{\lambda_Q'}{(x-\alpha)^2} + \int \frac{f_{2(i+1)}(y)}{(x-y)^2}dy>0.
    \]
    Then $\lambda_0'-\delta \log(18)\le\Lambda_A$.
\end{proposition}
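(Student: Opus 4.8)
The goal is a \emph{weak-duality-style} lower bound: given a pair $(\mu,\nu)$ together with nonnegative multipliers $b_Q'$, $\lambda_Q'$ and $\lambda_0'>0$ satisfying the primal/dual equalities \emph{on $\Sigma$}, show that the dual point built from $\nu$ (with $\lambda_0'$ slightly decreased to $\lambda_0'-\delta\log 18$) is genuinely feasible in the sense of the dual constraints \eqref{dualconstraint}. Once feasibility is established, weak duality ($\lambda_A\le\Lambda_A$, proven right before Proposition~\ref{mainprop}) together with $\lambda_A\ge\lambda_0'-\delta\log 18$ finishes the proof. So the real content is verifying \eqref{nu_ineq}, i.e.
\[
x \;\ge\; \lambda_0' - \delta\log(18) + \sum_{Q\in A} \lambda_Q' \log|Q(x)| + U_\nu(x)
\qquad\text{for all } x\in\mathbb R^+,
\]
given that equality (without the $-\delta\log 18$) holds on $\Sigma$.

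**Main steps.** First I would define $F(x):=x-\sum_{Q\in A}\lambda_Q'\log|Q(x)|-U_\nu(x)-\lambda_0'$, so that $F\equiv 0$ on $\Sigma$ and we must show $F(x)\ge -\delta\log 18$ off $\Sigma$. Outside $[0,18]$ (i.e. on $(a_{2l+1},\infty)$) the function $F$ is smooth, $F''(x)=\sum_{Q,\,Q(\alpha)=0}\lambda_Q'/(x-\alpha)^2+\int d\nu(y)/(x-y)^2>0$ since the only root of any $Q$ in that region is $0$, which lies to the left; hence $F$ is convex there, $F(a_{2l+1})=0$, and $F'(a_{2l+1}^{+})\ge 0$ would give $F\ge 0$ — the sign of $F'$ at the right endpoint follows because the density of $\nu$ is finite there (that's the content of $\delta_{2l+1}\le\delta$, comparing $\nu$ to the equilibrium measure whose potential decreases as one exits $\Sigma$). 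The substantive work is on each gap interval $(a_{2i+1},a_{2(i+1)})$, which by hypothesis contains exactly one root $r_i$ of some $Q\in A$. On $(a_{2i+1},r_i)$ write $F = \tilde F + \delta_{2i+1}\,(\,\text{const} - U_{\mu_{eq}})$ where $\tilde F$ has second derivative $\sum_{Q,Q(\alpha)=0}\lambda_Q'/(x-\alpha)^2+\int f_{2i+1}(y)/(x-y)^2\,dy$, which is positive by hypothesis, so $\tilde F$ is convex on $(a_{2i+1},r_i)$; combined with $\tilde F(a_{2i+1})=0$ (here one uses that the equilibrium-measure correction vanishes at $a_{2i+1}$ since $U_{\mu_{eq}}$ equals its Robin constant on $\Sigma$) and the fact that $\tilde F$, like $F$, tends to $+\infty$ as $x\to r_i$ (the $-\lambda_Q'\log|Q(x)|$ term with $\lambda_Q'>0$, which holds since $\Tr(Q)<\lambda_0'$ by Proposition~\ref{smooth}), convexity forces $\tilde F\ge 0$ on this subinterval. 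Then $F\ge \delta_{2i+1}(\,\mathrm{const}-U_{\mu_{eq}}) \ge -\delta_{2i+1}\sup_{[0,18]}U_{\mu_{eq}} \ge -\delta\log 18$, using $\mu_{eq}$ supported in $[0,18]$ and $U_{\mu_{eq}}(x)\le\log 18$ there. The subinterval $(r_i,a_{2(i+1)})$ is handled identically with $f_{2(i+1)}$ and $\delta_{2(i+1)}$. Collecting these pointwise bounds over all gaps and the unbounded component gives \eqref{nu_ineq}; the remaining dual constraint $\int\log|Q(x)|d\nu\ge 0$ is assumed, so $(\nu,\lambda_Q',\beta_Q',\lambda_0'-\delta\log 18)$ is a feasible dual point, whence $\lambda_0'-\delta\log 18\le\lambda_A\le\Lambda_A$.

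**Where the difficulty lies.** The routine part is the convexity bookkeeping; the delicate point is controlling the \emph{error incurred by subtracting off $\delta_i\mu_{eq}$}: one must make sure that $\tilde F=F-\delta_i(\mathrm{const}-U_{\mu_{eq}})$ really does vanish at the relevant endpoint $a_{2i+1}$ or $a_{2(i+1)}$ and not merely stay bounded, because it is this that lets convexity propagate from the endpoint into the gap. This hinges on the precise normalization of the equilibrium potential (equal to the Robin constant on all of $\Sigma$, hence on $a_i$) and on the definition of $\delta_i$ as the \emph{boundary density ratio} $\lim_{x\to a_i}d\nu/d\mu_{eq}$, which is exactly the quantity that matches the square-root singularities of the two densities at $a_i$ so that $F$ and $\delta_i(\mathrm{const}-U_{\mu_{eq}})$ have the same one-sided derivative at $a_i$. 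I would model this step on the proof of Proposition~6.1 of \cite{lowerbound}, whose structure it mirrors; the only new feature here is the explicit loss $\delta\log 18$, which is simply the crude bound $U_{\mu_{eq}}\le\log 18$ on $[0,18]$ applied to the residual term.
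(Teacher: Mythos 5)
Your overall architecture differs from the paper's, and the difference is exactly where your argument breaks. The paper never verifies dual feasibility on all of $\mathbb{R}^+$: it proves the inequality $x\ge \lambda_0'-\delta\log 18+\sum_{Q\in A}\lambda_Q'\log|Q(x)|+U_\nu(x)$ only on $[0,18]$ and then integrates it against the optimal primal measure $\mu_A$ of Theorem~\ref{main1}, which is supported in $[0,18]$, using $\int x\,d\mu_A=\Lambda_A$, $\lambda_Q'\int\log|Q|\,d\mu_A\ge 0$, and $I(\mu_A,\nu)=\int U_{\mu_A}\,d\nu\ge \int\sum_Q b_Q'\log|Q|\,d\nu\ge 0$. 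Your route instead needs \eqref{nu_ineq} on the whole half-line in order to quote weak duality, and your treatment of the region to the right of $a_{2l+1}$ is wrong: $\delta_{2l+1}\le\delta$ does \emph{not} say the density of $\nu$ is finite at $a_{2l+1}$; it bounds its ratio to the equilibrium density, which blows up like $|x-a_{2l+1}|^{-1/2}$. In the intended application $\nu$ contains a small positive multiple of $\mu_{eq}$, so $F'(a_{2l+1}^+)=-\infty$ and $F$ genuinely dips below $0$ just outside $\Sigma$ — this is precisely why the statement carries the loss $\delta\log 18$ and why the subtraction of $\delta_i\mu_{eq}$ is needed at every boundary point, not only in the interior gaps. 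Even with that correction, the comparison $g-f=\delta\log 18-\delta_i U_{\mu_{eq}}(x)\ge 0$ holds only for $x\le 18$ (for $x>18$ the potential $U_{\mu_{eq}}(x)$ exceeds $\log 18$), the hypotheses give no sign on the relevant second derivative outside the interior gaps, and the interval $[0,a_0)$ is not addressed at all; so feasibility on $\mathbb{R}^+$ is not established. Pairing with $\mu_A$, as the paper does, sidesteps all of this.

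Inside a gap, your convexity step as written is also insufficient: a convex function with $\tilde F(a_{2i+1})=0$ and $\tilde F\to+\infty$ at $r_i$ can still be negative in between (e.g.\ $-2x-1+\tfrac{1}{1-x}$ on $[0,1)$). What makes the argument work — and what the paper invokes via Lemma~4.2 of \cite{lowerbound} — is that the density of $\nu-\delta_{2i+1}\mu_{eq}$ vanishes at $a_{2i+1}$, so the corrected function has one-sided derivative $0$ there; convexity then gives monotonicity toward $r_i$, hence nonnegativity on $(a_{2i+1},r_i)$, and symmetrically from $a_{2(i+1)}$. You do gesture at this derivative matching in your closing paragraph, so the gap-interval mechanism is salvageable; note also that $\lambda_Q'>0$ is neither needed for it nor available from Proposition~\ref{smooth} under the stated hypotheses (that proposition presupposes a feasible dual point and $\Tr(Q)<\lambda_0$, which are not assumed here).
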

\begin{proof}
Let $\mu_A$ is the measure given by Theorem~\ref{main1} with support  $\Sigma_A\subset[0,18]$.

    It suffices to prove 
    \[
    x \ge \lambda_0'-\delta \log(18) + \sum_{Q\in A} \lambda_Q' \log|Q(x)| +U_\nu(x)\text{ for all }x\in [0,18].
    \]
    Indeed, integrating both sides with respect to $d\mu_A(x)$, we conclude
    \[
    \Lambda_A\geq \lambda_0'-\delta \log(18) +I(\mu_A,\nu).
    \]
    Note that $I(\mu_A,\nu)=\int U_{\mu_A}(x)d\nu\ge \int \sum_{Q\in A}b_Q\log(x)d\nu\ge 0$ so that $\Lambda_A \ge \lambda_0'-\delta\log(18)$.
    Let 
    $$g(x) := x - \lambda_0'+\delta\log(18) - \sum_{Q\in A} \lambda_Q' \log|Q(x)| -U_\nu(x).$$ 
    It is sufficient to show positivity of $g(x)\ge 0$ on $[0,18]$. By assumption, $g(x)= \delta \log(18) > 0$ on $\Sigma$ so we must show that this holds on the complement. We start by showing $g(x)\geq 0$ for $x\in (a_{2i+1},r_i)$. The proof for the remaining intervals is similar. 
    \newline
    
    By the assumptions of the proposition, the density of $\nu-\delta_{2i+1}\mu_{eq}$ vanishes at $a_{2i+1}$ for some $\delta_{2i+1}<\delta$. Let 
     \[
    f(x) := x - \lambda_0'- \sum_{Q\in A} \lambda_Q' \log|Q(x)| - U_{\nu-\delta_{2i+1}\mu_{eq}}(x).
    \]
Note that $f(x)=\delta_{2i+1} C_{\Sigma}\geq 0$ on $\Sigma$, where $C_\Sigma>0$ is the logarithmic capacity of $\Sigma$  and
\[
g(x)-f(x)=\delta \log(18)-\delta_{2i+1}U_{\mu_{eq}}(x)\geq 0
\]
for $0\le x\le 18$.
It is enough to show that $f(x)\geq 0$ for any $x\in (a_{2i+1},r_i).$  This is deduced from the fact that for $x\in \mathbb R^+\setminus\Sigma$,
    $$f''(x) = \sum_{Q\in A}\sum_{Q(\alpha)=0}\frac{\lambda_Q'}{(x-\alpha)^2} + \int \frac{f_{2i+1}(y)}{(x-y)^2}dy>0,$$
     where $f_{2i+1}(y)$ is the density function of $\nu-\delta_{2i+1}\mu_{eq}$, which is positive  by our assumption. We get this by Liebnitz's rule since $\mathbb R^+\setminus\Sigma$ is open.
    By Lemma 4.2 of \cite{lowerbound}, $f'(x)=0$ at $a_{2i+1}$. Since we assumed that there is only one root between any two intervals in $\Sigma$, $g(x)\ge 0 $ on $\mathbb R^+$.
\end{proof}
Now let $\mu_Q$ denote the unique probability measure with potential $U_{\mu_Q}(x)\ge \frac{\log|Q(x)|}{\deg(Q)}-C_{Q,\epsilon}$ with equality precisely on $\Sigma$ for some constant $C_{Q,\epsilon}>0$. Since in the gradient descent algorithm, we choose $\mu=c\mu_{eq}+\sum_{Q\in A}b_Q'\deg(Q)\mu_Q$
for suitable $c>0$, it follows that $U_\mu(x) \ge \sum_{Q\in A}b_Q'\log|Q(x)|$ with equality precisely on $\Sigma$.
Therefore, it simply remains to check that the density function of $\nu$ is positive which is possible because the numerator of its density function is a polynomial combination of absolute values of polynomials.
Once this is established, $\lambda_0'-\delta\log(18)\le \Lambda_A\le \int xd\mu$.

\subsection{Our Bound}\label{our_bound_proof}
Here we give details regarding the measures described in Corollary \ref{our_bound}. This numerical result is concerning $A=\{Q_0,Q_1,Q_2,Q_3\}$ where $Q_0:=x,Q_1:=x-1,Q_2:=x^2-3x+1,$ and $Q_3:=x^{3}-5x^{2}+6x-1$.
The proofs of Corollaries~\ref{first_numer_approx} and \ref{approx_quad} are similar. For these, the supports of the measures used  are provided in the statement of the corollaries which is sufficient to reproduce the given bounds by following the same procedure presented below.
\begin{proof}[Proof of Corollary~\ref{our_bound}]
Let  $\Sigma:=\bigcup_{k=0}^6[a_{2k},a_{2k+1}],$ where
\[
\begin{split}
[a_0,\dots,a_{13}]=[0.06129179514230616, 0.17515032062914598,\\
0w.22234021363650555, 0.31342480981675824, 0.45755305611054947,\\
0.7464580569205782, 1.3003810514246414, 1.4785537920437593,\\
1.629639282543484, 2.391040662791468, 2.8714421221120547, \\
3.108899604515233, 3.387062952658943, 4.905820899328081]    
\end{split}
\]

Using this, we compute $H(x) = -\prod_{k=0}^{14}(x-a_k)$.
This is sufficient to compute the equilibrium measure of $\Sigma$ which is given by
\[
d\mu_{eq}=\frac{|P_{eq}(x)|}{\pi \sqrt{|H(x)|}}
\]
where 
\[
\begin{split}
  P_{eq}(x)\approx
-1.01331193 + 10.12197831 x - 33.33843935 x^2 \\
+ 46.34630353 x^3 - 30.14669018 x^4 + 9.02111358 x^5 - x^6.
\end{split}
\]

Now we start computing details of the dual measure. We normalize the dual measure by a constant factor so that it is a probability measure on $\Sigma$. The normalizing coefficient $c$ via
\[
\int_{\Sigma} \frac{c\sqrt{H(x)}}{\prod_{Q\in \tilde{A}} |Q(x)|}dx=1
\]
which yields $c\approx 0.39829154.$
We know that $X_{lin}$ is therefore
\[X_{lin} = c\pi \approx 1.25126977. \]
From \eqref{residue}, we choose 
\[
X_{Q_0}= -0.66676107\approx  -c\pi|\res(Q_0,H)|^{1/2},
\]
\[
X_{Q_1}=-0.64219523\approx -c\pi|\res(Q_1,H)|^{1/2},
\]
\[
X_{Q_2}=-0.52351453\approx-\frac{2c\pi|\res(Q_2,H)|^{1/4}}{\sqrt{5}},
\]
\[
X_{Q_3}=-0.3263993\approx-\frac{3c\pi|\res(Q_3,H)|^{1/6}}{7^{2/3}}.
\]
In order that $\nu_A$ be a probability measure, we let 
\[
X_{eq}\approx 6.29132246.
\]
However, since this is only an approximation of the goal measure, this does not have all of the positive values we require (including positivity of the density function). To account for this, we take
\[
\nu:=2(10)^{-8}\mu_{eq} + \frac{1-2(10)^{-8}}{X_{eq}+X_{lin}\mu_{lin}(\mathbb{R})+\sum_{i=0}^3X_{Q_i}}\left(X_{eq}\mu_{eq}+\sum_{i=0}^3X_Q\mu_Q+X_{lin}\mu_{lin}\right).
\]
For simplicity, we rename the new coefficients of $\mu_Q$ to $X_Q$, the new coefficient of $\mu_{lin}$ to be $X_{lin}$, and similarly for that of $\mu_{eq}$.
One can check that  for $x\in \Sigma$,
\[
x =  \lambda - \sum_{Q\in A} \frac{X_Q}{X_{lin}}\log|Q(x)| - \frac{1}{X_{lin}}U_\nu(x)
\]
for every $x\in \Sigma$
where 
\[
\lambda \geq 1.8215998.
\]
To get a lower bound on $\Lambda_A$, we use Proposition~\ref{eq_sufficient}. Let
 \[
    \delta_{i}:=\lim_{x\to a_i,x\in \Sigma} \frac{d\nu(x)}{d\mu_{eq}(x)}=\frac{|P(a_i)|}{|P_{eq}(a_i)\prod_{Q\in A}Q(a_i)|}.
    \]
One can check that $\delta\leq 3(10)^{-8}$.
Moreover, the statement that
    \[
   \int \frac{f_{2i}(y)}{(x-y)^2}dy>0
    \]
    holds in a neighborhood of $a_{2i}$ is equivalent to
    \[
    \frac{d}{dx}\frac{|P(x)|}{\prod_Q |Q(x)|}|_{x=a_{2i}}\geq \frac{d}{dx} \big|P_{eq}(x)|_{x=a_{2i}}.
    \]
    Using $\nu$ as described above, we can compute that
    \[
    \begin{split}
    \left[\frac{d}{dx}\frac{|P(x)|}{\prod_Q |Q(x)|}|_{x=a_{2i}}: 0\leq i\leq 6\right]\approx[45, 8, 7, 5, 10, 24, 100]
    \\
\left[\frac{d}{dx} |P_{eq}(x)|_{x=a_{2i}}: 0\leq i\leq 6\right]\approx[-7, 1, 1, 0.03, 2, 5, 45]
\end{split}
    \]
    which confirms half of the inequalities required by Proposition~\ref{eq_sufficient} near the boundary points.
    Similarly, the statement that
    \[
   \int \frac{f_{2i+1}(y)}{(x-y)^2}dy>0
    \]
    holds in a neighborhood of $a_{2i+1}$ is equivalent to
    \[
    \frac{d}{dx}\frac{|P(x)|}{\prod_Q |Q(x)|}|_{x=a_{2i+1}}\leq \frac{d}{dx} |P_{eq}(x)|_{x=a_{2i+1}}.
    \]
 To verify the remaining requirements of Proposition~\ref{eq_sufficient} near the boundary points, we compute
    \[
    \begin{split}
    \left[\frac{d}{dx}\frac{|P(x)|}{\prod_Q |Q(x)|}|_{x=a_{2i+1}}: 0\leq i\leq 6\right]\approx[-12, -5, -7, -6, -25, -38, -1242]
    \\
\left[\frac{d}{dx} |P_{eq}(x)|_{x=a_{2i+1}}: 0\leq i\leq 6\right]\approx[-2, -0.4, -0.3, -1.1, -2.6, -5.7, 2131].
\end{split}
    \]
    For values of $x$ away from these neighborhoods, one can check the required inequalities numerically. 
By Proposition~\ref{eq_sufficient}, it follows that
\[
\Lambda_A\geq 1.8215997. 
\]
For the corresponding primal measure, if $\mu=Y_0\mu_{eq}+\sum_{Q\in A}Y_Q \mu_Q$, we compute the $Y_Q$'s as suggested in Section \ref{grad_desc}. 
With this,
$$[Y_0,Y_{Q_0},Y_{Q_1},Y_{Q_2},Y_{Q_3}]\approx [0.40812097, 0.09176568, 0.15086476, 0.207612  , 0.14163659].$$
Taking $\tilde\mu:=6(10)^{-8}\mu_{eq}+(1-6(10)^{-8})\mu$, its potential is greater than $\sum_{i=0}^3Y_{Q_i}\log|Q_i(x)|$.
From this,
\[
\int xd\tilde\mu\leq1.8215999.
\]
This completes the proof of  Corollary~\ref{our_bound}.
\end{proof}

\section{Open Questions}\label{conjectures}
The method provided herein helps us to compute better upper bounds on $\lambda^{SSS}$ from the Schur-Siegel-Smyth trace problem; however, many questions about the problem remain open.
Some such questions about the general Schur-Siegel-Smyth trace problem are on the uniqueness of an optimal distribution and properties of the optimal distribution.
\begin{conjecture}\label{unique_opt_dist}
    There is a unique arithmetic probability distribution $\mu_{SSS}$ compactly supported in $\mathbb R^+$ so that $\int xd\mu_{SSS}=\lambda^{SSS}$.
\end{conjecture}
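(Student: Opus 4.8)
Existence of a minimizer is soft: pick totally positive algebraic integers $\alpha_n$ with $\Tr(\alpha_n)\to\lambda^{SSS}$, so their root distributions $\mu_{\alpha_n}$ are supported in $[0,18]$ by \cite[Lemma~2.6]{lowerbound}, and a subsequence converges weakly to a probability measure $\mu_{SSS}$ on $[0,18]$. Since $x$ is bounded and continuous there, $\int x\,d\mu_{SSS}=\lambda^{SSS}$, and upper semicontinuity of $\log|Q(x)|$ together with $\int\log|Q|\,d\mu_{\alpha_n}\ge0$ makes $\mu_{SSS}$ arithmetic. More generally, every optimal arithmetic measure $\mu$ has $0\le I(\mu)<\infty$ (the lower bound from \eqref{multi}, the upper bound from compact support). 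The real content is therefore uniqueness, which I would deduce from the \emph{zero--energy property}: every optimal arithmetic probability measure $\mu$ satisfies $I(\mu)=0$ (this is Conjecture~\ref{opt_dist_zero_energy}). Indeed, the set of optimal measures is convex, so if $\mu_1\neq\mu_2$ were both optimal then $\mu:=\tfrac12(\mu_1+\mu_2)$ would be optimal; writing $\sigma:=\mu_1-\mu_2$, which has total mass $0$ and finite energy, the classical energy principle (see e.g.\ \cite{logpotentials}) gives $I(\sigma)<0$, i.e.\ $2I(\mu_1,\mu_2)>I(\mu_1)+I(\mu_2)$, and hence
\[
I(\mu)=\tfrac14\bigl(I(\mu_1)+2I(\mu_1,\mu_2)+I(\mu_2)\bigr)>\tfrac12\bigl(I(\mu_1)+I(\mu_2)\bigr)=0,
\]
contradicting the zero--energy property applied to the optimal measure $\mu$.

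\textbf{Plan for the zero--energy property.}
The finite-level analogue is in hand: by the last assertion of Theorem~\ref{main1}, $I(\mu_A)=0$ whenever every $Q\in A$ has non-negative roots and $\Tr(Q)<\Lambda_A$ --- for then Proposition~\ref{smooth} forces $\lambda_{A,Q}\neq0$, the complementary slackness of Proposition~\ref{mainprop} gives $\int\log|Q|\,d\mu_A=0$, and $I(\mu_A)=\sum_Q b_Q\int\log|Q|\,d\mu_A=0$. Since we need the conclusion for an \emph{arbitrary} optimal $\mu_{SSS}$, not merely for some distinguished limit of the $\mu_{A_n}$ (which would be circular with the uniqueness we are proving), I would re-run the separating-hyperplane argument of Proposition~\ref{mainprop} directly on the infinite constraint system $\{\int\log|Q|\,d\mu\ge0:Q\in\mathbb{Z}[x]\setminus\{0\}\}$ cutting out the arithmetic measures. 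Enumerating the non-integer rationals $x_i$, one separates the image of $\mu\mapsto(\int x\,d\mu,\int d\mu,(\int\log|Q|\,d\mu)_Q,(U_\mu(x_i))_i)$ from the obstruction region $\{y_0\le\lambda^{SSS}-\varepsilon,\ y_1\ge1,\ \text{rest}\ge0\}$ and lets $\varepsilon\to0$ to obtain a Lagrange certificate $(\tau,\lambda_Q,\lambda_0)$ with $\tau\ge0$, $\lambda_0=\lambda^{SSS}$, and $x\ge\lambda^{SSS}+U_\tau(x)+\sum_Q\lambda_Q\log|Q(x)|$ for $x\ge0$ with equality on $\operatorname{supp}(\mu_{SSS})$; combining this with the primal identity $U_{\mu_{SSS}}(x)=\sum_Q b_Q\log|Q(x)|$ on $\operatorname{supp}(\mu_{SSS})$ (the limiting form of Theorem~\ref{main1}) and the slackness relations $\int\log|Q|\,d\mu_{SSS}=0$, one gets $I(\mu_{SSS})=\int U_{\mu_{SSS}}\,d\mu_{SSS}=\sum_Q b_Q\int\log|Q|\,d\mu_{SSS}=0$.

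\textbf{The main obstacle.}
The hard step is the compactness needed to pass from finite $A$ to all of $\mathbb{Z}[x]$: one must bound the mass and control the support of the dual objects $\tau$ and $\eta:=\sum_Q\lambda_Q\deg(Q)\mu_Q$ uniformly as $A$ exhausts $\mathbb{Z}[x]$, so that the certificate survives as an honest finite measure and each slackness relation $\int\log|Q|\,d\mu_{SSS}=0$ is retained for every $Q$ occurring with positive coefficient. This is essentially the content of the conjectured bound $\limsup_A I(\nu_A)/\nu_A(\mathbb{R})=0$ (Conjecture~\ref{Inu=0}), which through strong duality (Proposition~\ref{mainprop}, Theorem~\ref{dual}) and concavity of the logarithmic energy controls $I(\mu_A)\to0$; equivalently, it forces one to check that the polynomials used to drive $\Lambda_A\downarrow\lambda^{SSS}$ can all be taken with $\Tr(Q)$ below the running value of $\Lambda_A$, keeping Proposition~\ref{smooth} applicable. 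A secondary, purely analytic difficulty appears if one instead argues through a weak-$*$ limit $\mu^{\ast}=\lim\mu_{A_n}$: since $I(\cdot)$ is only weak-$*$ upper semicontinuous on measures supported in $[0,18]$, one gets $I(\mu^{\ast})\ge\limsup I(\mu_{A_n})=0$ for free --- which merely re-derives the known $I\ge0$ --- and one must prevent the energy from strictly increasing in the limit; here the H\"older/discrepancy estimates of Section~\ref{result} and the empirical-potential bound of Proposition~\ref{emperical}, which quantitatively tie $\mu_{A_n}$ to its accumulation set, are the natural device for upgrading upper semicontinuity to continuity along this sequence.
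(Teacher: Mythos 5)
The statement you set out to prove is Conjecture~\ref{unique_opt_dist}, which the paper does not prove: it is left open, and the only thing the paper says about it is the remark that Conjecture~\ref{opt_dist_zero_energy} implies it by strict concavity of the logarithmic energy. Your first two steps are fine and match what the paper actually has. Existence via Prokhorov, upper semicontinuity of $\log|Q|$, and compactness of the support of the limit is the argument inside Corollary~\ref{Acor} (one small inaccuracy: \cite[Lemma~2.6]{lowerbound} bounds the support of the \emph{limiting} measure by $[0,18]$, not of the individual root distributions $\mu_{\alpha_n}$, so you need tightness first and the support bound only for the limit). Your convexity-plus-energy-principle argument that zero energy of every optimizer forces uniqueness is exactly the implication the paper states, and it is correct: for $\mu_1\neq\mu_2$ optimal, $\sigma=\mu_1-\mu_2$ has mass zero and finite energy, so $I(\sigma)<0$, whence $I\bigl(\tfrac12(\mu_1+\mu_2)\bigr)>0$, contradicting Conjecture~\ref{opt_dist_zero_energy} applied to the midpoint.

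But this is where the proposal stops being a proof. The zero-energy property you rely on is itself Conjecture~\ref{opt_dist_zero_energy}, also open in the paper, and your plan for it contains the gap you yourself flag: to push the separating-hyperplane/strong-duality argument of Proposition~\ref{mainprop} from a finite $A$ to all of $\mathbb{Z}[x]\setminus\{0\}$ you must keep the dual certificate $(\nu,\lambda_Q)$ uniformly controlled in mass and support as $A$ grows, and you must retain every complementary-slackness identity in the limit; nothing in the paper supplies this uniformity, and the needed control is essentially Conjecture~\ref{Inu=0} (Theorem~\ref{dual} only gives $\lambda^{SSS}\ge\Lambda_A+I(\nu_A)/(2\nu_A(\mathbb{R}))$ with $I(\nu_A)<0$, with no uniform lower bound). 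Note also that the finite-level identity $I(\mu_A)=0$ in Theorem~\ref{main1} needs $\Tr(Q)<\Lambda_A$ for every $Q\in A$ (so that Proposition~\ref{smooth} forces $\lambda_Q\neq0$), and you give no argument that an exhausting family achieving $\Lambda_{A_n}\downarrow\lambda^{SSS}$ can be chosen so that this persists for all polynomials carrying positive dual weight; your weak-$*$ fallback, as you concede, only re-derives $I\ge0$ because the energy is merely upper semicontinuous, and in any case would concern one particular limit rather than an arbitrary optimizer. So what you have is a correct conditional reduction (identical in substance to the paper's one-line remark) together with an honest research plan whose missing step is precisely the open content of Conjectures~\ref{opt_dist_zero_energy} and~\ref{Inu=0}; it should not be presented as a proof of Conjecture~\ref{unique_opt_dist}.
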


\begin{conjecture}\label{opt_dist_zero_energy}
    If $\mu$ is an arithmetic probability distribution compactly supported in $\mathbb R^+$ such that $\int xd\mu=\lambda^{SSS}$, then $I(\mu)=0$.
\end{conjecture}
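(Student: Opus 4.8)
The plan is to obtain $I(\mu)=0$ by pushing the zero‑energy property of the finite‑$A$ optimisers of Theorem~\ref{main1} through to the limit, where $\mu$ sits. Since $\mu$ is arithmetic we already have $I(\mu)\ge 0$ (the $Q=x_1-x_2$ instance of \eqref{multi}), so only the reverse inequality is at issue. By Lemma~2.6 of \cite{lowerbound} every arithmetic probability measure is supported in $[0,18]$, so the set $\mathcal K$ of such measures is weakly compact and convex and $\mu$ attains $\min_{\mathcal K}\int x\,d\nu=\lambda^{SSS}$. Using Corollary~\ref{Acor}, pick a chain $A_1\subset A_2\subset\cdots$ of finite sets of positive real‑rooted integer polynomials with $\Lambda_{A_k}\to\lambda^{SSS}$; taking each $A_k$ minimal and in the regime of the last clause of Theorem~\ref{main1} (all $Q\in A_k$ with $\Tr(Q)<\Lambda_{A_k}$, as happens for the polynomials of Corollaries~\ref{first_numer_approx}--\ref{our_bound}), Theorem~\ref{main1} produces optimal $\mu_{A_k}$ with
\[
I(\mu_{A_k})=0,\qquad \int x\,d\mu_{A_k}=\Lambda_{A_k},\qquad \int\log|Q|\,d\mu_{A_k}=0\ \ (Q\in A_k),
\]
the primal identity $U_{\mu_{A_k}}(x)=\sum_{Q\in A_k}b_{Q,k}\log|Q(x)|$ on $\Sigma_{A_k}$, the bound $\sum_Q b_{Q,k}\deg Q\le 1$, and explicit density $d\mu_{A_k}=\frac{|P_k(x)|}{\prod_{Q\in A_k}|Q(x)|\,\sqrt{|H_k(x)|}}\,dx$. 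A weak subsequential limit $\mu_\infty\in\mathcal K$ then satisfies $\int x\,d\mu_\infty=\lambda^{SSS}$, so $\mu_\infty$ is optimal; granting Conjecture~\ref{unique_opt_dist} it equals $\mu$, and without it one interleaves the $A_k$ with the polynomials produced by Theorem~\ref{approx} applied to $\mu$ so as to force $\mu$ itself to be a subsequential limit.

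The heart of the matter is that $I(\mu_{A_k})\equiv 0$ must be shown to \emph{pass to the limit}, and here one cannot invoke continuity: logarithmic energy is only upper semicontinuous under weak convergence, so mass coalescing near the diagonal could in principle make $I$ jump upward in the limit. I would preclude this at the level of potentials. The density formula above exhibits each $\mu_{A_k}$ as H\"older with exponent $\tfrac12$, its only singularities being the inverse square roots at the endpoints of the component intervals of $\Sigma_{A_k}$; if one can bound the H\"older constants of the $\mu_{A_k}$ \emph{uniformly in $k$} — equivalently, keep the endpoint densities and the number of components of $\Sigma_{A_k}$ under uniform control — then $\{U_{\mu_{A_k}}\}$ is uniformly equicontinuous on $[0,18]$, hence $U_{\mu_{A_k}}\to U_{\mu_\infty}$ uniformly there, and uniform convergence of potentials against weak convergence of measures gives $I(\mu_{A_k})=\int U_{\mu_{A_k}}\,d\mu_{A_k}\to\int U_{\mu_\infty}\,d\mu_\infty=I(\mu_\infty)$. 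With $I(\mu_{A_k})\equiv0$ this forces $I(\mu)=0$. The same uniform control lets one pass to the limit in $U_{\mu_{A_k}}=\sum_Q b_{Q,k}\log|Q|$ and in $\int\log|Q|\,d\mu_{A_k}=0$, endowing $\mu$ with its own primal certificate $U_\mu=\sum_Q b_Q\log|Q|$ on $\mathrm{supp}(\mu)$ together with $\int\log|Q|\,d\mu=0$ for every surviving $Q$, whence $I(\mu)=\int U_\mu\,d\mu=\sum_Q b_Q\int\log|Q|\,d\mu=0$ directly.

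An alternative route, designed to sidestep Conjecture~\ref{unique_opt_dist}, is a first‑order optimality argument inside $\mathcal K$: if $I(\mu)>0$, look for a compactly supported signed measure $\sigma$ of total mass $0$ with $\int x\,d\sigma<0$ and $\int\log|Q|\,d\sigma\ge0$ for every $Q$ \emph{active} at $\mu$ (those with $\int\log|Q|\,d\mu=0$); then $\mu+t\sigma$ would be arithmetic with strictly smaller trace for small $t>0$, contradicting optimality. By a separating‑hyperplane argument parallel to the proof of Proposition~\ref{mainprop}, the existence of such $\sigma$ is equivalent to the assertion that the active constraints alone force $U_\mu$ to be a nonnegative combination of the $\log|Q|$ on $\mathrm{supp}(\mu)$ — exactly the structural statement the first route also produces — so the two approaches meet.

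The step I expect to be the genuine obstacle is the quantitative one: upgrading the weak convergence $\mu_{A_k}\to\mu$ (or $\mu_n\to\mu$ of Theorem~\ref{approx}) to uniform convergence of potentials, through a discrepancy bound or a H\"older bound uniform in the approximation parameter. Absent such control, the zero‑energy property of each finite truncation simply need not survive the limit, which is presumably why the statement is still open. A subsidiary difficulty is the quantifier: the conjecture concerns \emph{every} optimal $\mu$, so one is not free to \emph{define} $\mu$ as a convenient limit of the $\mu_{A_k}$, and an unconditional proof is thereby pushed onto the variational route, which must then confront the complementary‑slackness structure for the infinite‑dimensional program directly.
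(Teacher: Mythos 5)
This statement is Conjecture~\ref{opt_dist_zero_energy}: the paper does not prove it, and explicitly lists it among the open questions, noting only that Theorem~\ref{main1} establishes a finite-$A$ analogue. Your proposal does not close it either, and in fact you name the decisive gaps yourself. The limiting route founders exactly where you say it does: weak convergence of the finite optimizers $\mu_{A_k}$ together with $I(\mu_{A_k})=0$ only yields $I(\mu_\infty)\ge 0$ (the semicontinuity of $\iint\log|x-y|$ goes the wrong way for the inequality you need, and $I(\mu_\infty)\ge 0$ is already known from \eqref{multi}). To get $I(\mu_\infty)\le 0$ you require uniform-in-$k$ equicontinuity of the potentials $U_{\mu_{A_k}}$, i.e.\ uniform control of the H\"older constants of the densities $|P_k|/\bigl(\prod_Q|Q|\sqrt{|H_k|}\bigr)$; nothing in Theorem~\ref{main1} or the numerics controls the number of components of $\Sigma_{A_k}$, the endpoint coefficients, or the proximity of roots of $Q\in A_k$ to $\Sigma_{A_k}$ uniformly as $A_k$ grows, so this is an unproved quantitative hypothesis, not a step. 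A second structural gap is the quantifier: the conjecture concerns an \emph{arbitrary} optimal arithmetic $\mu$, and you have no argument that such a $\mu$ arises as a weak limit of the $\mu_{A_k}$. Invoking Conjecture~\ref{unique_opt_dist} is not available, since (as the paper remarks) Conjecture~\ref{opt_dist_zero_energy} is the stronger statement that would \emph{imply} uniqueness; and the proposed ``interleaving'' with Theorem~\ref{approx} is unsubstantiated, because Theorem~\ref{approx} produces approximants of $\mu$ that are arithmetic but in no way trace-optimal, so they cannot be spliced into the chain of optimizers.

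Your alternative variational route is likewise only a program: the claimed equivalence between ``no trace-decreasing admissible perturbation $\sigma$ exists'' and ``$U_\mu$ is a nonnegative combination of $\log|Q|$ over the active constraints on $\mathrm{supp}(\mu)$'' is precisely an infinite-dimensional complementary-slackness statement, and the separating-hyperplane argument of Proposition~\ref{mainprop} is run for a \emph{finite} set $A$ with a measure-valued dual multiplier built from finitely many point constraints; extending it to the full arithmetic cone (infinitely many constraints $\int\log|Q|\,d\mu\ge 0$, with no compactness on the dual side) is the open content of the conjecture, not a parallel of an existing proof. In short, the statement remains open, and both of your routes reduce it to unproven uniform estimates or to duality statements that are themselves equivalent to what is to be shown.
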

Note that Conjecture \ref{opt_dist_zero_energy} implies Conjecture \ref{unique_opt_dist} by strict concavity of the energy functional.
Note that Theorem~\ref{main1} proves a finite version of Conjecture~\ref{opt_dist_zero_energy}. 
Of course there is also future work in improving the bounds on $\lambda^{SSS}$ to a more precise value, but it is currently thought to be out of reach to compute an explicit form for $\lambda^{SSS}$.
\newline

With the creation of a new method, other open questions arise about the method itself. In particular, we are interested in properties of the measures provided by the method for fixed $A\subset\mathbb Z[x]$. Let $\eta_A$ denote the lower bound measure given in \cite[Theorem 1.1]{lowerbound}.
\begin{conjecture}\label{supp_containment}
    Let $A$ and $B$ be finite sets of integer polynomials with all of their roots being real. If $A\subset B$, then
    $$\text{supp}(\eta_B)\subset\text{supp}(\eta_A)$$
    and
    $$\text{supp}(\mu_B)\subset \text{supp}(\mu_A)\subset \text{supp}(\eta_A)$$
    where $\text{supp}(\mu)$ for a measure $\mu$ denotes the support of $\mu$.
\end{conjecture}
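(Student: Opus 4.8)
We outline a possible line of attack on Conjecture~\ref{supp_containment}. All three inclusions assert that the optimal measure for a larger constraint set sits inside the optimal measure for a smaller one, so I would first reduce to $B=A\cup\{R\}$ for a single extra polynomial $R$ and then iterate. The plan is to realize each support as the coincidence set of an obstacle-type problem. For the dual measure this is nearly explicit in Theorem~\ref{dual}: set
\[
F_A(x):=x-\Lambda_A-\sum_{Q\in A}\lambda_{A,Q}\log|Q(x)|-U_{\nu_A}(x),
\]
which is $\ge 0$ on $\mathbb R^+$ and vanishes on $\Sigma_A$; a short lemma upgrades this to strict positivity off $\Sigma_A$, using the convexity estimate $F_A''>0$ on the complementary intervals (from the proof of Proposition~\ref{mainprop}), the boundary relation $F_A'(a_i)=0$ (as in Lemma~4.2 of \cite{lowerbound}), and $F_A(x)\to+\infty$. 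Hence $\Sigma_A=\{x\ge0:U_{\nu_A}(x)=\psi_A(x)\}$ for the obstacle $\psi_A(x):=x-\Lambda_A-\sum_{Q\in A}\lambda_{A,Q}\log|Q(x)|$; since $\text{supp}\,\mu_A=\text{supp}\,\nu_A=\Sigma_A$ by Theorem~\ref{dual}, $U_{\mu_A}$ has the parallel description with obstacle $h_A(x):=\sum_{Q\in A}b_{A,Q}\log|Q(x)|$, and the lower-bound measure $\eta_A$ of \cite[Theorem~1.1]{lowerbound} carries a structurally identical description with its own obstacle.

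With these reformulations, the crux is a monotonicity statement: in passing from $A$ to $A\cup\{R\}$ the obstacle should move upward in the coincidence-set sense. Two of the three effects help: $\Lambda_{A\cup\{R\}}\le\Lambda_A$ raises $\psi$ uniformly, and the new term $-\lambda_{A\cup\{R\},R}\log|R(x)|$ (with $\lambda\ge0$) raises it sharply near the roots of $R$, which is exactly where Theorem~\ref{main1} forces the new gaps to appear. The obstruction, and the step I expect to be genuinely hard, is the change in the \emph{old} multipliers: one must dominate $\sum_{Q\in A}\bigl(\lambda_{A\cup\{R\},Q}-\lambda_{A,Q}\bigr)\log|Q(x)|$, yet neither $\lambda_{A,Q}$ nor $\Lambda_A$ is individually monotone in $A$. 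I would attack this through the KKT/complementary-slackness system of the two dual problems: perturb $(\nu_A,\lambda_{A,Q},\Lambda_A)$ by a small multiple of the root distribution of $R$ to manufacture a near-feasible dual point for $A\cup\{R\}$, and symmetrically recover a near-feasible point for $A$ from $\nu_{A\cup\{R\}}$, then compare the two bounds and use strong duality, $\lambda_A=\Lambda_A$ (Proposition~\ref{mainprop}), to fix the sign of the multiplier perturbation. This is closely entangled with the conjectured monotonicity and vanishing of $I(\nu_A)/\nu_A(\mathbb R)$ recorded after Theorem~\ref{dual}.

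Even granting obstacle monotonicity, $\Sigma_{A\cup\{R\}}\subseteq\Sigma_A$ does not follow formally, because coincidence sets need not shrink when an obstacle rises and the sources $\nu_A,\nu_{A\cup\{R\}}$ carry unrelated masses. Here I would use the rigidity from Theorems~\ref{main1} and \ref{dual}: $\Sigma_A$ is a finite union of intervals whose number is at most $\sum_{Q\in A}\deg Q$, every complementary interval of $\Sigma_A$ in $[0,18]$ contains a root of some $Q\in A$, and $\nu_A$ has density $|p|\sqrt{|H|}/(\pi\prod_{\lambda_Q\ne0}|Q|)$ with $\deg p=\sum_{Q\in A}\deg Q-l-1$. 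Feeding $F_A>0$ off $\Sigma_A$ and $F_{A\cup\{R\}}=0$ on $\Sigma_{A\cup\{R\}}$ into the per-interval convexity argument should show that every newly opened gap either lies inside an existing gap of $\Sigma_A$ or abuts a root of $R$, and that the outer endpoints do not expand (for the latter, minimality of the trace together with $\Lambda_{A\cup\{R\}}\le\Lambda_A$ should suffice). Making this combinatorial bookkeeping precise is the bulk of the remaining work.

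The inclusion $\text{supp}\,\mu_A\subseteq\text{supp}\,\eta_A$ is of the same flavour but now compares the upper- and lower-bound constructions attached to the \emph{same} $A$. I would run the comparison between the obstacle $h_A$ for $\mu_A$ and the obstacle-type problem solved by $\eta_A$ in \cite{lowerbound}, exploiting the link recorded after Theorem~\ref{dual}: the bound produced by $\eta_A$ dominates $\Lambda_A+I(\nu_A)/(2\nu_A(\mathbb R))$ and $\eta_A$ is assembled from the same $\Sigma_A$-type ingredients, so that the coincidence set of $\eta_A$ must be at least as large as $\Sigma_A$; again the explicit density formulas and the gaps-contain-roots principle carry the combinatorics. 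As a checkpoint one could first prove all three inclusions up to finitely many points, or just for the chains $A=\{x,1-x,x^2-3x+1,\dots\}$ used in Section~\ref{validation}, where Corollaries~\ref{first_numer_approx}--\ref{our_bound} already exhibit the nested supports numerically.
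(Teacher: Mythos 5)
This statement is one of the paper's open problems (it appears in Section~\ref{conjectures} with only numerical evidence from the computed examples), so there is no proof in the paper to compare against; the only honest benchmark is whether your argument closes the question, and it does not. What you have written is a programme, and the two places where you yourself flag difficulty are exactly where the mathematical content lies. First, the obstacle-monotonicity step: adding $R$ changes \emph{all} the dual multipliers $\lambda_{A\cup\{R\},Q}$ and the value $\Lambda_{A\cup\{R\}}$ simultaneously, and nothing in Theorem~\ref{main1}, Theorem~\ref{dual} or Proposition~\ref{mainprop} controls the sign or size of $\lambda_{A\cup\{R\},Q}-\lambda_{A,Q}$; your proposed perturbation of $(\nu_A,\lambda_{A,Q},\Lambda_A)$ by a multiple of the root distribution of $R$ produces a feasible-looking dual point but gives no comparison of the \emph{optimal} multipliers, which is what the obstacle comparison needs. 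Second, even granting that the obstacle rises, you concede that coincidence sets of obstacle problems need not shrink, and the ``combinatorial bookkeeping'' you defer (new gaps lie in old gaps or abut roots of $R$, outer endpoints do not expand) is precisely the conjecture restated; the gaps-contain-roots principle of Theorem~\ref{main1} constrains where gaps can open but does not force $\Sigma_{A\cup\{R\}}\subseteq\Sigma_A$, since a priori $\Sigma_{A\cup\{R\}}$ could extend into a gap of $\Sigma_A$ away from any root.

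The third inclusion, $\text{supp}(\mu_A)\subseteq\text{supp}(\eta_A)$, is treated even more loosely: $\eta_A$ is the optimizer of a different variational problem in \cite{lowerbound}, and your inference from the inequality of optimal values ($\eta_A$'s bound dominating $\Lambda_A+I(\nu_A)/(2\nu_A(\mathbb{R}))$) to a containment of supports is a non sequitur --- comparing two scalars says nothing about how the two coincidence sets sit inside each other. There is also a smaller foundational issue: outside the hypothesis $\Tr(Q)<\Lambda_A$ for all $Q\in A$, Theorem~\ref{main1} does not guarantee uniqueness of $\mu_A$, so ``$\text{supp}(\mu_A)$'' is not even canonically defined for arbitrary $A$, and any proof would have to address which solution is meant. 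The obstacle-problem reformulation via $F_A\ge 0$ with $F_A=0$ on $\Sigma_A$ and $F_A''>0$ off the support is a reasonable framework and consistent with how the paper argues in Proposition~\ref{mainprop} and Proposition~\ref{eq_sufficient}, but as it stands the proposal establishes none of the three inclusions, and the statement remains the conjecture the paper says it is.
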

These containment proposed in Conjecture \ref{supp_containment} have been observed by the authors in the cases which are computed and given herein and in \cite{lowerbound}.
Another property studied of these measures is their logarithmic energies. We know that $I(\eta_A)=I(\mu_A)=0$, but $I(\nu_A)<0$.
\begin{conjecture}\label{Inu=0}
    Let $A_0\subset A_1\subset\dots$ be an increasing chain of finite subsets of $\mathbb Z[x]$ so that $\displaystyle\lim_{n\to\infty} \Lambda_{A_n}\to \lambda^{SSS}$. Then 
    $\frac{I(\nu_{A_n})}{\nu_{A_n}(\mathbb{R})}\le \frac{I(\nu_{A_{n+1}})}{\nu_{A_{n+1}}(\mathbb{R})}$ for all $n\in\mathbb N$ and
    $\displaystyle \lim_{n\to\infty}\frac{I(\nu_{A_n})}{\nu_{A_n}(\mathbb{R})}=0$.
\end{conjecture}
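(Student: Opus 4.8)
We do not have a proof of Conjecture~\ref{Inu=0}; what follows is the strategy I would pursue. The plan is to translate both assertions into statements about the (logarithmic) \emph{energy distance} on probability measures. Recall from Theorem~\ref{main1} and Theorem~\ref{dual} that, in the regime where every $Q\in A$ has non‑negative roots and $\Tr(Q)<\Lambda_A$ (to which one reduces by discarding the polynomials that receive zero coefficients), one has $I(\mu_A)=0$, $I(\mu_A,\nu_A)=0$, and $\int\log|Q|\,d\nu_A=0$ whenever $\lambda_{A,Q}\neq0$. Writing $\hat\nu_A:=\nu_A/\nu_A(\mathbb R)$ for the normalised dual measure, $\overline m(\mu):=\int x\,d\mu/\mu(\mathbb R)$ for the barycentre, and $d(\sigma,\sigma'):=\sqrt{-I(\sigma-\sigma')}$ for the energy distance on compactly supported probability measures, expanding $U_{\nu_A}(x)=x-\Lambda_A-\sum_Q\lambda_{A,Q}\log|Q(x)|$ on $\Sigma_A$ yields
\[
\frac{I(\nu_A)}{\nu_A(\mathbb R)}=\overline m(\nu_A)-\Lambda_A,\qquad
\frac{-I(\nu_A)}{\nu_A(\mathbb R)^2}=-I\bigl(\mu_A-\hat\nu_A\bigr)=d(\mu_A,\hat\nu_A)^2 .
\]
Since $\nu_A(\mathbb R)\le 2e$ by Proposition~\ref{smooth}, part (b) becomes $d(\mu_{A_n},\hat\nu_{A_n})\to0$, and part (a) becomes monotonicity of the weighted quantity $\nu_{A_n}(\mathbb R)\,d(\mu_{A_n},\hat\nu_{A_n})^2$, equivalently of $\overline m(\nu_{A_n})-\Lambda_{A_n}$.

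For part (b), I would first pass to a subsequence along which $\mu_{A_n}\rightharpoonup m$, $\hat\nu_{A_n}\rightharpoonup m'$ and $\nu_{A_n}(\mathbb R)\to\tau\in(0,2e]$ (Prokhorov applies since all supports lie in $[0,18]$). As in the proof of Corollary~\ref{Acor} each $\mu_{A_n}$ satisfies Smyth's inequalities for \emph{every} $Q\in\mathbb Z[x]$, and $\int x\,d\mu_{A_n}=\Lambda_{A_n}\to\lambda^{SSS}$, so $m$ is arithmetic with $\int x\,dm=\lambda^{SSS}$. The heart of the argument is to show $m=m'$: from $I(\mu_{A_n},\nu_{A_n})=0$ one gets $\int U_{\mu_{A_n}}\,d\hat\nu_{A_n}=0$, and since $U_{\mu_{A_n}}=\sum_Q b_{A_n,Q}\log|Q|$ on $\Sigma_{A_n}$ this forces $\hat\nu_{A_n}$ asymptotically to avoid the region where $U_m>0$; combined with $\overline m(\nu_{A_n})\le 2\lambda^{SSS}-\Lambda_{A_n}$ (a consequence of Theorem~\ref{dual}) and the uniqueness Conjectures~\ref{unique_opt_dist}–\ref{opt_dist_zero_energy}, this should pin $m'=m=\mu_{SSS}$. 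Granting $m=m'$, I would upgrade weak convergence of the explicit densities $\tfrac{|P|}{\prod|Q|\sqrt{|H|}}$ and $\tfrac{|p|\sqrt{|H|}}{\prod|Q|}$ to $W_1$‑convergence, and then to $d(\mu_{A_n},\hat\nu_{A_n})\to0$ via an estimate of the shape $d(\sigma,\sigma')^2\ll W_1(\sigma,\sigma')^{1/2}$ valid for densities with a common modulus of continuity; feeding this back gives $\tfrac{-I(\nu_{A_n})}{\nu_{A_n}(\mathbb R)}=\nu_{A_n}(\mathbb R)\,d(\mu_{A_n},\hat\nu_{A_n})^2\to0$.

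For the monotonicity in part (a), since $\Lambda_{A_{n+1}}\le\Lambda_{A_n}$ it suffices to prove $\overline m(\nu_{A_{n+1}})\ge\overline m(\nu_{A_n})$, i.e.\ that enlarging the polynomial set drives the dual barycentre upward toward $\lambda^{SSS}$. The natural route is a direct comparison: feed $\hat\nu_{A_{n+1}}$ and $\mu_{A_{n+1}}$ as competitors into variational characterisations of $\hat\nu_{A_n}$ and $\mu_{A_n}$, using the weak‑duality pairing in \eqref{Lagin}–\eqref{Lag2} to show that $\nu_{A_n}(\mathbb R)\,d(\mu_{A_n},\hat\nu_{A_n})^2$ cannot increase when constraints are added; alternatively one can try to read off monotonicity of $\overline m(\nu_{A_n})$ from the closed‑form density of $\nu_A$ in Theorem~\ref{dual} together with the nesting of supports proposed in Conjecture~\ref{supp_containment}.

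The main obstacles, in order of severity, are: (1) proving $m=m'$, which is genuinely entangled with the open uniqueness Conjectures~\ref{unique_opt_dist}–\ref{opt_dist_zero_energy} and with the support‑nesting Conjecture~\ref{supp_containment}; (2) the failure of $I(\cdot)$ to be weakly continuous, so that the passage from ``$\mu_{A_n}\approx\hat\nu_{A_n}$ weakly'' to ``$d(\mu_{A_n},\hat\nu_{A_n})\to0$'' demands a uniform‑regularity estimate on the densities of $\nu_{A_n}$ near the endpoints $a_i$ and — more delicately — near the roots of the $Q\in A_n$ that fill the gaps of $\Sigma_{A_n}$, whose distance to $\Sigma_{A_n}$ is not obviously controlled uniformly in $n$; and (3) for part (a), the lack of an evident monotone variational principle, since the comparison sketched above produces only one‑sided inequalities that do not visibly combine, so one may be forced to argue monotonicity of $\overline m(\nu_{A_n})$ by hand from the explicit formulas.
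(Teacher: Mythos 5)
The statement you were asked about is Conjecture~\ref{Inu=0}: the paper does not prove it. It appears in the ``Open Questions'' section and is supported only by the authors' numerical observations (the remark after Theorem~\ref{dual} records that $\frac{I(\nu_A)}{\nu_A(\mathbb{R})}\ge\frac{I(\nu_B)}{\nu_B(\mathbb{R})}$ for $B\subset A$ was observed experimentally). So there is no proof in the paper to compare against, and your decision to present a strategy rather than claim a proof is the correct reading of the situation.

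On the substance of your sketch: the reformulation is sound modulo two small corrections. The identity $\frac{I(\nu_A)}{\nu_A(\mathbb R)}=\overline m(\nu_A)-\Lambda_A$ uses $\int\log|Q|\,d\nu_A=0$ for every $Q$ with $\lambda_{A,Q}\neq0$, but Proposition~\ref{mainprop} gives that vanishing under the condition $b_Q\neq0$ (the dual slackness $\int\log|Q|\,d\mu_A=0$ is what is tied to $\lambda_Q\neq0$); without reducing to a set $A$ that is minimal for both problems you only get the inequality $\frac{I(\nu_A)}{\nu_A(\mathbb R)}\le\overline m(\nu_A)-\Lambda_A$. Likewise $\frac{-I(\nu_A)}{\nu_A(\mathbb R)^2}=d(\mu_A,\hat\nu_A)^2$ needs $I(\mu_A)=0$, which Theorem~\ref{main1} only guarantees when every $Q\in A$ has non-negative roots and $\Tr(Q)<\Lambda_A$; for a general increasing chain $A_n$ this hypothesis must be arranged. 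Beyond that, your plan openly rests on Conjectures~\ref{unique_opt_dist}, \ref{opt_dist_zero_energy} and \ref{supp_containment}, and on an unproved uniform-regularity estimate to upgrade weak convergence of the densities to energy convergence (the behaviour of $d\nu_{A_n}$ near roots of $Q\in A_n$ inside the gaps is exactly the uncontrolled part), and for part (a) you concede there is no monotone variational principle in hand. So this is an honest and reasonable programme, consistent with the paper's own framing of the statement as open, but it is not a proof and the hardest steps (identifying the limit of $\hat\nu_{A_n}$ with that of $\mu_{A_n}$, and the monotonicity of $\overline m(\nu_{A_n})$) remain exactly as open as the conjecture itself.
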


Another open problem is in regards to the polynomial $p(x)$ in Theorem \ref{dual}. This polynomial appears when multiple roots from the polynomials in $A$ occur in a single gap of the measure's support $\Sigma_A$. 
\begin{conjecture}
    Suppose that for a finite set $A\subset\mathbb Z[x]$ containing only polynomials with all real roots. Let $\Sigma_A=\bigcup_{i=0}^l[a_{2i},a_{2i+1}]$. Enumerate the roots of the polynomials in $A$ inside $(a_{2j+1},a_{2j+2})$ by $r_1<\dots<r_k$ for some $0\le j<l$ and $k>1$. Then the polynomial $p(x)$ from Theorem \ref{dual} has a unique root in each of the intervals $(r_s,r_{s+1})$ for $1\le s<k$.
\end{conjecture}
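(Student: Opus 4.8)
The plan is to recast the conjecture as a statement about the critical points of the function
\[
F(x):=x-U_{\nu_A}(x)-\sum_{Q\in A}\lambda_{Q}\log|Q(x)|-\Lambda_{A},
\]
which by Theorem~\ref{dual} (and its proof) vanishes identically on $\Sigma_A$ and satisfies $F\ge 0$ on $\mathbb{R}^{+}$. I will work under the standing assumption that $\lambda_{Q}>0$ for every $Q\in A$ --- this is automatic by Proposition~\ref{smooth} as soon as each $Q\in A$ has only non-negative roots and $\Tr(Q)<\Lambda_{A}$, and it is exactly the hypothesis under which each $r_s$ is a logarithmic singularity of $F$; so the statement should be read as concerning the zeros of the polynomials appearing in the denominator of the density of $\nu_A$ provided by Theorem~\ref{dual} (in particular in the case $\{x\}\subset A$ where that polynomial $p$ is defined).

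The first step is the rational identity
\begin{equation}\label{resolvent}
F'(x)^{2}=\frac{p(x)^{2}\,H(x)}{\prod_{Q\in A}Q(x)^{2}}\qquad\text{for every real }x\notin\Sigma_A .
\end{equation}
Write $S(z):=\int\frac{d\nu_A(y)}{z-y}$ and $G(z):=1-S(z)-\sum_{Q\in A}\lambda_{Q}\frac{Q'(z)}{Q(z)}$, so that $G(x)=F'(x)$ for real $x$ off $\Sigma_A$ and off the zeros of the $Q$'s. Differentiating the equality $F\equiv0$ along the interior of $\Sigma_A$ gives $\mathrm{PV}\!\int\frac{d\nu_A(y)}{x-y}=1-\sum_{Q}\lambda_{Q}\frac{Q'(x)}{Q(x)}$ there, and feeding this into the Sokhotski--Plemelj relations $S(x\pm i0)=\mathrm{PV}\!\int\frac{d\nu_A(y)}{x-y}\mp i\pi\rho_A(x)$ (with $\rho_A$ the density of $\nu_A$) yields $G(x+i0)=-G(x-i0)$ on $\Sigma_A$; hence $G^{2}$ has no jump across $\Sigma_A$ and extends analytically there. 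Its only poles are the double poles at the zeros of the $Q$'s, the candidate singularities at the $a_i$ are removable because $\rho_A$ vanishes at those points (Theorem~\ref{dual}, i.e.\ \cite[Proposition~4.1]{lowerbound}), and $G^{2}\to1$ at $\infty$; so $G^{2}$ is rational with denominator $\prod Q^{2}$. Evaluating on $\Sigma_A$, where $G(x\pm i0)=\pm i\pi\rho_A(x)$, $\rho_A=\tfrac{|p|\sqrt{|H|}}{\pi\prod|Q|}$, and $H<0$, identifies this rational function with $\tfrac{p^{2}H}{\prod Q^{2}}$ on $\Sigma_A$, hence everywhere; this is \eqref{resolvent}. (Equivalently, \eqref{resolvent} can be extracted from the construction of $\nu_A$ in \cite[Section~3]{lowerbound}.)

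The second step is a convexity analysis on the gap $(a_{2j+1},a_{2j+2})$. Fix $1\le s<k$. The open interval $(r_s,r_{s+1})$ is disjoint from $\Sigma_A$ and from the zeros of the $Q$'s, so differentiation under the integral (as in the formula for $F''$ recorded in the proof of Proposition~\ref{mainprop}) gives $F''(x)=\int\frac{d\nu_A(y)}{(x-y)^{2}}+\sum_{Q\in A}\lambda_{Q}\sum_{Q(\alpha)=0}\frac{1}{(x-\alpha)^{2}}>0$, using that all the $\alpha$ are real; thus $F$ is strictly convex and $F'$ strictly increasing on $(r_s,r_{s+1})$. Because $\lambda_{Q}>0$ and $\log|Q(x)|\to-\infty$ as $x$ approaches a zero of $Q$, the local expansion $F(x)\sim-\big(\textstyle\sum_{Q(r_s)=0}\lambda_{Q}\,\mathrm{ord}_{r_s}Q\big)\log|x-r_s|$ shows $F(x)\to+\infty$ and $F'(x)\to-\infty$ as $x\to r_s^{+}$, and symmetrically $F'(x)\to+\infty$ as $x\to r_{s+1}^{-}$. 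By the intermediate value theorem $F'$ vanishes somewhere in $(r_s,r_{s+1})$, and by strict monotonicity at a single point, at which $F''>0$, so the zero is simple. Since $H(x)\ne0$ and $\prod Q(x)^{2}\ne0$ on the open gap, \eqref{resolvent} forces that point to be a simple zero of $p$, and conversely every zero of $p$ in $(r_s,r_{s+1})$ is a zero of $F'$; hence $p$ has exactly one zero in each $(r_s,r_{s+1})$. (Using in addition $F\ge0$ together with $F=0$ at $a_{2j+1},a_{2j+2}$, the same reasoning shows $p$ has no zero in $(a_{2j+1},r_1)$ or $(r_k,a_{2j+2})$, so these $k-1$ are all the zeros of $p$ in the gap.)

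The step I expect to be the main obstacle is the identity \eqref{resolvent}: one must justify differentiating the logarithmic potential under the integral, control $S$ (hence $G^{2}$) near the endpoints $a_i$ so that the square-root branch points are genuinely removable, and exclude spurious poles of $G^{2}$ --- all of which hinge on the precise endpoint behavior of $\rho_A$ supplied by Theorem~\ref{dual}. A minor additional point is bookkeeping: a root $r_s$ that lies only on polynomials $Q$ with $\lambda_{Q}=0$ is not a singularity of $F$ and must be dropped from the enumeration (equivalently, the conjecture concerns the zeros of $\prod_{\lambda_Q\ne0}Q$ inside the gap), an issue that does not arise for any of the sets $A$ treated in this paper.
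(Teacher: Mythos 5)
This statement is not proved in the paper at all: it appears in Section~\ref{conjectures} as an open conjecture, supported only by numerical experiments and a heuristic remark, so there is no proof of record to compare yours against. Judged on its own terms, your proposal is a sound and essentially complete strategy, and it follows the natural route: the identity $F'(x)^2=p(x)^2H(x)\big/\prod_{Q}Q(x)^2$ off $\Sigma_A$ (obtained by the Plemelj/two-sided-boundary-value argument for $G(z)=1-S(z)-\sum_Q\lambda_Q Q'(z)/Q(z)$, using that $F\equiv 0$ on $\Sigma_A$, that the density of $\nu_A$ from Theorem~\ref{dual} is H\"older with square-root vanishing at the $a_i$, and that $G^2\to 1$ at infinity), combined with the strict convexity $F''>0$ on the gaps (the same computation as in the proof of Proposition~\ref{mainprop}) and the blow-up of $F'$ at the $r_s$, does pin down exactly one simple zero of $F'$, hence of $p$, in each $(r_s,r_{s+1})$, and your endpoint argument correctly excludes zeros of $p$ in $(a_{2j+1},r_1)$ and $(r_k,a_{2j+2})$.

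Two caveats keep this from being a proof of the conjecture exactly as stated. First, your argument needs every root $r_s$ in the gap to be a root of some $Q$ with $\lambda_Q>0$; you secure this via Proposition~\ref{smooth} only under the extra hypotheses that each $Q\in A$ has non-negative roots and $\Tr(Q)<\Lambda_A$ (and the density formula for $\nu_A$, hence the definition of $p$, is stated in Theorem~\ref{dual} only when $\{x\}\subset A$ and the support avoids the roots of the $Q$'s). If some root in the gap belongs only to polynomials with $\lambda_Q=0$, the singularity analysis at that $r_s$ fails and the literal statement would have to be reformulated, as you note; so what you prove is a (reasonable) restricted version. Second, the resolvent identity is the part you flag but do not fully carry out: the Plemelj relations, the differentiation of $U_{\nu_A}$ along the support, and the removability of the singularities of $G^2$ at the endpoints all rest on the explicit H\"older-$\tfrac12$ endpoint behavior of the density supplied by Theorem~\ref{dual}; these are standard potential-theoretic facts, but they need to be written out (including the case of multiple roots of a single $Q$ and the degree/leading-coefficient match $\deg p=\sum_Q\deg Q-l-1$ that makes $G^2\prod_Q Q^2$ a monic polynomial equal to $p^2H$). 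With those points supplied, your argument would upgrade this conjecture to a theorem under the stated hypotheses, which cover all the sets $A$ used in the paper's computations.
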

Note that this accounts for all of the roots of $p$. The motivation behind this conjecture is that there ``ought to be" an interval between the roots of the $Q$'s; however, these intervals are degenerate, one-point, closed intervals. Allowing such $\Sigma$s in Theorem \ref{dual}, we get this result. This conjecture is also supported by our numerical experiments.
\bibliographystyle{alpha}
\bibliography{main}
\end{document}